\numberwithin{equation}{section}
\let\mathscrtmp\mathscr
\let\mathrsfs\mathscr
\let\mathscr\mathscrtmp
\newcommand{\cD}{\mathrsfs{D}}
\newcommand{\cN}{\mathrsfs{N}}
\let\itemref\ref
\DeclareMathOperator{\Ind}{Ind}
\newcommand{\ttBan}{\mathsf{Ban}}
\newcommand{\ttBanleq}{\mathsf{Ban}^{\leq 1}}
\newcommand{\ttC}{\mathsf{C}}
\newcommand{\ttP}{\mathsf{P}}
\newcommand{\T}{\mathbb{T}}
\newcommand{\M}{\mathbb{M}}
\newcommand{\N}{\mathbb{N}}
\newcommand{\K}{\mathbb{K}}
\newcommand{\Q}{\mathbb{Q}}
\newcommand{\Z}{\mathbb{Z}}
\newcommand{\C}{\mathbb{C}}
\newcommand{\A}{\mathbb{A}}
\newcommand{\G}{\mathbb{G}}
\newcommand{\V}{\mathbb{V}}
\newcommand{\OO}{\mathcal{O}}
\newcommand{\BB}{\mathcal{B}}
\newcommand{\NN}{\mathcal{N}}
\newcommand{\LL}{\mathcal{L}}
\newcommand{\MM}{\mathcal{M}}
\newcommand{\RR}{\mathcal{R}}
\newcommand{\sN}{\mathrm{N}}
\newcommand{\sD}{\mathrm{D}}
\newcommand{\sL}{\mathrm{L}}
\newcommand{\sLfil}{\sL^{\mathrm{fil}}}
\newcommand{\BBBan}{\mathscr{B}\mathrm{an}}
\newcommand{\BBB}{\mathscr{B}}
\newcommand{\CCC}{\mathscr{C}}
\newcommand{\DDD}{\mathscr{D}}
\newcommand{\EEE}{\mathscr{E}}
\newcommand{\AAA}{\mathscr{A}}
\newcommand{\TTT}{\mathscr{T}}
\newcommand{\PPP}{\mathscr{P}}
\newcommand{\wotimes}{\widehat{\otimes}}
\newcommand{\bDelta}{\mathbf{\Delta}}
\mathchardef\hyph="2D
\tikzset{%
	symbol/.style={%
		draw=none,
		every to/.append style={%
			edge node={node [sloped, allow upside down, auto=false]{$#1$}}}
	}
}
\newcommand{\na}{\mathrm{nA}}
\DeclareMathOperator{\Map}{Map}
\DeclareMathOperator{\id}{id}
\DeclareMathOperator{\Hom}{Hom}
\DeclareMathOperator*{\colim}{colim}
\let\lim\relax 
\DeclareMathOperator*{\lim}{lim}
\DeclareMathOperator{\Mod}{Mod}
\newcommand{\op}{\mathrm{op}}
\DeclareMathOperator{\Spec}{Spec}
\DeclareMathOperator{\Proj}{Proj}
\DeclareMathOperator{\Sym}{Sym}
\newcommand{\cl}{\mathrm{cl}}
\DeclareMathOperator{\LSym}{LSym}
\DeclareMathOperator{\im}{im}
\DeclareMathOperator{\Arr}{Arr}
\newcommand{\ext}{{\operatorname{ext}}}
\DeclareMathOperator{\Res}{Res}
\DeclareMathOperator{\triv}{triv}
\DeclareMathOperator{\Der}{Der}
\DeclareMathOperator{\fgf}{\mathrm{fgf}}
\newcommand{\ev}{\mathrm{ev}}
\newcommand{\Poly}{\mathscr{P}\mathrm{oly}}
\newcommand{\sSet}{\mathrm{s}\mathscr{S}\mathrm{et}}
\newcommand{\Aff}{\mathscr{A}\mathrm{ff}}
\newcommand{\Alg}{\mathscr{A}\mathrm{lg}}
\renewcommand{\Mod}{\mathscr{M}\mathrm{od}}
\newcommand{\St}{\mathscr{S}\mathrm{t}}
\newcommand{\Cat}{\mathscr{C}\mathrm{at}}
\newcommand{\Set}{\mathscr{S}\mathrm{et}}
\newcommand{\Fun}{\mathrm{Fun}}
\newcommand{\Space}{\mathscr{S}}
\newcommand{\QCoh}{\mathrm{Q}\mathscr{C}\mathrm{oh}}
\DeclareMathOperator{\Bl}{Bl}
\newcommand{\Einfty}{\mathbf{E}_\infty}
\newcommand{\Sp}{\mathscr{S}\mathrm{p}}
\newcommand{\PrL}{\mathscr{P}\mathrm{r}^{\mathrm{L}}}
\newcommand{\DAlg}{\mathrm{D}\mathscr{A}\mathrm{lg}}
\newcommand{\CAlg}{\mathrm{C}\mathscr{A}\mathrm{lg}}
\newcommand{\nc}{\mathrm{nc}}
\newcommand{\StMod}{\mathscr{S}\mathrm{t}\mathscr{M}\mathrm{od}}
\newcommand{\QAlg}{\mathrm{Q}\mathscr{A}\mathrm{lg}}
\mathchardef\hyph="2D
\newtheorem*{rep@theorem}{\rep@title}
\newcommand{\newreptheorem}[2]{%
	\newenvironment{rep#1}[1]{%
		\def\rep@title{#2 \ref{##1}}%
		\begin{rep@theorem}}%
		{\end{rep@theorem}}}
\theoremstyle{definition}
\newtheorem{Def}{Definition}[section]
\newtheorem{Not}[Def]{Notation}
\newtheorem{Rem}[Def]{Remark}
\newtheorem{Rem*}[]{Remark}
\newtheorem{Exm}[Def]{Example}
\newtheorem{Ass}[Def]{Assumption}
\newtheorem{Warn}[Def]{Warning}
\theoremstyle{plain}
\newtheorem{Prop}[Def]{Proposition}
\newtheorem{Thm}[Def]{Theorem}
\newtheorem{Lem}[Def]{Lemma}
\newtheorem{Cor}[Def]{Corollary}
\title[Blow-ups and normal bundles in  derived geometries]{Blow-ups and normal bundles in connective and nonconnective derived geometries}
\author{Oren Ben-Bassat}
\address{University of Haifa\\Department of 
	Mathematics\\Haifa 3498838 \\Israel}
\email{ben-bassat@math.haifa.ac.il}
\author{Jeroen Hekking} 
\address{University of Regensburg\\Department of Mathematics\\Regensburg 93053\\Germany}
\email{jeroen.hekking@ur.de}
\begin{document}	
		
\maketitle

\begin{abstract}
This work presents a generalization of derived blow-ups and of the derived deformation to the normal bundle from derived algebraic geometry to any geometric context. The latter is our proposed globalization of a derived algebraic context, itself a generalization of the theory of simplicial commutative rings and due to Bhatt--Mathew and Raksit.

One key difference between a geometric context and ordinary derived algebraic geometry is that the coordinate ring of an affine object in the former is not necessarily connective. When constructing  generalized blow-ups, this not only turns out to be remarkably convenient, but also leads to a wider existence result. Indeed, we show that the derived Rees algebra and the derived blow-up exist for any affine morphism of stacks in a given geometric context. However, in general the derived Rees algebra will no longer be connective, hence in general the derived blow-up will not live in the connective part of the theory. Unsurprisingly, this can be solved by restricting the input to closed immersions. The proof of the latter statement uses a derived deformation to the normal bundle in any given geometric context, which is also of independent interest.

Besides the geometric context which extends algebraic geometry, the second main example of a geometric context is an extension of analytic geometry as based on categories of Ind-Banach spaces or modules. The latter is a recent construction, and includes many different flavors of analytic geometry, such as complex analytic geometry, non-Archimedean rigid analytic geometry and analytic geometry over the integers. The present work thus provides derived blow-ups and a derived deformation to the normal bundle in all of these, which is expected to have many applications.
\end{abstract}

\setcounter{tocdepth}{1}
\tableofcontents
\setcounter{tocdepth}{2}

\section*{Introduction} 
The theory of derived blow-ups has recently been developed in \cite{KhanVirtual, HekkingGraded, Weil}. It gives an analogue in derived algebraic geometry to blow-ups in classical algebraic geometry, and has found applications in (virtual) intersection theory, in algebraic $K$-theory, and in stabilizer reduction of derived Artin stacks.  

 The key constructions in \cite{Weil} are to a large extent formal, since they ultimately depend on the theory of derived Weil restrictions (as first studied in \cite{LurieSpectral} in the context of spectral algebraic geometry), which is essentially an $\infty$-categorical idea. Since the applications of derived blow-ups have been quite satisfying so far, this leads to the natural question whether the same techniques can be used to define derived blow-ups in contexts other than algebraic geometry. 

The answer presented in this paper takes as starting point a derived algebraic context, the theory of which is due to Bhatt--Mathew (since their work has not yet appeared, we will use the exposition in \cite{RaksitHKR}). Such a derived algebraic context $\CCC$ gives rise to derived rings in $\CCC$, which are generalizations of  simplicial commutative rings but which need not be connective. We then use spectra of derived rings in a given derived algebraic context as affine building blocks to define geometric contexts. 

We will first present our main results in this introduction, after which we explain the central definitions in more detail. A recent history of derived blow-ups and its applications in derived algebraic geometry is given in \S \ref{Sec:The_development}.

\subsection*{Main results}
Fix a derived algebraic context $\CCC$ and a geometric context relative to $\CCC$ (Definition \ref{Def:GeomCont}). To this, we associate the $\infty$-category ${\St_{\CCC}}$ of so-called \textit{ $\CCC$-stacks}. There is a natural contravariant inclusion from the $\infty$-category of derived rings into ${\St_{\CCC}}$, written $\Spec^\nc(-)$. Let ${\Aff_{\CCC}}$ be the essential image of $\Spec^\nc(-)$, and let ${\Aff_{\CCC_{\geq 0}}}$ be the full subcategory spanned by objects of the form $\Spec^\nc R$ for $R$ connective. The datum of the geometric context includes a topology on ${\Aff_{\CCC_{\geq 0}}}$, and a \emph{$\CCC_{\geq 0}$-stack} is by definition a sheaf on ${\Aff_{\CCC_{\geq 0}}}$ (in the $\infty$-categorical sense) for this topology. We first show:
\begin{reptheorem}{Thm:Stacks}
	The category ${\St_{\CCC_{\geq 0}}}$ of $\CCC_{\geq 0}$-stacks is the smallest full subcategory of ${\St_{\CCC}}$ which contains ${\Aff_{\CCC_{\geq 0}}}$ and is closed under colimits.
\end{reptheorem}
The primary example to keep in mind is $\CCC = \Mod_\Z$, with the topology on $\Aff_{\Mod_\Z}$ induced by the \'{e}tale topology on $\Aff_{(\Mod_\Z)_{\geq 0}}$ via Example \ref{Ex:InducedContext}. Then $\St_{(\Mod_\Z)_{\geq 0}}$ is the $\infty$-category of derived stacks in the sense of ordinary derived algebraic geometry.

Roughly speaking, a morphism $h:U \to V$ of $\CCC$-stacks is \emph{nonconnectively affine} if it is locally of the form $\Spec^\nc B \to \Spec^\nc A$, and \emph{affine} if additionally we can choose $A,B$ to be connective (Definition \ref{Def:Affine}). Following \cite{Weil}, we introduce the \emph{deformation space} $\sD_{X/Y}$ of a morphism $X \to Y$ of $\CCC$-stacks as the $\CCC$-stack over $Y \times \A^1_\CCC$ such that a $T$-point of $\sD_{X/Y}$ is a morphism $T \times_{Y \times \A^1_\CCC} (Y \times \{0\}) \to X$ over $Y$, where $\A^1_\CCC$ is the affine line for the given geometric context (Definition \ref{Def:DefSpace}). The next main result leads to the definition of Rees algebras:
\begin{reptheorem}{Thm:Rees-affine}
	The deformation space $\sD_{X/Y}$ of an affine morphism $f:X \to Y$ of $\CCC_{\geq 0}$-stacks is nonconnectively affine over $Y \times \A^1_\CCC$.
\end{reptheorem}

In the situation of the above theorem, we define the \emph{extended Rees algebra} of $f$ as the quasi-coherent $\OO_Y$-algebra $\RR_{X/Y}^\ext$ such that $\sD_{X/Y}$ is the relative spectrum of $\RR_{X/Y}^\ext$ over $Y$ (Definition \ref{Def:Rees}). The construction will induce a natural $\Z$-grading on $\RR_{X/Y}^\ext$. Using this, we define the \emph{blow-up} of $Y$ in $X$ as $\Bl_XY \coloneqq \Proj \RR_{X/Y}$, where $\RR_{X/Y}$ is the $\N$-graded algebra obtained from $\RR_{X/Y}^\ext$ by discarding everything with negative homogeneous degrees, and $\Proj(-)$ is the projective spectrum construction carried out in the given geometric context (Definition \ref{Def:Blup}).

At this point the Rees algebra can fail to be connective, and thus the blow-up can fail to be a $\CCC_{\geq 0}$-stack. This is solved by restricting the input to closed immersions. Taking our cue from ordinary derived algebraic geometry, we say that a morphism $f :X \to Y$ of $\CCC_{\geq 0}$-stacks is a \emph{closed immersion} if it is affine, and locally of the form $\Spec^\nc B \to \Spec^\nc A$, where $A \to B$ is a morphism of connective derived rings with connective fiber. We then have:
\begin{reptheorem}{Thm:ReesClosed}
	Let $X \to Y$ be a closed immersion of $\CCC_{\geq 0}$-stacks. Then the extended Rees algebra $\RR_{X/Y}^\ext$ is connective, and consequently the blow-up $\Bl_XY$ lives in ${\St_{\CCC_{\geq 0}}}$.
\end{reptheorem}
Interestingly, the proof of this theorem uses the deformation to the normal bundle, which is formulated in the following result:
\begin{reptheorem}{Thm:Deformation}
	In the diagram 
	\begin{center}
		\begin{tikzcd}
			X\ar[r]\ar[d] & X\times \A^1_\CCC\ar[d] & X\times \G_{m,\CCC}\ar[l]\ar[d]\\
			\sN_{X/Y}\ar[r]\ar[d] & \sD_{X/Y}\ar[d] & Y\times \G_{m,\CCC}\ar[l]\ar[d]\\
			Y\ar[r] & Y\times \A^1_\CCC & Y\times \G_{m,\CCC},\ar[l]		
		\end{tikzcd}	
	\end{center}
	all squares are Cartesian, assuming $X \to Y$ admits a cotangent complex.
\end{reptheorem}
Here, the \emph{normal bundle} is $\sN_{X/Y} \coloneqq \Spec^\nc (\LSym_{\OO_X} \LL_{X/Y}[-1])$, where $\LL_{X/Y}$ is the cotangent complex as recalled in \S \ref{Sec:Algebraic_contexts} for the affine case and defined in \S \ref{Sec:Geometric_contexts} for the global case. The diagram is naturally acted upon by  the multiplicative group $\G_{m,\CCC}$ for the given geometric context.

In ordinary derived algebraic geometry, the deformation stack $\cD_{X/Y} \coloneqq [\sD_{X/Y}/\G_{m,\CCC}]$ can also be described as the classifying stack of virtual Cartier divisors over $X \to Y$ \cite{Weil}. This description goes through in the general setting, as shown in Proposition \ref{Prop:VCD}. This leads to our final main result, which generalizes the fact that $\Bl_ZX$ classifies excessive virtual Cartier divisors over $Z \to X$ in ordinary derived algebraic geometry \cite{Weil}.

\begin{reptheorem}{Thm:BlFOPaff}
	For $Z \to X$ a closed immersion of $\CCC_{\geq 0}$-stacks, the blow-up $\Bl_ZX$ classifies strict virtual Cartier divisors over $Z \to X$. Moreover, under appropriate finiteness assumptions, the structure map $\Bl_ZX \to X \times [\A^1_{\CCC}/\G_{m,\CCC}]$ is nonconnectively affine.
\end{reptheorem}	

This result uses Assumption \ref{Ass:Bcover}, which roughly says that covers of the form $\{ \Spec A_f \to \Spec A \mid f \in I \}$ behave as expected.

\subsection*{Synopsis}
\S \ref{Sec:Algebraic_contexts} is a review of the theory of derived algebraic contexts found in \cite{RaksitHKR} that is relevant to the present work. This story is globalized in \S \ref{Sec:Geometric_contexts}, which leads to the geometric framework used for generalized blow-ups and normal bundles in \S \ref{Sec:Deformation_spaces_normal_bundles_and_blowups}. The paper ends in \S \ref{Sec:Applications_and_examples} with an exposition of our constructions in the two main examples of a geometric context: derived algebraic geometry and derived analytic geometry. The remainder of this introduction provides more details on the key definitions used in the paper, a short overview of derived analytic geometry, and an explanation of the benefits of nonconnective geometry in the construction of derived blow-ups.

\subsection*{Derived algebraic contexts and graded modules}
A derived algebraic context, simply called an \emph{algebraic context},  is a symmetric monoidal, stable $\infty$-category $\CCC$ endowed with a t-structure, together with a symmetric monoidal subcategory $\CCC^0 \subset \CCC^\heartsuit$, satisfying certain compatibility conditions. The objects of $\CCC$---called \emph{$\CCC$-modules}---play the role of modules (in the derived sense) in the context under consideration. For example, taking $\CCC = \Mod_\Z$ leads to derived algebraic geometry, where $\Mod_\Z$ is the derived $\infty$-category of $\Z$-modules (unbounded and with homological indexing notation).

To produce the algebras for $\CCC$, one first takes the left derived functor $\LSym_{\CCC_{\geq 0}}$ of the functor $\Sym_{\CCC^\heartsuit} : \CCC^0 \to \CCC_{\geq 0}$, where $\Sym_{\CCC^\heartsuit}$ is the classical symmetric algebra functor with respect to the symmetric monoidal structure on $\CCC^\heartsuit$. A key result in the theory of derived algebraic contexts is that $\LSym_{\CCC_{\geq 0}}$ then extends to a monad $\LSym_\CCC : \CCC \to \CCC$. This produces the $\infty$-category ${\DAlg_{\CCC}}$ of algebra objects for $\LSym_\CCC$, simply called \emph{$\CCC$-algebras}. Observe that the underlying module of a given $\CCC$-algebra is typically not connective. We write ${\DAlg_{\CCC_{\geq 0}}}$ for the subcategory of algebras that do have connective underlying module.

In \S \ref{Sec:Algebraic_contexts}, we also develop the $\M$-graded version $\CCC^\M$ of an algebraic context $\CCC$, for any discrete commutative monoid $\M$. We think of the objects in $\CCC^\M$ as $\M$-graded $\CCC$-modules. Such an object is not hard to define, since it is just a set of objects $N_a \in \CCC$ of \emph{elements of degree $a$}, indexed by $a \in \M$.\footnote{There is a potential clash of notation and terminology here with underlying 1-categorical, simplicial objects, at least when $N$ is connective. Since in the present work we never consider such 1-categorical models this should not lead to any confusion.} However, as in classical algebra, the commutative monoid structure on $\M$ comes into play when defining the appropriate tensor product in $\CCC^\M$. We do this through Day convolution, since an explicit description is in general infeasible in the derived setting. The prime examples that we are interested in are $\M =\N$ and $\M = \Z$, since this leads to $\N$-graded and $\Z$-graded $\CCC$-algebras respectively, which we use to construct the blow-up from the Rees algebra.

\subsection*{Geometric contexts}
In \S \ref{Sec:Geometric_contexts} we propose a perspective on geometry relative to a fixed algebraic context $\CCC$, meaning that $\DAlg_{\CCC}^\op$ will be the category of (nonconnective) affines. The datum of a \textit{geometric context} will thus be an algebraic context $\CCC$, together with a Grothendieck topology $J$ on ${\Aff_{\CCC}} \coloneqq \DAlg_{\CCC}^\op$. Since we are ultimately interested in the connective part of the theory, we impose certain conditions on $J$ such that it restricts to a topology on ${\Aff_{\CCC_{\geq 0}}} \coloneqq \DAlg_{\CCC_{\geq 0}}^\op$, and such that the induced functor 
\[
	G \colon {\St_{\CCC_{\geq 0}}} \to {\St_{\CCC}}
\]
from stacks on ${\Aff_{\CCC_{\geq 0}}}$ to stacks on ${\Aff_{\CCC}}$ is fully faithful and preserves limits, colimits and affine objects.

After this, we introduce some basic notions for a given geometric context, such as quasi-coherent modules. We also globalize graded algebraic contexts, which leads to a comparison between quasi-coherent, $\Z$-graded algebras over a given base $X$, and affine morphisms $Y \to X$ endowed with a (fibre-wise) $\G_{m,\CCC}$-action on $Y$, where $\G_{m,\CCC} \coloneqq \Spec (\mathbbm{1}_{\CCC}[\Z])$. This enables us to define the projective spectrum of a given $\N$-graded, quasi-coherent $\OO_X$-algebra $\BB$ as 
\[
	\Proj \BB \coloneqq [((\Spec^\nc \BB) \setminus V(\BB_+)) / \G_{m,\CCC}],
\]
where $V(\BB_+) \coloneqq \Spec^\nc(\BB_0)$, with $\BB_0$  the part of $\BB$ of degree 0. 

\subsection*{Derived analytic geometry}
The first applications of generalized derived blow-ups outside derived algebraic geometry are expected in derived analytic geometry. The latter is a fairly recent field, which has (at least) two main approaches. The first is work done by Porta in \cite{PortaDerivedI, PortaDerivedII}, \cite{PortaRiemannHilbert} on derived complex geometry, going back to \cite{LurieDAGIX}, and work done by Porta--Yue Yu in \cite{PortaNonArchimedean,PortaRepresentability}, which is mostly in derived non-Archimedean analytic geometry. The second main approach is carried out in
\cite{BB,BassatNonArchimedean,BBK,BBK2,BassatAnalytification,BeKr,FDA}. We follow the latter.

In \cite{BassatNonArchimedean} it is shown how to cast non-Archimedean analytic geometry as a geometry relative to a homotopical algebraic context, as in \cite{ToenHAGII}. This, together with a study of analytification and homotopy epimorphisms in \cite{BassatAnalytification}, is the groundwork for \cite{FDA}, where the authors construct a (huge) derived algebraic context for derived analytic geometry. The construction depends on some parameters, and varying these produce derived $\infty$-categories of Ind-Banach $R$-modules, or of bornological $R$-modules,  over a fixed Banach ring $R$, which can be Archimedean or non-Archimedean. Imposing extra conditions or structure on the derived rings in such a context produces the basic affine building blocks for derived versions of rigid geometry \cite{BassatNonArchimedean}, Berkovich geometry, complex analytic geometry, dagger analytic geometry \cite{BB} and Stein geometry (complex or non-Archimedean) \cite{BBK}, amongst others. The present work thus provides derived blow-ups in all of these settings.

\subsection*{Benefits of the nonconnective setting}
Besides derived analytic geometry, the theory of generalized derived blow-ups is also of value to derived algebraic geometry itself. Indeed, the constructions and arguments are significantly simpler than the ones used in \cite{HekkingGraded}. 
The key difference with the previous approach is that we are now able to pass through nonconnective algebras, even though connective algebras are our main interest. Since simplicial commutative $k$-algebras are generally different from connective $\Einfty$-$k$-algebras, the latter are not suitable for this. In contrast, connective derived $k$-algebras do coincide with  simplicial $k$-algebras.

 In ordinary derived algebraic geometry the derived  Rees algebra only makes sense for closed immersions of derived schemes. For example, it can be shown that the deformation space $\sD_{\A^1/\{0\}}$ is equivalent to $B\G_a$, which is not connectively affine. 
 
 At the time of this writing, it was not realized that the $\infty$-category of $A$-algebras $B$ with $A \to B$ surjective on $\pi_0$ is compactly generated, which essentially follows from the theory of Smith ideals \cite{MaoRevisiting}. Without this, it is generally hard to define a functor out of this $\infty$-category.
 
 On the other hand, the $\infty$-category of $A$-algebras $B$ is clearly generated under sifted colimits by finitely generated, polynomial $A$-algebras, which means that it is much easier to define a Rees algebra functor on this category. In order to do this, we need to relax the notion of affine objects, as the example $\sD_{\A^1/\{0\}} \simeq B\G_a$ shows. This example also suggests how to relax this notion: since $\G_a \simeq * \times_{B\G_a} *$, we expect $\OO_{\G_a} \simeq k \otimes_{\OO_{B\G_a}} k$, and thus $\OO_{B\G_a}$ to be a symmetric algebra on $k[-1]$. This leads to our solution of taking nonconnective derived rings as the target category of the Rees algebra functor.

 A by-product of this approach is a derived  Rees algebra for any $A$-algebra $B$, and thus a derived blow-up of any derived stack $X$ in a relatively affine center $Z \to X$. On first encounter, this seems like a somewhat bizarre idea. Of course, since the derived  Rees algebra of an $A$-algebra $B$ will in general be nonconnective unless $A \to B$ is surjective on $\pi_0$, the derived blow-up of $X$ in $Z$ will in general not be an ordinary derived algebro-geometric object, unless $Z \to X$ is a closed immersion. It is expected that the negative connectivity of the derived Rees algebra is related to the stackiness of the corresponding derived blow-up.   It will be interesting to explore this direction further, and perhaps derived blow-ups in centers other than closed immersions are more natural in a context other than derived algebraic geometry.

\subsection*{Future work}
A natural question to ask is whether Theorem~\ref{Thm:Rees-affine} can be further generalized to allow also nonconnectively affine morphisms $f \colon X \to Y$ as input. In the affine case, the question becomes whether one can construct the extended Rees algebra of any morphism $A \to B$ of (possibly nonconnective) derived rings. This question has been answered positively in upcoming work by Gardner and the second named author in \cite{Ideals}. The latter uses the theory of Smith ideals in derived geometry, thereby completing the picture familiar from ordinary algebraic geometry by rephrasing extended Rees algebras in terms of adic filtrations. 

Theorem~\ref{Thm:Rees-affine} (and its generalization) is used in \cite{HekHKR} to  give a global version of the HKR filtration on Hochschild homology. Geometrically, this entails the construction of a filtered loop stack $\sLfil(X)$ over $[\A^1_\CCC/\G_{m,\CCC}]$ which is generically the loop stack $\sL(X)$ and has special fiber the odd tangent bundle $\T X[-1] = \V(\LL_X[1])$. The geometric statement can already be deduced directly from Theorem~\ref{Thm:Deformation} applied to the second diagonal $X \to \sL(X)$. Theorem~\ref{Thm:Rees-affine} is then used to deduce the necessary base change properties in the deformation to the normal bundle diagram, to conclude the existence of a filtration on the global sections of the loop stack, with associated graded the underlying module of the derived de Rham complex. Further, a reformulation of the geometric picture in terms of a global filtered circle is given, with the aim to give additional algebraic structure on the filtered loop stack that recovers the circle action on the generic fiber and induces a mixed graded structure on the derived de Rham complex---thus globalizing \cite[Thm.~6.2.6]{RaksitHKR}.

Recently a new approach to derived analytic geometry based on condensed mathematics has been proposed by Clausen and Scholze. This does not fit our framework where geometric objects are locally modeled on derived rings in derived algebraic contexts, essentially because the compact projective generators of the category of condensed abelian groups do not form a symmetric monoidal subcategory. In the forthcoming work \cite{LocCont}, joint with Kelly, we propose a generalization of a derived algebraic context to a so-called \emph{localized context}, and develop a theory of derived analytic rings over such a localized context. This allow us to view the liquid, solid, or gaseous types of analytic geometry of Clausen--Scholze, as well as Bornological or Banach types, as different examples of the same thing. We will look at various simple Grothendieck topologies in these contexts and prove their basic properties. This leads to geometric objects which are locally the spectra of derived analytic rings over such a localized context, which allows us to define formal smoothness and \'{e}taleness via the cotangent complex. We expect to give an HKR-type result also in this more general setting, based on the deformation to the normal bundle. Ultimately, there should be a well-behaved notion of algebraicity that leads to  a more refined version of Theorem \ref{Thm:Rees-affine}, similar to \cite{Weil}. We expect that there will be a strong link between nonconnectivity on the algebra side and algebraicity on the geometry side. In the future, we also intend to examine notions of properness as well as \'{e}taleness beyond formal \'{e}taleness.

\subsection*{Acknowledgments}
The authors thank David Rydh for detailed and valuable feedback on previous versions of this paper. It should also be stressed that the work at hand is a generalization of \cite{Weil}, where most of the key ideas in this paper have their origin. The paper is a result of two research visits of Hekking to visit Ben-Bassat in Haifa, partially supported by the Signeul foundation and by the CMSC of the BSF (2021717).

The first named author would like to acknowledge the helpful conversations and support of Tony Pantev and his NSF grant (includes the NSF-BSF program joint with the first named author) corresponding to NSF award DMS-2200914 with project Title: NSF-BSF: Derived and quantum corrected structures on arithmetic and geometric moduli.

The second named author was supported by the G\"oran Gustafsson Foundation, 
 the G\"oran Gustafsson Foundation for Research in Natural Sciences and Medicine, 
by the Knut and Alice Wallenberg Foundation (project number 2021.0287), and by the DFG via the SFB 1085: Higher Invariants (project number 224262486).

 \subsection*{Notation \& conventions} 
 This work is written in the language of $\infty$-categories, with \cite{LurieHTT, LurieHA} as main sources. Since we will rarely consider 1-categories, we agree that all categorical language is implicitly $\infty$-categorical from here on, unless otherwise stated. Likewise, all algebro-geometric language is implicitly derived, meaning that a \emph{scheme} is a derived scheme etc.
 
 Familiarity with derived algebraic geometry is strictly speaking only necessary to appreciate some of the examples. However, all constructions will be generalizations from derived algebraic geometry, which means that some background knowledge could be helpful. We will mostly leave this out, and instead refer the reader to the standard literature on the subject, such as \cite{ToenHAGI, ToenHAGII, GaitsgoryStudy, LurieSpectral}. Summaries that will certainly suffice can be found in \cite{Weil} and in \cite{DRS}. Likewise, we refer the reader to \cite{HekkingGraded, Weil} for background on blow-ups and the deformation to the normal bundle in derived algebraic geometry.

 We will use Grothendieck universes $\mathfrak{U}_0 \in \mathfrak{U}_1 \in \mathfrak{U}_2 \in \cdots$ in our set-theoretic background. This allows us to  ignore set-theoretic issues in the main text, with the understanding that appropriate smallness assumptions are made implicitly. For example, when talking about colimits in a $\mathfrak{U}_{i+1}$-small category, we will mean $\mathfrak{U}_{i}$-small colimits. We provide a few separate remarks throughout the text on how to deal with this, and refer to the appendix for more details.
 
 We will use the following terminology.
 \begin{itemize}
 	\item $\PrL$ is the category of presentable categories, with colimit-preserving functors between them. Recall that all categories are assumed $\infty$-categories unless otherwise stated.
 	\item For a category $\CCC$ and a morphism $f:A \to B$ in $\CCC$, we write $\CCC_{A/B}$ for the \emph{double slice category} $(\CCC_{A/})_{f}$, so that an object in $\CCC_{A/B}$ is a factorization $A \to X \to B$ of $f$.
    \item For objects $x,y$ in a category $\CCC$ we write $\CCC(x,y)$ for the mapping space of morphisms $x \to y$ in $\CCC$.
 	\item The limit of a cosimplicial diagram is called its \textit{totalization}.
 	\item The colimit of a simplicial diagram is called its \emph{geometric realization}.
 	\item $\Space$ is the category of spaces, and $\Sp$ is the category of spectra.
 	\item $\Fun(\CCC,\DDD)$ is the category of functors $\CCC \to \DDD$, and $\PPP(\EEE) \coloneqq \Fun(\EEE^\op,\Space)$.
 \end{itemize}

Formally, a \emph{topology} will be a Grothendieck topology. However, when we need to be explicit, we will do so in terms of a Grothendieck pretopology, and silently pass to the topology it generates.

\section{The development and applications of derived blow-ups}
\label{Sec:The_development}
\subsection*{History of derived blow-ups}
The first appearance of derived blow-ups is in the proof of Weibel's conjecture in \cite{Kerz}, which says that a Noetherian scheme of dimension $d$ has no non-trivial $K$-groups below degree $-d$. The authors define the derived blow-up of a classical scheme $X$ in a center $Z = V(f_1,\dots,f_n)$ as the derived pullback of the classical blow-up of $\A^n$ in $\{0\}$ along the map $X \to \A^n$ induced by $f_1,\dots,f_n$.  With further applications in $K$-theory and in virtual intersection theory in mind (see, e.g., \cite{KhanVirtualofStacks, KhanBlowCDH, AnnalaBivariant}), this construction is generalized to blow-ups of derived schemes in quasi-smooth centers in \cite{KhanVirtual}. This suggests a generalization to allow arbitrary derived centers, which is carried out by the second named author in \cite{HekkingGraded}, and further developed and generalized to derived algebraic stacks by Khan, Rydh and the second named author in \cite{Weil}. 

\subsection*{Derived reduction of stabilizers}
One of the major goals in allowing derived blow-ups in arbitrary centers is a derived version of the reduction of stabilizers of classical Artin stacks. Two independent paths have led to this. First, a reduction of stabilizers algorithm for classical Artin stacks with good moduli spaces is given in \cite{EdidinCanon}. Since (saturated) blow-ups are a key ingredient, the main application mentioned in \cite{Weil}, as formulated by Rydh, is a derived version of this algorithm. 

Second, Kiem--Li--Savvas developed an independent approach to the reduction of stabilizers for classical DM-stacks in \cite{KiemGeneralized}, generalized to classical Artin stack with good moduli spaces in \cite{SavvasGeneralizedDT} by Savvas, with applications in generalized Donaldson--Thomas theory and virtual intersection theory in mind. The authors use a construction they call the intrinsic blow-up, which they expected to be the classical shadow of a derived construction. 
 
The goal of a derived stabilizer reduction is partially met by Rydh, Savvas and the second named author in \cite{DRS}, where a derived reduction of stabilizers algorithm is given under certain finiteness assumptions and over the base scheme $\Spec \C$, using a comparison between intrinsic and derived blow-ups. The work in \cite{DRS} has applications in (derived) shifted symplectic geometry and enumerative invariants, such as generalized Donaldson--Thomas theory, and generalized Vafa--Witten invariants. A crucial assumption made in \cite{DRS} is the existence of a good moduli space on the level of underlying classical objects. A fully derived picture of good moduli spaces is given in \cite{GMS}, where it is shown that a derived 1-Artin stack $X$ admits a derived good moduli space if and only if $X_\cl$ admits a classical good moduli space.

\subsection*{Deformation to the normal bundle}
The deformation to the normal bundle plays a prominent role in classical intersection theory, as in \cite{FultonIntersection}. A derived version was first defined for quasi-smooth maps of derived schemes in \cite{KhanVirtual}, then generalized to allow arbitrary morphisms of derived algebraic stacks in \cite{Weil}. The derived version has found applications in algebraic $K$-theory and virtual intersection theory in \cite{KhanKG}, in derived algebraic cobordism in  \cite{AnnalaPrecobordism}, and in microlocal sheaf theory in \cite{SchefersMicrolocal}.

Independently of the present work there are (derived) deformation to the normal bundle results in various parts of (derived) analytic geometry. For example, in \cite{JorgeSpreading} such a result for derived analytic spaces is obtained and then used in spreading out Hodge filtrations. The construction is similar to the one found \cite{GaitsgoryStudyII} for the formal algebraic setting, which is independent from \cite{Weil} and involves gluing. A non-derived example can be found in \cite{NiDeformation}, where the author defines a deformation to the normal cone in Arakelov geometry, later used in \cite{NiHilbertSamuel} to prove an arithmetic Hilbert--Samuel type theorem. 

The advantages of the approach in the present work are as follows: it proposes a unification of  previous constructions, it is robust in the sense that it does not use any glueing, and it is general in the sense that it provides a deformation to the normal bundle for any morphism which allows a cotangent complex (in a given geometric context). Of course, in each example one still has to check whether it coincides with the known formalism for a deformation to the normal bundle. Although interesting, these questions are not pursued in the work at hand.

\section{Algebraic contexts}
\label{Sec:Algebraic_contexts}
\subsection*{Recollection on algebraic contexts}
We will use the theory of derived algebraic contexts as found in \cite{RaksitHKR}, simply called algebraic contexts in the present work. The associated algebra objects will be the affine building blocks of our geometry later on. Let us first summarize the parts of \cite{RaksitHKR} relevant to us. For a symmetric monoidal category $\CCC$ we sometimes write the tensor product $(-)\otimes_\CCC(-)$ simply as $(-)\otimes(-)$.

\begin{Ass}
	We assume that all presentable, symmetric monoidal categories are commutative algebra objects in $(\PrL,\otimes)$, where $\otimes$ is the Lurie tensor product. Equivalently, for each presentable, symmetric monoidal category $\EEE$, we assume that the tensor product in $\EEE$ commutes with colimits in each variable separately, see \cite[\S 4.8.1]{LurieHA}. 
\end{Ass}

\begin{Def}
	Let $\CCC$ be a stable, presentable, symmetric monoidal category. A t-structure on $\CCC$ is \textit{compatible} if
	\begin{itemize}
		\item $\CCC_{\leq 0}$ is closed under filtered colimits,
		\item the unit object lies in $\CCC_{\geq 0}$, and
		\item $\CCC_{\geq 0}$ is closed under $\otimes$.
	\end{itemize}
\end{Def}

Let $\CCC$ be a stable category with t-structure such that $\CCC_{\leq 0}$ is closed under filtered colimits. Then the truncation functor $\tau_{\geq 0} : \CCC \to \CCC_{\geq 0}$ commutes with filtered colimits. Indeed, let $M_\alpha$ be a filtered system in $\CCC$, with colimit $M$. Then, since $\CCC_{\leq 0}$ is closed under filtered colimits, it holds that $\colim \tau_{\leq -1} M_\alpha \in \CCC_{\leq -1}$. Since $\tau_{\leq -1}$ is left adjoint to the inclusion $\CCC_{\leq -1} \subset \CCC$, it moreover commutes with all colimits. Therefore, the exact sequences
\[
	\tau_{\geq 0} M_\alpha \to M_\alpha \to \tau_{\leq -1} M_\alpha
\]
induces an exact sequence
\[
	\colim \tau_{\geq 0} M_\alpha \to M \to \tau_{\leq -1} M
\]
which shows that $\colim \tau_{\geq 0} M_\alpha \simeq \tau_{\geq 0} M$.

Recall that an object $X$ in a category $\EEE$ is \textit{projective} if $\EEE(X,-)$ commutes with geometric realizations, \textit{compact} if $\EEE(X,-)$ commutes with filtered colimits, and \textit{compact projective} if $\EEE(X,-)$ commutes with sifted colimits. Then $X$ is compact projective if and only if it is compact and projective, see \cite[Cor.\ 5.5.8.17]{LurieHTT}. 

\begin{Rem}
	If $\EEE$ is the derived category $D_{\geq 0}(\AAA)$ of an abelian category $\AAA$, then $P \in \AAA$ is projective in $\EEE$ (via the canonical map $\AAA \to \EEE$) if and only if $P$ is projective in the classical sense, i.e., if $\pi_0\AAA(P,-)$ is exact, see \cite[Exm.\ 5.5.8.21]{LurieHTT}.
\end{Rem}

Let $\EEE$ be a presentable category, let $G$ be a finite group, and let $BG$ be the classifying groupoid of $G$, which can be constructed as the nerve of $G$. Then an action of $G$ on an object $M \in \EEE$ is a functor $BG \to \EEE$ which sends the unique point in $BG$ to $M$. This gives the category $G\EEE \coloneqq \Fun(BG,\EEE)$ of $G$-objects in $\EEE$.\footnote{In \S \ref{Par:Equivariant_C_geometry} we will encounter actions of group objects internal to $\EEE$. These two notions are unrelated in the present work, in the sense that they are used in completely different parts. We therefore do not dwell on a comparison between the two.} Composition with the projection $BG \to *$ gives the functor $\EEE \to G\EEE$ which endows an object $M \in \EEE$ with trivial $G$-action. The left adjoint is written 
\[
(-)_G \colon G\EEE \to \EEE,
\]
and it sends $M \in G\EEE$ to the \emph{orbits} $M_G$. For example, when $\EEE$ is the category of spaces, then for a $G$-space $X$ it holds that $X_G$ is the space of homotopy orbits.

Now suppose that $\EEE$ is symmetric monoidal. Let $\Sigma_n$ be the symmetric group on $n$ elements. Then the functor
\begin{align*}
	\Sym_\EEE \colon \EEE &\to \EEE \\ M &\mapsto \bigoplus_{n \in \N} (M^{\otimes n})_{\Sigma_n}
\end{align*}
carries the structure of a monad (\cite[Constr.\ 4.1.1]{RaksitHKR}), and thus induces an adjunction
\begin{center}
    \begin{tikzcd}
        \EEE \arrow[r, shift left, "\Sym_\EEE"] & \CAlg_{\EEE}, \arrow[l, shift left, "V_\EEE"]
    \end{tikzcd}
\end{center}
where $\CAlg_{\EEE}$ are the algebra objects for the monad $\Sym_\EEE$, and $V_\EEE$ is the forgetful functor. Note that $\CAlg_{\EEE}$ is also the category of $\Einfty$-algebra objects in $\EEE$ with respect to the symmetric monoidal structure.

\begin{Def}
	An \textit{algebraic context} is a stable, presentable, symmetric monoidal category $\CCC$ endowed with a t-structure, together with a full subcategory $\CCC^0\subset \CCC^\heartsuit$, such that:
	\begin{itemize}
		\item the t-structure is compatible and right-complete,
		\item $\CCC^0$ is a symmetric monoidal subcategory of $\CCC$, and is closed under $\CCC^\heartsuit$-symmetric powers, meaning that for $X \in \CCC^0$ and $n\geq 0$ it holds $\pi_0(X^{\otimes n}_{\Sigma_n}) \in \CCC^0$, and
		\item $\CCC^0$ is closed under finite coproducts in $\CCC$, and the objects form a set of compact projective generators of $\CCC_{\geq 0}$.
	\end{itemize}
\end{Def}

\begin{Rem}
	To deal with set-theoretic issues, we assume that $\CCC$ is $1$-small. 
\end{Rem}

For $T$ a category with finite coproducts, we write $\PPP_\Sigma(T)$ for the full subcategory of $\Fun(T^\op, \Space)$ consisting of presheaves that send finite coproducts in $T$ to products in $\Space$.\footnote{This construction is called the category of \textit{homotopy varieties}, or the \textit{nonabelian derived category}, or the \textit{animation}.}  Yoneda induces a fully faithful functor $T \to \PPP_\Sigma(T)$. If $T$ is symmetric monoidal, then $\PPP_\Sigma(T)$ inherits a symmetric monoidal structure through Day-convolution for which the tensor product commutes with colimits in each variable separately and the Yoneda functor is symmetric monoidal, see \cite[Prop.\ 4.8.1.10, Rem.\ 4.8.1.13]{LurieHA}.

For $\EEE$ a presentable, symmetric monoidal category, the stabilization $\Sp(\EEE)$ can be computed via the tensor  product in $\PrL$ as $\Sp(\EEE) \simeq \Sp \otimes \EEE$, and is thus a commutative algebra object in the category of stable, presentable symmetric monoidal categories, see \cite[\S 4.8.2]{LurieHA}

\begin{Exm}
	For $T$ the category of finitely generated, free $\Z$-modules, it holds $\PPP_\Sigma(T) \simeq (\Mod_\Z)_{\geq 0}$. By \cite[Cor.\ 4.8.1.12]{LurieHA}, the symmetric monoidal structure on $\PPP_\Sigma(T)$ induced by the tensor product on $T$ coincides with the existing symmetric monoidal structure on $(\Mod_\Z)_{\geq 0}$. Likewise, we have $\Sp(\PPP_\Sigma(T)) \simeq \Mod_\Z$, and the symmetric monoidal structure on $\Sp(\PPP_\Sigma(T))$ induced by the one on $\PPP_\Sigma(T)$ coincides with the symmetric monoidal structure on $\Mod_\Z$ induced by the standard one on $\Sp$.
\end{Exm}

From here on, let $\CCC$ be an algebraic context. Then $\CCC^\heartsuit$ is presentable, and has a symmetric monoidal structure given by $X \otimes_{\CCC^\heartsuit} Y \coloneqq \pi_0(X \otimes_\CCC Y)$. In particular, we have a monad $\Sym_{\CCC^\heartsuit}$ on $\CCC^\heartsuit$. 
\begin{Def}
	Let $\LSym_{\CCC_{\geq 0}}$ be the left derived functor of the functor $\CCC^0 \to \CCC_{\geq 0}$ given by $X \mapsto \Sym_{\CCC^\heartsuit} X$.
\end{Def}
In \cite[Constr.\ 4.2.20]{RaksitHKR}, it is shown that $\LSym_{\CCC_{\geq 0}}$ extends to a sifted colimit-preserving functor with monad structure
\[
	\LSym_\CCC \colon \CCC \to \CCC.
\]

\begin{Rem}
	Observe that $\Sym_{\CCC_{\geq 0}}$ and $\LSym_{\CCC}$ both restrict to a sifted colimit-preserving functor $\CCC_{\geq 0} \to \CCC_{\geq 0}$. The difference between these two functors is therefore completely determined by their difference on $\CCC^0$, which in turn comes about from the fact that, in general, $X^{\otimes n}_{\Sigma n}$ is not discrete for $X \in \CCC^0$. For example, in the context $\CCC = \Mod_\Z$, it holds that $\pi_k(\Z^{\otimes n}_{\Sigma_n})$ is  group homology $H_k(\Sigma_n;\Z)$ (with trivial $\Sigma_n$-action on $\Z$). 
\end{Rem}

\begin{Def}
	We write ${\DAlg_{\CCC}}$ for the algebra objects over the monad $\LSym_\CCC$, simply called \textit{$\CCC$-algebras}. The full subcategory of ${\DAlg_{\CCC}}$ spanned by those algebras for which the underlying object in $\CCC$ is connective (resp.\
	 discrete), is written ${\DAlg_{\CCC_{\geq 0}}}$ (resp.\
	  $\DAlg_{\CCC^\heartsuit}$). We also write $\DAlg_{\CCC^0}$ for the full subcategory $\{\LSym_\CCC(M) \mid M \in \CCC^0\} \subset {\DAlg_{\CCC}}$.
\end{Def}

\begin{Prop}
	\label{Prop:DACsum}
	Let $\CCC$ be an algebraic context.
	\begin{enumerate}
		\item The functor $\PPP_\Sigma(\CCC^0) \to \CCC_{\geq 0}$ induced by the inclusion $\CCC^0 \to \CCC_{\geq 0}$ is a symmetric monoidal equivalence. 
		\item The category $\CCC$ is the stabilization $\Sp(\CCC_{\geq 0})$ of $\CCC_{\geq 0}$, and the symmetric monoidal structure on $\CCC$ coincides with the one inherited from $\CCC_{\geq 0}$.
		\item The category $\CCC^\heartsuit$ is the category of finite product-preserving functors $(\CCC^0)^\op \to \Set$.
		\item It holds ${\DAlg_{\CCC_{\geq 0}}} \simeq \PPP_\Sigma(\DAlg_{\CCC^0})$.
		\item Likewise, it holds $\DAlg_{\CCC^\heartsuit} \simeq \CAlg_{\CCC^\heartsuit}$.
	\end{enumerate}
\end{Prop}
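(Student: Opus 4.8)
The plan is to bootstrap everything from the defining property of an algebraic context: that $\CCC^0$ is a set of compact projective generators of $\CCC_{\geq 0}$, closed under finite coproducts and symmetric powers. For (i), the universal property of $\PPP_\Sigma(\CCC^0)$ (the free sifted-cocompletion) produces a canonical colimit-preserving functor $\PPP_\Sigma(\CCC^0) \to \CCC_{\geq 0}$ extending the inclusion $\CCC^0 \hookrightarrow \CCC_{\geq 0}$; since the objects of $\CCC^0$ are compact projective generators of $\CCC_{\geq 0}$, this functor is an equivalence by the standard recognition criterion for $\PPP_\Sigma$ (e.g.\ \cite[Prop.\ 5.5.8.22, Cor.\ 5.5.8.25]{LurieHTT}). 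The symmetric monoidal enhancement comes from the fact that $\CCC^0$ is a symmetric monoidal subcategory and Day convolution on $\PPP_\Sigma(\CCC^0)$ is characterized by being the unique presentably symmetric monoidal structure for which the Yoneda embedding is symmetric monoidal (\cite[Prop.\ 4.8.1.10]{LurieHA}); since $(-)\otimes_{\CCC_{\geq 0}}(-)$ preserves colimits in each variable and restricts to the given monoidal structure on $\CCC^0$, the equivalence of (i) upgrades to a symmetric monoidal one.

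For (ii): the t-structure on $\CCC$ is compatible and right-complete, so $\CCC \simeq \Sp(\CCC_{\geq 0})$ follows from the standard identification of a right-complete stable category with a t-structure as the stabilization of its connective part (one checks the suspension/delooping on $\CCC_{\geq 0}$ is the one coming from the t-structure, using right-completeness to reconstruct arbitrary objects from their truncations). For the symmetric monoidal statement: $\Sp(\CCC_{\geq 0}) \simeq \Sp \otimes \CCC_{\geq 0}$ in $\Pr^L$, and by the universal property of stabilization the symmetric monoidal structure on $\CCC_{\geq 0}$ extends uniquely to a presentably symmetric monoidal structure on $\Sp(\CCC_{\geq 0})$ (\cite[\S 4.8.2]{LurieHA}); the given structure on $\CCC$ is one such extension of (the restriction to $\CCC_{\geq 0}$ of) itself — using that $\CCC_{\geq 0}$ is closed under $\otimes$ — hence agrees with it by uniqueness. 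Part (iii) is then a formality: $\CCC^\heartsuit$ is the heart of $\CCC \simeq \Sp(\PPP_\Sigma(\CCC^0))$, and taking $\pi_0$ identifies this with finite-product-preserving functors $(\CCC^0)^\op \to \Set$ — concretely, $\PPP_\Sigma(\CCC^0)^\heartsuit$ is the $1$-categorical nonabelian derived category, i.e.\ models of the Lawvere-type theory $\CCC^0$ in $\Set$; one just has to match the monoidal structures via $X \otimes_{\CCC^\heartsuit} Y = \pi_0(X \otimes_\CCC Y)$.

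For (iv): the monad $\LSym_\CCC$ restricts to a sifted-colimit-preserving monad on $\CCC_{\geq 0}$ (Raksit), and by (i) the category $\CCC_{\geq 0} \simeq \PPP_\Sigma(\CCC^0)$ is generated under sifted colimits by $\CCC^0$; a monad on $\PPP_\Sigma(T)$ that preserves sifted colimits has its algebras computed as $\PPP_\Sigma$ of its algebras on $T$ — more precisely $\DAlg_{\CCC^0} = \{\LSym_\CCC M : M \in \CCC^0\}$ is closed under finite coproducts (these are computed by $\LSym_\CCC(M \oplus N) \simeq \LSym_\CCC M \otimes \LSym_\CCC N$, using the monoidal monad structure and that $\CCC^0$ is closed under finite coproducts and symmetric powers, so the coproduct stays in $\DAlg_{\CCC^0}$), it consists of compact projective generators of $\DAlg_{\CCC_{\geq 0}}$ (compactness and projectivity are inherited from $\CCC^0$ along the left adjoint $\LSym_\CCC$, and generation follows since every connective $\CCC$-algebra is a sifted colimit of free ones on objects of $\CCC^0$), and one concludes $\DAlg_{\CCC_{\geq 0}} \simeq \PPP_\Sigma(\DAlg_{\CCC^0})$ by the same recognition criterion as in (i). Finally (v) follows because on $\CCC^\heartsuit$ the symmetric powers $X^{\otimes n}_{\Sigma_n}$ are replaced by their $\pi_0$ by definition of $\Sym_{\CCC^\heartsuit}$, so $\LSym_\CCC$ and $\Sym_{\CCC^\heartsuit}$ have the same $0$-truncation, whence the same discrete algebras; and $\Sym_{\CCC^\heartsuit}$-algebras are by construction the $\Einfty$-algebras $\CAlg_{\CCC^\heartsuit}$.

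The main obstacle I expect is not any single step but the bookkeeping of \emph{monoidal} compatibility: each equivalence in (i)–(iv) is easy as an equivalence of plain categories via the compact-projective-generator recognition theorem, but upgrading to symmetric monoidal equivalences (and checking the two a priori different monoidal structures — the ``intrinsic'' one and the ``induced-via-$\PPP_\Sigma$-or-$\Sp$'' one — genuinely coincide) requires invoking the uniqueness clauses in Lurie's Day-convolution and stabilization results and verifying their hypotheses (colimit-preservation in each variable, the unit in $\CCC_{\geq 0}$, closure of $\CCC_{\geq 0}$ under $\otimes$). For (iv) specifically, the delicate point is confirming that $\DAlg_{\CCC^0}$ is closed under finite coproducts inside $\DAlg_\CCC$ — this is exactly where ``$\CCC^0$ closed under $\CCC^\heartsuit$-symmetric powers'' is used, to keep $\LSym$ of the generators landing back among the chosen generators.
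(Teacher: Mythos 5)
Your proposal is correct in substance, but it is worth knowing that the paper does not prove this proposition at all: its entire proof is a citation, namely \cite[Rem.\ 4.2.2]{RaksitHKR} for (1)--(3) and \cite[Rem.\ 4.2.24]{RaksitHKR} for (4)--(5), since the statement is just a summary of the Bhatt--Mathew/Raksit theory of derived algebraic contexts being recalled for later use. What you have written is essentially a reconstruction of the arguments underlying those remarks, and the individual steps are the right ones: the compact-projective-generator recognition theorem for (1) and (4), right-completeness plus the universal property of stabilization in $\Pr^L$ for (2), identification of the heart of $\PPP_\Sigma(\CCC^0)$ with $\Set$-valued models for (3), and comparison of $\pi_0\LSym_\CCC$ with $\Sym_{\CCC^\heartsuit}$ for (5). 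Your emphasis on the monoidal bookkeeping (uniqueness clauses in Day convolution and in stabilization) is exactly where the real content lies and is handled correctly. One small correction: in (4), closure of $\DAlg_{\CCC^0}$ under finite coproducts only needs $\LSym_\CCC(M\oplus N)\simeq \LSym_\CCC M\otimes \LSym_\CCC N$ together with closure of $\CCC^0$ under finite coproducts; the axiom that $\CCC^0$ is closed under $\CCC^\heartsuit$-symmetric powers is not what keeps the generators in place (that is tautological, since $\DAlg_{\CCC^0}$ is \emph{defined} as the image of $\CCC^0$ under $\LSym_\CCC$), but is rather used upstream, in \cite[Constr.\ 4.2.20]{RaksitHKR}, to make the left derived functor $\LSym_{\CCC_{\geq 0}}$ of $\Sym_{\CCC^\heartsuit}|_{\CCC^0}$ well defined in the first place.
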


\begin{proof}
	See \cite[Rem.\ 4.2.2]{RaksitHKR} for (1), (2), (3), and \cite[Rem.\ 4.2.24]{RaksitHKR} for (4), (5).
\end{proof}

We write the forgetful functor as $U_\CCC \colon {\DAlg_{\CCC}} \to \CCC$. By definition, we have a monadic adjunction
\begin{center}
    \begin{tikzcd}
        \CCC \arrow[r, shift left, "\LSym_\CCC"] & \DAlg_{\CCC}. \arrow[l, shift left, "U_\CCC"]
    \end{tikzcd}
\end{center}

For $A \in {\DAlg_{\CCC}}$ (resp.\ $A' \in \CAlg_{\CCC}$), we write $\DAlg_A$ (resp.\  $\CAlg_{A'}$) for the slice category $(\DAlg_{\CCC})_{A/}$  (resp.\ $(\CAlg_{\CCC})_{A'/}$). The slice category is symmetric monoidal, with tensor product $R \otimes_A R'$ computed as a pushout in ${\DAlg_{\CCC}}$ (resp.\ in $\CAlg_{\CCC}$).

There is a limit and colimit preserving functor $\Theta\colon{\DAlg_{\CCC}} \to \CAlg_{\CCC}$, which induces a symmetric monoidal functor
\[
	\DAlg_A \to \CAlg_{\Theta A},
\]
for each $A\in {\DAlg_{\CCC}}$. We then write $\Mod_A$ for the category of $\Theta A$-modules in $\CCC$. This gives a monadic adjunction 
\begin{center}
    \begin{tikzcd}
        \Mod_A \arrow[r, shift left, "\LSym_A"] & \DAlg_A, \arrow[l, shift left, "U_A"]
    \end{tikzcd}
\end{center}
where $U_A$ is the forgetful functor (which preserves sifted colimits). If $A$ is connective, then the category $\Mod_A$ inherits a t-structure from $\CCC$, with subcategory of connective objects $\Mod_{A_{\geq 0}}$ those $A$-modules which are connective as $\CCC$-modules.

\begin{Lem}\label{Lem:ZeroDef}
	Let $A \in {\DAlg_{\CCC_{\geq 0}}}$, and write  $\DAlg_{A^0}$ for the full subcategory of $\DAlg_A$ of objects of the form $\LSym_A(A \otimes M)$, where $M \in \CCC^0$. Then the canonical functor $F\colon\PPP_\Sigma(\DAlg_{A^0}) \to \DAlg_{A_{\geq 0}} \coloneqq (\DAlg_{\CCC_{\geq 0}})_{A/}$ is an equivalence.
\end{Lem}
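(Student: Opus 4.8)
The plan is to recognize $\DAlg_{A_{\geq 0}}$ as the nonabelian derived category $\PPP_\Sigma$ of a set of compact projective generators, namely $\DAlg_{A^0}$. Concretely, I would check three things: (i) $\DAlg_{A_{\geq 0}}$ is presentable; (ii) every object of $\DAlg_{A^0}$ is compact projective in $\DAlg_{A_{\geq 0}}$, and $\DAlg_{A^0}$ is essentially small and closed under finite coproducts; (iii) $\DAlg_{A^0}$ generates $\DAlg_{A_{\geq 0}}$ under sifted colimits. Granting these, the universal property of $\PPP_\Sigma$ (\cite[\S 5.5.8]{LurieHTT}) produces $F$ as the unique sifted-colimit-preserving extension of the inclusion, and the usual recognition criterion identifies it as an equivalence. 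For (i): by Proposition \ref{Prop:DACsum}, $\DAlg_{\CCC_{\geq 0}} \simeq \PPP_\Sigma(\DAlg_{\CCC^0})$ is presentable, and $\DAlg_{A_{\geq 0}} = (\DAlg_{\CCC_{\geq 0}})_{A/}$ is a slice of it. It is also useful to record at the outset that $\DAlg_{A_{\geq 0}}$ is closed under sifted colimits inside $\DAlg_A = (\DAlg_\CCC)_{A/}$: sifted (hence weakly contractible) colimits in a slice are created by the projection to the underlying category, the forgetful functor $\DAlg_A \to \CCC$ preserves sifted colimits (it factors as $U_A$ followed by the forgetful functor $\Mod_A \to \CCC$, both sifted-colimit-preserving), and $\CCC_{\geq 0} \subseteq \CCC$ is closed under sifted colimits. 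Hence $U_A$ restricts to a conservative, sifted-colimit-preserving functor $\DAlg_{A_{\geq 0}} \to \Mod_{A_{\geq 0}}$, and all geometric realizations below may be computed in the ambient categories.

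For the compact projectivity in (ii): for $M \in \CCC^0$ and $B \in \DAlg_{A_{\geq 0}}$, full faithfulness of $\DAlg_{A_{\geq 0}} \subseteq \DAlg_A$ together with the adjunction $\LSym_A \dashv U_A$ and the base-change adjunction $(A \otimes -) \dashv (\text{forget})$ on modules gives
\[
	\Map_{\DAlg_{A_{\geq 0}}}(\LSym_A(A \otimes M),\, B) \simeq \Map_{\Mod_A}(A \otimes M,\, U_A B) \simeq \Map_{\CCC}(M,\, WB),
\]
where $W \colon \DAlg_A \to \CCC$ is the composite forgetful functor. Since $W$ preserves sifted colimits and carries $\DAlg_{A_{\geq 0}}$ into $\CCC_{\geq 0}$, and since $M$ is a compact projective generator of $\CCC_{\geq 0}$, the functor $\Map_{\DAlg_{A_{\geq 0}}}(\LSym_A(A\otimes M), -)$ preserves sifted colimits, i.e.\ $\LSym_A(A \otimes M)$ is compact projective. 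For closure under finite coproducts: $\LSym_A$ is a left adjoint, hence preserves coproducts, so $\LSym_A(A \otimes M) \sqcup \LSym_A(A \otimes M') \simeq \LSym_A(A \otimes (M \oplus M'))$ with $M \oplus M' \in \CCC^0$ (as $\CCC^0$ is closed under finite coproducts in $\CCC$); and the initial object of $\DAlg_{A_{\geq 0}}$ is $A \simeq \LSym_A(A \otimes 0) \in \DAlg_{A^0}$.

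For the generation in (iii), I would argue in two steps. First, $\{A \otimes M \mid M \in \CCC^0\}$ generates $\Mod_{A_{\geq 0}}$ under sifted colimits: the forgetful functor $\Mod_{A_{\geq 0}} \to \CCC_{\geq 0}$ is monadic, conservative and sifted-colimit-preserving, with left adjoint $M \mapsto A \otimes M$; since $\CCC_{\geq 0} \simeq \PPP_\Sigma(\CCC^0)$ is generated under sifted colimits by $\CCC^0$, pushing the monadic bar resolution of an arbitrary connective $A$-module through the free functor expresses it as a sifted colimit of objects $A \otimes M$, $M \in \CCC^0$. Second, for $B \in \DAlg_{A_{\geq 0}}$, the monadic bar resolution associated to $\LSym_A \dashv U_A$ exhibits $B$ as the geometric realization of a simplicial object whose $n$-th term is $\LSym_A(N_n)$ with $N_n \coloneqq (U_A\LSym_A)^n U_A B \in \Mod_A$; these $N_n$ are connective because $B$ is and because $U_A$ and $\LSym_A$ preserve connectivity, and the realization is computed in $\DAlg_{A_{\geq 0}}$. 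Writing each $N_n$ as a sifted colimit of free modules $A \otimes M$ as in the first step, and using that $\LSym_A$ preserves sifted colimits, presents $B$ as an iterated — hence, by the stability of the class of sifted colimits, a single — sifted colimit of objects of $\DAlg_{A^0}$.

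Finally I would conclude in the standard way. The universal property of $\PPP_\Sigma$ yields $F \colon \PPP_\Sigma(\DAlg_{A^0}) \to \DAlg_{A_{\geq 0}}$ preserving sifted colimits (and finite coproducts, hence all colimits); its right adjoint $G$ is the restricted-Yoneda functor $B \mapsto \Map_{\DAlg_{A_{\geq 0}}}(\iota(-), B)$, which lands in $\PPP_\Sigma(\DAlg_{A^0})$ and preserves sifted colimits precisely because the objects $\iota(P)$, $P \in \DAlg_{A^0}$, are compact projective. On (the Yoneda image of) $\DAlg_{A^0}$ the unit $\id \to GF$ and counit $FG \to \id$ are equivalences, and since $\DAlg_{A^0}$ generates both categories under sifted colimits while $F$, $G$ and $\id$ all preserve sifted colimits, the unit and counit are equivalences everywhere; hence $F$ is an equivalence. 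I expect the only genuine subtlety — everything else being the routine recognition of a compactly-projectively-generated category, and indeed this is just the relative-over-$A$ version of Proposition \ref{Prop:DACsum}(4) — to be the connectivity bookkeeping: one must check carefully that the monadic bar resolutions stay inside $\Mod_{A_{\geq 0}}$ and $\DAlg_{A_{\geq 0}}$, and that the relevant free and forgetful functors restrict to the connective subcategories and continue to preserve sifted colimits there.
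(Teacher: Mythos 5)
Your proposal is correct, and it shares the paper's overall skeleton: both identify $\DAlg_{A_{\geq 0}}$ via the recognition criterion for $\PPP_\Sigma$ of a family of compact projective generators, and both establish full faithfulness by checking (using that the forgetful functors preserve sifted colimits) that the objects $\LSym_A(A \otimes M)$, $M \in \CCC^0$, are compact projective. Where you genuinely diverge is in the generation/essential-surjectivity step. You prove generation directly, by a two-level monadic bar resolution (first resolving connective $A$-modules by free modules $A \otimes M$ with $M \in \CCC^0$, then resolving connective $A$-algebras by free algebras on connective modules), with the attendant connectivity bookkeeping you flag at the end. The paper instead computes the composite $G = FH$ of $F$ with restricted Yoneda as a colimit over the comma category $\DAlg_{A^0/B}$, and then uses the adjunction identity $\DAlg_A(\LSym_A(A\otimes M),B) \simeq {\DAlg_{\CCC}}(\LSym_\CCC(M),B)$ from \cite[Rem.\ 4.2.29]{RaksitHKR} to identify $\DAlg_{A^0/B}$ with the absolute comma category $\DAlg_{\CCC^0/B}$, so that $G(B) \simeq B$ follows at once from the already-established absolute statement, Proposition \ref{Prop:DACsum}(4). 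The paper's route is shorter and sidesteps all connectivity checks by piggybacking on the absolute case; your route is self-contained (it does not need the corepresentability comparison from \cite{RaksitHKR}) and makes the generation mechanism explicit, at the cost of having to verify that the free and forgetful functors restrict to, and the bar resolutions stay within, the connective subcategories --- all of which you do, or correctly identify as the points needing care.
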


\begin{proof}
	By \cite[Prop.\ 5.5.8.22]{LurieHTT}, to show that \[F\colon\PPP_\Sigma(\DAlg_{A^0}) \to \DAlg_{A_{\geq 0}}\] is fully faithful, it suffices to show that the inclusion \[f\colon\DAlg_{A^0} \to \DAlg_{A_{\geq 0}}\] is fully faithful, with essential image consisting of compact projective objects of $\DAlg_{A_{\geq 0}}$. The first point is true by construction. Using that the forgetful functor $\DAlg_{A_{\geq 0}} \to \Mod_{A_{\geq 0}}$ preserves sifted colimits, one shows that objects in $\DAlg_{A^0}$ are compact projective in $\DAlg_{A_{\geq 0}}$. 
	
	Consider the functor
	\[
	H\colon\DAlg_{A_{\geq 0}} \to \PPP_\Sigma(\DAlg_{A^0})
	\]
	induced by restricted Yoneda. Recall that $F$ is the left Kan extension of $\DAlg_{A^0} \to \DAlg_{A_{\geq 0}}$ along the inclusion $\DAlg_{A^0} \to \PPP_\Sigma(\DAlg_{A^0})$ induced by Yoneda. It follows that $G \coloneqq FH$ is the functor 
\begin{align*}
			G\colon\DAlg_{A_{\geq 0}} &\to \DAlg_{A_{\geq 0}} \\ B &\mapsto \colim_{(R \to B) \in \DAlg_{A^0/B}} R
\end{align*}
	 where $\DAlg_{A^0/B}$ is the full subcategory of $\DAlg_{A/B} \coloneqq (\DAlg_A)_{/B}$ spanned by objects of the form $R \to B$ in the arrow-category $\Arr(\DAlg_A)$ with $R \in \DAlg_{A^0}$.  To show that $F$ is essentially surjective, it thus suffices to show that $G$ is essentially surjective. 
	
	Let $B \in \DAlg_{A_{\geq 0}}$ be given. We will show that $G(B) \simeq B$. To this end, observe that for any $M \in \CCC^0$, it holds
	 \[
	 \DAlg_A(\LSym_A(A\otimes M), B) \simeq {\DAlg_{\CCC}}(\LSym(M),B)
	 \] 
	 by \cite[Rem.\ 4.2.29]{RaksitHKR}, which gives us an equivalence $\DAlg_{A^0/B} \simeq \DAlg_{\CCC^0/B}$ of categories, where $\DAlg_{\CCC^0/B} \subset (\DAlg_{\CCC})_{/B}$ is similarly defined as $\DAlg_{A^0/B}$.  We thus have
	 \[
	 G(B) \simeq  \colim_{(R \to B) \in \DAlg_{\CCC^0/B}} R \simeq B
	 \]
	 because the canonical functor $\PPP_\Sigma(\DAlg_{\CCC^0}) \to {\DAlg_{\CCC_{\geq 0}}}$ is an equivalence by Proposition \ref{Prop:DACsum}. 
\end{proof}

\begin{Exm}
	Consider $\CCC = \Mod_\Z$, and let $\CCC^0$ be the full subcategory spanned by finite free $\Z$-modules, with standard t-structure and symmetric monoidal structure. This is an algebraic context. For this context, we drop the $\CCC$ from the notation. Then $\DAlg_{\geq 0}$ is equivalent to the category $\PPP_{\Sigma}(\Poly)$ of simplicial commutative rings, where $\Poly$ is the category of finitely generated polynomial rings over $\Z$.  
\end{Exm}

\begin{Exm}
	The symmetric monoidal category $\Sp$ of spectra, with the standard t-structure, is not an algebraic context. Indeed, the unit in $\Sp$ is the sphere spectrum, which is not discrete, contradicting the requirement that $\CCC^0 \subset \CCC$ is a symmetric monoidal subcategory of a given algebraic context $\CCC$.
\end{Exm}

\begin{Def}
	Let $\CCC,\DDD$ be algebraic contexts. Then a \textit{morphism of algebraic contexts} $\CCC \to \DDD$ is a colimit preserving, symmetric monoidal functor $\CCC \to \DDD$ which is right t-exact and which carries $\CCC^0$ into $\DDD^0$.
\end{Def}
\begin{Rem}
	\label{Rem:InducedAdjoint}
	A morphism $F \colon \CCC \to \DDD$ of algebraic contexts has a right adjoint $G$. By \cite[Rem.\ 4.2.25]{RaksitHKR}, this adjunction ascends to an adjunction between ${\DAlg_{\CCC}}$ and $\DAlg_{\DDD}$, also written $F \dashv G$. Here, $F$ commutes with the forgetful functors and the symmetric algebra functors, and $G$ commutes with the forgetful functors.
\end{Rem}

For any algebraic context $\CCC$, there is a unique morphism of algebraic contexts $\Mod_\Z \to \CCC$ by \cite[Rem.\ 4.3.2]{RaksitHKR}. This morphism induces a functor $\DAlg \to {\DAlg_{\CCC}}$ which restrict to connective objects. For $R \in \DAlg$, we write the image of $R$ under $\DAlg \to {\DAlg_{\CCC}}$ simply as $R$ when convenient.

\begin{Prop}
	\label{Prop:iotatau}
	Let $\iota\colon {\DAlg_{\CCC_{\geq 0}}} \to {\DAlg_{\CCC}}$ be the inclusion. Then the right adjoint $\tau_{\geq 0}$ to the inclusion $\CCC_{\geq 0} \to \CCC$ induces an adjunction
    \begin{center}
    \begin{tikzcd}
        \DAlg_{\CCC_{\geq 0}} \arrow[r, shift left, "\iota"] & \DAlg_{\CCC} \arrow[l, shift left, "\tau_{\geq 0}"]
    \end{tikzcd}
    \end{center}
    in the sense that $\tau_{\geq 0} \circ U_\CCC \simeq U_\CCC \circ \tau_{\geq 0}$, where $U_\CCC$ is the forgetful functor.
\end{Prop}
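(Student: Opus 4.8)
The plan is to obtain the right adjoint of $\iota$ abstractly from the adjoint functor theorem, and then to pin down its composite with the forgetful functor by a mate argument; this is cleaner than building the algebra structure on $\tau_{\geq 0}U_\CCC A$ directly, which would force one to chase the coherences for the induced multiplication and unit.

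To begin, I would use that $\LSym_\CCC$ preserves connectivity: by construction it restricts on $\CCC_{\geq 0}$ to $\LSym_{\CCC_{\geq 0}}\colon \CCC_{\geq 0}\to\CCC_{\geq 0}$. Hence the free $\CCC$-algebra $\LSym_\CCC M$ on a connective module $M$ is connective, so the left adjoint $\LSym_\CCC\colon\CCC\to{\DAlg_{\CCC}}$ of $U_\CCC$ carries $\CCC_{\geq 0}$ into ${\DAlg_{\CCC_{\geq 0}}}$, where it restricts to a left adjoint $\LSym_{\CCC_{\geq 0}}\colon\CCC_{\geq 0}\to{\DAlg_{\CCC_{\geq 0}}}$ of $U_\CCC\colon{\DAlg_{\CCC_{\geq 0}}}\to\CCC_{\geq 0}$ --- using that ${\DAlg_{\CCC_{\geq 0}}}$ is by definition the full subcategory of connective-underlying algebras. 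In particular the square of left adjoints
\begin{center}
\begin{tikzcd}
\CCC_{\geq 0} \ar[r, hook] \ar[d, "\LSym_{\CCC_{\geq 0}}"'] & \CCC \ar[d, "\LSym_\CCC"] \\
{\DAlg_{\CCC_{\geq 0}}} \ar[r, "\iota"'] & {\DAlg_{\CCC}}
\end{tikzcd}
\end{center}
commutes, both composites sending $M$ to $\LSym_\CCC M$ with its algebra structure.

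Next I would verify that $\iota$ preserves colimits. It preserves sifted colimits, since both forgetful functors $U_\CCC$ preserve sifted colimits (their monads $\LSym_\CCC$ and $\LSym_{\CCC_{\geq 0}}$ do) and are conservative, and $\CCC_{\geq 0}\hookrightarrow\CCC$ preserves all colimits, so a sifted colimit of connective-underlying algebras has connective underlying module and is computed compatibly in ${\DAlg_{\CCC}}$. It also preserves finite coproducts: the initial object is $\LSym_\CCC(0)$ in both categories, and for binary coproducts one writes connective algebras as sifted colimits of free algebras on objects of $\CCC^0$ (Proposition \ref{Prop:DACsum}), notes that $\LSym_\CCC M\sqcup\LSym_\CCC N\simeq\LSym_\CCC(M\oplus N)$ is connective for $M,N\in\CCC^0$, and uses that $(-)\sqcup B\colon{\DAlg_{\CCC}}\to{\DAlg_{\CCC}}$ preserves colimits. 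As ${\DAlg_{\CCC}}$ and ${\DAlg_{\CCC_{\geq 0}}}$ are presentable, \cite[Cor.~5.5.2.9]{LurieHTT} then provides a right adjoint $\tau_{\geq 0}\colon{\DAlg_{\CCC}}\to{\DAlg_{\CCC_{\geq 0}}}$ of $\iota$.

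Finally, it remains to identify $U_\CCC\circ\tau_{\geq 0}$ with $\tau_{\geq 0}\circ U_\CCC$. Here I would pass the commuting square of left adjoints above to right adjoints, using that right adjoints compose and are unique up to equivalence: the right adjoint of $\LSym_\CCC\circ(\CCC_{\geq 0}\hookrightarrow\CCC)$ is $\tau_{\geq 0}\circ U_\CCC$, while the right adjoint of $\iota\circ\LSym_{\CCC_{\geq 0}}$ is $U_\CCC\circ\tau_{\geq 0}$; since the two left-adjoint composites agree, we obtain $\tau_{\geq 0}\circ U_\CCC\simeq U_\CCC\circ\tau_{\geq 0}$, as claimed. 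The one step that needs genuine care is the colimit-preservation of $\iota$ --- concretely, that coproducts of connective $\CCC$-algebras stay connective --- which is where the structural description of ${\DAlg_{\CCC_{\geq 0}}}$ from Proposition \ref{Prop:DACsum} enters; the rest is formal manipulation of adjoints.
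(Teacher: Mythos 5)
Your proposal is correct and follows essentially the same route as the paper: the paper also deduces colimit-preservation of $\iota$ from the fact that $\DAlg_{\CCC^0}\to{\DAlg_{\CCC}}$ preserves finite coproducts (citing \cite[Prop.\ 5.5.8.15]{LurieHTT} to handle the sifted part you argue by hand), and then obtains $\tau_{\geq 0}\circ U_\CCC\simeq U_\CCC\circ\tau_{\geq 0}$ by exactly your mate argument, from $\LSym_\CCC$ commuting with the two inclusions. Your write-up merely unpacks the steps the paper leaves to the cited references.
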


\begin{proof}
	Since $\DAlg_{\CCC^0} \to {\DAlg_{\CCC}}$ preserves finite coproducts, the functor $\iota$ preserves all colimits by \cite[Prop.\ 5.5.8.15]{LurieHTT}, hence admits a right adjoint, written $\tau_{\geq 0}$. Now $\tau_{\geq 0} \circ U_\CCC \simeq U_\CCC \circ \tau_{\geq 0}$ follows from the fact that $\LSym_\CCC$ commutes with the inclusions $\CCC_{\geq 0} \to \CCC$ and $\iota$.
\end{proof}

\subsection*{$\M$-graded algebraic contexts}
Let $\CCC$ be an algebraic context. We consider commutative monoids (always discrete) as discrete, symmetric monoidal categories with no morphisms other than identities.
\begin{Def}
	For a commutative monoid $\M$, let $\CCC^\M$ be the category $\Fun(\M,\CCC)$ endowed with symmetric monoidal structure through Day convolution.
\end{Def}

For $N \in \CCC^\M$ and $a\in \M$, we write $N_a$ for the image of $a$ under $N$ in $\CCC$. For $N' \in \CCC$, we write $N'(a) \in \CCC^\M$ for the object which is $N'$ concentrated in degree $-a$. We then define $(\CCC^\M)^0$ as the full subcategory of $\CCC^\M$ spanned by finite coproducts of objects of the form $N(a)$, with  $a\in \M$ and $N \in \CCC^0$. Furthermore, we consider $\CCC_{\geq 0}^\M = \Fun(\M,\CCC_{\geq 0})$ as a full subcategory of $\CCC^\M$, and similarly for $\CCC_{\leq 0}^\M$. This gives a t-structure on $\CCC^\M$.  
\begin{Lem}
	The category $\CCC^\M$, with  t-structure $(\CCC^\M_{\geq 0},\CCC^\M_{\leq 0})$ and subcategory $(\CCC^\M)^0$, defines an algebraic context. 
\end{Lem}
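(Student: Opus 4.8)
The plan is to verify the three clauses of the definition of an algebraic context for $\CCC^M$, exploiting that $M$ is a discrete category, so that $\CCC^M=\Fun(M,\CCC)$ is, as a plain category, the product $\prod_{a\in M}\CCC$. Consequently limits, colimits, the $t$-structure and its truncation functors are all computed weight-by-weight, and every axiom that only refers to the underlying category with its $t$-structure will reduce at once to the corresponding statement for $\CCC$. The one substantive point will be closure of $(\CCC^M)^0$ under $\CCC^\heartsuit$-symmetric powers.

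First I would dispose of the formal structure. The category $\CCC^M$ is stable and presentable (it is $1$-small since $M$ is small), and Day convolution along $M$ makes it presentable symmetric monoidal with tensor product $(N\otimes N')_c\simeq\bigoplus_{a+b=c}N_a\otimes_\CCC N'_b$ commuting with colimits separately in each variable, so the standing Assumption holds. The levelwise pair $(\CCC^M_{\geq0},\CCC^M_{\leq0})$ is a $t$-structure; it is compatible because $\CCC^M_{\leq0}$ is closed under filtered colimits, the unit (which is $\mathbbm{1}_\CCC$ placed in weight $0$) is connective, and $\CCC^M_{\geq0}$ is closed under $\otimes$ by the displayed formula together with the corresponding facts for $\CCC$; right-completeness likewise passes weightwise from $\CCC$. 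The heart is $(\CCC^M)^\heartsuit=\Fun(M,\CCC^\heartsuit)$, and, since $\pi_0$ preserves coproducts, its induced symmetric monoidal structure $N\otimes_\heartsuit N'\simeq\pi_0(N\otimes N')$ is the Day convolution along $M$ of the symmetric monoidal structure on $\CCC^\heartsuit$. The generators $N(a)$ are concentrated in a single weight with value in $\CCC^0\subset\CCC^\heartsuit$, so $(\CCC^M)^0\subset(\CCC^M)^\heartsuit$, and $(\CCC^M)^0$ is closed under finite coproducts by construction. Since $\Map_{\CCC^M}(N(a),Y)\simeq\Map_\CCC(N,Y_a)$ and $Y\mapsto Y_a$ preserves sifted colimits, each $N(a)$ — hence each object of $(\CCC^M)^0$ — is compact projective in $\CCC^M_{\geq0}$; and they form a set of generators, because a map in $\CCC^M_{\geq0}$ is an equivalence iff it is so weightwise, while the objects of $\CCC^0$ are generators of $\CCC_{\geq0}$ (equivalently, the canonical functor $\PPP_\Sigma((\CCC^M)^0)\to\Fun(M,\PPP_\Sigma(\CCC^0))\simeq\CCC^M_{\geq0}$ is an equivalence, which follows from Proposition~\ref{Prop:DACsum}).

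Next I would treat the monoidal constraints on $(\CCC^M)^0$. On single-weight objects all but one summand in the convolution formula vanishes, so the Day tensor and its braiding reduce to those of $\CCC$: $N(a)\otimes N'(b)$ is $N\otimes_\CCC N'$ placed in the sum of the two weights. Hence $\mathbbm{1}_\CCC$ placed in weight $0$ lies in $(\CCC^M)^0$ because $\mathbbm{1}_\CCC\in\CCC^0$, and in the heart $N(a)\otimes_\heartsuit N'(b)$ is $N\otimes_{\CCC^\heartsuit}N'$ in that weight, which lies in $(\CCC^M)^0$ since $\CCC^0\subset\CCC^\heartsuit$ is a symmetric monoidal subcategory and $(\CCC^M)^0$ is closed under finite coproducts; thus $(\CCC^M)^0$ is a symmetric monoidal subcategory of $(\CCC^M)^\heartsuit$. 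For symmetric powers, given $X=\bigoplus_{i=1}^k N_i(a_i)\in(\CCC^M)^0$ I would first invoke the exponential splitting of symmetric powers, valid in any presentable symmetric monoidal stable category with colimit-preserving tensor,
\[
\Bigl(\textstyle\bigoplus_i X_i\Bigr)^{\otimes n}_{\Sigma_n}\;\simeq\;\bigoplus_{n_1+\dots+n_k=n}\ \bigotimes_i\bigl(X_i^{\otimes n_i}\bigr)_{\Sigma_{n_i}},
\]
to reduce to single-weight factors; then observe that $\bigl(N(a)^{\otimes m}\bigr)_{\Sigma_m}\simeq\bigl((N^{\otimes_\CCC m})_{\Sigma_m}\bigr)$ is again a single-weight object, and connective; and finally take Day-convolution products of such objects and apply $\pi_0$, using that $\otimes_\CCC$ is right $t$-exact, so that $\pi_0$ of a $\otimes_\CCC$ of connective objects is the $\otimes_{\CCC^\heartsuit}$ of their $\pi_0$'s. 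This exhibits $\pi_0(X^{\otimes n}_{\Sigma_n})$ as a finite coproduct of single-weight objects whose values are finite $\otimes_{\CCC^\heartsuit}$-products of objects of the form $\pi_0\bigl((N_i^{\otimes_\CCC m})_{\Sigma_m}\bigr)$; each of these lies in $\CCC^0$ because $\CCC^0$ is closed under $\CCC^\heartsuit$-symmetric powers and under $\otimes_{\CCC^\heartsuit}$, so $\pi_0(X^{\otimes n}_{\Sigma_n})\in(\CCC^M)^0$.

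The hard part will be precisely this last step: justifying the exponential splitting while correctly tracking the $\Sigma_n$-action induced by the Day-convolution braiding, and pinning down how $\pi_0$ interacts with multi-fold tensor products of connective objects. Everything else is a weightwise transfer of the algebraic-context axioms from $\CCC$.
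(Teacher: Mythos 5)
Your proof is correct and follows the same route as the paper, whose entire argument is the one-line observation that everything reduces to the weightwise computation of (co)limits together with the Day convolution formula $(N\otimes N')_a=\bigoplus_{a=b+c}N_b\otimes N'_c$ --- exactly the two ingredients you elaborate. The extra details you supply (compact projective generation by the $N(a)$, the exponential splitting of symmetric powers, and the interaction of $\pi_0$ with tensor products of connective objects) are precisely the ``straightforward'' verifications the paper leaves to the reader, and they check out.
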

\begin{proof}
	This is straightforward, using that limits and colimits in $\CCC^\M$ are computed homogeneous-degree wise, together with the formula for the Day convolution product, which  reads that
	\[
		(N \otimes N')_a = \bigoplus_{a = b +c} N_b \otimes N'_c
	\]
	for $N,N' \in \CCC^\M$ and $a \in \M$.
\end{proof}
Objects in $\CCC^\M$ are called \textit{$\M$-graded objects in $\CCC$}. Likewise for $\DAlg_{\CCC^\M}$, etc.

\begin{Not}
	Let $l\colon \M \to \K$ be a homomorphism of commutative monoids. We write $l^!\colon\CCC^\K \to \CCC^\M$ for the functor which is given by precomposition with $l$, and $l_!$ for the left adjoint of $l^!$. 
\end{Not}
Observe that $l_!(N)$ is the left Kan extension of $N$ along $l$, for $N \in \CCC^\M$. Hence, for $b \in \K$ it holds that
\[
	l_!(N)_b = \bigoplus_{la = b} N_a.
\]
\begin{Prop}
	\label{Prop:EquivAdj}
	For $l\colon \M \to \K$ a map of commutative monoids, the morphism
	\[
		l_! \colon \CCC^\M \to \CCC^\K
	\]
	is a morphism of algebraic contexts. 
\end{Prop}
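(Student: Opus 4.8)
The plan is to verify the three defining conditions in the definition of a morphism of algebraic contexts: that $l_!$ is colimit-preserving and symmetric monoidal, that it is right t-exact, and that it carries $(\CCC^M)^0$ into $(\CCC^N)^0$. The first of these is essentially formal: $l_!$ is a left adjoint (to $l^!$), hence preserves all colimits, and the symmetric monoidal structures on $\CCC^M$ and $\CCC^N$ are given by Day convolution along the monoid maps $M \times M \to M$ and $N \times N \to N$, which are compatible with $l$ since $l$ is a monoid homomorphism. Concretely, one checks from the explicit formulas $(N \otimes N')_a = \bigoplus_{a = b+c} N_b \otimes N'_c$ and $l_!(K)_b = \bigoplus_{la = b} K_a$ that there is a natural isomorphism $l_!(K) \otimes l_!(K') \simeq l_!(K \otimes K')$; this is the usual fact that left Kan extension along a monoidal functor is (op)lax, and is in fact strong monoidal here because $l$ is a map of discrete symmetric monoidal categories and the Day convolution is built to make it so. One should cite \cite[Prop.\ 4.8.1.10, Rem.\ 4.8.1.13]{LurieHA} or the functoriality of Day convolution.

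Next, right t-exactness: since the t-structure on $\CCC^M$ is defined weight-wise, an object $K$ is connective iff each $K_a \in \CCC_{\geq 0}$, and likewise for $\CCC^N$. From $l_!(K)_b = \bigoplus_{la = b} K_a$ and the fact that $\CCC_{\geq 0}$ is closed under (arbitrary, in particular the relevant) coproducts, connectivity of $K$ immediately gives connectivity of $l_!(K)$. So $l_!$ restricts to $\CCC^M_{\geq 0} \to \CCC^N_{\geq 0}$, which is exactly right t-exactness.

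Finally, the condition on the distinguished subcategories: $(\CCC^M)^0$ is by definition generated under finite coproducts by objects $N(a)$ with $N \in \CCC^0$ and $a \in M$, and similarly for $(\CCC^N)^0$. Since $l_!$ preserves finite coproducts, it suffices to check $l_!(N(a)) \in (\CCC^N)^0$ for each such generator. But $N(a)$ is $N$ placed in weight $a$, and left Kan extension along $l$ of an object concentrated in a single weight $a$ is the same object concentrated in weight $l(a)$, i.e.\ $l_!(N(a)) \simeq N(l(a))$, which lies in $(\CCC^N)^0$ by definition. This is the one place where one uses that $l$ is a genuine monoid homomorphism (so $l(a) \in N$ makes sense) rather than merely a functor.

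I do not expect any serious obstacle here; the statement is a compatibility lemma and every piece follows from the weight-wise description of limits, colimits, the t-structure, and the Day convolution formula already recorded in the excerpt. If anything requires a moment's care it is the strong monoidality of $l_!$ — one must make sure the oplax structure map coming from left Kan extension is an equivalence, which follows because on each weight $b \in N$ it is the identity map between $\bigoplus_{l a = b}\bigoplus_{a = a' + a''} K_{a'} \otimes K'_{a''}$ and $\bigoplus_{b = b' + b''}\bigoplus_{l a' = b', l a'' = b''} K_{a'} \otimes K'_{a''}$, and these index sets agree because $l$ is additive. I would state this as a one-line check rather than belabor it, and otherwise keep the proof to a few sentences invoking the relevant adjunctions and the explicit formulas.
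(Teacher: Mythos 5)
Your proof is correct and follows essentially the same route as the paper's: colimit preservation is formal from the adjunction $l_! \dashv l^!$, right t-exactness comes from the weight-wise formula together with closure of $\CCC_{\geq 0}$ under colimits, and symmetric monoidality is checked via the explicit Day convolution formula. The only difference is that you additionally spell out that $l_!$ carries $(\CCC^M)^0$ into $(\CCC^N)^0$ via $l_!(N(a)) \simeq N(l(a))$, a condition in the definition that the paper's proof leaves implicit; this is a harmless (indeed welcome) extra check.
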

\begin{proof}
	Clearly $l_!$ preserves colimits. It is right t-exact since $\CCC_{\geq 0}$ is closed under colimits in $\CCC$. Using the formula for the Day convolution, together with the fact that the tensor product commutes with colimits in each variable separately, one shows that $l_!$ is also symmetric monoidal. 
\end{proof}
We thus get an adjunction
\begin{center}
    \begin{tikzcd}
        \DAlg_{\CCC^\M} \arrow[r, shift left, "l_!"] & \DAlg_{\CCC^\K}. \arrow[l, shift left, "l^!"]
    \end{tikzcd}
\end{center}
The functors $l_!,l^!$ both commute with morphisms of algebraic contexts. Consequently, we get a functor
\[
	(\CCC,\M) \mapsto  \DAlg_{\CCC^\M}
\]
from the category of pairs $(\CCC,\M)$, where $\CCC$ is an algebraic context and $\M$ is a commutative monoid, to the category $\PrL$. 

\begin{Exm}
	Consider the unique map $p\colon \M \to 0$. Then $p_!$ is the functor which forgets the grading. That is, it sends $B \in \DAlg_{\CCC^\M}$ to the algebra with underlying module $\bigoplus_{a \in \M} B_a \in \CCC$. We can thus think of $p^!\colon {\DAlg_{\CCC}} \to \DAlg_{\CCC^\M}$ as the functor that sends $R \in {\DAlg_{\CCC}}$ to the monoid ring of $\M$ over $R$, and we put
\[
		R[\M] \coloneqq p_{!}p^!(R).
\]
\end{Exm}

\begin{Exm}
	Consider the unique map $j\colon 0 \to \K$. Then $j_!$ is the functor that endows an object with trivial grading, and $j^!$ is the functor that sends a graded object $X$ to the graded piece $X_0$.
\end{Exm}

 Let $\CCC,\DDD$ be algebraic contexts, write $\bDelta_{-\infty}$ for the category of objects of the form $\{-\infty\} \cup [0,m] \subset \{-\infty\} \cup \N$ for $m \geq -1$, with order-preserving maps which preserves $-\infty$ as morphisms. Recall that the Barr--Beck--Lurie Theorem states that an adjunction $F \dashv G \colon \CCC\rightleftarrows \DDD$ is comonadic (i.e., that $G^\op \dashv F^\op$ is monadic) if and only if $F$ is conservative and preserves $F$-cosplit totalizations \cite[Thm.\ 4.7.3.5]{LurieHA}. Here, a totalization 
 \[  X^{-1} \to X^0 \rightrightarrows X^1 \mathrel{\substack{\textstyle\rightarrow\\[-0.6ex]
 		\textstyle\rightarrow \\[-0.6ex]
 		\textstyle\rightarrow}} \cdots  \]
 of a cosimplicial object $X \colon [n] \mapsto X^n$ in $\CCC$ is \emph{$F$-cosplit} if the augmented cosimplicial object $FX$ extends to $\bDelta_{-\infty}$, in which case necesarily $FX_{-1} \simeq \lim FX$ \cite[Lem.\ 6.1.3.16]{LurieHTT}.

\begin{Lem}
	\label{Lem:AscMonadic}
	Let $F \colon \CCC \to \DDD$ be a left adjoint with right adjoint $G$, with $F$ conservative, and suppose we have a commutative diagram
	\begin{center}
		\begin{tikzcd}
			\CCC \arrow[d, "U"] \arrow[r, "F"] & \DDD \arrow[d, "V"] \\
			\CCC' \arrow[r, "F'"] & \DDD'
		\end{tikzcd}
	\end{center}
	such that $F'$ preserves $F'$-cosplit totalizations, $U,V$ preserve totalizations, and $V$ is conservative. Then $F \dashv G$ is comonadic.
\end{Lem}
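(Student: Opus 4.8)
The plan is to verify the Barr--Beck--Lurie criterion as recalled just above. Conservativity of $F$ is a hypothesis, so the only thing to check is that $F$ preserves $F$-cosplit totalizations (existence of the totalizations involved, needed for the full form of the criterion, is automatic in the applications of interest, where all categories in sight are presentable). Recall that a cosimplicial object $X^\bullet$ in $\CCC$ is $F$-cosplit precisely when $FX^\bullet$ extends to a (necessarily split) functor $\widetilde{Y}\colon\bDelta_{-\infty}\to\DDD$, in which case $\lim_{\bDelta}FX^\bullet$ exists and is the augmentation value of $\widetilde{Y}$. So I would take such an $X^\bullet$, with totalization $X^{-1}\coloneqq\lim_{\bDelta}X^\bullet$ in $\CCC$, and aim to show that the canonical comparison map $FX^{-1}\to\lim_{\bDelta}FX^\bullet$ is an equivalence.

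The key observation is that $U$ sends this situation to an analogous one over $F'$. First, $UX^\bullet$ is $F'$-cosplit: postcomposing $\widetilde{Y}$ with $V$ gives $V\widetilde{Y}\colon\bDelta_{-\infty}\to\DDD'$, and its restriction to $\bDelta$ is $V(FX^\bullet)\simeq F'(UX^\bullet)$ by the commutative square, so $F'(UX^\bullet)$ extends over $\bDelta_{-\infty}$. Second, since $U$ preserves totalizations we have $UX^{-1}\simeq\lim_{\bDelta}UX^\bullet$, so $UX^{-1}$ is a totalization of the $F'$-cosplit cosimplicial object $UX^\bullet$, and by hypothesis $F'$ preserves it, i.e.\ the comparison map $F'(UX^{-1})\to\lim_{\bDelta}F'(UX^\bullet)$ is an equivalence. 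On the other hand, applying the conservative functor $V$, and using that $V$ preserves totalizations together with the commutativity $VF\simeq F'U$, the map $V\bigl(FX^{-1}\to\lim_{\bDelta}FX^\bullet\bigr)$ is identified with $F'(UX^{-1})\to\lim_{\bDelta}F'(UX^\bullet)$. Hence $V$ of our map is an equivalence, so our map is an equivalence by conservativity of $V$. This shows $F$ preserves $F$-cosplit totalizations, and Barr--Beck--Lurie then yields that $F\dashv G$ is comonadic.

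The step I expect to be most delicate is the last identification: that $V$ applied to the comparison map $FX^{-1}\to\lim_{\bDelta}FX^\bullet$ really is, coherently, the comparison map $F'(UX^{-1})\to\lim_{\bDelta}F'(UX^\bullet)$. This asks that the coherence data---the equivalence $VF\simeq F'U$ supplied by the commutative square, and the preservation-of-totalizations equivalence $V\lim_{\bDelta}\simeq\lim_{\bDelta}V$---be compatible with the cosimplicial structure maps, so that ``equivalence after applying $V$'' can be legitimately upgraded to ``equivalence'' via conservativity of $V$. Everything else is formal; in particular, that $F'(UX^\bullet)$ is $F'$-cosplit reduces to the observation that ``extends over $\bDelta_{-\infty}$'' is a condition stable under postcomposition with any functor, here $V$.
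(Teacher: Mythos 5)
Your argument is correct and is essentially identical to the paper's proof: both push the $F$-cosplit cosimplicial object down via $U$, note that $VF\simeq F'U$ makes $UX^\bullet$ an $F'$-cosplit object whose totalization $F'$ preserves, and then transport the resulting equivalence back through the conservative, totalization-preserving functor $V$. The coherence point you flag at the end is the right thing to worry about but is handled implicitly (and unproblematically) in the paper by working with the canonical comparison maps throughout.
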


\begin{proof}
	Let $A$ be an $F$-cosplit cosimplicial diagram in $\CCC$. Then since $VF \simeq F'U$, the composition $UA$ is $F'$-cosplit, and thus 
	\begin{align*}
			VF(\lim A) &\simeq F' U(\lim A)\\
			 &\simeq F'(\lim UA) \simeq \lim (F'UA) \simeq \lim (VF A) \simeq V \lim (FA)
	\end{align*}
	since $U,V$ preserve totalizations. Since $V$ is conservative, this shows that $F(\lim A) \simeq \lim (FA)$.
\end{proof}

\begin{Lem}
	\label{Lem:Monadicity}
	Let $F\colon \CCC \to \DDD$ be a morphism of algebraic contexts, with right adjoint $G$, such that $F \dashv G$ is comonadic. Then the induced adjunction
    \begin{center}
    \begin{tikzcd}
        \DAlg_{\CCC} \arrow[r, shift left, "F'"] & \DAlg_{\DDD} \arrow[l, shift left, "G'"]
    \end{tikzcd}
    \end{center}
	is comonadic.
\end{Lem}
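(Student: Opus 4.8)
The plan is to deduce the statement directly from Lemma \ref{Lem:AscMonadic}, which is designed precisely to ascend comonadicity along a commutative square of adjunctions. I would apply it to the square
\begin{center}
\begin{tikzcd}
\DAlg_\CCC \arrow[d, "U_\CCC"'] \arrow[r, "F'"] & \DAlg_\DDD \arrow[d, "U_\DDD"] \\
\CCC \arrow[r, "F"] & \DDD
\end{tikzcd}
\end{center}
which commutes up to coherent equivalence because, by Remark \ref{Rem:InducedAdjoint}, the induced functor $F'$ commutes with the forgetful functors. In the notation of Lemma \ref{Lem:AscMonadic}, the roles are: our $F'$ plays the part of ``$F$'', our morphism of algebraic contexts $F$ plays the part of ``$F'$'', and $U_\CCC$, $U_\DDD$ play the parts of ``$U$'' and ``$V$'' respectively.

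Then I would verify the four hypotheses of Lemma \ref{Lem:AscMonadic}. First, $F'$ is conservative: if $F'(\varphi)$ is an equivalence, then $F U_\CCC(\varphi) \simeq U_\DDD F'(\varphi)$ is an equivalence, hence $U_\CCC(\varphi)$ is an equivalence because $F$ is conservative (a comonadic left adjoint is conservative, e.g.\ by the Barr--Beck--Lurie criterion recalled above), and therefore $\varphi$ is an equivalence because the monadic forgetful functor $U_\CCC$ is conservative. Second, $F$ preserves $F$-cosplit totalizations --- this is exactly the content of Barr--Beck--Lurie applied to the comonadic adjunction $F \dashv G$ we are given. Third, $U_\CCC$ and $U_\DDD$ preserve totalizations because they are right adjoints (of $\LSym_\CCC$ and $\LSym_\DDD$) and hence preserve all limits. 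Fourth, $U_\DDD$ is conservative, being a monadic forgetful functor. Lemma \ref{Lem:AscMonadic} then yields that $F' \dashv G'$ is comonadic, where $G'$ is the right adjoint supplied by Remark \ref{Rem:InducedAdjoint}.

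Since the substantive work has been pushed into Lemma \ref{Lem:AscMonadic} and Remark \ref{Rem:InducedAdjoint}, I do not expect a genuine obstacle; the proof should be a few lines. The only points requiring a little care are bookkeeping ones: getting the variance of the square right, so that it really reads $U_\DDD \circ F' \simeq F \circ U_\CCC$ and thus matches the shape required by Lemma \ref{Lem:AscMonadic}, and making sure that the compatibility of $F'$ with the forgetful functors provided by Remark \ref{Rem:InducedAdjoint} is coherent rather than merely pointwise, so that the square genuinely commutes as a diagram of functors.
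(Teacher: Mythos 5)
Your proposal is correct and follows essentially the same route as the paper: the paper's proof is a one-liner that invokes Lemma \ref{Lem:AscMonadic} via the commutative square of forgetful functors, exactly as you do. Your version is slightly more careful in that it explicitly checks conservativity of $F'$ (which the paper leaves implicit), and your bookkeeping of which functor plays which role in Lemma \ref{Lem:AscMonadic} is accurate.
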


\begin{proof}
	Since the forgetful functors $U_\CCC\colon {\DAlg_{\CCC}} \to \CCC$ and $U_\DDD \colon \DAlg_{\DDD} \to \DDD$ preserve limits and are conservative, this follows from Lemma \ref{Lem:AscMonadic}.
\end{proof}

\begin{Prop}
	\label{Prop:GradvsAct}
	The adjunction
    \begin{center}
    \begin{tikzcd}
        \DAlg_{\CCC^\M}  \arrow[r, shift left, "\mathrm{forget}"] & \DAlg_{\CCC} \arrow[l, shift left, "{(-)[\M]}"]
    \end{tikzcd}
    \end{center}
    induced by $p \colon \M \to 0$ is comonadic. Consequently, the forgetful functor induces a symmetric monoidal equivalence betweem $\DAlg_{\CCC^\M}$ and the category of coalgebra objects over $\mathbbm{1}_\CCC[\M]$ in $\DAlg_{\CCC}$.
\end{Prop}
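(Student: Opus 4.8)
The plan is to reduce to the level of modules using Lemma~\ref{Lem:Monadicity}. By Proposition~\ref{Prop:EquivAdj} (applied to $p\colon M\to 0$), the forgetful functor $p_!\colon\CCC^M\to\CCC$ is a morphism of algebraic contexts; its right adjoint is $p^!$, and the adjunction $\mathrm{forget}\dashv(-)[M]$ on $\DAlg$ is precisely the one this ascends to. So by Lemma~\ref{Lem:Monadicity} it suffices to show that $p_!\dashv p^!\colon\CCC^M\rightleftarrows\CCC$ is comonadic.

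For this I would use the comonadic Barr--Beck--Lurie theorem, both of whose hypotheses are powered by the observation that for each $a\in M$ the evaluation $\mathrm{ev}_a\colon\CCC^M\to\CCC$ is naturally a retract of $p_!$: indeed $p_!(N)=\bigoplus_{b\in M}N_b$, and the inclusion of and projection onto the $a$-th summand exhibit $\mathrm{ev}_a$ as a retract of $p_!$ as functors. Conservativity of $p_!$ follows: if $p_!(f)$ is an equivalence then each $\mathrm{cofib}(f)_a=\mathrm{ev}_a\,\mathrm{cofib}(f)$ is a retract of $\mathrm{cofib}(p_!f)\simeq 0$, so $\mathrm{cofib}(f)\simeq 0$ and $f$ is an equivalence. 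For the remaining hypothesis, that $p_!$ preserves $p_!$-cosplit totalizations, the point is that since $M$ is a discrete category $\CCC^M=\Fun(M,\CCC)$, and hence a cosimplicial object $X^\bullet$ of $\CCC^M$ is split exactly when each of its weight pieces $X^\bullet_a$ is split in $\CCC$ --- no coherence across weights being required. If $X^\bullet$ is $p_!$-cosplit, then applying the retraction above shows each $X^\bullet_a$ is a retract, as a cosimplicial object of $\CCC$, of the split cosimplicial object $p_!X^\bullet$; since retracts of split cosimplicial objects are split, each $X^\bullet_a$, and therefore $X^\bullet$, is split. A split cosimplicial object has its totalization preserved by every functor (cf.\ \cite[Lem.\ 6.1.3.16]{LurieHTT}), so in particular by $p_!$.

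For the concluding statement: comonadicity identifies $\DAlg_{\CCC^M}$, compatibly with the forgetful functors, with comodules over the comonad $R\mapsto R[M]=R\otimes_{\mathbbm{1}_\CCC}\mathbbm{1}_\CCC[M]$ on $\DAlg_\CCC$. Here $\mathbbm{1}_\CCC[M]$ carries the structure of a cocommutative bialgebra object in $\DAlg_\CCC$, with respect to the symmetric monoidal structure given by $\otimes_{\mathbbm{1}_\CCC}$: its comultiplication $\mathbbm{1}_\CCC[M]\to\mathbbm{1}_\CCC[M]\otimes_{\mathbbm{1}_\CCC}\mathbbm{1}_\CCC[M]=\mathbbm{1}_\CCC[M\times M]$ is induced by the diagonal $M\to M\times M$ and its counit $\mathbbm{1}_\CCC[M]\to\mathbbm{1}_\CCC[0]=\mathbbm{1}_\CCC$ by $M\to 0$. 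Comodules over the comonad $(-)\otimes_{\mathbbm{1}_\CCC}\mathbbm{1}_\CCC[M]$ are thus the same as coalgebra objects over $\mathbbm{1}_\CCC[M]$, which is the asserted description, realized by the forgetful functor. Finally, since $p_!$ --- and hence $\mathrm{forget}$ --- is symmetric monoidal and the comonad together with its bialgebra structure is compatible with the monoidal structures, this is an equivalence of symmetric monoidal categories; to make this precise one promotes the whole adjunction to the $\infty$-category of symmetric monoidal categories and (lax) symmetric monoidal functors and invokes the relevant results of \cite{LurieHA}.

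The main work, I expect, lies in the second paragraph --- concretely, the two auxiliary facts that splitness in $\Fun(M,\CCC)$ is detected weightwise and that retracts of split cosimplicial objects are again split --- together with making the final symmetric monoidal upgrade genuinely precise rather than merely plausible.
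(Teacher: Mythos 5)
Your proposal is correct and follows essentially the same route as the paper: reduce to the module level via Lemma \ref{Lem:Monadicity}, apply the comonadic Barr--Beck--Lurie criterion, and use the key observation that each weight piece $X_a^\bullet$ is a retract of $p_!X^\bullet$ together with the stability of split cosimplicial objects under retracts (the paper's citation of \cite[Cor.\ 4.7.2.13]{LurieHA}). The only immaterial difference is in the packaging of the last step---you conclude that $X^\bullet$ itself is split because splitness in $\Fun(M,\CCC)$ is detected weightwise, whereas the paper sums the cosplittings of the augmented weight pieces to identify $\lim p_!X$ with $p_!\lim X$ directly---and your final paragraph simply spells out the paper's one-line remark that $A[M]\simeq A\otimes\mathbbm{1}_\CCC[M]$.
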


\begin{proof}
	We first show that the stated adjunction is comonadic. By Lemma \ref{Lem:Monadicity}, it suffices to show that the adjunction
    \begin{center}
    \begin{tikzcd}
        \CCC^\M  \arrow[r, shift left, "p_!"] & \CCC \arrow[l, shift left, "p^!"]
    \end{tikzcd}
\end{center}
	is comonadic. Clearly, $p_!$ is conservative. By the Barr--Beck--Lurie Theorem, it thus suffices to show that $p_!$ preserves $p_!$-cosplit totalizations.
	
	Let $X$ be a $p_!$-cosplit cosimplicial diagram $[n] \mapsto X^n$ in $\CCC^\M$, write $Z$ for the limit of $p_!X$. For $a \in \M$, write $X_a$ for the cosimplicial diagram $[n] \mapsto X^n_a$ in $\CCC$. Taking the limit gives us an augmentation $X^{-1}_a \to X_a$. Write $X_a'$ for the resulting augmented cosimplicial object in $\CCC$, and $p_!X'$ for the augmented cosimplicial object in $\CCC$ resulting from restricting the cosplitting of $p_!X$. Observe that the latter augmentation is $Z \to p_!X$.
	
	We have a retract diagram
	\[
		X_a' \xrightarrow{\iota_a} p_!X' \xrightarrow{\rho_a} X_a'.
	\]
	Indeed, the inclusions $\iota_a^n \colon X_a^n \to (p_!X)^n$ and projections $\rho_a^n \colon (p_!X)^n \to X_a^n$ induce a retract diagram $X_a \to p_!X  \to X_a$. Taking limits, this gives us a retract diagram $X_a^{-1} \to Z \to X_a^{-1}$ which extends the given retract $X_a \to p_!X \to X_a$ to \[X_a' \xrightarrow{\iota_a} p_!X' \xrightarrow{\rho_a} X_a'.\]
	
	The key of the argument now is that cosplit augmented objects are stable under retracts by \cite[Cor\ 4.7.2.13]{LurieHA}. Hence $X_a'$ is cosplit, since $p_!X'$ is. Taking the direct sum of these cosplittings gives us a cosplitting of the augmentation
	\[
		\bigoplus_{a \in \M} X_a^{-1} \to X,
	\]
	which shows 
	\[
		\lim(p_! X) = Z \simeq \bigoplus_{a \in \M} X_a^{-1} \simeq p_! (\lim X),
	\]
	since limits in $\CCC^\M$ are computed homogeneous-degree wise.
	
	Now the second claim follows from the fact that $A[\M] \simeq A \otimes \mathbbm{1}_\CCC[\M]$ for $A \in {\DAlg_{\CCC}}$.
\end{proof}

Observe that the equivalence between $\DAlg_{\CCC^\M}$ and the category of coalgebra objects over $\mathbbm{1}_\CCC[\M]$ from Proposition \ref{Prop:GradvsAct}  restricts to an equivalence between connective objects.

\begin{Rem}
	For $\CCC = \Mod$ and $\M = \Z$, Proposition \ref{Prop:GradvsAct} shows that the category of connective, $\Z$-graded algebras is anti-equivalent to the category of affine schemes with $\Spec (\Z[\Z])$-action. This recovers and generalizes \cite[Cor.\ 4.5.5]{HekkingGraded}. However, the argument in  \cite{HekkingGraded} was significantly more involved, essentially because $\DAlg^\Z_{\geq 0}$ was not realized as the connective algebras over the monad $\LSym_{\Mod_\Z^\Z}$.
\end{Rem}

\subsection*{The cotangent complex: local case}
\label{subsection:LocCotangent}
Let $\CCC$ be an algebraic context, and let $A \in {\DAlg_{\CCC}}$ be given. For $M \in \Mod_A$, we write $A \oplus M \in {\DAlg_{\CCC}}$ for the trivial square-zero extension as in \cite{RaksitHKR}. 

Write $\DAlg\Mod_A$ for the category of pairs $(C,M)$, where $C \in \DAlg_A$ and $M \in \Mod_C$. Then the functor 
\[
	\DAlg\Mod_A \to \DAlg_A \colon (C,M) \mapsto C \oplus M
\]
has as left adjoint  $B \mapsto (B,L_{B/A})$, where $L_{B/A}$ is by definition the  cotangent complex of $B$ over $A$.

\begin{Rem}
	Let $A \to B$ and $A\to C$ in ${\DAlg_{\CCC}}$ be given. For $M \in \Mod_C$, the universal property of the cotangent complex gives us an equivalence
	\[
		\DAlg\Mod_A((B,L_{B/A}),(C,M)  ) \simeq \DAlg_A(B,C \oplus M).
	\]
	The fiber of this equivalence over a given map $B \to C$ of $A$-algebras gives the familiar equivalence
	\[
		\Mod_C(L_{B/A}\otimes_B C, M) \simeq \DAlg_{A/C}(B,C \oplus M),
	\]
	where $\DAlg_{A/C}$ is again $(\DAlg_A)_{/C}$. Observe, however, that $A,B,C,M$ are now allowed to be nonconnective.
\end{Rem}

\begin{Prop}
	\label{Prop:CotangentofSur}
	Let $A \to B$ be a map in ${\DAlg_{\CCC_{\geq 0}}}$ for which the fiber is connective. Then $L_{B/A}$ is $1$-connective.
\end{Prop}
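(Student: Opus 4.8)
The plan is to reduce to the case where $A \to B$ is a \emph{free} map, i.e.\ $B = \LSym_A(A \otimes M)$ for some $M \in \CCC_{\geq 0}$, and then compute the cotangent complex directly. First I would observe that, since $\DAlg_{A_{\geq 0}}$ is generated under sifted colimits by the free algebras $\LSym_A(A\otimes M)$ with $M \in \CCC^0$ (this is essentially Lemma \ref{Lem:ZeroDef}), and since $B \mapsto L_{B/A}$ preserves sifted colimits (being a left adjoint composed with a projection), it is tempting to try to write $B$ as such a colimit. But the connectivity of the fiber is not obviously preserved under sifted colimits in a useful way, so the honest route is different: I would use that any $A \to B$ with connective fiber can be built from $A$ by attaching cells. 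Concretely, since $\fib(A \to B)$ is connective, one builds a sequence $A = B_0 \to B_1 \to B_2 \to \cdots$ with $\colim B_i \simeq B$, where each $B_{i+1}$ is obtained from $B_i$ by a pushout
\[
	\begin{tikzcd}
		\LSym_{B_i}(B_i \otimes P_i[n_i]) \ar[r]\ar[d] & B_i \ar[d]\\
		\LSym_{B_i}(0) \simeq B_i \ar[r] & B_{i+1}
	\end{tikzcd}
\]
with $P_i \in \CCC^0$ and $n_i \geq 0$; the point is that connectivity of the fiber means no cells in negative degrees are needed. This is the analogue of the standard CW-approximation for connective $\Einfty$- or simplicial rings, and it should follow from right-completeness of the $t$-structure together with the fact that $\CCC^0$ generates $\CCC_{\geq 0}$.

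Granting such a cell structure, the computation is then formal. For a pushout square as above, base change for the cotangent complex gives $L_{B_{i+1}/B_i} \simeq L_{\LSym_{B_i}(B_i \otimes P_i[n_i])/B_i} \otimes_{\LSym_{B_i}(B_i\otimes P_i[n_i])} B_{i+1}$, and the cotangent complex of a free algebra is free of the expected form, namely $L_{\LSym_A(A\otimes M)/A} \simeq \LSym_A(A\otimes M) \otimes_A (A \otimes M) = \LSym_A(A\otimes M)\otimes M$. Hence $L_{B_{i+1}/B_i} \simeq B_{i+1} \otimes P_i[n_i]$, which is $n_i$-connective, in particular $0$-connective. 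Now if $n_i \geq 1$ at every stage one would be done immediately for the relative terms; the subtlety is that $n_0$ could be $0$ (one may need generators in degree $0$, e.g.\ a surjection on $\pi_0$), so $L_{B_1/A}$ is only $0$-connective, not $1$-connective. This is where one must use more than the first layer. The resolution is to instead \emph{directly} build the fiber $F = \fib(A \to B)$, which is connective, choose a compact projective resolution of $\pi_0 F$ and then of the higher homotopy, producing generators all in degrees $\geq 1$ after the first relation is imposed --- equivalently, use that $L_{B/A}$ only depends on $B$ as an $A$-algebra and $A \to B$ factors through $A \to \tau_{\geq 0}$ appropriately. The cleanest formulation: the augmentation ideal / fiber being connective forces $L_{B/A} \otimes_B \pi_0 B$ to have $\pi_0 = 0$, because $\pi_0(L_{B/A}) \simeq \Omega^1_{\pi_0 B/\pi_0 A}$ and $\pi_0 A \to \pi_0 B$ is surjective (as $\fib$ is connective), so $\Omega^1_{\pi_0 B/\pi_0 A} = 0$.

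So the streamlined argument I would actually write is: (1) $\pi_0 A \to \pi_0 B$ is surjective since the fiber of $A \to B$ is connective; (2) therefore $\pi_0(L_{B/A})$, which is the Kähler differentials $\Omega^1_{\pi_0 B/\pi_0 A}$ of discrete rings, vanishes; (3) $L_{B/A}$ is connective --- this needs an argument, and is the main obstacle: it should follow by writing $B$ as a sifted colimit of free $A$-algebras $\LSym_A(A\otimes M)$ with $M$ connective (possible exactly because the fiber is connective, via the cell/resolution picture above), each of which has connective cotangent complex $\LSym_A(A\otimes M)\otimes M$, and connectivity is preserved by the relevant colimits since the $t$-structure is compatible and right-complete; (4) a connective module with vanishing $\pi_0$ is $1$-connective, so $L_{B/A}$ is $1$-connective.

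\textbf{Main obstacle.} The genuinely non-formal step is (3), establishing that $L_{B/A}$ is connective in the first place, i.e.\ that one can present $B$ over $A$ using only generators (and relations) in non-negative degrees. In ordinary derived algebraic geometry this is classical (Postnikov/cell towers for simplicial rings), but here it must be carried out intrinsically in the algebraic context $\CCC$, relying on right-completeness of the $t$-structure, the fact that $\CCC^0$ is a set of compact projective generators of $\CCC_{\geq 0}$, and Lemma \ref{Lem:ZeroDef}. Once connectivity of $L_{B/A}$ is in hand, upgrading $0$-connective to $1$-connective via the vanishing of $\pi_0$ is immediate.
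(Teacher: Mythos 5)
Your streamlined argument (1)--(4) is essentially the paper's proof: the paper establishes connectivity of $L_{B/A}$ (by citing \cite[Rem.\ 4.4.12]{RaksitHKR}), identifies $\pi_0 L_{B/A} \simeq L_{\pi_0 B/\pi_0 A}$ by checking on free algebras $\LSym_A(A\otimes M)$, $M \in \CCC^0$, using that $\pi_0$ and $L_{-/A}$ preserve sifted colimits on connective objects, and then kills $\pi_0$ of the latter via the universal property of the cotangent complex, since $\pi_0 A \to \pi_0 B$ is an epimorphism in $\CCC^\heartsuit$. So the core of your proposal is correct and matches the paper.

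The one point worth correcting is your diagnosis of the ``main obstacle.'' Connectivity of $L_{B/A}$ does \emph{not} use the hypothesis that the fiber of $A \to B$ is connective, and the presentation of $B$ as a sifted colimit of free $A$-algebras on objects of $\CCC^0$ is \emph{not} ``possible exactly because the fiber is connective'': it is unconditional for every $B \in \DAlg_{A_{\geq 0}}$, by Lemma \ref{Lem:ZeroDef} (equivalently Proposition \ref{Prop:DACsum}(iv)). Since $L_{\LSym_A(A\otimes M)/A} \simeq \LSym_A(A\otimes M)\otimes M$ is connective and connective objects are closed under colimits, $L_{B/A}$ is connective for \emph{any} map in $\DAlg_{\CCC_{\geq 0}}$; the fiber hypothesis is used only to get surjectivity of $\pi_0 A \to \pi_0 B$ and hence the vanishing of $\pi_0 L_{B/A}$. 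Your worry that ``connectivity of the fiber is not obviously preserved under sifted colimits'' is therefore moot, and the entire cell-attachment detour is unnecessary. (As a side remark, that detour as written also miscounts degrees: attaching a cell along $P_i[n_i]$ imposes a relation, and by base change and the rotation of the fiber sequence the contribution to the cotangent complex is $B_{i+1}\otimes P_i[n_i+1]$, not $B_{i+1}\otimes P_i[n_i]$ --- but since you abandon that route for the streamlined one, this does not affect your final argument.)
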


\begin{proof}
	By \cite[Rem.\ 4.4.12]{RaksitHKR}, we know that $L_{B/A}$ is connective. First observe that $\pi_0L_{B/A} \simeq L_{\pi_0B/\pi_0 A}$. Indeed, since $\pi_0$ and $L_{-/A}$ both preserve sifted colimits on connective objects, this can be checked on \[B = \LSym_A(A \otimes M)\] where $M \in \CCC^0$. Then the statement follows from \cite[Ex.\ 4.4.11]{RaksitHKR}.
	
	 It now suffices to show that $\pi_0 L_{\pi_0B/\pi_0A} \simeq 0$. Let $M$ be a discrete $\pi_0B$-module. Then
	\[
	\Mod_{\pi_0B} (L_{\pi_0B/\pi_0 A},M) \simeq \DAlg_{\pi_0A/\pi_0B}(\pi_0B,\pi_0B \oplus M) \simeq \{*\},
	\]
	since $\pi_0A \to \pi_0B$ is an epimorphism in the abelian category $\CCC^\heartsuit$. The statement follows.
\end{proof}

\section{Geometric contexts}
\label{Sec:Geometric_contexts}
Let still $\CCC$ be an algebraic context. 
\subsection*{$\CCC$-stacks and $\CCC_{\geq 0}$-stacks}
For any category $\EEE$, write $\PPP(\EEE) \coloneqq \Fun(\EEE^\op, \Space)$ for the category of prestacks on $\EEE$. 

\begin{Def}
	We define the category ${\Aff_{\CCC_{\geq 0}}}$ of \textit{affine $\CCC$-schemes} as 
	\[ {\Aff_{\CCC_{\geq 0}}} \coloneqq \DAlg_{\CCC_{\geq 0}}^\op \]
	and the category ${\Aff_{\CCC}}$ of \textit{nonconnective affine $\CCC$-schemes} as $\DAlg_{\CCC}^\op$.
	
	We write 
	\begin{align*}
		\Spec \colon \DAlg_{\CCC_{\geq 0}} &\to \PPP({\Aff_{\CCC_{\geq 0}}}) \\
		\Spec^\nc \colon \DAlg_\CCC &\to \PPP({\Aff_{\CCC}})
	\end{align*}
	for the contravariant functors induced by Yoneda.
\end{Def}

\begin{Rem}
	Formally, we take prestacks to be $\Space_1$-valued. Since we also assume $\CCC$ to be  $1$-small, in general $\PPP(\CCC)$ will be $2$-small.
\end{Rem}

\begin{Exm}
	\label{Ex:DAGstacks}
	Consider the context $\CCC = \Mod_\Z$. We recall the basic setup of derived algebraic geometry (see, e.g., \cite[\S 2]{GaitsgoryStudy}). In this example, we drop the $\CCC$ from our notation (as before). Put $\Aff^\heartsuit \coloneqq (\DAlg^\heartsuit)^\op$, and endow $\Aff, \Aff^\heartsuit$ with the \'{e}tale topology. Consider the adjunction
    \begin{center}
    \begin{tikzcd}
        \Aff^\heartsuit \arrow[r, shift left, "j"] & \Aff, \arrow[l, shift left, "(-)_\cl"]
    \end{tikzcd}
    \end{center}
	where $j$ is the inclusion and $(-)_\cl$ the truncation. Now composition with $j$ and left Kan extension gives us an adjunction
    \begin{center}
    \begin{tikzcd}
        \PPP(\Aff^\heartsuit) \arrow[r, shift left, "j_!"] & \PPP(\Aff), \arrow[l, shift left, "(-)_\cl"]
    \end{tikzcd}
    \end{center}
    where $j_!$ is fully faithful, and $(-)_\cl$ is composition with $j$. Recall that $(-)_\cl$ sends stacks in $\PPP(\Aff)$ to stacks in $\PPP(\Aff^\heartsuit)$, which gives us an adjunction
    \begin{center}
    \begin{tikzcd}
        \St(\Aff^\heartsuit) \arrow[r, shift left, "\iota"] & \St(\Aff), \arrow[l, shift left, "(-)_\cl"]
    \end{tikzcd}
    \end{center}
    where $\iota$ is given by the composition of the stackification functor $\PPP(\Aff) \to \St(\Aff)$ with the restriction of $j_!$ to $\St(\Aff^\heartsuit) \to \PPP(\Aff)$. The functor $\iota$ is still fully faithful, and $(-)_\cl$ remains unchanged.
	
	We also have an adjunction $(-)_\cl \dashv j_*$, given by right Kan extension. Since also $j_*$ preserves stacks, this adjunction restricts to stacks. Note, however, that $j_*$ is less commonly studied in derived algebraic geometry. 
\end{Exm}

Returning to the general algebraic context $\CCC$, we want to study stacks both on ${\Aff_{\CCC_{\geq 0}}}$ and on ${\Aff_{\CCC}}$. Moreover, we want these stacks to interplay nicely with one another. In particular, we wish for an adjunction
\begin{center}
    \begin{tikzcd}
        \St({\Aff_{\CCC}}) \arrow[r, shift left, "F"] & \St({\Aff_{\CCC_{\geq 0}}}) \arrow[l, shift left, "G"]
    \end{tikzcd}
\end{center}
between stacks on ${\Aff_{\CCC}}$ and stacks on ${\Aff_{\CCC_{\geq 0}}}$ (with respect to some topology). For applications in derived algebraic geometry, we are ultimately interested in $\St({\Aff_{\CCC_{\geq 0}}})$. We therefore would like $G$ to be fully faithful, to preserve limits and colimits, and to preserve affine objects. We will take our cue from Example \ref{Ex:DAGstacks}. 
\begin{Def}
	Upon applying $(-)^\op$ to the adjunction $\iota \dashv \tau_{\geq 0}$ from Proposition \ref{Prop:iotatau}, we obtain the adjunction
    \begin{center}
    \begin{tikzcd}
        \Aff_{\CCC} \arrow[r, shift left, "t"] & \Aff_{\CCC_{\geq 0}} \arrow[l, shift left, "i"]
    \end{tikzcd}
    \end{center}
where $i$ is the inclusion.
\end{Def} 
The adjunction $t \dashv i$ will be lifted to the level of stacks in Proposition \ref{Prop:StAdj}, and this will play a prominent role in the rest of the paper. For now, using Kan extensions, $t\dashv i$ gives us the following diagram
\begin{equation}
\label{Eq:PrestAdjs}
\begin{tikzcd}
\PPP({\Aff_{\CCC_{\geq 0}}}) \arrow[r, bend left, "i_!"] \arrow[r, bend right, swap, "i_*"] & \PPP({\Aff_{\CCC}})\arrow[l, swap, "i^*"] \arrow[r, bend left,  "t_!"] \arrow[r, bend right, swap, "t_*"]   & \PPP({\Aff_{\CCC_{\geq 0}}}) \arrow[l, swap, "t^*"]
\end{tikzcd}
\end{equation}
where $t^*$ resp.\ $i^*$ is given by composition with $t$ resp.\ $i$, and we have adjunctions 
\begin{align*}
	i_! \dashv i^* \dashv i_*,	 && 	t_! \dashv t^* \dashv t_*.
\end{align*}
\begin{Lem}
	\label{Lem:it}
	Consider the diagram (\ref{Eq:PrestAdjs}).
	\begin{enumerate}
		\item There are natural equivalences $i_! \simeq t^*$ and $i^* \simeq t_*$.
		\item The functor $i_!$ (hence $t^*$) is fully faithful and preserves affines.
	\end{enumerate}
\end{Lem}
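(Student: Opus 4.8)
The plan is to derive both parts from standard formal properties of Kan extensions, so that essentially no computation is needed. The one point that requires care is the passage to opposite categories: the adjunction $t \dashv i$ between $\Aff_{\CCC}$ and $\Aff_{\CCC_{\geq 0}}$ becomes an adjunction $i^{\op} \dashv t^{\op}$ between $\Aff_{\CCC_{\geq 0}}^{\op}$ and $\Aff_{\CCC}^{\op}$, and one must keep straight which of $i_!, t_*$ is then ``Kan extension along a right adjoint'' versus ``along a left adjoint''.

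For part (i), I would invoke the general fact that for an adjunction $L \dashv R$, the left Kan extension along $L$ is computed by restriction along $R$ — the relevant comma category acquires a terminal object — and dually the right Kan extension along $R$ is restriction along $L$. Unwinding $\PPP(\EEE) = \Fun(\EEE^{\op}, \Space)$, the functor $i_!$ is by definition left Kan extension along $i^{\op}$ and $t_*$ is right Kan extension along $t^{\op}$. Applied to $i^{\op} \dashv t^{\op}$, the general fact says that left Kan extension along $i^{\op}$ is restriction along $t^{\op}$, which is precisely $t^{*}$, and that right Kan extension along $t^{\op}$ is restriction along $i^{\op}$, which is precisely $i^{*}$. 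This gives $i_! \simeq t^{*}$ and $t_* \simeq i^{*}$. (The two equivalences are in fact equivalent: once $t^{*} \simeq i_!$, its right adjoint $t_*$ must be $i^{*}$.)

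For part (ii), fully faithfulness of $i_!$ is a consequence of fully faithfulness of the inclusion $i$: since $i$ is fully faithful, the unit $\id \to i^{*} i_!$ is an equivalence — again because the comma categories computing the pointwise left Kan extension over objects in the essential image of $i$ have terminal objects — so $i_!$ is fully faithful; one may also simply cite the standard fact that $f_!$ is fully faithful whenever $f$ is (\cite[\S 4.3]{LurieHTT}). For the statement about affines I would use the identification $i_! \simeq t^{*}$ from (i) together with a one-line Yoneda computation: for $R \in \DAlg_{\CCC_{\geq 0}}$ and $S \in \Aff_{\CCC}$,
\[
(t^{*}\Spec R)(S) = (\Spec R)(tS) = \Hom_{\Aff_{\CCC_{\geq 0}}}(tS, R) \simeq \Hom_{\Aff_{\CCC}}(S, iR) = (\Spec^{\nc}(iR))(S),
\]
using $t \dashv i$, so that $i_!\Spec R \simeq t^{*}\Spec R \simeq \Spec^{\nc}(iR)$ is again affine. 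Since $i_! \simeq t^{*}$, the parenthetical ``hence $t^{*}$'' in the statement is then automatic.

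I do not expect any genuine obstacle here: the argument is entirely formal, and the only thing to be vigilant about is the opposite-category bookkeeping in part (i), where it is easy to reverse an adjunction by mistake.
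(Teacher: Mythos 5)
Your argument is correct and is essentially the paper's own proof: part (i) via the pointwise Kan extension formula applied to the adjunction $t \dashv i$ (equivalently, uniqueness of adjoints for $i^* \simeq t_*$), and part (ii) via fully faithfulness of $i$ plus the Yoneda/adjunction computation identifying $i_!\Spec R$ with $\Spec^{\nc}$ of the inclusion of $R$. The opposite-category bookkeeping you flag is handled correctly.
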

\begin{proof}
	The equivalence $i_! \simeq t^*$ follows by the pointwise formula for left Kan extensions, together with the fact that $i$ is a right adjoint to $t$. Then $i^* \simeq t_*$ follows by uniqueness of adjoints.
	
	Now $i_!$ is fully faithful since $i$ is. Finally, we have
	\[
	(i_! \Spec R)(B) \simeq \Spec R (\tau_{\geq 0} B) \simeq \Spec^\nc(\iota R)(B)
	\]
	for $R \in {\DAlg_{\CCC_{\geq 0}}}$ and $B \in {\DAlg_{\CCC}}$, where \[\iota\colon {\DAlg_{\CCC_{\geq 0}}} \to {\DAlg_{\CCC}}\] is the inclusion.
\end{proof}

Let $\EEE$ be a complete category and $\TTT$ a site. We define an \textit{$\EEE$-valued stack} on $\TTT$ as a functor $\TTT^\op \to \EEE$ that satisfies descent for the topology $\tau$ in the obvious way.

\begin{Def}
	\label{Def:GeomCont}
	A \textit{geometric context} is an algebraic context $\CCC$ together with a subcanoncial topology $J$ on ${\Aff_{\CCC}}$ such that the following holds.
	\begin{enumerate}
		\item $\{ \Spec^\nc (\iota \tau_{\geq 0} A_\alpha) \to \Spec^\nc (\iota \tau_{\geq 0} A) \}_\alpha$ is a $J$-covering family, for any given $J$-covering family $\{ \Spec^\nc A_\alpha \to \Spec^\nc A \}_{\alpha}$.
		\item The functor $\Mod_{(-)} \colon \Aff_{\CCC}^\op \to \Cat$ is a $\Cat$-valued stack.
	\end{enumerate}	 
\end{Def}

	Let $(\CCC,J)$ be a geometric context. Let $J_{\geq 0}$ be the restriction of $J$ to ${\Aff_{\CCC_{\geq 0}}}$, which is a topology since ${\Aff_{\CCC_{\geq 0}}} \to {\Aff_{\CCC}}$ preserves fiber products. The category of \textit{$\CCC_{\geq 0}$-stacks}, written ${\St_{\CCC_{\geq 0}}}$, is then the full subcategory of $\PPP({\Aff_{\CCC_{\geq 0}}})$ spanned by stacks with respect to $J_{\geq 0}$. Likewise, the category of \textit{ $\CCC$-stacks}, written ${\St_{\CCC}}$, consists of stacks on ${\Aff_{\CCC}}$ with respect to $J$. Since we do not compare different topologies on ${\Aff_{\CCC}}$, suppressing $J$ from the notation should not lead to any confusion.

\begin{Rem}
	Since we assumed that $\CCC \in \Cat_1$, it follows that ${\St_{\CCC}}$ is a presentable category in $\Cat_2$.
\end{Rem}

The following example will be used in Section \ref{Sec:Applications_and_examples} to promote connective (derived, as always) algebraic/analytic geometry to nonconnective algebraic/analytic geometry.
\begin{Exm}
	\label{Ex:InducedContext}
	Let $\CCC$ be an algebraic context, and suppose $\Aff_{\CCC_{\geq 0}}$ is endowed with a subcanonical  topology $J_{\geq 0}$ such that
	\[ \Mod_{(-)} \colon \Aff_{\CCC_{\geq 0}}^\op \to \Cat \]
	is a $\Cat$-valued stack, and such that each covering family $\{T_\alpha' \to T'\}_{\alpha}$ in $J_{\geq 0}$ is finite. We claim that there is a smallest topology $J$ on $\Aff_{\CCC}$ which makes $(\CCC,J)$ into a geometric context, and such that the restriction of $J$ to $\Aff_{\CCC_{\geq 0}}$ is $J_{\geq 0}$.
	
	Let $T \coloneqq \Spec^\nc B$ in $\Aff_\CCC$ be given. For a $J_{\geq 0}$ covering family $\{T'_\alpha \to T'\}_{\alpha}$ of $T' \coloneqq \Spec \tau_{\geq 0}B$, define $\{T_\alpha \to T\}_\alpha$ by pulling back along the morphism $T \to T'$ induced by the counit $\iota \tau_{\geq 0} B \to B$. Then certainly $\{T_\alpha \to T\}_\alpha$ needs to be a $J$-covering family. 	
	
	In fact, taking all families of the form just described is already a topology, and clearly it is the smallest one that could make $(\CCC,J)$ into a geometric context. It thus suffices to show that $(\CCC,J)$ is indeed a geometric context.
	
	Let a $J$-covering family $\{T_\alpha \to T\}_\alpha$ be given, induced by a $J_{\geq 0}$-covering family $\{T'_\alpha \to T'\}_\alpha$ as above. Since $\bigsqcup_\alpha T_\alpha' \to T'$ is an effective epimorphism, so is $\bigsqcup_\alpha T_\alpha \to T$. Hence $J$ is subcanonical.
	
	Since the covering families in $J_{\geq 0}$ are assumed to be finite, to show descent for $\Mod_{(-)}$ we can assume that both of the given families $\{T_\alpha \to T\}_\alpha$ and $\{T_\alpha' \to T'\}_\alpha$ consist of a single morphism. By assumption, the diagram
	\[ \Mod_{T'} \to \Mod_{T'_\alpha} \rightrightarrows \Mod_{T'_\alpha \times_{T'} T'_\alpha} \mathrel{\substack{\textstyle\rightarrow\\[-0.6ex]
			\textstyle\rightarrow \\[-0.6ex]
			\textstyle\rightarrow}} \cdots \]
	is limiting, meaning that it exhibits $\Mod_{T'}$ as a limit of the simplicial diagram induced by the \v{C}ech nerve of $T'_\alpha \to T'$. By \cite[Prop.\ 7.3.2]{GaitsgoryStudy}, also
	\[ \Mod_{T} \to  \Mod_{T'_\alpha} \otimes_{\Mod_{T'}} \Mod_T \rightrightarrows \Mod_{T'_\alpha \times_{T'} T'_\alpha}\otimes_{\Mod_{T'}} \Mod_T \mathrel{\substack{\textstyle\rightarrow\\[-0.6ex]
			\textstyle\rightarrow \\[-0.6ex]
			\textstyle\rightarrow}} \cdots \]
	is limiting, where the tensor product is the Lurie tensor product. By \cite[Prop.\ 4.1]{BenIntegral}, this diagram is equivalent to 
	\[ \Mod_{T} \to \Mod_{T_\alpha} \rightrightarrows \Mod_{T_\alpha \times_{T} {T}_\alpha} \mathrel{\substack{\textstyle\rightarrow\\[-0.6ex]
			\textstyle\rightarrow \\[-0.6ex]
			\textstyle\rightarrow}} \cdots \]
	obtained by the \v{C}ech nerve of $T_\alpha \to T$. This concludes the argument.
	
	Note how  $\{T_\alpha \to T\}_\alpha$ being a pullback of $\{T'_\alpha \to T'\}_\alpha$ is reminiscent of the strongness condition in ordinary derived algebraic geometry from \cite[Def.\ 2.2.2.1]{ToenHAGII}.
\end{Exm}
	
	A morphism $f\colon X \to Y$ in $\PPP({\Aff_{\CCC}})$ is a \textit{$J$-equivalence} if for all $Z \in {\St_{\CCC}}$ it holds that 
	\[
		f^*\colon\PPP({\Aff_{\CCC}})(Y,Z) \to \PPP({\Aff_{\CCC}})(X,Z)
	\]
	is an equivalence. We similarly define \textit{$J_{\geq 0}$-equivalences}. Write the localizations as 
\begin{center}
    \begin{tikzcd}
        \PPP({\Aff_{\CCC_{\geq 0}}}) \arrow[r, shift left, "L_{\geq 0}"] & \St_{\CCC_{\geq 0}}, \arrow[l, shift left, "I_{\geq 0}"] && 
        \PPP({\Aff_{\CCC}}) \arrow[r, shift left, "L"] & \St_{\CCC}. \arrow[l, shift left, "I"]
    \end{tikzcd}
\end{center}   
\begin{Prop}
	\label{Prop:StAdj}
	Consider again the diagram (\ref{Eq:PrestAdjs}).
	\begin{enumerate}
		\item The functor $t^*$ (hence $i_!$) preserves stacks.
		\item The functor $t_!$ sends $J$-equivalences to $J_{\geq 0}$-equivalences.
		\item The functor $L_{\geq 0} t_! I$ is left adjoint to the restriction $t^* \colon {\St_{\CCC_{\geq 0}}} \to {\St_{\CCC}}$.
		\item The functor $i^*$ (hence $t_*$) preserves stacks.
		\item The functor $L_{\geq 0}t_!I$ restricts to the functor $t \colon \Aff_{\CCC} \to \Aff_{\CCC_{\geq 0}}$.
	\end{enumerate}
\end{Prop}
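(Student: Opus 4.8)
The plan is to prove (1) first — it is the only substantive assertion — and then obtain (2), (3) as formal consequences of it and (4), (5) by short direct arguments. For (1) the idea is to exhibit the reflector $t\colon{\Aff_{\CCC}}\to{\Aff_{\CCC_{\geq 0}}}$ as a \emph{cocontinuous} functor of sites: I would check that for every $\Spec^\nc A\in{\Aff_{\CCC}}$ and every $J_{\geq 0}$-covering sieve $V$ of $t(\Spec^\nc A)=\Spec(\tau_{\geq 0}A)$, the preimage sieve $\{(\Spec^\nc B\to\Spec^\nc A)\mid t(\Spec^\nc B\to\Spec^\nc A)\in V\}$ on $\Spec^\nc A$ is again $J$-covering; by the standard criterion for cocontinuous functors this is exactly what guarantees that restriction along $t$, namely $t^*\colon F\mapsto F\circ t^{\op}$, carries $J_{\geq 0}$-stacks to $J$-stacks. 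To verify cocontinuity, choose a $J_{\geq 0}$-cover $\{\Spec R_\beta\to\Spec(\tau_{\geq 0}A)\}_\beta$ contained in $V$; by the construction of $J_{\geq 0}$ the family $\{\Spec^\nc(\iota R_\beta)\to\Spec^\nc(\iota\tau_{\geq 0}A)\}_\beta$ is a $J$-cover, hence so is its pullback $\{\Spec^\nc A\times_{\Spec^\nc(\iota\tau_{\geq 0}A)}\Spec^\nc(\iota R_\beta)\to\Spec^\nc A\}_\beta$ along the map induced by the counit $\iota\tau_{\geq 0}A\to A$, and by functoriality of the defining pushout square, applying $t$ to each member of this pulled-back cover yields a map into $\Spec(\tau_{\geq 0}A)$ which factors through $\Spec R_\beta$, hence lies in $V$; so the preimage sieve contains a $J$-cover and is $J$-covering. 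I expect this cocontinuity check to be the main obstacle: the delicate point is that $t=\tau_{\geq 0}^{\op}$ does \emph{not} preserve fibre products, so one must not try to compare the \v{C}ech nerve of a $J$-cover of $\Spec^\nc A$ with that of the induced cover of $\Spec(\tau_{\geq 0}A)$ directly — the sieve-theoretic formulation circumvents this, as only the existence of a cover inside the preimage sieve is at stake, and that is precisely what Definition \ref{Def:GeomCont}(1), together with pullback-stability of covers, supplies.

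Given (1), statement (2) is formal: for a morphism $u$ in $\PPP({\Aff_{\CCC}})$ and a $J_{\geq 0}$-stack $F$, the adjunction $t_!\dashv t^*$ gives $\Map(t_!u,F)\simeq\Map(u,t^*F)$, and since $t^*F$ is a $J$-stack by (1), the right-hand side is an equivalence whenever $u$ is a $J$-equivalence; hence $t_!u$ is a $J_{\geq 0}$-equivalence. (In particular (1) and (2) are formally equivalent.) For (3), with $t^*\colon{\St_{\CCC_{\geq 0}}}\to{\St_{\CCC}}$ now well-defined by (1), the natural chain
\begin{align*}
\Map_{{\St_{\CCC_{\geq 0}}}}(L_{\geq 0}t_!IX,\,Y) &\simeq \Map_{\PPP({\Aff_{\CCC_{\geq 0}}})}(t_!IX,\,I_{\geq 0}Y)\\
&\simeq \Map_{\PPP({\Aff_{\CCC}})}(IX,\,t^*I_{\geq 0}Y)\simeq \Map_{{\St_{\CCC}}}(X,\,t^*Y),
\end{align*}
natural in $X\in{\St_{\CCC}}$ and $Y\in{\St_{\CCC_{\geq 0}}}$ and using $L_{\geq 0}\dashv I_{\geq 0}$, then $t_!\dashv t^*$, then (1) with the full faithfulness of $I$, exhibits the adjunction $L_{\geq 0}t_!I\dashv t^*$.

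Finally, (4) and (5) are short. For (4), $i^*\colon G\mapsto G\circ i^{\op}$ is restriction along the fully faithful inclusion ${\Aff_{\CCC_{\geq 0}}}\hookrightarrow{\Aff_{\CCC}}$, which preserves fibre products, so the \v{C}ech nerve of a $J_{\geq 0}$-cover coincides with its \v{C}ech nerve computed in ${\Aff_{\CCC}}$; since $J_{\geq 0}$-covers are $J$-covers, \v{C}ech descent for a $J$-stack $G$ at once yields \v{C}ech descent for $i^*G$, so $i^*$ preserves stacks, whence $t_*\simeq i^*$ (Lemma \ref{Lem:it}) preserves stacks as well. For (5), a representable $\Spec^\nc A$ is a $J$-stack since $J$ is subcanonical, and $L_{\geq 0}t_!I(\Spec^\nc A)\simeq L_{\geq 0}(\Spec(\tau_{\geq 0}A))\simeq\Spec(\tau_{\geq 0}A)=t(\Spec^\nc A)$, using that $t_!$ sends a representable presheaf to the representable at its image and that $J_{\geq 0}$ is subcanonical.
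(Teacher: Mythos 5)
Your (4) and (5) are fine, and your direct computation for (5) ($t_!$ sends representables to representables, which are already stacks by subcanonicity) is in fact slicker than the paper's argument via the global-sections adjoints $\Gamma \dashv h$. Likewise (2) and (3) are the same formal deductions the paper makes. The problem is (1), which you correctly identify as the load-bearing statement: the lemma you invoke there is the wrong one. The condition you verify --- for every $J_{\geq 0}$-covering sieve $V$ of $t(\Spec^\nc A)$, the preimage sieve on $\Spec^\nc A$ is $J$-covering --- is the \emph{cocontinuity} (covering-lifting) property of $t$. The standard consequence of cocontinuity of a functor $u$ of sites is that the \emph{right Kan extension} along $u$ (the right adjoint of $u^*$) preserves sheaves, and that sheafification-after-$u^*$ is its left adjoint; it does \emph{not} say that $u^*$ itself lands in sheaves. (Quick sanity check: equip a category with the topology in which only maximal sieves cover versus a finer topology; the identity functor from the coarse site to the fine site is precomposition-preserving-sheaves exactly when the topologies are nested one way, while cocontinuity demands the opposite nesting.) In the notation of the paper, cocontinuity of $t$ yields that $t_* \simeq i^*$ preserves stacks --- that is point (4), which you prove again, correctly and independently, via continuity of $i$ --- and yields nothing about $t^*$.

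The criterion that actually governs whether precomposition along $t$ preserves stacks is \emph{continuity} of $t$: $t$ must send $J$-covers to $J_{\geq 0}$-covers --- this is precisely Definition \ref{Def:GeomCont}(1), and it is the fact the paper's proof of (1) invokes --- together with compatibility with the fibre products appearing in the \v{C}ech nerves of covers (equivalently, $t_!$ must send covering-sieve inclusions to $J_{\geq 0}$-\emph{local equivalences}, not merely to local epimorphisms, which is all that covers-to-covers gives). You correctly flag that $t = \tau_{\geq 0}^{\op}$ does not preserve fibre products, and that this is the delicate point; but the sieve-theoretic reformulation you propose does not circumvent the difficulty, it replaces the statement to be proved by a different one. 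As written, your argument establishes (4) twice and leaves (1) --- and hence (2) and (3), which you derive from it --- without proof. To repair it you should argue along the paper's lines, i.e.\ show that $t$ is continuous for the topologies at hand (for instance by showing directly that the $J$-localization functor inverts $t^*$ applied to $J_{\geq 0}$-covering-sieve inclusions), rather than cocontinuous.
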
	
\begin{proof}
	The first point follows from the fact that $t$ sends $J$-covering families to $J_{\geq 0}$-covering families, and then (2) follows from $t_! \dashv t^*$. Point (3) follows from the adjunction $L_{\geq 0} t_! \dashv t^* I_{\geq 0}$ together with (1). Now (4) holds, since $i$ sends $J_{\geq 0}$-covering families to $J$-covering families.
	
	For the last remaining point, put $\theta \coloneqq L_{\geq 0}t_!I$, and observe that the inclusions $h \colon \Aff_\CCC \to \St_\CCC$ and $h_{\geq 0} \colon \Aff_{\CCC_{\geq 0}}\to \St_{\CCC_{\geq 0}}$ have left adjoints $\Gamma \dashv h$ and $\Gamma_{\geq 0} \dashv h_{\geq 0}$, since $\CCC$ is presentable. We need to show that $\theta h = h_{\geq 0} t$.
	
	Since $h_{\geq 0}$ is fully faithful, it suffices to show that $\Gamma_{\geq 0} \theta h = t$. So let $S \in \Aff_\CCC$ and $T \in \Aff_{\CCC_{\geq 0}}$ be given. Then it holds
	\begin{align*}
		\Aff_{\CCC_{\geq 0}}(\Gamma_{\geq 0}\theta h S,T) & \simeq \St_{\CCC_{\geq 0}}(\theta h S, h_{\geq 0} T) \\
		& \simeq \St_\CCC(hS, t^*h_{\geq 0}T) \\
		& \simeq \St_\CCC(hS,hiT) \\
		& \simeq \Aff_\CCC(S,iT) \simeq \Aff_{\CCC_{\geq 0}}(tS,T)
	\end{align*}
	where we have used (in order) the adjunction $\Gamma_{\geq 0} \dashv h_{\geq 0}$, point (3), Lemma \ref{Lem:it}, fully faithfulness of $h$, and $t \dashv i$. 
\end{proof}
Define the functors

\begin{equation}
\label{Eq:functors}
\begin{tikzcd}
		t \coloneqq L_{\geq 0}t_!I,  && i \coloneqq t^* \simeq i_!,&& \rho \coloneqq t_* \simeq i^*.
\end{tikzcd}
\end{equation}

 By Lemma \ref{Lem:it} and Proposition \ref{Prop:StAdj}, the use of $i$ and of $t$ here is unproblematic. Notice that $\rho$ is given by restricting along $\Aff_{\CCC_{\geq 0}} \to \Aff_{\CCC}$. We can then summarize Proposition \ref{Prop:StAdj} in the diagram	
\begin{center}
	\begin{tikzcd}
		{\St_{\CCC}} \arrow[rr, bend right, swap, pos=0.55, "{\rho}"] \arrow[rr, bend left, "t"] && {\St_{\CCC_{\geq 0}}}, \arrow[ll, hook', swap, "i"] 
	\end{tikzcd}
\end{center}
where $t \dashv i \dashv \rho$. In particular, the adjunction $t \dashv i$ satisfies the requirements laid down in the beginning of this subsection. Since ${\St_{\CCC_{\geq 0}}}$ is cocomplete, we deduce the following immediate consequence.

\begin{Thm}
	\label{Thm:Stacks}
	The category ${\St_{\CCC_{\geq 0}}}$ is the smallest full subcategory of ${\St_{\CCC}}$ which contains ${\Aff_{\CCC_{\geq 0}}}$ and is closed under colimits.
\end{Thm}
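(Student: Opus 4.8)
The plan is to read this off from the adjunctions $t \dashv i \dashv \rho$ of Proposition \ref{Prop:StAdj}, by identifying ${\St_{\CCC_{\geq 0}}}$ with the essential image $\mathcal{D} \subseteq {\St_{\CCC}}$ of the fully faithful functor $i \simeq i_!$. There are then three things to verify: that $\mathcal{D}$ contains ${\Aff_{\CCC_{\geq 0}}}$, that $\mathcal{D}$ is closed under colimits in ${\St_{\CCC}}$, and that $\mathcal{D}$ is contained in any full subcategory of ${\St_{\CCC}}$ with these two properties. The first two facts exhibit $\mathcal{D}$ as one such subcategory; the third shows it is the smallest.

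For the first, I would invoke the computation from the proof of Lemma \ref{Lem:it}: for $R \in \DAlg_{\CCC_{\geq 0}}$ one has $i(\Spec R) \simeq \Spec^\nc(\iota R)$, where $\iota\colon \DAlg_{\CCC_{\geq 0}} \to \DAlg_{\CCC}$ is the inclusion. Thus $i$ carries the affine $\CCC_{\geq 0}$-schemes, viewed inside ${\St_{\CCC}}$ as the essential image of the composite ${\Aff_{\CCC_{\geq 0}}} \hookrightarrow {\Aff_{\CCC}} \to {\St_{\CCC}}$, into $\mathcal{D}$; this is exactly what it means for $\mathcal{D}$ to contain ${\Aff_{\CCC_{\geq 0}}}$. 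For the second, I would use that $i$, being left adjoint to $\rho$ by Proposition \ref{Prop:StAdj}, preserves colimits, and that ${\St_{\CCC_{\geq 0}}}$ is cocomplete (it is an accessible localization of $\PPP({\Aff_{\CCC_{\geq 0}}})$, hence presentable); so the colimit in ${\St_{\CCC}}$ of any diagram valued in $\mathcal{D}$ agrees with the $i$-image of the corresponding colimit in ${\St_{\CCC_{\geq 0}}}$, and therefore lies in $\mathcal{D}$.

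For minimality, let $\mathcal{E} \subseteq {\St_{\CCC}}$ be any full subcategory containing ${\Aff_{\CCC_{\geq 0}}}$ and closed under colimits. Given $X \in {\St_{\CCC_{\geq 0}}}$, I would write $X \simeq L_{\geq 0} I_{\geq 0} X$ and express the presheaf $I_{\geq 0}X$ as the colimit of representables over its category of elements; since $L_{\geq 0}$ preserves colimits, $X$ is a colimit of a diagram $\{\Spec R_\alpha\}$ of affine $\CCC_{\geq 0}$-schemes. Applying the colimit-preserving functor $i$ gives $i(X) \simeq \colim_\alpha i(\Spec R_\alpha) \simeq \colim_\alpha \Spec^\nc(\iota R_\alpha)$, a colimit in ${\St_{\CCC}}$ of objects of ${\Aff_{\CCC_{\geq 0}}} \subseteq \mathcal{E}$, hence $i(X) \in \mathcal{E}$. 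Running over all $X$ yields $\mathcal{D} \subseteq \mathcal{E}$, which together with the previous paragraph identifies $\mathcal{D} \simeq {\St_{\CCC_{\geq 0}}}$ as the smallest full subcategory of ${\St_{\CCC}}$ containing ${\Aff_{\CCC_{\geq 0}}}$ and closed under colimits.

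Since the statement is essentially a formal consequence of the machinery already in place, I do not expect a serious obstacle. The only points needing genuine care are the bookkeeping of the two a priori different ways ${\Aff_{\CCC_{\geq 0}}}$ embeds into ${\St_{\CCC}}$ — via $i$ and via ${\Aff_{\CCC_{\geq 0}}} \hookrightarrow {\Aff_{\CCC}}$ — which is precisely reconciled by the identity $i(\Spec R) \simeq \Spec^\nc(\iota R)$ from Lemma \ref{Lem:it}, and the (standard) fact that representables generate ${\St_{\CCC_{\geq 0}}}$ under colimits, for which one uses that sheafification preserves colimits and that the topology $J_{\geq 0}$ is subcanonical so that the representables $\Spec R$ are already $\CCC_{\geq 0}$-stacks.
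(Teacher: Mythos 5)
Your argument is correct and is precisely the route the paper intends: the theorem is stated there as an immediate consequence of the adjunctions $t \dashv i \dashv \rho$, the fully faithfulness and colimit-preservation of $i$, and the cocompleteness of ${\St_{\CCC_{\geq 0}}}$, with the identification $i(\Spec R) \simeq \Spec^\nc(\iota R)$ and generation by representables supplying the remaining details exactly as you spell them out.
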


For $X \in \St_{\CCC}$, we put $\St_X \coloneqq (\St_{\CCC})_{/X}$, and $\St_{X_{\geq 0}} \coloneqq (\St_{\CCC_{\geq 0}})_{/\rho X}$. This has the pleasant feature that if $X= iX'$, then $\St_{X_{\geq 0}} \simeq (\St_{\CCC_{\geq 0}})_{/X'}$.

\begin{Not}
	When convenient, we consider $\St_{\CCC_{\geq 0}}$ as full subcategory of $\St_\CCC$ via the functor $i$.
\end{Not}

\subsection*{Quasi-coherent modules and algebras}

\begin{Rem}
	Let $F\colon \Aff_{\CCC}^\op \to \Cat$ be given, and write $\widehat{F}$ for the right Kan extension of $F$ along $\Aff_{\CCC}^\op \to \St_{\CCC}^\op$. Then the right Kan extension, along $\Aff_{\CCC_{\geq 0}}^{\op} \to \St_{\CCC_{\geq 0}}^{\op}$, of the restriction of $F$ to $\Aff_{\CCC_{\geq 0}}^\op \to \Aff_{\CCC}^\op \to \Cat$ is equivalent to the composition of $\widehat{F}$ with $i \colon \St_{\CCC_{\geq 0}} \to \St_\CCC$. Indeed, for any $X \in {\St_{\CCC_{\geq 0}}}$, the functor
	\[
		i \colon (\Aff_{\CCC_{\geq 0}})_{/X} \to (\Aff_{\CCC})_{/iX}
	\]
	on the comma categories is cofinal, by \cite[Thm.\ 4.1.3.1]{LurieHTT} and using the adjunction $t \dashv i$. 
\end{Rem}

Define functors 
\begin{align*}
		\QCoh(-) : \St_{\CCC}^\op \to \Cat, && \QAlg(-) : \St_{\CCC}^\op \to \Cat, 
\end{align*}
by right Kan extension of the functors $\Mod_{(-)}$ and $\DAlg_{(-)}$ along the inclusion $\Aff_{\CCC}^\op \to \St_{\CCC}^\op$. Objects in $\QCoh(X)$ (resp.\ in $\QAlg(X)$) are called \textit{quasi-coherent modules} (resp.\ \textit{quasi-coherent algebras}) on $X$.

Note that the functor $\Mod_{(-)}$ lands in the category of stable, presentable symmetric monoidal categories, with left adjoint functors between them. This shows that $\QCoh(X)$ is stable and presentable, and that it carries a natural symmetric monoidal structure. Therefore, for $f\colon X \to Y$ in ${\St_{\CCC}}$, we have an adjunction
\begin{center}
    \begin{tikzcd}
        \QCoh(Y) \arrow[r, shift left, "f^*"] & \QCoh(X). \arrow[l, shift left, "f_*"]
    \end{tikzcd}
\end{center}
For $\MM \in \QCoh(X)$ and $f\colon\Spec^\nc A \to X$, we write the restriction of $\MM$ to $\Spec^\nc A$ as $\MM_A$ or $\MM_f$.
\begin{Lem}\label{Lem:DefOfTstr}
	For any $X$ in $\St_{\CCC_{\geq 0}}$, the category $\QCoh(X)$ inherits a t-structure from the categories $\Mod_A$, such that $\MM \in \QCoh(X)$ is connective if and only if $\MM_{A} \in \Mod_A$ is connective for all $\Spec A \to X$. 
\end{Lem}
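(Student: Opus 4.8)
The plan is to present $\QCoh(X)$ as a limit of the module categories $\Mod_A$ and then to transport the componentwise t-structures along this limit, using the recognition criterion for t-structures on a presentable stable category \cite[Prop.\ 1.4.4.11]{LurieHA}. First I would unwind the right Kan extension defining $\QCoh$ to obtain a canonical equivalence
\[
	\QCoh(X) \simeq \lim_{(\Spec^\nc A \to X)\in (\Aff_{\CCC})_{/X}} \Mod_A ,
\]
in which the transition functor associated with $f\colon \Spec^\nc B \to \Spec^\nc A$ over $X$ is the base-change functor $f^* = (-)\otimes_A B$. These functors are colimit-preserving (being left adjoints), hence exact (being colimit-preserving functors between stable categories), and right t-exact (since $\CCC_{\geq 0}$ is closed under the tensor product). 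The key formal consequence I would record is that, because the transition functors preserve all colimits and all finite limits, both small colimits and finite limits in $\QCoh(X)$ are computed componentwise; in particular a triangle in $\QCoh(X)$ is a (co)fiber sequence iff it is so in each $\Mod_A$.

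Next I would introduce the candidate connective aisle
\[
	\QCoh(X)_{\geq 0} \coloneqq \{\, \MM \in \QCoh(X) \mid \MM_A \in \Mod_{A,\geq 0} \text{ for all } \Spec^\nc A \to X \,\},
\]
which manifestly has the property demanded in the statement, and then verify the hypotheses of \cite[Prop.\ 1.4.4.11]{LurieHA}. Since the transition functors $f^*$ are right t-exact, they restrict to the subcategories $\Mod_{A,\geq 0}$, so $\QCoh(X)_{\geq 0}$ is itself the limit of the subdiagram $A \mapsto \Mod_{A,\geq 0}$; in particular it is presentable. Closure under colimits in $\QCoh(X)$ follows from the componentwise computation of colimits together with the fact that each $\Mod_{A,\geq 0}$ is closed under colimits in $\Mod_A$. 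Closure under extensions follows likewise: given a fiber sequence $\MM' \to \MM \to \MM''$ in $\QCoh(X)$ with $\MM',\MM'' \in \QCoh(X)_{\geq 0}$, componentwise exactness yields fiber sequences $\MM'_A \to \MM_A \to \MM''_A$, and each $\Mod_{A,\geq 0}$ is closed under extensions, so $\MM_A$ is connective for every $A$.

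Granting these points, \cite[Prop.\ 1.4.4.11]{LurieHA} produces a (unique, accessible) t-structure on the presentable stable category $\QCoh(X)$ whose connective part is $\QCoh(X)_{\geq 0}$, which by construction is precisely the t-structure characterized by: $\MM$ is connective iff $\MM_A$ is connective for all $\Spec^\nc A \to X$. To justify the phrasing ``inherited from the $\Mod_A$'', I would finally note that each restriction functor $\QCoh(X) \to \Mod_A$ is then tautologically right t-exact, and is t-exact when the corresponding structure map is flat.

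The step I expect to be the crux is the assertion that colimits and finite limits in $\QCoh(X)$ are computed componentwise: this is exactly where one uses that base change along affine morphisms is colimit-preserving and exact, and everything downstream (closure under colimits, closure under extensions, presentability of the aisle) is then formal. The only other point requiring attention is purely set-theoretic — the indexing category $(\Aff_{\CCC})_{/X}$ is large — and is handled by the universe conventions fixed in the introduction (one may pass to a small cofinal subcategory without affecting any of the above).
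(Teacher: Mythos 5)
Your proposal is correct and follows essentially the same route as the paper: both define the candidate aisle $\QCoh(X)_{\geq 0}$ by the componentwise connectivity condition, verify that it is presentable and closed under colimits and extensions (using that $\QCoh(X)$ is a limit of the $\Mod_A$ along colimit-preserving, right t-exact base-change functors), and then invoke Lurie's recognition principle for t-structures (the paper cites \cite[Prop.\ 1.2.1.16]{LurieHA} applied to the induced localization of $\QCoh(X)^\op$, while you cite the equivalent presentable formulation \cite[Prop.\ 1.4.4.11]{LurieHA}). The only difference is that you spell out the componentwise computation of colimits and fiber sequences, which the paper leaves implicit.
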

\begin{proof}
	Write $\QCoh(X)_{\geq 0}$ for the full subcategory spanned by $\MM \in \QCoh(X)$ such that $\MM_A$ is connective, for all $\Spec A \to X$. Then $\QCoh(X)_{\geq 0}$ is closed under extensions, and $F\colon\QCoh(X)_{\geq 0} \to \QCoh(X)$ is a fully faithful left adjoint. The claim now follows by applying \cite[Prop.\ 1.2.1.16]{LurieHA} to the localization $L\colon\QCoh(X)^\op \to \QCoh(X)^\op$ induced by $F$.
\end{proof} 
From hereon endow $\QCoh(X)$ with the t-structure induced by the preceding lemma, for any $X \in \St_{\CCC_{\geq 0}}$.
\begin{Cor}
	\label{Cor:PullRightt}
	For any $f\colon X \to Y$ in $\St_{\CCC_{\geq 0}}$, the functor $f^* \colon \QCoh(Y) \to \QCoh(X)$ is right t-exact.
\end{Cor}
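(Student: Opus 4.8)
The plan is to reduce the claim to the characterization of connective objects in $\QCoh$ supplied by the preceding Lemma, and then to unwind the definition of $f^*$ as a restriction functor. Recall that right t-exactness of $f^* \colon \QCoh(Y) \to \QCoh(X)$ means precisely that $f^*$ carries $\QCoh(Y)_{\geq 0}$ into $\QCoh(X)_{\geq 0}$.

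So let $\MM \in \QCoh(Y)_{\geq 0}$ be given; I want to show $f^*\MM \in \QCoh(X)_{\geq 0}$. By the Lemma (applied to $X$), it suffices to check that $(f^*\MM)_A \in \Mod_A$ is connective for every $g \colon \Spec^\nc A \to X$ in ${\Aff_{\CCC}}$. Since $\QCoh(-)$ is defined as the right Kan extension of $\Mod_{(-)}$ along $\Aff_\CCC^\op \to \St_\CCC^\op$, for any morphism $h \colon S \to X$ in $\St_\CCC$ the pullback $h^* \colon \QCoh(X) \to \QCoh(S)$ is the induced restriction functor; these are functorial, so for composable morphisms $S' \xrightarrow{k} S \xrightarrow{h} X$ there is a canonical equivalence $k^* h^* \simeq (hk)^*$. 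Moreover, when the source is affine, say $S' = \Spec^\nc A$, the functor $(\Spec^\nc A \to X)^*$ is by construction the restriction $\MM' \mapsto \MM'_A$ under the identification $\QCoh(\Spec^\nc A) \simeq \Mod_A$ built into the Kan extension. Applying this with $h = f$ and $k = g$ yields $(f^*\MM)_A \simeq g^* f^* \MM \simeq (f \circ g)^* \MM = \MM_{f \circ g}$, where now $f \circ g \colon \Spec^\nc A \to Y$.

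It remains to observe that $\MM_{f \circ g}$ is connective. Since $\MM \in \QCoh(Y)_{\geq 0}$, the Lemma (applied to $Y$) gives that $\MM_B \in \Mod_B$ is connective for every $\Spec^\nc B \to Y$; specializing to the morphism $f \circ g$ shows that $\MM_{f \circ g}$, and hence $(f^*\MM)_A$, is connective in $\Mod_A$. As $g \colon \Spec^\nc A \to X$ was arbitrary, the Lemma gives $f^*\MM \in \QCoh(X)_{\geq 0}$, which is the desired right t-exactness. I do not expect a genuine obstacle here: the only thing to be careful about is the (standard) identification of $\QCoh$ on affines with $\Mod$ and the compatibility $k^* h^* \simeq (hk)^*$ of the pullback functors, both of which are immediate from the right Kan extension definition of $\QCoh$ together with the cofinality remark preceding it.
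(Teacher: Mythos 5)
Your argument is correct and coincides with the paper's own proof, which is the one-line observation that for $\MM \in \QCoh(Y)_{\geq 0}$ and $g \colon \Spec^\nc A \to X$ one has $(f^*\MM)_g = \MM_{fg}$, which is connective. You simply spell out the compatibility $g^*f^* \simeq (fg)^*$ coming from the right Kan extension definition in more detail; there is no difference in substance.
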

\begin{proof}
	For $\MM \in \QCoh(Y)_{\geq 0}$ and $g\colon\Spec  A \to X$, it holds $(f^*\MM)_{g} = \MM_{fg}$, which is connective.
\end{proof}

Likewise, the category $\QAlg(X)$ is presentable and has a symmetric monoidal structure, and for $f\colon X \to Y$ we have an adjunction
\begin{center}
    \begin{tikzcd}
        \QAlg(Y) \arrow[r, shift left, "f^*"] & \QAlg(X). \arrow[l, shift left, "f_*"]
    \end{tikzcd}
\end{center}

\begin{Rem}
	For $X$ of the form $\Spec^\nc(A)$, we have $\QCoh(X) \simeq \Mod_A$ and $\QAlg(X) \simeq \DAlg_A$. Moreover, if $A$ is connective, then the t-structures on $\QCoh(X)$ and on $\Mod_A$ coincide.
\end{Rem}

\begin{Rem}
	The adjunction $\LSym_A \dashv U_A \colon \Mod_A \rightleftarrows \DAlg_A$ globalizes to an adjunction 
    \begin{center}
    \begin{tikzcd}
        \QCoh(X) \arrow[r, shift left, "\LSym_X"] & \QAlg(X) \arrow[l, shift left, "U_X"]
    \end{tikzcd}
    \end{center}
	for any $X \in {\St_{\CCC}}$, where $U_X$ is the forgetful functor.
\end{Rem}

\subsection*{Descent}
Let $F\colon \Aff_{\CCC}^\op \to \Space$ be given. Write $L\colon\PPP({\Aff_{\CCC}}) \to {\St_{\CCC}}$ for the stackification functor, and $h\colon {\Aff_{\CCC}} \to \PPP({\Aff_{\CCC}})$ for the Yoneda embedding. Define the functors
\begin{align*}
		\widehat{F}_0\colon\PPP({\Aff_{\CCC}})^\op \to \Space, && \widehat{F}\colon\St_{\CCC}^\op \to \Space
\end{align*}
via Kan extension along $h$ and along $L h$, respectively. Since the topology on $\Aff_\CCC$ is subcanonical by assumption, $Lh$ is just the restriction of $h$ to $\Aff_\CCC \to \St_\CCC$.

\begin{Lem}
	\label{Lem:hatF0}
	The object $F \in \PPP({\Aff_{\CCC}})$ represents the functor $\widehat{F}_0$. Consequently, $\widehat{F}_0$ sends colimits in $\PPP({\Aff_{\CCC}})$ to limits in $\Space$. 
\end{Lem}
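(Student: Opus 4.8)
The plan is to identify $\widehat{F}_0$ with the functor $\PPP({\Aff_{\CCC}})(-,F)\colon \PPP({\Aff_{\CCC}})^\op \to \Space$ represented by $F$; both assertions then follow at once, the second because a contravariantly representable functor carries colimits to limits. In essence this is the statement that the Yoneda embedding exhibits $\PPP({\Aff_{\CCC}})$ as the free cocompletion of ${\Aff_{\CCC}}$, so one could also simply quote the corresponding fact from \cite{LurieHTT}; but the argument is short enough to spell out.

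The key input is the density (co-Yoneda) theorem: every $X \in \PPP({\Aff_{\CCC}})$ is the canonical colimit $X \simeq \colim_{(S,s)} h(S)$ of the representables lying over it, indexed by the category of elements $({\Aff_{\CCC}})_{/X} \coloneqq {\Aff_{\CCC}} \times_{\PPP({\Aff_{\CCC}})} \PPP({\Aff_{\CCC}})_{/X}$, whose objects are an $S \in {\Aff_{\CCC}}$ together with a point $s \in X(S) \simeq \PPP({\Aff_{\CCC}})(h(S),X)$. Mapping out of this colimit into $F$ and using the Yoneda lemma $\PPP({\Aff_{\CCC}})(h(S),F) \simeq F(S)$ exhibits $\PPP({\Aff_{\CCC}})(X,F)$, naturally in $X$, as the limit of the diagram $(S,s)\mapsto F(S)$ indexed by the opposite of $({\Aff_{\CCC}})_{/X}$. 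On the other hand, unwinding the pointwise formula for the right Kan extension defining $\widehat{F}_0$ — namely $F\colon {\Aff_{\CCC}}^\op \to \Space$ extended along $h^\op$ — computes $\widehat{F}_0(X)$ as the limit of $F$ over the comma category $h^\op \downarrow X$, which one identifies with the opposite of $({\Aff_{\CCC}})_{/X}$ in such a way that the diagram is again $(S,s)\mapsto F(S)$. Comparing the two descriptions gives $\widehat{F}_0(X) \simeq \PPP({\Aff_{\CCC}})(X,F)$ naturally in $X$, so $F$ represents $\widehat{F}_0$; and then $\PPP({\Aff_{\CCC}})(\colim_\alpha X_\alpha, F) \simeq \lim_\alpha \PPP({\Aff_{\CCC}})(X_\alpha,F)$ gives the final sentence.

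There is no serious obstacle. The one point that needs care is the bookkeeping of opposite categories: matching the comma category occurring in the right Kan extension formula with the category of elements $({\Aff_{\CCC}})_{/X}$, and checking that the variances conspire so that the two natural equivalences above are compatible (this is forced by the statement, but should be verified). One can also phrase the argument without the explicit formula: the right Kan extension $\widehat{F}_0$ sends colimits in $\PPP({\Aff_{\CCC}})$ to limits (again via density) and restricts to $F$ along $h^\op$, and a colimit-to-limit functor on $\PPP({\Aff_{\CCC}})^\op$ is determined by its restriction to ${\Aff_{\CCC}}^\op$ since ${\Aff_{\CCC}}$ generates $\PPP({\Aff_{\CCC}})$ under colimits; hence $\widehat{F}_0 \simeq \PPP({\Aff_{\CCC}})(-,F)$. (Following the paper's conventions, when ${\Aff_{\CCC}}$ is only $1$-small these mapping spaces land in $\Space_2$, which is immaterial.)
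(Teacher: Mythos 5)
Your argument is correct and is essentially the paper's own proof: the paper likewise writes $X$ as the canonical colimit of representables, maps into $F$ via Yoneda, and identifies the resulting limit with the pointwise formula for the right Kan extension, deducing the colimit-to-limit statement from representability. The extra care you take with the variance bookkeeping is fine but not a point of divergence.
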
	

\begin{proof}
	Let $X \in \PPP({\Aff_{\CCC}})$ be given. Observe that
	\[
		\PPP({\Aff_{\CCC}})(X,F) \simeq \lim_{\Spec^\nc A \to X} \PPP({\Aff_{\CCC}})(\Spec^\nc A, X) \simeq \widehat{F}_0(X)
	\]
	where the last equivalence follows from the formula for right Kan extensions.
\end{proof}

\begin{Prop}
	\label{Prop:KanExtDesc}
	The functor $F$ is a stack if and only if $\widehat{F}$ sends colimits in ${\St_{\CCC}}$ to limits in $\Space$, in which case $F$ represents $\widehat{F}$.
\end{Prop}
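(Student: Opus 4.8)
The plan is to deduce the equivalence from the presheaf-level identification of Lemma \ref{Lem:hatF0} together with the fact that $\St_{\CCC}\subseteq\PPP(\Aff_{\CCC})$ is the reflective localization at the $J$-equivalences. As a preliminary step, write $I\colon\St_{\CCC}\hookrightarrow\PPP(\Aff_{\CCC})$ for the inclusion; since $J$ is subcanonical, $h\simeq I\circ(Lh)$, so by functoriality of right Kan extensions $\widehat{F}_0\simeq\mathrm{Ran}_{I^\op}\widehat{F}$, and as $I^\op$ is fully faithful, restricting back yields a natural equivalence
\[
	\widehat{F}(X)\;\simeq\;\widehat{F}_0(IX)\;\simeq\;\PPP(\Aff_{\CCC})(IX,F),\qquad X\in\St_{\CCC},
\]
using Lemma \ref{Lem:hatF0}. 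In particular $\widehat{F}(\Spec^\nc A)\simeq F(A)$ by Yoneda.

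For the ``only if'' direction, suppose $F$ is a stack. A colimit $X\simeq\colim_\alpha X_\alpha$ in $\St_{\CCC}$ is computed by forming $W\coloneqq\colim_\alpha IX_\alpha$ in $\PPP(\Aff_{\CCC})$ and setting $X\simeq LW$, and the localization unit $u\colon W\to IX$ is a $J$-equivalence. Since $F$ is a stack, $\PPP(\Aff_{\CCC})(-,F)$ inverts $J$-equivalences by the very definition of the latter, so
\[
	\widehat{F}(X)\simeq\PPP(\Aff_{\CCC})(IX,F)\simeq\PPP(\Aff_{\CCC})(W,F)\simeq\lim_\alpha\PPP(\Aff_{\CCC})(IX_\alpha,F)\simeq\lim_\alpha\widehat{F}(X_\alpha),
\]
the third equivalence being that mapping out of a colimit is a limit. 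Hence $\widehat{F}$ carries colimits in $\St_{\CCC}$ to limits in $\Space$; moreover it is represented by $F$, since $\PPP(\Aff_{\CCC})(IX,F)\simeq\St_{\CCC}(X,F)$ by full faithfulness of $I$.

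For the converse, assume $\widehat{F}$ sends colimits to limits. Since $\St_{\CCC}$ is presentable, the representability criterion \cite[Prop.\ 5.5.2.2]{LurieHTT} produces $Z\in\St_{\CCC}$ with $\widehat{F}\simeq\St_{\CCC}(-,Z)$. Evaluating on affines and using Yoneda together with $\Spec^\nc A\in\St_{\CCC}$ (subcanonicity), $F(A)\simeq\widehat{F}(\Spec^\nc A)\simeq\St_{\CCC}(\Spec^\nc A,Z)\simeq Z(A)$ for every $A\in\DAlg_{\CCC}$, so $F\simeq Z$ as presheaves on $\Aff_{\CCC}$; in particular $F$ is a stack and represents $\widehat{F}$.

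I expect the only real friction to be organizational: keeping the various right Kan extensions and opposite categories straight when establishing $\widehat{F}(X)\simeq\PPP(\Aff_{\CCC})(IX,F)$, and verifying that the reflective localization $L\dashv I$ behaves exactly as used (colimits in $\St_{\CCC}$ are $L$ applied to colimits in $\PPP(\Aff_{\CCC})$, and localization units are $J$-equivalences). The genuine content is already supplied by Lemma \ref{Lem:hatF0} and by the representability criterion in the presentable category $\St_{\CCC}$.
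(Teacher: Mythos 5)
Your argument is correct and follows essentially the same route as the paper: both directions rest on Lemma \ref{Lem:hatF0} and on the representability criterion \cite[Prop.\ 5.5.2.2]{LurieHTT} in the presentable category $\St_{\CCC}$, with the localization adjunction $L \dashv I$ mediating between $\widehat{F}$ and $\widehat{F}_0$. Your upfront identification $\widehat{F}(X)\simeq \PPP(\Aff_{\CCC})(IX,F)$ is just a repackaging of what the paper extracts separately in each direction (via the fact that every stack is a colimit of representables, and via $\widehat{F}L\simeq\widehat{F}_0$).
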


\begin{proof}
	If $F$ is a stack, then with the same argument as in Lemma \ref{Lem:hatF0}, it holds that $F$ represents $\widehat{F}$, since also every stack is a colimit of representables.
	
	Conversely, suppose that $\widehat{F}$ sends colimits to limits. By \cite[Prop.\ 5.5.2.2]{LurieHTT}, it follows that $\widehat{F}$ is representable, say by $X \in {\St_{\CCC}}$. We will show that $X \simeq F$ in $\PPP({\Aff_{\CCC}})$. To this end, observe that $\widehat{F} L \simeq \widehat{F}_0$, since $L$ preserves colimits, and colimit-preserving functors out of $\PPP({\Aff_{\CCC}})$ are determined by their restriction to ${\Aff_{\CCC}}$. Then for $Y \in \PPP({\Aff_{\CCC}})$, it holds
	\[
	\PPP({\Aff_{\CCC}}) (Y,X) \simeq  \widehat{F}(L Y) \simeq \widehat{F}_0(Y) \simeq \PPP({\Aff_{\CCC}}) (Y,F),
	\]
	where the first equivalence comes from the fact that $L$ is left adjoint to the inclusion $ {\St_{\CCC}} \subset \PPP({\Aff_{\CCC}})$, and the last equivalence follows from Lemma \ref{Lem:hatF0}. The claim follows.
\end{proof}

In the following two results fix a complete category $\EEE$.  We verify that \cite[Lem.\ I.1.3.2]{GaitsgoryStudy} also holds in our setting.

\begin{Lem}
	\label{Lem:EvaluedSt}
	A functor $F\colon\Aff_{\CCC}^\op \to \EEE$ is an $\EEE$-valued stack if and only if the composition
	\[
		\Aff_{\CCC}^\op \xrightarrow{F} \EEE \xrightarrow{h} \PPP(\EEE) \xrightarrow{\ev_x} \Space
	\]
	is an $\Space$-valued stack for each $x \in \EEE$. Here, $h$ denotes Yoneda, and $\ev_x$ denotes the evaluation-at-$x$ functor.
\end{Lem}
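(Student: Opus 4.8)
The plan is to reduce $\EEE$-valued descent to $\Space$-valued descent through the Yoneda embedding $h\colon \EEE \to \PPP(\EEE)$, exploiting three facts: $h$ preserves all limits that exist in $\EEE$; $h$ is fully faithful, hence reflects equivalences; and limits and equivalences in $\PPP(\EEE) = \Fun(\EEE^\op,\Space)$ are detected objectwise by the evaluation functors $\ev_x$, $x \in \EEE$. Completeness of $\EEE$ enters only to guarantee that the relevant limits exist in $\EEE$ in the first place.

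First I would make the descent condition explicit. Given a $J$-covering family $\{T_\alpha \to T\}_\alpha$ in $\Aff_\CCC$, let $T_\bullet$ be its \v{C}ech nerve, so that $F(T_\bullet)$ is a cosimplicial object of $\EEE$ equipped with an augmentation $F(T) \to F(T_\bullet)$. By definition, $F$ is an $\EEE$-valued stack precisely when, for every such covering family, this augmented cosimplicial diagram exhibits $F(T)$ as the totalization of $F(T_\bullet)$; that is, when the canonical comparison $F(T) \to \lim F(T_\bullet)$ (which exists since $\EEE$ is complete) is an equivalence. Writing $G_x \coloneqq \ev_x \circ h \circ F$, the functor $G_x$ is a $\Space$-valued stack under the analogous condition with $\EEE$ replaced by $\Space$.

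Second I would prove the equivalence for a single fixed covering family. Applying $h$ and then $\ev_x$ to the comparison map, and using that $h$ preserves limits while $\ev_x$ preserves limits (limits in $\PPP(\EEE)$ being objectwise), one identifies $\ev_x h(\lim F(T_\bullet))$ with $\lim G_x(T_\bullet)$ and the map $G_x(T) \to \lim G_x(T_\bullet)$ with $\ev_x$ applied to the comparison map. Hence: if $F(T) \to \lim F(T_\bullet)$ is an equivalence then so is $G_x(T) \to \lim G_x(T_\bullet)$ for every $x$. Conversely, if the latter holds for every $x$, then $h(F(T)) \to h(\lim F(T_\bullet))$ becomes an equivalence in $\PPP(\EEE)$ after evaluation at every $x \in \EEE$, hence is an equivalence; since $h$ is fully faithful it reflects equivalences, so $F(T) \to \lim F(T_\bullet)$ is an equivalence. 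Quantifying over all covering families gives the lemma.

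The only non-formal ingredient is the use of fully faithfulness of $h$ to reflect equivalences, together with completeness of $\EEE$: this is what lets us recover the $\EEE$-valued totalization from its image under Yoneda. The remaining steps amount to unwinding the definition of descent and the objectwise computation of limits in a presheaf category, so I do not anticipate any real obstacle.
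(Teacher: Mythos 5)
Your argument is correct and is essentially the paper's own proof written out in full: the paper's one-line justification invokes exactly the two facts you isolate, namely that the Yoneda embedding preserves limits and that the family $(\ev_x)_{x\in\EEE}$ is jointly conservative (which you phrase equivalently as fully faithfulness of $h$ plus objectwise detection of equivalences in $\PPP(\EEE)$). No issues.
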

\begin{proof}
	This follows from the fact that the Yoneda embedding preserves limits, and that \[(\ev_x)_{x \in \EEE} \colon \PPP(\EEE) \to \prod_{x \in \EEE} \Space \] is conservative.
\end{proof}

\begin{Prop}
	\label{Prop:KanExtEDesc}
	Let $F \colon \Aff_{\CCC}^\op \to \EEE$ be given, with right Kan extension $\widehat{F}\colon\St_{\CCC}^\op \to \EEE$. Then $\widehat{F}$ sends colimits in ${\St_{\CCC}}$ to limits in $\EEE$ if and only if $F$ is an $\EEE$-valued stack.
\end{Prop}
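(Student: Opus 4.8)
The plan is to reduce the $\EEE$-valued assertion to the $\Space$-valued case already proved in Proposition \ref{Prop:KanExtDesc}, using the characterization of $\EEE$-valued stacks from Lemma \ref{Lem:EvaluedSt}. First I would fix $x \in \EEE$ and consider the composite
\[
	F_x \colon \Aff_\CCC^\op \xrightarrow{F} \EEE \xrightarrow{h} \PPP(\EEE) \xrightarrow{\ev_x} \Space
\]
appearing in Lemma \ref{Lem:EvaluedSt}, where $h$ is the Yoneda embedding of $\EEE$. The crucial point is that $\ev_x \circ h \simeq \EEE(x,-)$ is limit-preserving, and the right Kan extension along $\Aff_\CCC^\op \to \St_\CCC^\op$ is computed by pointwise limits over the relevant slice categories; hence postcomposition with $\ev_x \circ h$ commutes with this right Kan extension, which identifies the right Kan extension $\widehat{F_x}$ of $F_x$ with $\ev_x \circ h \circ \widehat F$.

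Granting this identification, one implication is immediate. If $\widehat F$ sends colimits in $\St_\CCC$ to limits in $\EEE$, then each $\widehat{F_x} \simeq \ev_x \circ h \circ \widehat F$ is the composite of $\widehat F$ with the limit-preserving functor $\ev_x \circ h$, so it too sends colimits to limits; Proposition \ref{Prop:KanExtDesc} then gives that $F_x$ is a $\Space$-valued stack for every $x \in \EEE$, and Lemma \ref{Lem:EvaluedSt} lets me conclude that $F$ is an $\EEE$-valued stack.

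For the converse, suppose $F$ is an $\EEE$-valued stack. By Lemma \ref{Lem:EvaluedSt} every $F_x$ is a $\Space$-valued stack, so by Proposition \ref{Prop:KanExtDesc} each $\widehat{F_x} \simeq \ev_x \circ h \circ \widehat F$ carries colimits in $\St_\CCC$ to limits in $\Space$. Now I would fix a colimit cocone $(Y_i \to Y)$ in $\St_\CCC$, apply $\widehat F$ to obtain a cone $(\widehat F(Y) \to \widehat F(Y_i))$ in $\EEE$, and show it is a limit cone. Applying $h$ and then each $\ev_x$ yields the cone $(\widehat{F_x}(Y) \to \widehat{F_x}(Y_i))$, which is a limit cone in $\Space$ by the above. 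Since the family $(\ev_x)_{x \in \EEE}$ is conservative and each $\ev_x \colon \PPP(\EEE) \to \Space$ preserves limits (limits of presheaves are computed objectwise), the family jointly reflects limits, so $h$ carries our cone to a limit cone in $\PPP(\EEE)$. Finally, because the Yoneda embedding $h$ is fully faithful and preserves limits, it reflects limits, so the original cone is a limit cone in $\EEE$, i.e. $\widehat F(\colim_i Y_i) \simeq \lim_i \widehat F(Y_i)$; Lemma \ref{Lem:EvaluedSt} (in the direction just used) is not needed again here.

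I do not expect a serious obstacle. The only steps needing care are the compatibility of right Kan extension with postcomposition by the corepresentable functor $\ev_x \circ h$ (which follows from its preserving all limits), and the passage back and forth between $\EEE$ and $\PPP(\EEE)$: this rests on the standard facts that a jointly conservative family of limit-preserving functors jointly reflects limits (applied to $(\ev_x)_x$), and that a fully faithful limit-preserving functor reflects limits (applied to Yoneda). Everything else is a bookkeeping of adjunctions and of Lemma \ref{Lem:EvaluedSt} and Proposition \ref{Prop:KanExtDesc}.
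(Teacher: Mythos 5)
Your proposal is correct and follows exactly the route the paper takes: its proof is the one-line ``Combine Lemma \ref{Lem:EvaluedSt} with Proposition \ref{Prop:KanExtDesc}'', and you have simply supplied the details of that combination (the identification $\widehat{F_x} \simeq \ev_x \circ h \circ \widehat{F}$ via limit-preservation of the corepresentable $\ev_x \circ h$, and the joint reflection of limits by the family $(\ev_x)_x$ together with Yoneda). All the steps you flag as needing care are standard and hold as you describe.
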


\begin{proof}
	Combine Lemma \ref{Lem:EvaluedSt} with Proposition \ref{Prop:KanExtDesc}.
\end{proof}

\begin{Rem}
	Set-theoretically, the Kan extension $\widehat{F}$ from Proposition \ref{Prop:KanExtDesc} lands in $\Space_2$. Likewise, in Proposition \ref{Prop:KanExtEDesc}, the category $\EEE$ should be at least $2$-small, and $1$-complete. In practice, the functor $F$ will land in $\Cat_1$, while $\widehat{F}$ will land in $\Cat_2$. However, by Propostion \ref{Prop:UniversesCont} and with the same argument as the proof of Lemma \ref{Lem:EvaluedSt}, the inclusion $i\colon\Cat_1 \to \Cat_2$ preserves limits. Therefore, $F$ is a $\Cat_1$-valued stack if and only if $iF$ is a $\Cat_2$-valued stack.  
\end{Rem}

\begin{Cor}
	\label{Cor:QCohCont}
	The functors $\QCoh(-)$ and $\DAlg_{(-)}$ send colimits in ${\St_{\CCC}}$ to limits in $\Cat$.
\end{Cor}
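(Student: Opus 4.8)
The plan is to deduce everything from Proposition~\ref{Prop:KanExtEDesc}. By construction, the two functors in the statement are the right Kan extensions along $\Aff_\CCC^\op \hookrightarrow \St_\CCC^\op$ of $\Mod_{(-)}$ and of $\DAlg_{(-)}$, viewed as functors $\Aff_\CCC^\op \to \Cat$. Proposition~\ref{Prop:KanExtEDesc}, applied with $\EEE = \Cat$ (the size bookkeeping being handled by the remark following that proposition, using that $\Cat_1 \hookrightarrow \Cat_2$ preserves limits), says that such a Kan extension carries colimits in $\St_\CCC$ to limits in $\Cat$ precisely when the functor being extended is a $\Cat$-valued stack. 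So the task reduces to checking that $\Mod_{(-)}$ and $\DAlg_{(-)}$ are $\Cat$-valued stacks, and I would treat these two cases separately.

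For $\Mod_{(-)}$ there is nothing to do: this is exactly clause~(2) of the definition of a geometric context, Definition~\ref{Def:GeomCont}. For $\DAlg_{(-)}$, I would argue that module descent propagates to algebra descent because $\DAlg_A$ is the category of algebras over a monad that is compatible with base change. Concretely, for each $A \in \DAlg_\CCC$ the forgetful functor $\DAlg_A \to \Mod_A$ is monadic with monad $\LSym_A$, and for each map $A \to B$ the base-change functor $B \otimes_A (-)\colon \Mod_A \to \Mod_B$ commutes with the symmetric-algebra monads, i.e.
\[
	\LSym_B(B \otimes_A M) \simeq B \otimes_A \LSym_A(M),
\]
which holds by the construction of $\LSym$ as a left Kan extension from $\CCC^0$ together with the fact that base change preserves colimits (cf.~\cite{RaksitHKR}). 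Thus the $\LSym_{(-)}$ assemble into a compatible family of monads over $\Aff_\CCC^\op$, and $\DAlg_{(-)}$ is the fibrewise category of algebras over it. Given a $J$-covering $\Spec B \to \Spec A$ with \v{C}ech nerve $A \to B^\bullet$, descent for $\Mod_{(-)}$ yields $\Mod_A \simeq \lim_{\bDelta} \Mod_{B^\bullet}$ with transition functors the base-change functors; since these commute with the monads $\LSym_{B^\bullet}$, and since the forgetful functors $\DAlg_{B^\bullet} \to \Mod_{B^\bullet}$ are conservative, commute with base change, and preserve limits, the transfer principle underlying Lemma~\ref{Lem:AscMonadic}, applied to the cosimplicial diagram of monadic adjunctions $\DAlg_{B^\bullet} \rightleftarrows \Mod_{B^\bullet}$, gives $\DAlg_A \simeq \lim_{\bDelta} \DAlg_{B^\bullet}$. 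Hence $\DAlg_{(-)}$ is a $\Cat$-valued stack, and Proposition~\ref{Prop:KanExtEDesc} finishes the proof.

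I expect the only non-formal step to be this last one: transporting module descent to algebra descent. Its essential content is the displayed base-change compatibility of the symmetric-algebra monad; granted that, the Barr--Beck--Lurie machinery already in use in Lemmas~\ref{Lem:AscMonadic}--\ref{Lem:Monadicity} applies levelwise to the \v{C}ech diagram. If a self-contained argument is preferred one can spell this out; alternatively one may simply cite the general fact that the total category of algebras over a base-change-compatible monad over a $\Cat$-valued stack is again a $\Cat$-valued stack.
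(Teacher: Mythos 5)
Your proposal is correct and follows essentially the same route as the paper: reduce via Proposition~\ref{Prop:KanExtEDesc} to showing that $\DAlg_{(-)}$ is a $\Cat$-valued stack (the case of $\Mod_{(-)}$ being part of Definition~\ref{Def:GeomCont}), and then deduce algebra descent from module descent by monadic descent, using that $\LSym$ is compatible with base change. The paper packages the last step slightly differently---it applies Barr--Beck--Lurie directly to the adjunction $\Mod_A \rightleftarrows \lim_\alpha \DAlg_{A_\alpha}$, after noting that colimits in a limit of categories are computed pointwise---but this is the same argument as your levelwise transfer along the \v{C}ech diagram.
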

\begin{proof}
	By definition of a geometric context, the functor
	\[
		\Mod_{(-)} \colon \Aff_{\CCC}^\op \to \Cat
	\]
	is a $\Cat$-valued stack. By Proposition \ref{Prop:KanExtEDesc}, it suffices to show that also
	\[
		\DAlg_{(-)} \colon \Aff_{\CCC}^\op \to \Cat
	\]
	is a stack. 
	
	Let $\CCC = \lim \CCC_\alpha$ be a limit in $\Cat$. Then for $x,y \in \CCC$, it holds that $\CCC(x,y) = \lim \CCC_\alpha(x_\alpha,y_\alpha)$, where $x_\alpha,y_\alpha$ denote the image of $x,y$ under the projection $\CCC \to \CCC_\alpha$. It follows that colimits in $\CCC$ are computed pointwise, i.e., the colimit of a diagram $i \mapsto x_i \in \CCC$ is the object $x \in \CCC$ such that $x_\alpha = \colim(x_i)_\alpha$ in $\CCC_\alpha$, for each $\alpha$. 
	
	Now the claim on $\DAlg_{(-)}$ follows by applying the Barr--Beck--Lurie Theorem to the adjunction 
    \begin{center}
    \begin{tikzcd}
        \Mod_A \arrow[r, shift left, "F"] & \lim_\alpha  \DAlg_{A_\alpha} \arrow[l, shift left, "G"]
    \end{tikzcd}
    \end{center}
    for any covering family $\{\Spec^\nc A_\alpha \to \Spec^\nc A\}$, where $F$ is the composition of $\LSym_A$ with $\DAlg_A \to \lim_\alpha \DAlg_{A_\alpha}$, and $G$ is induced by the forgetful functors $\DAlg_{A_\alpha} \to \Mod_{A_\alpha}$ and the equivalence $\Mod_A \simeq \lim_\alpha \Mod_{A_\alpha}$.
\end{proof}	

\subsection*{Affineness}
A quasi-coherent algebra on a $\CCC_{\geq 0}$-stack is \textit{connective} if the underlying quasi-coherent module is. 

Let $\BB \in \QAlg(X)$ be given, for $X \in \St_{\CCC}$. Then we define the \textit{nonconnective relative spectrum} of $\BB$ as the $\CCC$-stack $\Spec^\nc\BB$ that sends $\Spec^\nc A \in {\Aff_{\CCC}}$ to the space of pairs $(f,\varphi)$, where $f\colon \Spec^\nc A \to X$ is a map of $\CCC$-stacks and $\varphi\colon f^*\BB \to A$ a map of $\CCC$-algebras. Observe that $\Spec^\nc\BB$ naturally lives over $X$. We can write $\Spec^\nc_X(\BB)$ to emphasize the base $X$.

\begin{Lem}
	\label{Lem:SpecBC}
	The nonconnective relative spectrum commutes with base change.
\end{Lem}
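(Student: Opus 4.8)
The plan is to verify the claim directly on functors of points over the affine site $\Aff_{\CCC}$. Fix a morphism $g \colon X' \to X$ of $\CCC$-stacks and $\BB \in \QAlg(X)$; the goal is a natural equivalence $X' \times_X \Spec^\nc\BB \simeq \Spec^\nc(g^*\BB)$ in $\St_{X'}$. The first step is to note that, since $\St_{\CCC} \hookrightarrow \PPP(\Aff_{\CCC})$ is reflective and hence preserves limits, the fibre product on the left is computed pointwise on $\Aff_{\CCC}$; thus it suffices to identify the two associated functors $\Aff_{\CCC}^\op \to \Space$ compatibly with their maps to $X'$.

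Next I would unwind a $T$-point, for $T = \Spec^\nc A$, of each side. Using the defining formula for $\Spec^\nc(-)$ together with the universal property of the fibre product, a $T$-point of $X' \times_X \Spec^\nc\BB$ is a triple consisting of a morphism $f' \colon T \to X'$, a pair $(f \colon T \to X,\ \varphi \colon f^*\BB \to A)$, and a homotopy $g f' \simeq f$. Since the space of pairs $(f, h)$ with $h$ a homotopy $g f' \simeq f$ is contractible (it is the fibre over a point of the trivial fibration given by source-evaluation on the path space of $X(T)$), transporting $\varphi$ along $h$ identifies the $T$-points of the left-hand side with the space of pairs
\[
\bigl(f' \colon T \to X',\ \varphi \colon (g f')^*\BB \to A\bigr).
\]
A $T$-point of $\Spec^\nc(g^*\BB)$, on the other hand, is by definition a pair $(f' \colon T \to X',\ \psi \colon (f')^*(g^*\BB) \to A)$, and the canonical equivalence $(f')^* \circ g^* \simeq (g f')^*$ of functors $\QAlg(X) \to \DAlg_A$ — part of the data making $\QAlg(-) \colon \St_{\CCC}^\op \to \Cat$ a functor — matches $\psi$ with such a $\varphi$.

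The final step is to check that all of these identifications are natural in $A$ and compatible with the projections to $X'$, so that they assemble into an equivalence of $\CCC$-stacks over $X'$. I do not expect a genuine obstacle: the only thing requiring mild care is bookkeeping the naturality and, in particular, the use of the composition constraint $(f')^* g^* \simeq (g f')^*$ for pullback of quasi-coherent algebras, which is exactly the structure provided by the right Kan extension definition of $\QAlg(-)$. An essentially equivalent alternative would be to first extract from the affine case the universal property $\St_X(W \xrightarrow{w} X, \Spec^\nc\BB) \simeq \QAlg(W)(w^*\BB, \OO_W)$ for all $W \in \St_{\CCC}$ (by extending along colimits, since every stack is a colimit of affines), and then deduce base change from the one-line manipulation $\QAlg(W')((g w')^*\BB, \OO_{W'}) \simeq \QAlg(W')((w')^* g^*\BB, \OO_{W'})$ applied to $W' \in \St_{X'}$; but the pointwise computation above is the most elementary and is what I would write up.
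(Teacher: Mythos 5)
Your proposal is correct and is exactly the argument the paper has in mind: the paper's own proof is the single line ``Immediate from the definition,'' and what you have written is the careful unwinding of that definition (fibre products of stacks computed pointwise in presheaves, identification of $T$-points via the contractibility of the space of pairs $(f,h)$, and the composition constraint $(f')^*g^*\simeq (gf')^*$ coming from the functoriality of $\QAlg(-)$). No discrepancy to report.
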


\begin{proof}
	Immediate from the definition. 
\end{proof}
A (nonconnective) $\CCC$-stack is \textit{(nonconnectively) affine} if it is equivalent to a (nonconnective) affine $\CCC$-scheme. We relativize these notions as follows. 
\begin{Def}
	\label{Def:Affine}
	A morphism $f\colon X \to Y$ of $\CCC$-stacks is called \textit{nonconnectively affine} if it is of the form $\Spec^\nc(\BB) \to Y$ for some $\BB \in \QAlg(Y)$. If moreover $f$ lives in ${\St_{\CCC_{\geq 0}}}$ and $\BB$ is connective, then $f$ is called \textit{affine}.
\end{Def}
\begin{Lem}
	\label{Lem:AffineBC}
	Nonconnectively affine morphisms satisfy arbitrary base-change. That is, for any Cartesian diagram
	\begin{center}
		\begin{tikzcd}
			{X'} \arrow[d, "{f'}"] \arrow[r, "{g_2}"] & X \arrow[d, "f"] \\
			{Y'} \arrow[r, "g_1"] & Y
		\end{tikzcd}
	\end{center}
	where $f$ is nonconnectively affine, it holds that the natural transformation
	\[
		g_1^*f_* \to f'_*g_2^*
	\]
	of functors $\QCoh(X) \to \QCoh(Y')$ is an equivalence.
\end{Lem}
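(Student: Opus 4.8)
The plan is to reduce to the case where $Y$ and $Y'$ are affine $\CCC$-schemes, where the statement becomes the elementary fact that restriction of scalars commutes with base change of scalars, and then to bootstrap the general case from this using the presentation of $\QCoh(-)$ as a limit over affines provided by Corollary \ref{Cor:QCohCont}.

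Write $f$ as $\Spec^\nc(\BB)\to Y$ for some $\BB\in\QAlg(Y)$. By Lemma \ref{Lem:SpecBC}, the pullback of $f$ along any $u\colon\Spec^\nc A\to Y$ is $\Spec^\nc(u^*\BB)\to\Spec^\nc A$; in particular nonconnective affineness is stable under base change, so $f'$ is again nonconnectively affine. Since every $\CCC$-stack is a colimit of affines and colimits in $\St_\CCC$ are universal, the identification $Y\simeq\colim_{u\colon\Spec^\nc A\to Y}\Spec^\nc A$ yields $X\simeq\colim_u\Spec^\nc(u^*\BB)$; hence by Corollary \ref{Cor:QCohCont} we get $\QCoh(Y)\simeq\lim_u\Mod_A$ and $\QCoh(X)\simeq\lim_u\Mod_{u^*\BB}$, compatibly with $f^*$, which is $-\otimes_A u^*\BB$ on the $u$-component. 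Passing to right adjoints, $f_*$ is the limit of the restriction-of-scalars functors $\Mod_{u^*\BB}\to\Mod_A$, \emph{provided} that for each transition morphism $\Spec^\nc A'\to\Spec^\nc A$ in $(\Aff_\CCC)_{/Y}$ the attached Beck--Chevalley transformation is invertible; by Lemma \ref{Lem:SpecBC} that transformation is exactly the base-change transformation for the Cartesian square of \emph{affine} $\CCC$-schemes obtained by pulling $\Spec^\nc(u^*\BB)\to\Spec^\nc A$ back along $\Spec^\nc A'\to\Spec^\nc A$, i.e.\ the affine case treated below. Granting that case, the same discussion applies to $f'\colon X'\to Y'$, so that $f'_*$ is likewise the limit of restrictions of scalars over $v\colon\Spec^\nc A'\to Y'$, while $g_1^*$ and $g_2^*$ are the reindexing functors along $(\Aff_\CCC)_{/Y'}\to(\Aff_\CCC)_{/Y}$ (using $v^*g_1^*\BB\simeq(g_1v)^*\BB$). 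A direct comparison on components then shows that $g_1^*f_*\MM$ and $f'_*g_2^*\MM$ are both the restriction of scalars along $A'\to(g_1v)^*\BB$ applied to the $(g_1v)$-component of $\MM$, naturally in $v$, and that this identification is the base-change transformation $g_1^*f_*\to f'_*g_2^*$; since $\QCoh(Y')\simeq\lim_v\Mod_{A'}$ detects equivalences, the lemma follows in general.

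It remains to treat the affine case: $Y=\Spec^\nc A$, $Y'=\Spec^\nc A'$ with $g_1$ given by $A\to A'$, and $X=\Spec^\nc B$ for $B\in\DAlg_A$, so $X'\simeq\Spec^\nc(B\otimes_A A')$ by Lemma \ref{Lem:SpecBC}. Under the identifications $\QCoh(X)\simeq\Mod_B$, $\QCoh(Y)\simeq\Mod_A$, etc., the functor $f_*$ is restriction of scalars along $A\to B$, $g_1^*$ is $-\otimes_A A'$, $g_2^*$ is $-\otimes_B(B\otimes_A A')$, and $f'_*$ is restriction along $A'\to B\otimes_A A'$. Both $g_1^*f_*$ and $f'_*g_2^*$ preserve colimits, so it suffices to check that the base-change transformation is an equivalence on the free modules $B\otimes M$ with $M\in\CCC$, where both sides are canonically $(B\otimes_A A')\otimes M$ viewed over $A'$ and the transformation is the evident identity (equivalently, this is the standard fact that restriction and base change of scalars commute for a pushout square of derived rings).

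The one point that genuinely requires care — and the main obstacle — is the passage to right adjoints in the limit presentation: one must verify that the componentwise restriction-of-scalars functors really assemble to $f_*$ (and to $f'_*$), which is the precise spot where the affine base change is fed back in, and then that the resulting componentwise comparison is the Beck--Chevalley transformation rather than merely some equivalence. Everything else is bookkeeping.
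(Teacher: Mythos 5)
Your proposal is correct and follows essentially the same route as the paper: reduce to the case where everything is affine (where the claim is that restriction of scalars commutes with base change of scalars) and then assemble the general case from the limit presentation $\QCoh(Y)\simeq\lim_u\Mod_A$ of Corollary \ref{Cor:QCohCont}. The paper simply outsources the Beck--Chevalley bookkeeping to the proof of \cite[Prop.~1.3.6]{DrinfeldFiniteness}, which is precisely the argument you spell out.
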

\begin{proof}
	The statement is clear when $Y$ is nonconnectively affine. Indeed, whether $g_1^*f_*(\MM) \to f_*'g_2^*(\MM)$ is an equivalence can be checked affine-locally on $Y'$. Since $f$ is nonconnectively affine, this then reduces to the case where $X,Y,X',Y'$ are all nonconnectively affine. Now the proof of \cite[Prop.\ 1.3.6]{DrinfeldFiniteness} goes through verbatim in the general case. 
\end{proof}

\begin{Prop}
	\label{Prop:AffLoc}
	Let $f \colon X \to Y$ in ${\St_{\CCC}}$ be given. Then $f$ is nonconnectively affine if and only if $X_A$ is nonconnectively affine for all $\Spec^\nc A \to Y$. Likewise, if $f$ lives in ${\St_{\CCC_{\geq 0}}}$, then $f$ is affine if and only if $X_A$ is affine for all $\Spec A \to Y$.
\end{Prop}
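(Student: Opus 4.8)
The plan is to show that ``nonconnectively affine over $Y$'' (resp.\ ``affine over $Y$'') is a property detected on the fibres over affines, by realizing the nonconnectively affine morphisms to $Y$ as a full subcategory of $(\St_\CCC)_{/Y}$ equivalent to $\QAlg(Y)^{\op}$; once that is in place the Proposition becomes a statement about membership in that subcategory. The ``only if'' direction needs no work: if $f = (\Spec^\nc\BB\to Y)$ with $\BB\in\QAlg(Y)$, then for $\Spec^\nc A\to Y$ Lemma~\ref{Lem:SpecBC} gives $X_A\simeq\Spec^\nc(\BB_A)$ with $\BB_A\in\QAlg(\Spec^\nc A)\simeq\DAlg_A$, so $X_A$ is nonconnectively affine; and if $\BB$ is connective then so is $\BB_A$, whence $X_A\simeq\Spec\BB_A$ is affine.

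For the ``if'' direction in the nonconnective case I would first write $Y$ as the canonical colimit $Y\simeq\colim_{(\Spec^\nc A\to Y)\in I}\Spec^\nc A$ over $I\coloneqq(\Aff_\CCC)_{/Y}$. Since $\St_\CCC$ is an $\infty$-topos (being a category of sheaves on a site), colimits in it are universal, so descent yields $(\St_\CCC)_{/Y}\simeq\lim_{I^{\op}}(\St_\CCC)_{/\Spec^\nc A}$, under which $X\to Y$ corresponds to the compatible family $(X_A\to\Spec^\nc A)_A$. Now each $(\St_\CCC)_{/\Spec^\nc A}$ contains the full subcategory $(\Aff_\CCC)_{/\Spec^\nc A}\simeq\DAlg_A^{\op}$ spanned by the nonconnectively affine morphisms to $\Spec^\nc A$, and Lemma~\ref{Lem:SpecBC} shows the transition functors $-\times_{\Spec^\nc A}\Spec^\nc A'$ of this limit carry these subcategories into one another. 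Passing to the limit therefore produces a fully faithful functor $\lim_{I^{\op}}\DAlg_A^{\op}\hookrightarrow(\St_\CCC)_{/Y}$ whose essential image consists precisely of those $Z\to Y$ with every fibre $Z_A\to\Spec^\nc A$ nonconnectively affine. By Corollary~\ref{Cor:QCohCont} the source of this functor is $\QAlg(Y)^{\op}$, and by Lemma~\ref{Lem:SpecBC} again the functor itself is the relative-spectrum functor $\BB\mapsto(\Spec^\nc\BB\to Y)$. The hypothesis is exactly that $X\to Y$ lies in the essential image, so $X\simeq\Spec^\nc\BB$ over $Y$ for some $\BB$, i.e.\ $f$ is nonconnectively affine.

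The connective case runs identically, with $I$ replaced by $(\Aff_{\CCC_{\geq 0}})_{/Y}$: one uses Theorem~\ref{Thm:Stacks} to write $Y$, hence $iY$, as a colimit of connective affines, together with the cofinality of $(\Aff_{\CCC_{\geq 0}})_{/Y}\to(\Aff_\CCC)_{/iY}$ recorded just before the definition of $\QCoh$, so that $\QAlg(iY)\simeq\lim_{(\Spec A\to Y)}\DAlg_A$ and the fibres $(iX)_{A}\simeq i(X_A)$ are $\Spec^\nc$ of connective $A$-algebras. Assembling these as before produces a candidate $\BB\in\QAlg(iY)$ with $iX\simeq\Spec^\nc\BB$ over $iY$; since $f$ already lives in $\St_{\CCC_{\geq 0}}$, the only thing left to check for ``$f$ affine'' is that $\BB$ is connective, i.e.\ that $\BB_B$ is connective for \emph{every} $\Spec^\nc B\to iY$. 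For that I would factor such a morphism through $i\Spec\tau_{\geq 0}B$ via the counit $\iota\tau_{\geq 0}B\to B$ and note that $\BB_B$ is the pullback of $\BB_{\tau_{\geq 0}B}$ along $\Spec^\nc B\to\Spec^\nc\tau_{\geq 0}B$; this is connective because $\BB_{\tau_{\geq 0}B}$ is and pullback of quasi-coherent modules is right $t$-exact by Corollary~\ref{Cor:PullRightt}.

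The step I expect to be the real content is the passage from the \emph{pointwise} identifications $X_A\simeq\Spec^\nc(\BB_A)$ to a single \emph{coherent} equivalence $X\simeq\Spec^\nc\BB$ over $Y$. This is precisely what the descent equivalence $(\St_\CCC)_{/Y}\simeq\lim_{I^{\op}}(\St_\CCC)_{/\Spec^\nc A}$ together with Corollary~\ref{Cor:QCohCont} buys us: it lets one exhibit ``nonconnectively affine over $Y$'' as a full subcategory $\QAlg(Y)^{\op}\subseteq(\St_\CCC)_{/Y}$ whose objects are detected fibrewise, so that no explicit gluing of the algebras $\BB_A$ is required; everything else (base change, $t$-exactness, cofinality, and the colimit presentation of $Y$) is quoted from earlier in the text.
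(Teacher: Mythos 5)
Your argument is correct, but it takes a genuinely different route from the paper's. The paper's proof of the ``if'' direction is direct: it takes $\BB \coloneqq f_*\OO_X$ as the candidate algebra, forms the natural map $h\colon X \to \Spec^\nc f_*\OO_X$ over $Y$, and checks that $h$ is an equivalence affine-locally using the base-change equivalence $g^*f_*\OO_X \simeq f'_*\OO_{X_A}$; since Lemma~\ref{Lem:AffineBC} is stated for $f$ already nonconnectively affine, the paper has to remark that its proof only uses that the fibres $X_B$ are nonconnectively affine, which is exactly the hypothesis. You instead glue: you use that $\St_\CCC$ is an $\infty$-topos, so that $(\St_\CCC)_{/Y} \simeq \lim (\St_\CCC)_{/\Spec^\nc A}$ (this needs the full descent/van Kampen property of topoi, not just universality of colimits, but that is available), identify the limit of the full subcategories $\DAlg_A^{\op}$ with $\QAlg(Y)^{\op}$ via Corollary~\ref{Cor:QCohCont}, and observe that the resulting fully faithful functor is $\Spec^\nc(-)$ by Lemma~\ref{Lem:SpecBC}. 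This buys you two things: it sidesteps the mild circularity around Lemma~\ref{Lem:AffineBC} entirely, and it establishes Corollary~\ref{Cor:RelSpec} in the same breath (indeed the paper derives that corollary from exactly the descent argument you spell out). What you lose is the explicit identification of the gluing $\BB$ as $f_*\OO_X$, which the paper gets for free and uses later. Your treatment of the connective case matches the paper's reduction in substance --- factoring $\Spec^\nc B \to iY$ through $\Spec\tau_{\geq 0}B$ and invoking right t-exactness of pullback (Corollary~\ref{Cor:PullRightt}) --- just phrased through the cofinality of $(\Aff_{\CCC_{\geq 0}})_{/Y} \to (\Aff_\CCC)_{/iY}$ rather than as a bare reduction to the first part.
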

\begin{proof}
	The forward direction of the first point follows from stability of $\Spec^\nc(-)$ under base change, Lemma \ref{Lem:SpecBC}. Conversely, suppose that $X_A$ is nonconnectively affine, for all $\Spec^\nc A \to Y$. Consider the natural map $h\colon X \to \Spec^\nc f_* \OO_X$ over $Y$. We claim that $h$ is an equivalence. 
	
	The statement can be checked affine-locally on $Y$, so take $g\colon \Spec^\nc A \to Y$, and write $f'\colon X_A \to \Spec^\nc A$ for the projection. Then pulling back $h$ along $g$ gives
	\[
		h_A\colon X_A \to \Spec^\nc (g^*f_*\OO_X) \simeq \Spec^\nc (f'_* \OO_{X_A})
	\]
	where the second equivalence follows from the proof of Lemma \ref{Lem:AffineBC}, which only uses that $X_B$ is nonconnectively affine, for any $\Spec^\nc B \to Y$. Since $X_A$ is nonconnectively affine, $h_A$ is indeed an equivalence.
	
	The second point follows from the first, using that $f^*$ is right t-exact for the forward direction, and that any map $\Spec^\nc A \to iY$ uniquely factors through $\Spec^\nc(\iota \tau_{\geq 0} A)$ for the other direction.
\end{proof}

\begin{Cor}
	\label{Cor:RelSpec}
	The functor $\Spec^\nc(-) \colon  \QAlg(X)^\op \to \St_X$ is fully faithful, with essential image the category of $\CCC$-stacks over $X$ which are nonconnectively affine over $X$.
\end{Cor}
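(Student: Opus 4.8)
The essential-image claim is essentially Definition~\ref{Def:Affine}: a morphism $Z \to X$ of $\CCC$-stacks is nonconnectively affine precisely when it is of the form $\Spec^\nc \BB \to X$ for some $\BB \in \QAlg(X)$, so the essential image of $\Spec^\nc(-)\colon \QAlg(X)^\op \to \St_X$ is exactly the full subcategory of $\St_X$ on the nonconnectively affine morphisms to $X$. The content of the corollary is therefore full faithfulness, which I would establish by reducing to the affine case.

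\emph{The affine case.} Suppose $X = \Spec^\nc A$. Then $\QAlg(X) \simeq \DAlg_A$, and for $\BB \in \DAlg_A$ with underlying $\CCC$-algebra $B$ one reads off from the functor-of-points definition of $\Spec^\nc \BB$ --- using the base-change identity $f^*\BB \simeq \BB \otimes_A A'$ and the adjunction $\DAlg_{A'}(\BB \otimes_A A', A') \simeq \DAlg_A(\BB, A')$ --- that $\Spec^\nc \BB \simeq \Spec^\nc B$ as a $\CCC$-stack over $\Spec^\nc A$, with structure map induced by $A \to B$. Hence for $\BB, \CC \in \DAlg_A$ with underlying algebras $B, C$,
\[
\St_{\Spec^\nc A}(\Spec^\nc \BB, \Spec^\nc \CC) \simeq \fib\bigl( \St_\CCC(\Spec^\nc B, \Spec^\nc C) \to \St_\CCC(\Spec^\nc B, \Spec^\nc A) \bigr),
\]
the fibre taken over the structure map $\Spec^\nc B \to \Spec^\nc A$. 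Since $J$ is subcanonical, $\Spec^\nc(-)\colon \Aff_\CCC \to \St_\CCC$ is fully faithful, so the right-hand side is $\fib\bigl( \DAlg_\CCC(C,B) \to \DAlg_\CCC(A,B) \bigr) \simeq \DAlg_A(C,B) \simeq \QAlg(X)(\CC, \BB)$, which is the desired equivalence.

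\emph{The general case.} For arbitrary $X \in \St_\CCC$, write $X \simeq \colim_{\Spec^\nc A \to X} \Spec^\nc A$ as the canonical colimit of representables indexed by $(\Aff_\CCC)_{/X}$. Descent in the $\infty$-topos $\St_\CCC$ gives $\St_X \simeq \lim_{\Spec^\nc A \to X} \St_{\Spec^\nc A}$, while Corollary~\ref{Cor:QCohCont} (via the right Kan extension defining $\QAlg(-)$) gives $\QAlg(X) \simeq \lim_{\Spec^\nc A \to X} \DAlg_A$, both limits over the same index category. Under these identifications the global functor $\Spec^\nc(-)$ is the limit of the local functors $\Spec^\nc(-)\colon \DAlg_A^\op \to \St_{\Spec^\nc A}$, the coherence being supplied by Lemma~\ref{Lem:SpecBC} (the relative spectrum commutes with base change). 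Since mapping spaces in a limit of categories are limits of mapping spaces, a limit of objectwise fully faithful functors is fully faithful, so the affine case yields the general one. The step I expect to be the main obstacle is not the affine computation, which is a direct Yoneda argument, but the bookkeeping in the general case: one must check that $\Spec^\nc A \mapsto \bigl( \St_{\Spec^\nc A}, \DAlg_A^\op, \Spec^\nc(-) \bigr)$ is genuinely functorial on $(\Aff_\CCC)_{/X}$ and that the limit of the components $\Spec^\nc(-)$ recovers the global functor $\QAlg(X)^\op \to \St_X$, i.e. one must assemble the base-change equivalences of Lemma~\ref{Lem:SpecBC} coherently. Once that naturality is in place, the conclusion is formal.
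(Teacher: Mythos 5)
Your proposal is correct and follows essentially the same route as the paper: the paper's (one-line) proof likewise reduces to the affine case via descent for $\DAlg_{(-)}$ (Corollary \ref{Cor:QCohCont}) together with the local characterization of nonconnectively affine morphisms (Proposition \ref{Prop:AffLoc}), with the affine case being the Yoneda/subcanonicity computation you spell out. You have simply made explicit the details the paper leaves implicit, including the coherence of the base-change equivalences from Lemma \ref{Lem:SpecBC}.
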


\begin{proof}
	By Proposition \ref{Prop:AffLoc}, this follows from descent, Corollary \ref{Cor:QCohCont}.
\end{proof}

For $X \in \St_{\CCC_{\geq 0}}$, we write $\QAlg(X)_{\geq 0}$ for the full subcategory of $\QAlg(X)$ spanned by connective objects. 

\begin{Rem}
	By descent, the adjunction from Proposition \ref{Prop:iotatau} globalizes to an adjunction
    \begin{center}
    \begin{tikzcd}
        \QAlg(X)_{\geq 0} \arrow[r, shift left, "\iota"] & \QAlg(X) \arrow[l, shift left, "\tau_{\geq 0}"]
    \end{tikzcd}
    \end{center}
for any $X \in \St_{\CCC_{\geq 0}}$. We then have the \emph{relative spectrum} functor
\[ \Spec(-) \colon  \QAlg(X)_{\geq 0}^\op \to \St_{X_{\geq 0}} \]
defined similarly as the nonconnective version. For $\BBB \in \QAlg(X)_{\geq 0}$ it holds $\Spec \BBB \simeq \Spec^\nc \iota \BBB$. A morphism $Y' \to Y$ in $\St_{\CCC_{\geq 0}}$ is affine if and only if it is of the form $\Spec \BBB \to Y$ for some $\BBB \in \DAlg_{Y_{\geq 0}}$, and  $\Spec(-)$ commutes with base-change along morphisms in $\St_{\CCC_{\geq 0}}$.
\end{Rem}

\subsection*{Closed immersions \& pseudocomplements}
A morphism $M \to N$ in $\CCC$ is called \textit{surjective} if the fiber is connective, and $A \to B$ in $\DAlg_{\CCC}$ is \emph{surjective} if it is so after forgetting to $\CCC$.

A morphism $X \to Y$ of $\CCC_{\geq 0}$-stacks is a \textit{closed immersion} if it is affine, and for all $\Spec A \to Y$ with $A$ connective, the map $A \to B $ is surjective, where $X_A \simeq \Spec B$.

\begin{Def}
	Let $X \to Y$ be a morphism ${\St_{\CCC}}$. Define the \textit{pseudocomplement} $Y \setminus X$ of $X$ in $Y$ as the prestack on ${\Aff_{\CCC}}$ which sends $T$ to the full subspace of $Y(T)$ spanned by those morphisms $T \to Y$ for which 
	\[
	T \times_Y X \simeq \emptyset.
	\]
\end{Def}

Let $\EEE$ be a presentable category, and $X \in \EEE$. Recall that the full subcategory $\mathrm{Sub}(X)$ of $\EEE_{/X}$ spanned by monomorphisms $X' \to X$ is equivalent to a 1-category (in fact a poset), by \cite[Prop.\ 6.2.1.4]{LurieHTT}. An object of $\mathrm{Sub}(X)$ for $X \in {\St_{\CCC}}$ is called a \textit{subobject} (in the category $ {\St_{\CCC}}$). 

\begin{Warn}
	At the moment, there is no good notion of the underlying topological space of a given $\CCC$-stack (nor $\CCC_{\geq 0}$-stack) in general, hence also no good notion of an open substack. Even so, a subobject $X' \to X$ should not be thought of as an open immersion. In the algebraic case, for example, it is not true that any monomorphism is locally of finite presentation. In extension, the pseudocomplement should not be thought of as the open complement in general. On the other hand, it seems reasonable to expect that a good definition of underlying topological spaces entails that the pseudocomplement of a closed immersion between connective stacks is an open immersion.
\end{Warn}	

\begin{Prop}
	\label{Prop:PseudoC}
	The prestack $Y \setminus X$ is a subobject of $Y$. In fact, it is the largest subobject of $Y$ such that $(Y \setminus X) \times_Y X = \emptyset$.
\end{Prop}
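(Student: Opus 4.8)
The plan is to verify the monomorphism claim componentwise and then characterise $Y \setminus X$ by a universal property inside $\mathrm{Sub}(Y)$. First I would check that $T \mapsto (Y\setminus X)(T)$ is a genuine subfunctor of $Y$: if $\xi\colon T \to Y$ satisfies $T\times_Y X \simeq \emptyset$ and $g\colon S \to T$ is any morphism, then $S \times_Y X \simeq S\times_T(T\times_Y X)\simeq S\times_T\emptyset\simeq\emptyset$ because $\emptyset$ is strictly initial; moreover the condition depends only on the connected component of $\xi$ in the space $Y(T)$, since homotopic maps to $Y$ have equivalent fibre products. Hence $(Y\setminus X)(T)\subseteq Y(T)$ is a union of connected components, so each map $(Y\setminus X)(T)\to Y(T)$ is the inclusion of such a union, and therefore $Y\setminus X \to Y$ is a monomorphism of prestacks. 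To see that $Y\setminus X$ is in fact a $\CCC$-stack, and so a subobject in $\St_\CCC$, I would use that the empty stack admits no nontrivial cover: given a $J$-covering family $\{T_\alpha\to T\}$ and $\xi\in Y(T)$ with $\xi|_{T_\alpha}\in(Y\setminus X)(T_\alpha)$ for all $\alpha$, set $W\coloneqq T\times_Y X$; then $\{T_\alpha\times_T W\to W\}=\{\emptyset\to W\}$ is a covering family, so $\emptyset \to W$ is an effective epimorphism and $W\simeq|\check{C}(\emptyset/W)|\simeq\emptyset$. Since $Y$ is a stack and $Y\setminus X\to Y$ is a monomorphism, this is enough for descent of $Y\setminus X$.

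Next I would record the property $(Y\setminus X)\times_Y X\simeq\emptyset$. For a representable $T$, a $T$-point of $(Y\setminus X)\times_Y X$ amounts to a $T$-point $\xi$ of $Y\setminus X$, hence in particular $T\times_Y X\simeq\emptyset$, together with a lift of $\xi$ along $X\to Y$; but such a lift is exactly a section over $T$ of $T\times_Y X\to T$, and $\emptyset$ admits no $T$-points unless $T\simeq\emptyset$, which fails for representable $T$. So $((Y\setminus X)\times_Y X)(T)\simeq\emptyset$ for all representable $T$, whence $(Y\setminus X)\times_Y X\simeq\emptyset$.

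Finally, for maximality I would take any subobject $\iota\colon Z\hookrightarrow Y$ with $Z\times_Y X\simeq\emptyset$. For every representable $T$ and $T$-point $\zeta$ of $Z$, the composite $\xi\colon T\xrightarrow{\zeta}Z\xrightarrow{\iota}Y$ satisfies $T\times_Y X\simeq T\times_Z(Z\times_Y X)\simeq T\times_Z\emptyset\simeq\emptyset$ by pasting of pullback squares, so $\xi\in(Y\setminus X)(T)$. This produces a factorisation $Z\to Y\setminus X\to Y$ of $\iota$ through the monomorphism $Y\setminus X\to Y$; since $\iota$ is a monomorphism, so is $Z\to Y\setminus X$, and therefore $Z$ is a subobject of $Y\setminus X$. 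Combined with the previous paragraph, this exhibits $Y\setminus X$ as the largest subobject of $Y$ whose intersection with $X$ is empty.

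The argument is essentially formal, and I do not expect a serious obstacle; the two points that need care are the functoriality and ``union of components'' observation that makes $Y\setminus X$ a well-defined subfunctor, and the verification that $Y\setminus X$ satisfies descent, which rests on the empty stack not being covered by any nonempty family. If one only wants the statement in $\PPP(\Aff_\CCC)$, the descent step may be omitted.
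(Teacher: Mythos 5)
Your proposal is correct and follows essentially the same route as the paper's proof: the monomorphism claim comes from $(Y\setminus X)(T)$ being a full subspace (union of components) of $Y(T)$, descent is checked by observing that a covering family of $T$ pulls back to the effective epimorphism $\emptyset \to T\times_Y X$ (the paper phrases this as $T$ being the colimit of the \v{C}ech nerve of $\bigsqcup T_\alpha \to T$ together with universality of colimits), and maximality is the same formal factorization argument. You spell out a few details the paper leaves implicit (functoriality of the condition, the verification that $(Y\setminus X)\times_Y X\simeq\emptyset$ on representables), but there is no substantive difference.
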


\begin{proof}
	Since the space $(Y \setminus X)(T)$ is a full subspace of $Y(T)$ by construction for any $T \in \Aff_{\CCC}$,  the morphism $Y \setminus X \to Y$ is a monomorphism, hence the first claim follows. 
	
	Let $\{U_\alpha \to U\}$ be a covering family in ${\Aff_{\CCC}}$. Then $U$ is the colimit of the \v{C}ech nerve of $\bigsqcup U_\alpha \to U$ by \cite[Lem.\ 6.2.3.19]{LurieHTT}. It follows that $U \times_Y X = \emptyset$ if and only if $U_\alpha \times_Y X = \emptyset$ for all $\alpha$. Thus $Y \setminus X$ is a stack, since $Y$ is.
	
	It is clear that $(Y \setminus X) \times_Y X = \emptyset$, and that any other subobject $W \subset X$ such that $W \times_YX = \emptyset$ factors through $Y \setminus X$.
\end{proof}

\begin{Rem}
	\label{Rem:highbrow}
	Here is a more highbrow argument for Proposition \ref{Prop:PseudoC}, which also provides another useful perspective.  Factorize $f \colon X \to Y$ through its image $\im (f) \to Y$, which is a monomorphism. Now observe that $\mathrm{Sub}(Y)$ is a locale, hence a Heyting algebra, hence is pseudocomplemented, meaning that for any object $U$ in $\mathrm{Sub}(Y)$ (considered as lattice) there exists a largest $U^* \in \mathrm{Sub}(Y)$ such that $U \wedge U^* = 0$ holds in $\mathrm{Sub}(Y)$ \cite[\S 7]{BlythLattice}. Since the meet $(-) \wedge (-)$ in $\mathrm{Sub}(Y)$ corresponds to $(-) \times_Y (-)$, it holds that $\im(f)^* \simeq Y \setminus X$.
\end{Rem}

\begin{Exm}
	Let $Z \to X$ be a closed immersion of schemes in the sense of ordinary derived algebraic geometry. Then $X \setminus Z$ is the open complement of $Z$ in $X$. If $U \to X$ is an open immersion of schemes, then $X \setminus (X \setminus U) \simeq U$. In general, for any quasi-compact morphism $f\colon X \to Y$ of schemes, with $Z$ the scheme-theoretic image of $f_\cl \colon  X_\cl \to Y_\cl$, it holds that $Y \setminus X$ is the open subscheme of $Y$ supported on $Y_\cl \setminus Z_\cl$, by Remark \ref{Rem:highbrow} and \cite[\href{https://stacks.math.columbia.edu/tag/01R8}{Tag 01R8}]{stacks-project}.
\end{Exm}

\subsection*{Vanishing loci and affine subobjects}
Let $Y \in \St_{\CCC}$ and $\MM \in \QCoh(Y)$ be given.

\begin{Def}
	The \textit{vanishing locus} of $\MM$ is the subobject $V(\MM) \to Y$ such that $V(\MM)(T)$ is the space of morphism $g \colon  T \to Y$ for which $g^*\MM \simeq 0$.
\end{Def}

\begin{Exm}
	Write $\Spec^\nc(\LSym_Y \MM) \setminus \{0\}$ for the pseudocomplement of the zero section $Y \to \Spec^\nc(\LSym_Y \MM)$. Then the vanishing locus of $\MM$ is equivalent to the pseudocomplement of $\Spec^\nc(\LSym_Y \MM) \setminus \{0\} \to Y$.
\end{Exm}

Let $A \in \DAlg$. Since the tensor product in $\Mod_A$ commutes with colimits in each variable separately, the symmetric monoidal category $\Mod_A$ is closed. We write $\Map_A(-,-)$ for the internal mapping object. Recall that $M \in \Mod_A$ is \textit{perfect} if the natural map
\[ M \otimes_A \Map_A(M,A) \to \Map_A(M,M) \]
is an equivalence \cite[Def.~1.2.3.6]{ToenHAGII}.

\begin{Def}
	A quasi-coherent module $\MM$ on $Y \in \St_{\CCC}$ is \textit{perfect} if $\MM_C \in \Mod_C$ is perfect for all $\Spec^\nc C \to Y$.
\end{Def}

\begin{Prop}
	\label{Prop:VanLocAff}
	If $\MM$ is perfect, then the vanishing locus $V(\MM) \to Y$ is nonconnectively affine.
\end{Prop}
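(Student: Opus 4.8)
The statement is local on $Y$, so by Proposition \ref{Prop:AffLoc} it suffices to treat the case $Y = \Spec^\nc A$ for $A \in \DAlg_\CCC$ and $\MM = M$ a perfect $A$-module; the plan is to show $V(M) \to \Spec^\nc A$ is nonconnectively affine, i.e.\ of the form $\Spec^\nc(A/\!/M)$ for a suitable $\CCC$-algebra. First I would record what $V(M)$ represents: by definition $V(M)(T)$ is the space of maps $\varphi \colon \Spec^\nc B \to \Spec^\nc A$, i.e.\ maps $A \to B$ of $\CCC$-algebras, such that $M \otimes_A B \simeq 0$. The idea is to express this vanishing condition as a lifting/extension condition against a single $\CCC$-algebra. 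Note that $M \otimes_A B \simeq 0$ holds if and only if the identity $\id_{M \otimes_A B}$ is nullhomotopic, equivalently the zero map $M \otimes_A B \to M \otimes_A B$ is an equivalence, equivalently (since the tensor unit detects this) $\Map_B(M \otimes_A B, M \otimes_A B) \simeq 0$.

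The cleaner route uses perfectness via the evaluation/coevaluation. Since $M$ is perfect over $A$, it is dualizable in $\Mod_A$; write $M^\vee = \Map_A(M,A)$. Then $M \otimes_A B \simeq 0$ over $B$ if and only if the coevaluation $A \to M \otimes_A M^\vee$ becomes zero after $-\otimes_A B$, equivalently the composite $\End_A(M) \xrightarrow{\ } \cdots$ detects it; most economically, $M \otimes_A B \simeq 0$ iff the unit map $B \to \Map_B(M_B, M_B) \simeq (\End_A M)\otimes_A B$ is the zero map. Now $\End_A(M) = M \otimes_A M^\vee$ is a perfect $A$-module, hence so is its augmentation ideal $I \coloneqq \fib(\End_A(M) \to A)$ — wait, that map goes the wrong way; instead use that the unit $A \to \End_A(M)$ has a perfect cofiber. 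The condition "$A \to \End_A(M)$ becomes an equivalence after $-\otimes_A B$, with both sides then being $0$" I would package as: $V(M)$ is the vanishing locus of the perfect module $N \coloneqq \cofib(A \to \End_A M) \oplus A[\text{shift}]$...

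Rather than belabour this, here is the approach I would actually carry out. Since $M$ is perfect, $M \simeq \colim$ of a finite diagram built from $A$ by shifts and cofibers, so there is a finite filtration reducing $V(M)$ to a finite intersection (limit) of vanishing loci of shifts of $A$-algebras of the form $A/\!/(a_1,\dots,a_n)$; more precisely, locally a perfect module admits a finite presentation $A^{\oplus m_1}[\,\cdot\,] \to A^{\oplus m_0}$, and $M \otimes_A B \simeq 0$ is then equivalent to the resulting matrix becoming invertible over $B$, which is the condition that $B$ is an algebra over the $\CCC$-algebra $A[x_{ij}]/(\text{the entries of } \mathrm{adj}\cdot(-) - \mathrm{id})$ obtained by inverting the determinant — i.e.\ an affine condition. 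Since nonconnectively affine morphisms are stable under base change (Lemma \ref{Lem:AffineBC}) and closed under limits over $Y$ (again by descent, Corollary \ref{Cor:QCohCont}, together with Corollary \ref{Cor:RelSpec}), this exhibits $V(M) \to \Spec^\nc A$ as nonconnectively affine, and then Proposition \ref{Prop:AffLoc} globalizes to arbitrary $Y$.

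The main obstacle I anticipate is the "finite presentation of a perfect module" step: perfectness only gives a finite colimit description Zariski-locally, so one must either (a) argue on a further cover and then descend using Corollary \ref{Cor:QCohCont}, or (b) give a presentation-free argument. The presentation-free version would run: $V(M)$ is the subobject of $Y$ classifying maps $T \to Y$ with $M_T \simeq 0$; by dualizability of $M$ this is the locus where the coevaluation $\OO_T \to M_T \otimes M_T^\vee$ vanishes, equivalently where the identity section of $\End(M_T)$ agrees with the zero section, i.e.\ the equalizer of two maps $T \to \Spec^\nc(\LSym_Y \End(M)^\vee)$ — and such an equalizer is a pullback of a diagonal, hence nonconnectively affine over $Y$ whenever $\End(M)$ is perfect (which it is, since $M$ is), because $\Spec^\nc(\LSym_Y \mathscr{P})$ is nonconnectively affine over $Y$ for perfect $\mathscr{P}$ and nonconnectively affine morphisms are closed under base change and composition. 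I would present this second argument as the clean proof, falling back to the local-presentation argument only if the identification of the equalizer with an affine morphism needs the extra input that $\LSym$ of a perfect module has affine relative spectrum — which itself reduces, locally, to the free case where it is clear.
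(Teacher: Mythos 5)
Your second, ``presentation-free'' argument is essentially correct, and it is in substance the same mechanism as the paper's proof --- but where the paper outsources the key step, you reconstruct it. The paper reduces to $Y = \Spec^\nc A$ and then simply cites \cite[Prop.~1.2.10.1]{ToenHAGII} for the existence of an $\Einfty$-algebra $A_\MM \in \CAlg_{\Theta A}$ whose mapping spaces are empty or contractible according to whether $\MM$ vanishes, and finally transports this to $\DAlg_A$ along the left adjoint $\Theta_L$ of the forgetful functor $\Theta \colon \DAlg_A \to \CAlg_{\Theta A}$. Your construction --- realize $V(\MM)$ as the pullback of the diagonal of $\Spec^\nc(\LSym_Y \End(\MM)^\vee) \to Y$ along the pair of sections $(\id,0)$ --- is precisely the content of the To\"en--Vezzosi lemma, carried out directly with $\LSym_Y$ so that no passage through $\CAlg$ and back via $\Theta_L$ is needed. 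What your version buys is self-containedness; what the paper's buys is brevity. Two small corrections: $\Spec^\nc(\LSym_Y \mathscr{P})$ is nonconnectively affine over $Y$ for \emph{any} quasi-coherent $\mathscr{P}$, by definition of the relative spectrum --- perfectness of $\End(\MM)$ is instead what you need to identify its $T$-points with sections of $\End(\MM)_T$ and to know that $\End(\MM)$ base-changes correctly; and your first (matrix-presentation) approach should be discarded outright, since in a general geometric context ``perfect'' only means dualizable and there is no Zariski-local finite cell structure to appeal to.

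One step in your clean argument does need to be supplied. The pullback of the diagonal along $(\id,0)$ is not a priori a subobject of $Y$: over $g \colon T \to Y$ it computes the \emph{space of paths} from $\id_{\MM_T}$ to $0$ inside the space of sections of $\End(\MM)_T$, and path spaces between two homotopic points are generally nontrivial torsors rather than points. You must check that this space is empty or contractible, so that the pullback really is the subobject $V(\MM)$: it is empty when $\MM_T \not\simeq 0$ because then $\id \neq 0$ in $\pi_0\End(\MM_T)$, and it is contractible when $\MM_T \simeq 0$ because then $\End(\MM_T) \simeq \End(0) \simeq 0$ and the ambient space of sections is already a point. This ``empty or contractible'' dichotomy is exactly the clause in the statement the paper cites; with it added, your proof is complete.
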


\begin{proof}
	We may assume $Y = \Spec^\nc A$ by Proposition \ref{Prop:AffLoc}. The forgetful functor $\Theta \colon  \DAlg_A \to \CAlg_{\Theta A}$ has a left adjoint, written $\Theta_L$, by \cite[Prop.~4.2.27]{RaksitHKR}. By \cite[Prop.~1.2.10.1]{ToenHAGII}, there is an $A_\MM \in \CAlg_{\Theta A}$ such that, for all $C \in \CAlg_{\Theta A}$, it holds 
	\[ \CAlg_{\Theta A}(A_\MM,C) = \begin{cases}
		* & \text{if } \MM_C = 0, \\
		\emptyset & \text{otherwise}.
	\end{cases} \]
	Now since $\DAlg_A(\Theta_LA_\MM,R) \simeq \CAlg_{\Theta A}(A_\MM,\Theta R)$ for all $R \in \DAlg_A$, it follows that $\Spec^\nc (\Theta_LA_\MM)$ is the vanishing locus of $\MM$.
\end{proof}

\begin{Cor}
	Let $f\colon X \to Y$ be a nonconnectively affine morphism of $\CCC$-stacks such that $f_*\OO_X$ is perfect. Then the pseudocomplement $Y \setminus X$ is nonconnectively affine over $Y$.
\end{Cor}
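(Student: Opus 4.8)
The plan is to realize $Y \setminus X$ as the vanishing locus of a perfect quasi-coherent module on $Y$, and then invoke Proposition \ref{Prop:VanLocAff}.

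Since $f$ is nonconnectively affine, Corollary \ref{Cor:RelSpec} identifies it with $\Spec^\nc_Y\BB \to Y$ for some $\BB \simeq f_*\OO_X \in \QAlg(Y)$; let $\MM \coloneqq U_Y\BB \in \QCoh(Y)$ be its underlying quasi-coherent module, which is perfect by hypothesis. I claim $Y \setminus X \simeq V(\MM)$. To verify this I would compare $T$-points for $T\in\Aff_\CCC$: a morphism $g\colon T\to Y$ lies in $(Y\setminus X)(T)$ iff $T\times_Y X\simeq\emptyset$, and $T\times_Y X\simeq \Spec^\nc_T(g^*\BB)$ by Lemma \ref{Lem:SpecBC}. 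Now a nonconnective relative spectrum $\Spec^\nc_T(\CC)$ is empty iff $\CC\simeq 0$ in $\QAlg(T)$ — by descent this reduces to the affine statement $\Spec^\nc(C)\simeq\emptyset\Leftrightarrow C\simeq 0$, which holds since $\Spec^\nc$ is fully faithful and the zero ring has empty spectrum — and a $\CCC$-algebra vanishes exactly when its underlying module does, the forgetful functor being conservative. Hence $T\times_Y X\simeq\emptyset$ iff $g^*\MM\simeq 0$, i.e.\ iff $g\in V(\MM)(T)$; so $Y\setminus X$ and $V(\MM)$ coincide as subobjects of $Y$, and Proposition \ref{Prop:VanLocAff} then gives that $V(\MM)\to Y$, hence $Y\setminus X\to Y$, is nonconnectively affine.

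The step that is not purely formal is the identification $T\times_Y X\simeq\emptyset\Leftrightarrow g^*\MM\simeq 0$, i.e.\ the passage between vanishing of an algebra and vanishing of its underlying module, together with the fact that an empty relative spectrum forces the algebra to be zero; everything else is bookkeeping with the adjunctions and descent already in place. One may also bypass the global formulation by first reducing to $Y=\Spec^\nc A$ affine via Proposition \ref{Prop:AffLoc} (the pseudocomplement commutes with base change along $\Spec^\nc A\to Y$, as one checks directly on $T$-points), so that $X=\Spec^\nc B$ with $B\in\DAlg_A$, whereupon the claim becomes the elementary fact that $A'\otimes_A B\simeq 0$ iff its underlying $A'$-module $A'\otimes_A\bar B$ vanishes, $\bar B$ being the perfect underlying $A$-module of $B$.
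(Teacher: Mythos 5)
Your argument is correct and follows essentially the same route as the paper: identify $Y\setminus X$ with the vanishing locus of $f_*\OO_X$ and then apply Proposition \ref{Prop:VanLocAff}. The only cosmetic difference is that you justify the identification via Lemma \ref{Lem:SpecBC} (writing $T\times_Y X\simeq\Spec^\nc(g^*\BB)$ and testing vanishing of the algebra), whereas the paper cites the equivalent base-change statement Lemma \ref{Lem:AffineBC} ($g^*f_*\OO_X\simeq f'_*\OO_{X_T}$); your extra detail on why an empty relative spectrum forces the algebra to vanish is a reasonable elaboration of what the paper leaves implicit.
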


\begin{proof}
	By Lemma \ref{Lem:AffineBC}, the pseudocomplement of $f$ is the vanishing locus of $f_* \OO_X$. Hence, the statement follows from Proposition \ref{Prop:VanLocAff}.
\end{proof}

\subsection*{Equivariant $\CCC$-geometry}
\label{Par:Equivariant_C_geometry}
Let $\M$ be a commutative monoid, and $A \in {\DAlg_{\CCC}}$. Consider $A$ as object in $\DAlg_\CCC^\M \coloneqq \DAlg_{\CCC^\M}$ by endowing it with the trivial grading, and put $\DAlg^\M_A \coloneqq (\DAlg_{\CCC^\M})_{A/}$. Let then 
\[
	\QAlg^\M(-) \colon  \St_{\CCC}^\op \to \Cat
\]
be the right Kan extension of the functor $\Aff_{\CCC}^\op \to \Cat$ that sends $\Spec^\nc A$ to $\DAlg^\M_A$. 

Observe that the unique morphism of algebraic contexts $\Mod_\Z \to \CCC$ sends $\Z[t,t^{-1}]$ to $\mathbbm{1}_\CCC[\Z]$. Put $\G_{m,\CCC} \coloneqq \Spec (\mathbbm{1}_\CCC[\Z])$, and observe that the group structure on $\G_m$ induces a group structure on $\G_{m,\CCC}$. 

For $X \in {\St_{\CCC}}$, write $\Aff^\nc(X)^{\G_{m,\CCC}}$ for the category of nonconnectively affine morphisms $Y \to X$, together with a $\G_{m,\CCC}$-action on $Y$, such that $Y \to X$ is $\G_{m,\CCC}$-equivariant when $X$ is endowed with the trivial $\G_{m,\CCC}$-action.

\begin{Prop}
	\label{Prop:ZgradeGm}
	We have an equivalence $\QAlg^\Z(X)^\op \simeq \Aff^\nc(X)^{\G_{m,\CCC}}$. 
\end{Prop}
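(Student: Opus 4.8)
The plan is to establish the affine case first and then globalize by descent. Fix $A\in\DAlg_\CCC$ and set $X=\Spec^\nc A$, so that $\QAlg(X)\simeq\DAlg_A$ and $\QAlg^\Z(X)\simeq\DAlg^\Z_A$. By Corollary~\ref{Cor:RelSpec} the functor $\Spec^\nc(-)\colon\DAlg_A^\op\to\St_X$ is fully faithful with essential image the nonconnectively affine morphisms over $X$, and from the functor-of-points description it sends coproducts in $\DAlg_A$ (given by $\otimes_A$) and the initial object $A$ to products and the terminal object $X$ of $\St_X$. Being a fully faithful, finite-product-preserving functor, it therefore identifies, for any cogroup object $H$ of $\DAlg_A$, the category of $H$-comodule algebras in $\DAlg_A$ with the category of $\Spec^\nc H$-equivariant objects among the nonconnectively affine $X$-stacks. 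Applying this with $H=A[\Z]=A\otimes\mathbbm{1}_\CCC[\Z]$, so that $\Spec^\nc H\simeq\G_{m,\CCC}\times X$ with the induced group structure, yields a natural equivalence between $A[\Z]$-comodule algebras in $\DAlg_A$ and $\Aff^\nc(\Spec^\nc A)^{\G_{m,\CCC}}$.

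Next I identify $A[\Z]$-comodule algebras in $\DAlg_A$ with $\DAlg^\Z_A$. This is the relative analogue of Proposition~\ref{Prop:GradvsAct}: the forgetful functor $\DAlg^\Z_A\to\DAlg_A$ admits the right adjoint $B\mapsto B\otimes_A A[\Z]$, and one checks exactly as in the proof of Proposition~\ref{Prop:GradvsAct} (equivalently, by slicing that statement under the trivially graded algebra $A$, using $A[\Z]\simeq A\otimes\mathbbm{1}_\CCC[\Z]$) that this adjunction is comonadic with associated comonad $B\mapsto B\otimes_A A[\Z]$; since $A[\Z]$ is a Hopf algebra in $\DAlg_A$, the comodules over this comonad are precisely the $A[\Z]$-comodule algebras. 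Combining with the previous paragraph, and using Lemma~\ref{Lem:SpecBC} together with functoriality of actions under base change for naturality, gives a natural equivalence
\[
	(\DAlg^\Z_A)^\op\simeq\Aff^\nc(\Spec^\nc A)^{\G_{m,\CCC}}
\]
of functors on $\Aff_\CCC$.

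To globalize, recall that $\QAlg^\Z(-)$ is by definition the right Kan extension along $\Aff_\CCC^\op\to\St_\CCC^\op$ of $\Spec^\nc A\mapsto\DAlg^\Z_A$; since $(-)^\op$ preserves limits in $\Cat$, the assignment $X\mapsto\QAlg^\Z(X)^\op$ is the right Kan extension of $\Spec^\nc A\mapsto(\DAlg^\Z_A)^\op$. By Proposition~\ref{Prop:KanExtEDesc} and the fact that every $\CCC$-stack is a colimit of affines, it then suffices to check that $X\mapsto\Aff^\nc(X)^{\G_{m,\CCC}}$ sends colimits in $\St_\CCC$ to limits in $\Cat$ and agrees on affines with the equivalence just constructed, the latter being the previous step. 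For the former, observe that the \v{C}ech nerve of $X\to X\times B\G_{m,\CCC}$ is $X\times\G_{m,\CCC}^{\times\bullet}$, and that nonconnectively affine morphisms satisfy effective descent by Corollaries~\ref{Cor:RelSpec} and~\ref{Cor:QCohCont}; hence $\Aff^\nc(X)^{\G_{m,\CCC}}\simeq\Aff^\nc(X\times B\G_{m,\CCC})\simeq\QAlg(X\times B\G_{m,\CCC})^\op$, which sends colimits in $X$ to limits by Corollary~\ref{Cor:QCohCont}, since $(-)\times B\G_{m,\CCC}$ preserves colimits. This produces the desired natural equivalence $\QAlg^\Z(X)^\op\simeq\Aff^\nc(X)^{\G_{m,\CCC}}$ for all $X\in\St_\CCC$.

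The main obstacle is the relative form of Proposition~\ref{Prop:GradvsAct} together with the bookkeeping needed to match the three descriptions — $\Z$-graded $\OO_X$-algebras, $\OO_X[\Z]$-comodule algebras, and $\G_{m,\CCC}$-equivariant nonconnectively affine $X$-stacks — keeping careful track of the opposite categories and of the condition that the $\G_{m,\CCC}$-action on the base $X$ be trivial, which on the algebra side corresponds exactly to working relative to $A$ equipped with its trivial grading.
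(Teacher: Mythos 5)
Your proposal is correct and takes essentially the same route as the paper: the paper's entire proof is ``This follows from Proposition \ref{Prop:GradvsAct}'', and what you have written is precisely the elaboration it leaves implicit --- the relative (comonadic) form of Proposition \ref{Prop:GradvsAct} over a trivially graded $A$, the translation of $A[\Z]$-comodule algebras into $\G_{m,\CCC}$-equivariant nonconnectively affine stacks via the fully faithful, product-preserving $\Spec^\nc(-)$, and the globalization by descent/right Kan extension. No gaps; your version is simply a fully spelled-out account of the intended argument.
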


\begin{proof}
	This follows from Proposition \ref{Prop:GradvsAct}.
\end{proof}

\begin{Rem}
	The preceding discussion can also be carried out, \textit{mutatis mutandis}, in the connective case.
\end{Rem}

Let $G$ be a group object in ${\St_{\CCC}}$. We use the theory of group objects in topoi as expounded in \cite{NikolausPrincipal}. Let $ {\St_{\CCC}^G}$ be the category of $\CCC$-stacks with a $G$-action, with $G$-equivariant morphisms between them.

We write $BG$ for the colimit of the simplicial diagram in ${\St_{\CCC}}$ that encodes the group structure of $G$.  Then for $X \in  {\St_{\CCC}^G}$, we write $[X/G]$ for the colimit of the simplicial diagram which encodes the $G$-action on $X$.

Observe, for $f\colon  X \to Y$ in $ {\St_{\CCC}^G}$, it holds that $f$ is the pullback of $\bar{f} \colon  [X/G] \to [Y/G]$ along the projection map $Y \to [Y/G]$. Moreover, any map $T \to [X/G]$ is of the form $[P/G] \to [X/G]$ for some $G$-equivariant $P \to X$.

Note that, if $X,G \in {\St_{\CCC_{\geq 0}}}$, then so is $[X/G]$, since $i$ commutes with colimits.

\begin{Not}
	For $M \in \CCC^\Z$ and $d \in \Z$, we define the \textit{twist} of $M$ by $d$ as the $\Z$-graded $\CCC$-module $M(d)$ such that $M(d)_n = M_{n+d}$ for all $n \in \Z$. 
\end{Not}

\subsection*{Projective spectra}
Consider the map $j\colon 0 \to \N$ of commutative monoids. Recall from Proposition \ref{Prop:EquivAdj} that we have an adjunction
\begin{center}
    \begin{tikzcd}
        \CCC  \arrow[r, shift left, "j_!"] &  \CCC^\N. \arrow[l, shift left, "j^!"]
    \end{tikzcd}
\end{center}
Observe that $j_!$ sends $M \in \CCC$ to the graded object $M(0)$ that is $M$ given degree $0$, and $j^!$ sends $K \in \CCC^\N$ to $K_0$. Using this description, we see that $j^!$ is also left adjoint to $j_!$. 
\begin{Lem}
	The functor $j^!\colon  \CCC^\N \to \CCC$ is a morphism of algebraic contexts.
\end{Lem}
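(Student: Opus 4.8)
The plan is to check the four defining conditions for a morphism of algebraic contexts for the functor $j^!\colon \CCC^\N \to \CCC$, $K \mapsto K_0$, using throughout that $j^!$ is a left adjoint of $j_!$ (as observed just above the statement) in addition to being, by construction, its right adjoint.

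Three of the four conditions I would dispatch quickly. First, $j^!$ preserves colimits: it is a left adjoint, and equivalently colimits in $\CCC^\N = \Fun(\N,\CCC)$ are computed homogeneous-weight-wise while $j^!$ is, under the identification $\Fun(0,\CCC)\simeq\CCC$, evaluation at $0 \in \N$. Second, $j^!$ is right t-exact — in fact t-exact — because $\CCC^\N_{\geq 0} = \Fun(\N,\CCC_{\geq 0})$, so if $K$ is connective then $j^!K = K_0$ is connective. Third, $j^!$ carries $(\CCC^\N)^0$ into $\CCC^0$: by definition $(\CCC^\N)^0$ is generated under finite coproducts by objects of the form $N(a)$ with $N\in\CCC^0$ and $a\in\N$, each of which is $N$ placed in a single homogeneous weight; hence $j^!(N(a))$ is either $N$ or $0$, both of which lie in $\CCC^0$ (closed under finite, including empty, coproducts), and $j^!$ preserves finite coproducts.

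The one point I would treat with some care is that $j^!$ is symmetric monoidal, since this is a statement about monoidal structures and not merely about underlying objects. Here I would argue as follows. The Day-convolution unit of $\CCC^\N$ is $\mathbbm{1}_\CCC$ placed in weight $0$ — equivalently $j_!\mathbbm{1}_\CCC$, since $j_!$ is symmetric monoidal by Proposition \ref{Prop:EquivAdj} — so $j^!\mathbbm{1}_{\CCC^\N}\simeq \mathbbm{1}_\CCC$. For the tensor product, the Day-convolution formula gives, for $K,K'\in\CCC^\N$,
\[
j^!(K\otimes K') = (K\otimes K')_0 = \bigoplus_{0=b+c}K_b\otimes K'_c = K_0\otimes K'_0 = j^!K\otimes j^!K',
\]
the only way to write $0$ as a sum in $\N$ being $0=0+0$. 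The actual content is that these identifications are the structure maps of the (op)lax symmetric monoidal structure that $j^!$ inherits from being adjoint on both sides to the symmetric monoidal functor $j_!$; being equivalences, they upgrade $j^!$ to a strong symmetric monoidal functor. This coherence step is the only real obstacle — one must ensure the evident weight-$0$ identifications assemble into an honest symmetric monoidal functor — which is why I would route it through the monoidal functoriality of Day convolution, as packaged in the adjunctions between $j^!$ and $j_!$, rather than verifying it object by object.
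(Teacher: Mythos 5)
Your proposal is correct and follows essentially the same route as the paper: the paper also reduces to checking that $j^!$ is symmetric monoidal and then verifies this via the Day convolution formula $(K \otimes K')_0 = \bigoplus_{b+c=0} K_b \otimes K'_c = K_0 \otimes K'_0$, using that $0=0+0$ is the only decomposition in $\N$. Your additional attention to the coherence of the monoidal structure maps (routing them through the adjunctions with the symmetric monoidal functor $j_!$) is a reasonable refinement of a point the paper leaves implicit, but it is the same argument.
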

\begin{proof}
	It suffices to show that $j^!$ is symmetric monoidal. This follows from the fact that
	\[
	j^!(K \otimes K') = (K \otimes K')_0 = \bigoplus_{b+c=0}(K_b \otimes K'_c) = K_0 \otimes K'_0
	\]
	by the Day convolution formula.
\end{proof}
We thus get an adjunction
\begin{center}
    \begin{tikzcd}
        \DAlg_{\CCC}^{\N} \arrow[r, shift left, "j^!"] & \DAlg_{\CCC}. \arrow[l, shift left, "j_!"]
    \end{tikzcd}
\end{center}
Observe, for $K \in \CCC^\N$, the counit $\epsilon$ of $j_! \dashv j^!$ is the inclusion $K_0 \to K$, while to unit $\eta$ of $j^! \dashv j_!$ is the projection $K \to K_0$. It follows that $\eta \circ \epsilon \simeq \id$, and hence also the composition
\[
B_0 = j_!j^! B \to B \to j_!j^!B = B_0
\]
is invertible, for any $B \in \DAlg_{\CCC}^{\N}$. From hereon, fix an $\N$-graded $\CCC$-algebra $B$. We will define the projective spectrum of $B$.

\begin{Def}
	Write $B_+$ for the graded object such that $(B_+)_0 = 0$ and $(B_+)_n = B_n$ for $n >0$. 
\end{Def}	

\begin{Lem}
	The sequence $B_+ \to B \to B_0$ is a fiber sequence.
\end{Lem}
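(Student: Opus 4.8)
The plan is to reduce everything to a weight-by-weight check. First I would note that the asserted fiber sequence is to be read at the level of underlying graded $\CCC$-modules, since $B_+$ carries no algebra structure; so the statement lives in $\CCC^\N$. Because $\N$ is a discrete category, $\CCC^\N = \Fun(\N,\CCC)$ is the product $\prod_{n \in \N}\CCC$, which is stable (as $\CCC$ is) and in which all limits and colimits, in particular fibers and cofibers, are computed weight by weight; equivalently, the evaluation functors $\ev_n \colon \CCC^\N \to \CCC$ are exact and jointly conservative, so it suffices to check that $\ev_n(B_+) \to \ev_n(B) \to \ev_n(B_0)$ is a fiber sequence for every $n$.

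Next I would pin down the two morphisms. The map $B \to B_0 = j_!j^!B$ is the unit $\eta$ of the adjunction $j^! \dashv j_!$, which, as recorded just above, is the projection onto the weight-zero part; using the left Kan extension formula $j_!(K)_b = \bigoplus_{ja = b} K_a$ for $j \colon 0 \to \N$, this means $\ev_0(B_0) = j^!B = B_0$ and $\ev_b(B_0) = 0$ for $b \neq 0$, with $\ev_0(\eta) = \id_{B_0}$ and $\ev_n(\eta) = 0$ for $n > 0$. The map $B_+ \to B$ is the evident one, which is a well-defined morphism of $\CCC^\N$ precisely because $\N$ is discrete (a morphism is just a family of morphisms in $\CCC$): it is the zero map $0 \to B_0$ in weight $0$ and the identity $\id_{B_n}$ in weight $n > 0$. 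Equivalently, one may take $B_+ \to B$ to be the fiber of $\eta$ and then observe it has exactly this description.

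Finally I would evaluate: $\ev_0$ gives $0 \to B_0 \xrightarrow{\id} B_0$, and $\ev_n$ for $n > 0$ gives $B_n \xrightarrow{\id} B_n \to 0$; both are manifestly fiber sequences in $\CCC$. Hence $B_+ \to B \to B_0$ is a fiber sequence in $\CCC^\N$, and in particular $B_+ \simeq \fib(\eta)$ — the fiber being the same whether computed in $\CCC^\N$ or after applying the limit-preserving forgetful functor $\DAlg_{\CCC^\N} \to \CCC^\N$. I do not expect a genuine obstacle here; the only points needing care are that the statement concerns underlying graded modules (not algebras) and that $B \to B_0$ is exactly the weight-zero projection identified above, rather than some other algebra map.
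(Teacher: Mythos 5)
Your proof is correct and follows the same route as the paper: the paper likewise reduces to the observation that limits in $\CCC^\N$ are computed weight-wise and then checks the sequence in each weight, where it is immediate. You simply spell out the weight-$0$ and weight-$n>0$ cases and the identification of $B \to B_0$ with the weight-zero projection, which the paper leaves as "clear."
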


\begin{proof}
	Since limits in $\CCC^\N$ are computed degree-wise, it suffices to show that $(B_+)_n \to B_n \to (B_0)_n$ is a fiber sequence, for all $n \in \N$. The latter is clear.
\end{proof}

Endow $\Spec^\nc B$ with the $\G_{m,\CCC}$-action induced from Proposition \ref{Prop:ZgradeGm}, where we consider $B$ as $\Z$-graded by putting zero in negative degrees. Formally, one does this via the fully faithful map $k_!\colon \DAlg_{\CCC}^{\N} \to \DAlg_{\CCC}^{\Z}$ induced by $k\colon \N \to \Z$.

\begin{Def}
	Write $\Spec^\nc B_0 \to \Spec^\nc B$ as $V(B_+) \to \Spec^\nc B$. 
\end{Def}

\begin{Prop}
	The $\G_{m,\CCC}$-action on $\Spec^\nc B$ restricts to an action on $(\Spec^\nc B) \setminus V(B_+)$.
\end{Prop}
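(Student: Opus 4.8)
The plan is to show that the pseudocomplement $W \coloneqq (\Spec^\nc B) \setminus V(B_+)$ is a $\G_{m,\CCC}$-invariant subobject of $\Spec^\nc B$, and then that an invariant subobject automatically inherits the action. Write $a \colon \G_{m,\CCC} \times \Spec^\nc B \to \Spec^\nc B$ for the action map and $\sigma \coloneqq (\mathrm{pr}_1, a)$ for the associated shear automorphism of $\G_{m,\CCC} \times \Spec^\nc B$ (its inverse is $(\mathrm{pr}_1, a \circ (\mathrm{inv} \times \id))$, where $\mathrm{inv}$ is the inversion of $\G_{m,\CCC}$), so that $a = \mathrm{pr}_2 \circ \sigma$.

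First I would record that $V(B_+) = \Spec^\nc B_0 \hookrightarrow \Spec^\nc B$ is $\G_{m,\CCC}$-equivariant for the trivial action on $V(B_+)$. Indeed, $V(B_+) \to \Spec^\nc B$ is the relative spectrum of the augmentation $B \to B_0$, which is a morphism of $\Z$-graded $\CCC$-algebras once $B_0$ is placed in weight $0$; by Proposition \ref{Prop:ZgradeGm} (over the terminal $\CCC$-stack) it therefore corresponds to a $\G_{m,\CCC}$-equivariant morphism with trivial source action, i.e.\ $a \circ (\id \times \iota_{V(B_+)}) = \iota_{V(B_+)} \circ \mathrm{pr}_2$. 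This says exactly that $\sigma$ restricts to the inclusion $\id \times \iota_{V(B_+)}$ on $\G_{m,\CCC} \times V(B_+)$, so since $\sigma$ is an automorphism we get $a^{-1}(V(B_+)) = \sigma^{-1}(\mathrm{pr}_2^{-1}(V(B_+))) = \sigma^{-1}(\G_{m,\CCC} \times V(B_+)) = \G_{m,\CCC} \times V(B_+) = \mathrm{pr}_2^{-1}(V(B_+))$ as subobjects of $\G_{m,\CCC} \times \Spec^\nc B$.

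Next I would note that forming the pseudocomplement commutes with arbitrary pullback, i.e.\ $f^{-1}(Y \setminus X) \simeq Z \setminus f^{-1}(X)$ for any $f \colon Z \to Y$ and $X \to Y$ in $\St_\CCC$; this is a quick check from Proposition \ref{Prop:PseudoC} using that colimits are universal and initial objects strict in the $\infty$-topos $\St_\CCC$. Applying it with $f = a$ and with $f = \mathrm{pr}_2$, and combining with the previous paragraph, yields $a^{-1}(W) = (\G_{m,\CCC} \times \Spec^\nc B) \setminus a^{-1}(V(B_+)) = (\G_{m,\CCC} \times \Spec^\nc B) \setminus (\G_{m,\CCC} \times V(B_+)) = \mathrm{pr}_2^{-1}(W)$. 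In particular the inclusion $\G_{m,\CCC} \times W = a^{-1}(W) \hookrightarrow \G_{m,\CCC} \times \Spec^\nc B$ maps into $W$ under $a$, so $a$ restricts to a morphism $\bar a \colon \G_{m,\CCC} \times W \to W$.

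Finally I would upgrade $\bar a$ to a coherent $\G_{m,\CCC}$-action. The cleanest route is descent: the quotient map $\Spec^\nc B \to [\Spec^\nc B / \G_{m,\CCC}]$ is an effective epimorphism whose \v{C}ech nerve has $\G_{m,\CCC} \times \Spec^\nc B$ as $1$-simplices with the two face maps $a$ and $\mathrm{pr}_2$, so the identity $a^{-1}(W) = \mathrm{pr}_2^{-1}(W)$ just established is precisely a descent datum for the subobject $W$; since subobjects form a poset, the cocycle condition is automatic, and $W$ descends to a subobject $[W/\G_{m,\CCC}] \hookrightarrow [\Spec^\nc B/\G_{m,\CCC}]$ over $B\G_{m,\CCC}$. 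Pulling back along $\ast \to B\G_{m,\CCC}$ recovers $W$ equipped with a $\G_{m,\CCC}$-action, which restricts the given one by construction. I expect this last step --- producing an honest action rather than merely the restricted action map $\bar a$ --- to be the only genuinely delicate point: because $W \hookrightarrow \Spec^\nc B$ is a monomorphism, the space of $\G_{m,\CCC}$-actions on $W$ lifting $\bar a$ embeds into the corresponding space for $\Spec^\nc B$, so one really must exhibit a lift, and the descent argument does exactly that. (Alternatively one can restrict the whole bar construction $\G_{m,\CCC}^{\times n} \times \Spec^\nc B$ to $W$ by the same pullback computation, the face maps involving $a$ landing in $W$ by invariance and the rest being products of identities with projections and diagonals, and then take the geometric realization.)
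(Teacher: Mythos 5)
Your proof is correct, and its computational core is the same as the paper's: both arguments rest on the fact that $B \to B_0$ is a homogeneous map, so that $V(B_+) \to \Spec^\nc B$ is $\G_{m,\CCC}$-equivariant for the trivial action on the source, and hence the action map $a$ restricted to $\G_{m,\CCC} \times W$, where $W \coloneqq (\Spec^\nc B)\setminus V(B_+)$, has empty preimage of $V(B_+)$ and therefore lands in $W$. The differences are in the packaging. For the coherence step you single out as delicate, the paper simply notes at the outset that $W \to \Spec^\nc B$ is a monomorphism, so that it suffices to factor the single composite $\G_{m,\CCC}\times W \to \Spec^\nc B$ through it: factoring through a monomorphism is a property rather than structure, so the entire bar diagram restricts with a contractible space of choices once that one map factors. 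This is exactly your parenthetical alternative, and it already supplies the \emph{existence} of the lift, not merely its uniqueness, so the worry you raise is handled without descent. Your main route instead establishes the stronger equality $a^{-1}(W) = \mathrm{pr}_2^{-1}(W)$ --- via the shear automorphism and the compatibility of pseudocomplements with arbitrary pullback, both of which are correct and worth recording, the latter following directly from the $T$-point description of $Y\setminus X$ --- and then descends $W$ along $\Spec^\nc B \to [\Spec^\nc B/\G_{m,\CCC}]$, using that $\mathrm{Sub}(-)$ is a poset-valued sheaf so the cocycle conditions are automatic. This buys a cleaner conceptual statement (an invariant subobject descends to the quotient) at the cost of invoking effectivity of groupoid objects and descent for subobjects in the topos $\St_\CCC$; the paper's verification is a single Cartesian rectangle, essentially your second paragraph, and needs neither.
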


\begin{proof}
	Since $f\colon (\Spec^\nc B) \setminus V(B_+) \to \Spec^\nc B$ is a monomorphism, it suffices to show that the composition
	\[
		\G_{m,\CCC} \times ((\Spec^\nc B) \setminus V(B_+)) \xrightarrow{\id \times f} \G_{m,\CCC} \times \Spec^\nc B \xrightarrow{\sigma} \Spec^\nc B
	\]
	factors through $f$, where $\sigma$ is the action map. By definition of pseudocomplements, it suffices to show that the outer square in the diagram
	\begin{center}
\begin{tikzcd}
			\emptyset \arrow[r] \arrow[d] & \G_{m,\CCC} \times V(B_+) \arrow[r] \arrow[d, "h"] & V(B_+) \arrow[d] \\
		\G_{m,\CCC} \times ((\Spec^\nc B) \setminus V(B_+)) \arrow[r] & \G_{m,\CCC} \times \Spec^\nc B \arrow[r] & \Spec^\nc B
\end{tikzcd}
	\end{center}
	is Cartesian, where we take the square on the right to be Cartesian by definition. Observe, since $B \to B_0$ is homogeneous, the map $g\colon V(B_+) \to \Spec^\nc B$ is equivariant. It follows that $h$ is the map $\id \times g$, and hence the square on the left is indeed Cartesian.
\end{proof}

\begin{Def}
	We define the \textit{projective spectrum} of $B$ as the stack quotient
	\[
		\Proj B \coloneqq [((\Spec^\nc B) \setminus V(B_+)) / \G_{m,\CCC}]
	\]
	in ${\St_{\CCC}}$.
\end{Def}

\begin{Rem}
	The above discussion globalizes to a given $\BB \in \QAlg^\N(X)$, where $X \in {\St_{\CCC}}$, which produces the relative projective spectrum 
	\[
		\Proj \BB \coloneqq [((\Spec^\nc \BB) \setminus V(\BB_+)) / \G_{m,\CCC}]
	\]
	over $X$.
\end{Rem}

\begin{Lem}\mbox{}
	\label{Lem:Proj-BC-Con}
	\begin{enumerate}
		\item $\Proj(-)$ commutes with base-change.
		\item If $\BB \in \QAlg^\N(X)$ is connective, then $\Proj \BB \in {\St_{\CCC_{\geq 0}}}$.
	\end{enumerate}
\end{Lem}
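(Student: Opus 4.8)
The plan is to handle the two assertions of Lemma~\ref{Lem:Proj-BC-Con} separately, in each case reducing to the behaviour of the three operations $\BB\mapsto\Spec^{\nc}\BB$, $(V\hookrightarrow W)\mapsto W\setminus V$, $W\mapsto[W/\G_{m,\CCC}]$ out of which $\Proj\BB$ is assembled.

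For part (i), fix $g\colon X'\to X$ in ${\St_{\CCC}}$ and set $U\coloneqq(\Spec^{\nc}\BB)\setminus V(\BB_+)$. I would check that each of these operations commutes with base change along $g$. First, $\Spec^{\nc}(-)$ commutes with base change by Lemma~\ref{Lem:SpecBC}, and since the pullback of a ($\Z$-)graded quasi-coherent algebra is computed weight by weight, also $V(\BB_+)\times_X X'\simeq V((g^{*}\BB)_+)$, compatibly with the $\G_{m,\CCC}$-actions. Second, forming the pseudocomplement commutes with base change: for $W\to Z$ and $Z'\to Z$ a $T$-point of $(Z\setminus W)\times_Z Z'$ is a map $T\to Z'$ whose composite to $Z$ satisfies $T\times_Z W\simeq\emptyset$, and since $T\times_Z W\simeq T\times_{Z'}(W\times_Z Z')$ this is exactly a $T$-point of $Z'\setminus(W\times_Z Z')$, these spaces agreeing as full subspaces of $Z'(T)$. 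Third, $[-/\G_{m,\CCC}]$ commutes with base change because ${\St_{\CCC}}$ is an $\infty$-topos, so colimits are universal: $[U/\G_{m,\CCC}]$ is the geometric realization of the action groupoid $[n]\mapsto\G_{m,\CCC}^{\times n}\times U$, and base change along $g$ preserves this colimit, replacing $U$ by $U\times_X X'$ and leaving the copies of $\G_{m,\CCC}$ untouched. Combining the three, $(\Proj\BB)\times_X X'\simeq\Proj(g^{*}\BB)$.

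For part (ii), connectivity of $\BB$ forces $\BB_0$ and the fibre $\BB_+$ of $\BB\to\BB_0$ to be connective (connectivity of graded objects being checked weight by weight). Hence $\Spec^{\nc}\BB$ and $V(\BB_+)=\Spec^{\nc}\BB_0$ lie in ${\St_{\CCC_{\geq 0}}}$ (via $\Spec(-)\simeq\Spec^{\nc}\iota(-)$ on connective algebras), the surjection $\BB\to\BB_0$ exhibits $V(\BB_+)\to\Spec^{\nc}\BB$ as a closed immersion of $\CCC_{\geq 0}$-stacks, and $\G_{m,\CCC}=\Spec(\mathbbm{1}_{\CCC}[\Z])$ lies in ${\St_{\CCC_{\geq 0}}}$ since $\mathbbm{1}_{\CCC}[\Z]=p_{!}p^{!}\mathbbm{1}_{\CCC}$ is connective. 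Granting that the pseudocomplement $U=(\Spec^{\nc}\BB)\setminus V(\BB_+)$, formed in ${\St_{\CCC}}$, again lies in ${\St_{\CCC_{\geq 0}}}$, it follows that $\Proj\BB=[U/\G_{m,\CCC}]$ is a geometric realization of a simplicial diagram in ${\St_{\CCC_{\geq 0}}}$, hence lies in ${\St_{\CCC_{\geq 0}}}$ because $i$ preserves colimits.

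Thus the crux — and the step I expect to be the main obstacle — is to show that the pseudocomplement in ${\St_{\CCC}}$ of a closed immersion $f\colon X'\to Y'$ of $\CCC_{\geq 0}$-stacks equals $i(Y'\setminus X')$, the pseudocomplement formed in ${\St_{\CCC_{\geq 0}}}$. Since $i$ is fully faithful and (being right adjoint to $t$ and left adjoint to $\rho$) preserves both limits and colimits, $i(Y'\setminus X')$ is a subobject of $iY'$ disjoint from $iX'$ and hence factors through $(iY')\setminus(iX')$; the content is the reverse inclusion, which amounts to showing, for $T=\Spec^{\nc}C$ and $\phi\colon T\to iY'$, that $T\times_{iY'}iX'\simeq\emptyset$ iff $tT\times_{Y'}X'\simeq\emptyset$. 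As $\phi$ factors through the unit $T\to itT$ and $f$ is a closed immersion over the affine $tT$, one gets $tT\times_{Y'}X'\simeq\Spec B'$ with $\tau_{\geq 0}C\to B'$ a surjection of connective algebras, and then $T\times_{iY'}iX'\simeq\Spec^{\nc}(C\otimes_{\iota\tau_{\geq 0}C}\iota B')$; since the counit $\iota\tau_{\geq 0}C\to C$ is an isomorphism on $\pi_0$ and $B'$ is connective, $\pi_0$ of this tensor is $\pi_0 B'$, so it vanishes iff $B'=0$. The essential point, and the only step not formal in the adjunctions $t\dashv i\dashv\rho$, is exactly that emptiness of a derived fibre product against a closed immersion is detected on $\pi_0$ — i.e.\ closed immersions see through the nilpotent thickening $T\to itT$; the connectivity bookkeeping and the passage of colimits through $i$ are routine.
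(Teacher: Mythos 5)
Your part (i) is correct and is essentially the paper's argument, only more explicit: the paper invokes universality of colimits for the quotient step and leaves the compatibility of $\Spec^{\nc}(-)$ and of the pseudocomplement with base change implicit, and you verify both correctly. Your reduction of part (ii) to the single claim that $U=(\Spec^{\nc}\BB)\setminus V(\BB_+)$ lies in ${\St_{\CCC_{\geq 0}}}$ also matches the paper, whose entire proof of (ii) is the assertion that $\Proj \BB$ is a colimit of $\CCC_{\geq 0}$-stacks; you have correctly isolated the one non-formal step hiding in that assertion.

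However, the justification you give for that step fails, and it fails exactly at the point you flag as ``the essential point''. You claim $\pi_0\bigl(C\otimes_{\iota\tau_{\geq 0}C}\iota B'\bigr)\cong\pi_0 B'$ because the counit $\iota\tau_{\geq 0}C\to C$ is an isomorphism on $\pi_0$ and $B'$ is connective. That base-change statement on $\pi_0$ is only valid when \emph{all} rings involved are connective; when $C$ is nonconnective, its negative homotopy feeds higher $\Tor$-terms into total degree $0$ and can kill the class you are counting on. Concretely, in $\CCC=\Mod_\Z$ let $A=k[[x,y]]$ and let $C$ be the totalization of the \v{C}ech diagram $A_x\times A_y\rightrightarrows A_{xy}$ (the derived ring of functions on the punctured spectrum), so that $\tau_{\geq 0}C=A$ and $\pi_{-1}C=H^2_{\mathfrak{m}}(A)\neq 0$. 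From the fibre sequence $R\Gamma_{\mathfrak{m}}(A)\to A\to C$ one gets
\[
C\otimes_A k \simeq \mathrm{cofib}\bigl(R\Gamma_{\mathfrak{m}}(A)\otimes_A k\to k\bigr)\simeq \mathrm{cofib}\bigl(k\xrightarrow{\ \sim\ }k\bigr)\simeq 0 ,
\]
although $B'=k\neq 0$. Taking $\BB=A[v,w]$ (so $V(\BB_+)\to\Spec^{\nc}\BB$ is a closed immersion of $\CCC_{\geq 0}$-stacks, with $B'=A\otimes_{A[v,w]}A\simeq k$) and the $\Spec^{\nc}C$-point $(x,y)$ of $\Spec^{\nc}\BB$, this exhibits a point of $U(\Spec^{\nc}C)$ that is not in the image of $U(\Spec\tau_{\geq 0}C)$, so the counit $i\rho U\to U$ is not an equivalence. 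In other words, emptiness of a derived fibre product against a closed immersion is \emph{not} detected on $\tau_{\geq 0}$, and the statement you identify as the crux is not merely unproven but false as stated. Be aware that the paper's own proof does not supply an argument here either --- it simply asserts that $\Proj B$ is a colimit of $\CCC_{\geq 0}$-stacks --- so you have located a real difficulty rather than overlooked an available argument; but as written your proof of part (ii) does not go through.
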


\begin{proof}
	Since colimits are universal in any topos, taking the quotient stack commutes with base-change. This shows the first claim. 
	
	For the second claim, we reduce to the affine case by the previous point. Then $\Proj B$ is a colimit of $\CCC_{\geq 0}$-stacks, and the claim follows since ${\St_{\CCC_{\geq 0}}}$ is closed under colimits.
\end{proof}

\subsection*{The cotangent complex: global case}
For $X \in {\St_{\CCC}}$, let $\StMod_X$ be the category of pairs $(T,\MM)$, where $T \in \St_X$ and $\MM \in \QCoh(T)$. A morphism $(T',\MM') \to (T,\MM)$ is a map $f \colon  T' \to T$ over $X$ together with a homomorphism $\varphi\colon  f^*\MM \to \MM'$ of quasi-coherent $\OO_{T'}$-modules.

For $(T,\MM) \in \St\Mod_X$, write $\OO_T \oplus \MM$ for the quasi-coherent $\OO_T$-algebra such that $(\OO_T \oplus \MM)(A) \simeq A \oplus f^*\MM$, for any $f\colon  \Spec^\nc A \to T$. Here, $A \oplus f^*\MM$ is the square-zero extension of $A$ by $f^*\MM$. Put \[T[\MM] \coloneqq \Spec^\nc_T (\OO_T \oplus \MM).\] Note this construction is functorial in $(T,\MM)$.

\begin{Def}
	\label{Def:GlobCotangent}
	Let $f\colon  X \to Y$ in ${\St_{\CCC}}$ be given. Write $\Der_{X/Y}$ for the functor
	\[
		\StMod_Y^\op \to \Space \colon  (T,\MM) \mapsto \St_Y(T[\MM],X).
	\]
	If $\Der_{X/Y}$ is representable by an object of the form $(X,\LL_{X/Y})$ (for which $X \to Y$ is of course $f$), then $f$ \textit{admits a cotangent complex}, in which case $\LL_{X/Y} \in \QCoh(X)$ is called the \textit{cotangent complex of $f$}.
\end{Def}

\begin{Rem}
	Observe that $\StMod_X$ is a 2-small, presentable category. Hence, the functor $\Der_{X/Y}$ takes values in $\Space_2$. 
\end{Rem}

\begin{Prop}
	For $f\colon  X \to Y$ in ${\St_{\CCC}}$, the cotangent complex exists if and only if $\Der_{X/Y}$ sends pushouts in $\StMod_Y$ to pullbacks in $\Space$.
\end{Prop}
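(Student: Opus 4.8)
The plan is to recognize this as an instance of the representability criterion for presheaves of spaces on a presentable category. By the Remark preceding the statement, $\StMod_Y$ is presentable, so a presheaf $\StMod_Y^\op \to \Space$ is representable precisely when it carries colimits in $\StMod_Y$ to limits in $\Space$, by \cite[Prop.\ 5.5.2.2]{LurieHTT}. The forward implication is then immediate: if $f$ admits a cotangent complex, $\Der_{X/Y} \simeq \StMod_Y(-,(X,\LL_{X/Y}))$ is representable, hence carries every colimit to a limit, in particular pushouts to pullbacks.

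For the converse, I would first upgrade the pushout hypothesis to preservation of \emph{all} small limits, i.e.\ to the statement that $\Der_{X/Y}$ sends every colimit in $\StMod_Y$ to a limit in $\Space$. Two observations suffice. First, $\Der_{X/Y}$ sends the initial object $(\emptyset,0)$ of $\StMod_Y$ to the point, since $\emptyset[0]\simeq\emptyset$ ($\emptyset$ being strictly initial in the topos $\St_\CCC$) and $\St_Y(\emptyset,X)\simeq *$; combined with the pushout hypothesis this already gives that $\Der_{X/Y}$ carries all finite colimits to finite limits. Second, $\Der_{X/Y}$ carries arbitrary coproducts to products: using the descent equivalence $\QCoh(\bigsqcup_i T_i)\simeq\prod_i\QCoh(T_i)$ coming from Corollary \ref{Cor:QCohCont} together with stability of $\Spec^\nc(-)$ under base change (Lemma \ref{Lem:SpecBC}), one checks that $(T,\MM)\mapsto T[\MM]$ carries $\bigsqcup_i(T_i,\MM_i)$ to $\bigsqcup_i T_i[\MM_i]$, and $\St_Y(-,X)$ turns coproducts of $\CCC$-stacks into products of spaces. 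Since every small limit is generated by small products and pullbacks \cite[\S 4.4]{LurieHTT}, this yields that $\Der_{X/Y}\colon \StMod_Y^\op \to \Space$ preserves all small limits, hence is representable by some $(Z,\NN)\in\StMod_Y$.

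Finally I would show that the representing object has first coordinate $X$, so that $\NN$ deserves the name $\LL_{X/Y}$. The trick is to restrict attention to pairs of the form $(T,0)$, naturally in $T\in\St_Y$. On the one hand $\Der_{X/Y}(T,0)=\St_Y(T[0],X)\simeq\St_Y(T,X)$, because $T[0]\simeq\Spec^\nc\OO_T\simeq T$. On the other hand $\Der_{X/Y}(T,0)\simeq\StMod_Y((T,0),(Z,\NN))\simeq\St_Y(T,Z)$, since the space of morphisms $g^*\NN\to 0$ in the stable category $\QCoh(T)$ is contractible ($0$ being a zero object). Both identifications are natural in $T$, so $\St_Y(-,X)\simeq\St_Y(-,Z)$ as presheaves on $\St_Y$, whence $Z\simeq X$ over $Y$ by Yoneda; one then sets $\LL_{X/Y}\coloneqq\NN$.

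The step I expect to be the real obstacle is the coproduct computation: since the hypothesis only mentions pushouts, preservation of infinite products is not formal, and one has to genuinely verify that $T[\MM]$ of a coproduct $\bigsqcup_i(T_i,\MM_i)$ is $\bigsqcup_i T_i[\MM_i]$ — this rests on $\QCoh$ turning coproducts of stacks into products of categories and on base change for the nonconnective relative spectrum. By contrast, the finite-colimit bookkeeping and the identification of the representing object are routine.
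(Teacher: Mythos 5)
Your proof is correct and follows essentially the same route as the paper's: both reduce, via the representability criterion \cite[Prop.\ 5.5.2.2]{LurieHTT} and the decomposition of colimits into coproducts and pushouts, to checking that $\Der_{X/Y}$ carries coproducts to products, which both establish by showing $\bigsqcup_\alpha T_\alpha[\MM_\alpha] \simeq T[\MM]$ using descent for $\QCoh(-)$ and disjointness of coproducts. The one step you include that the paper leaves implicit is the verification, via evaluation on pairs $(T,0)$, that the representing object has first coordinate $X$, so that representability really does yield a cotangent complex in the sense of Definition \ref{Def:GlobCotangent}.
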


\begin{proof}
	By \cite[Prop.\ 5.5.2.2]{LurieHTT}, the functor $\Der_{X/Y}$ is representable if and only if it sends colimits in $\StMod_Y$ to limits in $\Space$. By \cite[Prop.\ 4.4.2.7]{LurieHTT}, the latter is the case if and only if $\Der_{X/Y}$ sends coproducts to products and pushouts to pullbacks. Hence, it suffices to show that $\Der_{X/Y}$ always sends coproducts to products.
	
	Let $\{(T_\alpha,\MM_\alpha)\}_\alpha$ in $\StMod_Y$ be given. Put $T \coloneqq \bigsqcup_\alpha T_\alpha$. Using Corollary \ref{Cor:QCohCont}, we identify $\QCoh(T)$ with $\prod_\alpha \QCoh(T_\alpha)$. Let then $\MM \in \QCoh(T)$ be the sequence $(\MM_\alpha)_\alpha$. We claim that the canonical map
	\[
		\bigsqcup T_\alpha[\MM_\alpha] \to T[\MM]
	\]
	is an equivalence.
	
	Using the universality of colimits and disjointness of coproducts from the Giraud axioms, it follows that each $T_\alpha \to T$ is a monomorphism. Now let $U \to T$ be given. Write $U_\alpha$ for the pullback of $U$ along $T_\alpha \to T$. Then the canonical map $\bigsqcup U_\alpha \to U$ is an equivalence. Again using that coproducts are disjoint, this gives us
	\begin{align*}
		\St_T\left(U, \bigsqcup_{\alpha'} T_{\alpha'}[\MM_{\alpha'}]\right) &\simeq \prod_{\alpha} \St_{T}\left(U_{\alpha},\bigsqcup_{\alpha'} T_{\alpha'}[\MM_{\alpha'}]\right) \\ &\simeq \prod_{\alpha} \St_{T_\alpha}(U_\alpha, T_\alpha[\MM_\alpha]) \\ &\simeq \St_T(U,T[\MM]),
	\end{align*}
	where the last equivalence follows from the universal property of $\Spec^\nc(-)$, and again the equivalence $\QCoh(T) \simeq \prod_\alpha \QCoh(T_\alpha)$.
\end{proof}
\begin{Cor}
	\label{Cor:CotExistAff}
	For $f\colon X \to Y$ in ${\St_{\CCC}}$ the following are equivalent:
	\begin{enumerate}
		\item The cotangent complex of $f$ exists.
		\item The functor $\Der_{X/Y}$ sends pushouts in $\St\Mod_Y$ of the form $(T,\MM_{12}) = (T,\MM_1) \sqcup_{(T,\MM)} (T,\MM_2)$ with identities on $T$, to pullbacks in $\Space$.
		\item The same as in (2), but for $T$ of the form $\Spec^\nc B$.
	\end{enumerate}
\end{Cor}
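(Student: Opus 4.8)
By the preceding Proposition, the existence of the cotangent complex in (1) is equivalent to the assertion that $\Der_{X/Y}$ carries every pushout in $\St\Mod_Y$ to a pullback in $\Space$ (the coproduct case is automatic). Hence the implications $(1)\Rightarrow(2)\Rightarrow(3)$ are immediate: the pushouts appearing in (2), and a fortiori those in (3), form a subclass of all pushouts. The content is $(3)\Rightarrow(1)$, which I would carry out in two reductions.

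First, $(3)\Rightarrow(2)$. The key observation is that, for \emph{any} $(T,\MM)\in\St\Mod_Y$, writing $T$ as a colimit $\colim_\alpha \Spec^\nc B_\alpha$ of affines (every stack is such a colimit, as $\St_\CCC$ is a localization of presheaves), one has
\[
	T[\MM]\;\simeq\;\colim_\alpha\,\Spec^\nc B_\alpha\bigl[\MM|_{B_\alpha}\bigr],
\]
because the relative spectrum (Lemma~\ref{Lem:SpecBC}) and the trivial square-zero extension $(T,\MM)\mapsto\OO_T\oplus\MM$ both commute with base change, and colimits are universal in the topos $\St_\CCC$. Applying $\St_Y(-,X)$, which sends colimits to limits, yields $\Der_{X/Y}(T,\MM)\simeq\lim_\alpha\Der_{X/Y}(\Spec^\nc B_\alpha,\MM|_{B_\alpha})$, still for arbitrary $\MM$ and independently of (3). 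Now given a special pushout $(T,\MM_{12})=(T,\MM_1)\sqcup_{(T,\MM)}(T,\MM_2)$, so that $\MM_{12}\simeq\MM_1\times_\MM\MM_2$ in $\QCoh(T)$, restriction along $\Spec^\nc B_\alpha\to T$ is exact (a colimit-preserving functor between stable categories), hence carries this pullback to $\MM_1|_{B_\alpha}\times_{\MM|_{B_\alpha}}\MM_2|_{B_\alpha}$; thus the special pushout restricts over each $\Spec^\nc B_\alpha$ to one of the type allowed in (3). Applying (3) termwise and then $\lim_\alpha$ (which commutes with the pullback of spaces) shows that $\Der_{X/Y}$ sends $(T,\MM_{12})$ to a pullback, i.e.\ (2) holds.

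Second, $(2)\Rightarrow(1)$. One must upgrade from special pushouts to arbitrary ones. Here I would use the structure of $\St\Mod_Y\to\St_Y$ as a (co)Cartesian fibration whose fibre over $T$ is $\QCoh(T)^\op$, so that a pushout in $\St\Mod_Y$ is computed by pushing the underlying span forward to the pushed-out base $T:=T_1\sqcup_{T_0}T_2$ and then taking a fibrewise pushout there; factoring each leg of the span into a fibrewise part followed by a (co)Cartesian lift and invoking the pasting law for pushouts, an arbitrary pushout is rebuilt from one special pushout over $T$ together with gluings along (co)Cartesian maps. Since $\Der_{X/Y}$ factors as $\St_Y(-,X)$ precomposed with $(T,\MM)\mapsto T[\MM]$, and $\St_Y(-,X)$ already carries every colimit in $\St_Y$ to a limit, the gluings in the $\St_Y$-direction should be carried to pullbacks automatically, leaving only the special (fibrewise) pushouts, which are handled by (2) via the first reduction. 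Combined with the preceding Proposition this gives (1).

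The main obstacle is this last step: making precise the statement that an arbitrary pushout in $\St\Mod_Y$ is ``generated'' by a fibrewise special pushout together with $\St_Y$-direction colimits that $\Der_{X/Y}$ preserves for free. This requires a careful description of colimits in the total space of the fibration $\St\Mod_Y\to\St_Y$ and of how $(T,\MM)\mapsto T[\MM]=\Spec^\nc(\OO_T\oplus\MM)$ behaves under base change and (co)Cartesian transport — in effect the global counterpart of Lurie's reduction of corepresentability to trivial square-zero extensions. By contrast, the first reduction is routine once the base-change compatibilities of $\Spec^\nc(-)$ and of the square-zero functor are recorded.
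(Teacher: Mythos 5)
Your architecture is the same as the paper's: $(1)\Rightarrow(2)\Rightarrow(3)$ is free; your reduction $(3)\Rightarrow(2)$, via $T[\MM]\simeq\colim_{\Spec^\nc B\to T}\Spec^\nc(B\oplus\MM_B)$ and the exactness of restriction to affines, is exactly the paper's argument (the paper writes $\Der_{X/Y}(T,\MM_{12})\simeq\lim_{U\in(\Aff_{\CCC}^\op)_{/T}}\Der_{X/Y}((U,(\MM_1)_U)\sqcup_{(U,\MM_U)}(U,(\MM_2)_U))$ and concludes); and your proposed route for $(2)\Rightarrow(1)$ --- transport the span to the fibre over the pushed-out base and take a fibrewise pushout there --- is also the paper's. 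The difference is that you stop exactly where the content lies: as written, $(2)\Rightarrow(1)$ is a genuine gap in your proposal, acknowledged as such.

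The paper closes that gap with a single identity. For an arbitrary pushout with legs $(f_i,\varphi_i)\colon(T,\MM)\to(T_i,\MM_i)$ and $(g_i,\psi_i)\colon(T_i,\MM_i)\to(T_{12},\MM_{12})$, setting $gf\coloneqq g_1f_1\simeq g_2f_2$, one has
\[
(T_{12},\MM_{12})\;\simeq\;(T_{12},(g_1)_*\MM_1)\,\sqcup_{(T_{12},(gf)_*\MM)}\,(T_{12},(g_2)_*\MM_2),
\]
a special pushout over $T_{12}$; the paper asserts this is straightforward from the description of morphisms in $\StMod_Y$. This is precisely the ``careful description of colimits in the total space'' you ask for, and your anticipated mechanism is the right one: the fibres of $\StMod_Y\to\St_Y$ are $\QCoh(T)^{\op}$, the transport along $g$ is $g_*$ (the right adjoint of $g^*$ becomes a left adjoint after passing to opposite fibres), and the fibrewise pushout in $\QCoh(T_{12})^{\op}$ is the pullback $(g_1)_*\MM_1\times_{(gf)_*\MM}(g_2)_*\MM_2$. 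So you should either verify the displayed universal property directly or invoke the standard computation of colimits in a coCartesian fibration; without one of these, the proof is not complete. One caution on your closing heuristic: the claim that ``$\St_Y(-,X)$ handles the base direction for free'' is not by itself an argument, because the coCartesian edges $(T_i,\MM_i)\to(T_{12},(g_i)_*\MM_i)$ are not colimit cocones in $\St_Y$ and $T_i[\MM_i]\not\simeq T_{12}[(g_i)_*\MM_i]$ in general; the comparison between the original square and its special replacement has to be made through the identity above (the paper is itself terse on this point, but that identity is the input it supplies and your write-up lacks).
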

\begin{proof}
	Write a morphism $(S',\NN') \to (S,\NN)$ as $(h,\alpha)$, where $h\colon S' \to S$ is a morphism in $\St_X$, and $\alpha\colon  h^* \NN \to \NN'$ is a map in $\QCoh(S')$. Consider a pushout
	\begin{center}
		\begin{tikzcd}
			(T,\MM) \arrow[rr, "{(f_1,\varphi_1)}"] \arrow[dd, "{(f_2,\varphi_2)}"] && (T_1,\MM_1) \arrow[dd, "{(g_1,\psi_1)}"] \\ \\
			(T_2, \MM_2) \arrow[rr, "{(g_2,\psi_2)}"] && (T_{12}, \MM_{12})
		\end{tikzcd}
	\end{center}
	in $\StMod_Y$. Put $gf \coloneqq g_1 f_1$, which is equivalent to $g_2f_2$. Then it holds
	\[
		(T_{12},\MM_{12}) \simeq (T_{12},(g_1)_* \MM_1) \sqcup_{(T_{12},(gf)_* \MM)} (T_{12},(g_2)_* \MM_2)
	\]
	which is straightforward to check from the description of morphisms in $\St\Mod_Y$. This shows the equivalence of (1) and (2).
	
	Now let $(T,\MM) \in \StMod_Y$ be given. Observe
	\[
		T[\MM] \simeq \colim_{\Spec^\nc B \to T} \Spec^\nc(B \oplus \MM_B).
	\]
	Hence, for a pushout of the form $(T,\MM_{12}) = (T,\MM_1) \sqcup_{(T,\MM)} (T,\MM_2)$, with identities on $T$, we have
	\begin{align*}
		\Der_{X/Y}(T,\MM_{12}) \simeq \lim_{U \in (\Aff_{\CCC}^\op)_{/T}} \Der_{X/Y}((U,(\MM_1)_U) \sqcup_{(U, \MM_{U}) } (U,(\MM_2)_U))
	\end{align*}
	which shows that (2) and (3) are equivalent.
\end{proof}
We verify familiar behavior of the cotangent complex. To this end, let a commutative diagram
\begin{equation}
	\label{Eq:CotSquare}
	\begin{tikzcd}
		X' \arrow[r, "f"] \arrow[d] & X \arrow[d] \\
		Y' \arrow[r] & Y
	\end{tikzcd}
\end{equation}
in ${\St_{\CCC}}$ be given, such that $X' \to Y'$ and $X \to Y$ admit cotangent complexes. Write $\varphi \colon  \StMod_{Y'}^\op \to \StMod_Y^\op$ for the functor induced by composing with $Y' \to Y$. Then we have a natural transformation
\[ \eta_f \colon  \Der_{X'/Y'} \to \Der_{X/Y} \circ\varphi \]
of functors $\StMod^\op_{Y'} \to \Space$.

\begin{Prop}
	Given a square as in (\ref{Eq:CotSquare}), then $\eta_f$ induces a morpshim
	\[
		\alpha \colon  f^*\LL_{X/Y} \to \LL_{X'/Y'}
	\]
	in $\QCoh(X)$, which is invertible if the given square is Cartesian.
\end{Prop}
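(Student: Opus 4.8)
The plan is to read off $\alpha$ by evaluating $\eta_f$ at the universal derivation, and then to deduce invertibility in the Cartesian case from the slice-category base-change identity $\St_{Y'}(W,X\times_Y Y')\simeq\St_Y(W,X)$. Since $\Der_{X'/Y'}$ is represented by $(X',\LL_{X'/Y'})$, there is a universal element $\delta\in\Der_{X'/Y'}(X',\LL_{X'/Y'})=\St_{Y'}(X'[\LL_{X'/Y'}],X')$, namely the one corresponding to the identity of $(X',\LL_{X'/Y'})$; explicitly $\delta\colon X'[\LL_{X'/Y'}]\to X'$ is a morphism over $Y'$ restricting to $\id_{X'}$ along the zero section $X'\hookrightarrow X'[\LL_{X'/Y'}]$. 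Applying $\eta_f$ produces $\eta_f(\delta)\in\Der_{X/Y}(\varphi(X',\LL_{X'/Y'}))=\St_Y(X'[\LL_{X'/Y'}],X)$, which is the composite $f\circ\delta$; it restricts to $f$ along the zero section. By the fibrewise form of the universal property of $\LL_{X/Y}$ — for $g\colon T\to X$ over $Y$ and $\MM\in\QCoh(T)$, the maps $T[\MM]\to X$ over $Y$ restricting to $g$ are classified by $\QCoh(T)(g^{*}\LL_{X/Y},\MM)$, which is the fibre-over-$g$ version of the universal property in Definition \ref{Def:GlobCotangent} — the element $f\circ\delta$ corresponds to a morphism $\alpha\colon f^{*}\LL_{X/Y}\to\LL_{X'/Y'}$ in $\QCoh(X')$. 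This yields $\alpha$, with no use of Cartesianness.

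Before turning to invertibility I would record the naturality that links $\eta_f$ to $\alpha$: for every $(T,\MM)\in\StMod_{Y'}$ and every $g\colon T\to X'$ over $Y'$, the map induced by $\eta_f$ from the fibre over $g$ of $\Der_{X'/Y'}(T,\MM)$ to the fibre over $f\circ g$ of $\Der_{X/Y}(\varphi(T,\MM))$ becomes, after the two instances of the fibrewise universal property above, precomposition with $g^{*}\alpha$, i.e.\ the map $\QCoh(T)(g^{*}\LL_{X'/Y'},\MM)\to\QCoh(T)(g^{*}f^{*}\LL_{X/Y},\MM)$ sending $\beta\mapsto\beta\circ g^{*}\alpha$. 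This is a bookkeeping check that simply unwinds the construction of $\alpha$, but it is where I expect the main friction: one has to follow the presentation $T[\MM]\simeq\colim_{\Spec^\nc B\to T}\Spec^\nc(B\oplus\MM_B)$ used to reduce the universal property to the affine case, in the spirit of Corollary \ref{Cor:CotExistAff}, and verify that the identifications are compatible with $\eta_f$.

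For the invertibility, suppose the square (\ref{Eq:CotSquare}) is Cartesian, so we may take $X'\simeq X\times_Y Y'$ with $f$ the projection. The elementary identity $\St_{Y'}(W,X\times_Y Y')\simeq\St_Y(W,X)$, natural in $W\in\St_{Y'}$ (with $W$ viewed over $Y$ by composition), applied to $W=T[\MM]$ for $(T,\MM)\in\StMod_{Y'}$, gives a natural equivalence $\Der_{X/Y}\circ\varphi\simeq\Der_{X'/Y'}$ under which $\eta_f$ is the identity transformation (post-composition with $\id_{X'}$). Hence $\eta_f$ is an equivalence of functors $\StMod_{Y'}^{\op}\to\Space$. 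Combining this with the previous paragraph for $T=X'$, $g=\id_{X'}$, precomposition with $\alpha$ is an equivalence $\QCoh(X')(\LL_{X'/Y'},\MM)\to\QCoh(X')(f^{*}\LL_{X/Y},\MM)$ for every $\MM\in\QCoh(X')$, so $\alpha$ is an equivalence by the Yoneda lemma in $\QCoh(X')$.

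In summary, the main obstacle is the global fibrewise universal property of the cotangent complex together with the naturality bookkeeping of the second paragraph; the Cartesian hypothesis itself enters only through the trivial slice identity $\St_{Y'}(W,X\times_Y Y')\simeq\St_Y(W,X)$, and the remainder is formal Yoneda.
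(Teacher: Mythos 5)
Your proposal is correct and follows essentially the same route as the paper: the paper defines $\alpha$ by applying $\eta_f$ to the identity on $(X',\LL_{X'/Y'})$ and, in the Cartesian case, observes that $\eta_f$ is an equivalence (via exactly the slice identity $\St_{Y'}(W,X\times_Y Y')\simeq \St_Y(W,X)$ you invoke), whence $\alpha$ is invertible. Your version merely spells out the fibrewise universal property and the Yoneda step that the paper leaves implicit, and correctly places $\alpha$ in $\QCoh(X')$.
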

\begin{proof}
The map $\alpha$ is induced by applying $\eta_f$ to the identity on $(X',\LL_{X'/Y'})$. If the square is Cartesian, then $\eta_f$ is an equivalence, hence so is $\alpha$.
\end{proof}	

\begin{Prop}
	Let a sequence $X \xrightarrow{f} Y \xrightarrow{g} Z$ in ${\St_{\CCC}}$ be given, such that $g$ admits a cotangent complex. Then $f$ admits a cotangent complex if and only if $g f$ does. In this case, we have a natural exact sequence
		\[
			g^*\LL_{Y/Z} \to \LL_{X/Z} \to \LL_{X/Y}
		\]
	in $\QCoh(X)$.
\end{Prop}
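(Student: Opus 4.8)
The plan is to deduce everything from a single \emph{transitivity fibre sequence}. The first step is to note that for $(T,\MM)$ with $T\in\St_X$ --- so that $T$ is also an object of $\St_Y$ via $f$ and of $\St_Z$ via $gf$, and $T[\MM]=\Spec^\nc(\OO_T\oplus\MM)$ inherits the structure map $T[\MM]\to T\to X$ --- there is a natural fibre sequence of pointed spaces
\[
\Der_{X/Y}(T,\MM)\longrightarrow\Der_{X/Z}(T,\MM)\longrightarrow\Der_{Y/Z}(T,\MM).
\]
Indeed $\Der_{X/Z}(T,\MM)=\St_Z(T[\MM],X)$ and $\Der_{Y/Z}(T,\MM)=\St_Z(T[\MM],Y)$, the second map is postcomposition with $f$, its basepoint is the structure composite $T[\MM]\to T\to X\xrightarrow{f}Y$, and its homotopy fibre over that basepoint is the mapping space in the slice $\St_Y=(\St_Z)_{/Y}$, i.e.\ exactly $\St_Y(T[\MM],X)=\Der_{X/Y}(T,\MM)$; naturality in $(T,\MM)$ is clear from the construction.

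Next I would treat the implication ``$gf$ admits a cotangent complex $\Rightarrow$ so does $f$'', which also produces the triangle. Assuming $\LL_{X/Z}$ and $\LL_{Y/Z}$ exist, apply the preceding proposition to the square with top edge $f\colon X\to Y$, left vertical $gf\colon X\to Z$, right vertical $g\colon Y\to Z$ and bottom edge $\id_Z$ to obtain a natural map $\alpha\colon f^*\LL_{Y/Z}\to\LL_{X/Z}$ in $\QCoh(X)$, and set $\LL_{X/Y}\coloneqq\cofib(\alpha)$. To see $(X,\LL_{X/Y})$ represents $\Der_{X/Y}$, fix $(T,\MM)\in\StMod_Y$ and $h\colon T\to X$ over $Y$ and compute the homotopy fibre over $h$ of the zero-section map $\St_Y(T[\MM],X)\to\St_Y(T,X)$: by the fibre sequence above applied to $\MM$ and to $0$, this is the total fibre of the square $\bigl[\,\St_Z(T[\MM],W)\to\St_Z(T,W)\,\bigr]$ for $W\in\{X,Y\}$, which I evaluate columns first. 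By representability of $\Der_{X/Z}$ and $\Der_{Y/Z}$ the two column fibres are $\Map_{\QCoh(T)}(h^*\LL_{X/Z},\MM)$ and $\Map_{\QCoh(T)}(h^*f^*\LL_{Y/Z},\MM)$, the induced map between them is $\Map_{\QCoh(T)}(h^*\alpha,\MM)$, and since $h^*$ is exact and $\Map_{\QCoh(T)}(-,\MM)$ carries cofibre sequences to fibre sequences, the total fibre is $\Map_{\QCoh(T)}(h^*\LL_{X/Y},\MM)$. These identifications are natural in $(T,\MM)$ and compatible over $\St_Y(T,X)$, giving representability; and $f^*\LL_{Y/Z}\xrightarrow{\alpha}\LL_{X/Z}\to\LL_{X/Y}$ is the asserted exact sequence, by construction.

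For the converse ``$f$ admits a cotangent complex $\Rightarrow$ so does $gf$'' I would use Corollary~\ref{Cor:CotExistAff}: it suffices to check that $\Der_{X/Z}$ sends every pushout $(T,\MM_{12})=(T,\MM_1)\sqcup_{(T,\MM)}(T,\MM_2)$ with $T=\Spec^\nc B$ and identities on $T$ to a pullback. Checking this fibrewise over $\St_Z(T,X)$ (the condition being vacuous where that space is empty), the transitivity fibre sequence together with representability of $\Der_{X/Y}$ and $\Der_{Y/Z}$ presents, naturally in the module, the $h$-component of $\Der_{X/Z}(T,-)$ as the middle term of a fibre sequence with outer terms $\Map_{\QCoh(T)}(h^*\LL_{X/Y},-)$ and $\Map_{\QCoh(T)}(h^*f^*\LL_{Y/Z},-)$. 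The two outer functors send the pushout to a pullback --- they are $\Map_{\QCoh(T)}(N,-)$ for fixed $N$, hence preserve the bi-Cartesian square in the stable category $\QCoh(T)$ that presents the pushout --- so the middle one does as well, by comparing the fibre sequences; thus $gf$ admits a cotangent complex, and the triangle then follows from the previous paragraph.

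The main obstacle I expect is keeping the basepoints and slice-category identifications straight: one must be careful which structure maps serve as basepoints when passing between $\StMod_X$, $\StMod_Y$ and $\StMod_Z$, that ``the fibre of a square is the iterated fibre of its columns (equivalently rows)'' is applied with compatible basepoints, and that the connecting map in the total-fibre computation really is $\Map_{\QCoh(T)}(h^*\alpha,-)$ --- i.e.\ that $\alpha$ is exactly the map induced by the preceding proposition. Granting these, everything else is formal manipulation of fibre and cofibre sequences in the stable categories $\QCoh(X)$ and $\QCoh(T)$ together with the representability criterion already established.
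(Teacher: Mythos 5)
Your argument is correct and is essentially the paper's own approach: the paper's proof consists of the single remark that one should ``consider the fibers of the obvious natural transformation $\Der_{X/Z} \to \Der_{Y/Z}$'' (with details omitted), and your transitivity fibre sequence together with the total-fibre computation is a faithful, fully worked-out implementation of exactly that idea. The basepoint bookkeeping you flag is indeed the only delicate part, and you handle it correctly.
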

\begin{proof}
	The idea is to consider the fibers of the obvious natural transformation $\Der_{X/Z} \to \Der_{Y/Z}$. We omit the details, since the argument is similar as the previous proof.
\end{proof}

\begin{Rem}  
	If $f\colon X \to Y$ is of the form $\Spec^\nc B \to \Spec^\nc A$, then $\LL_{X/Y}$ corresponds to $L_{B/A}$ under the equivalence $\QCoh(X) \simeq \Mod_B$, since $\LL_{X/Y}$ satisfies the same universal property as $L_{B/A}$, which determines the latter uniquely.
\end{Rem}

\begin{Prop}
	\label{Prop:CotExistCover}
	Let $X \to Y$ in ${\St_{\CCC}}$ be given. Suppose there is a diagram $\alpha \mapsto Y_\alpha$ in $\St_{Y}$ such that each $X \times_Y Y_\alpha \to Y_\alpha$ admits a cotangent complex, and such that the natural map $\colim Y_\alpha \to Y$ is invertible. Then $X \to Y$ admits a cotangent complex as well.
\end{Prop}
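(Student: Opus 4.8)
The plan is to verify the representability criterion of Corollary \ref{Cor:CotExistAff}, namely that $\Der_{X/Y}$ sends every pushout of the form $(T,\MM_{12}) = (T,\MM_1)\sqcup_{(T,\MM)}(T,\MM_2)$ in $\StMod_Y$, with identities on $T$, to a pullback in $\Space$. Write $\mathcal{A}$ for the index category of the given diagram $\alpha\mapsto Y_\alpha$, so that the natural cocone exhibits $Y\simeq\colim_{\alpha\in\mathcal{A}}Y_\alpha$, and set $X_\alpha\coloneqq X\times_Y Y_\alpha$, which by hypothesis admits a cotangent complex $\LL_{X_\alpha/Y_\alpha}$. For $(T,\MM)\in\StMod_Y$ let $T_\alpha\coloneqq T\times_Y Y_\alpha$ with structure map $g_\alpha\colon T_\alpha\to T$, and $\MM_\alpha\coloneqq g_\alpha^*\MM\in\QCoh(T_\alpha)$; these assemble into a restriction functor $r_\alpha\colon\StMod_Y\to\StMod_{Y_\alpha}$.

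The first step is the natural equivalence
\[
	\Der_{X/Y}(T,\MM)\;\simeq\;\lim_{\alpha\in\mathcal{A}}\Der_{X_\alpha/Y_\alpha}(T_\alpha,\MM_\alpha).
\]
To establish it, observe that $T[\MM]\times_Y Y_\alpha\simeq T_\alpha[\MM_\alpha]$: the map $T[\MM]\to Y$ factors through $T$, so the left-hand side is $T[\MM]\times_T T_\alpha$, and $T[\MM]=\Spec^\nc_T(\OO_T\oplus\MM)$ commutes with this base change by Lemma \ref{Lem:SpecBC}, while the quasi-coherent algebra $\OO_T\oplus\MM$ restricts along $g_\alpha$ to $\OO_{T_\alpha}\oplus\MM_\alpha$. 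Since colimits are universal in the topos $\St_\CCC$, applying $\colim_\alpha$ recovers $T[\MM]\simeq\colim_\alpha\bigl(T[\MM]\times_Y Y_\alpha\bigr)\simeq\colim_\alpha T_\alpha[\MM_\alpha]$. Mapping into $X$ over $Y$ turns this colimit into a limit, and a map $T_\alpha[\MM_\alpha]\to X$ over $Y$ is precisely a map $T_\alpha[\MM_\alpha]\to X_\alpha$ over $Y_\alpha$ (using the canonical map $T_\alpha[\MM_\alpha]\to T_\alpha\to Y_\alpha$), which by definition is a point of $\Der_{X_\alpha/Y_\alpha}(T_\alpha,\MM_\alpha)$.

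The second step is to run the above through a special pushout. The functor $r_\alpha$ carries $(T,\MM_{12}) = (T,\MM_1)\sqcup_{(T,\MM)}(T,\MM_2)$ to $(T_\alpha,(\MM_{12})_\alpha)=(T_\alpha,(\MM_1)_\alpha)\sqcup_{(T_\alpha,\MM_\alpha)}(T_\alpha,(\MM_2)_\alpha)$, since $g_\alpha^*$ is a left adjoint and hence preserves the pushout on the module part (the $\OO$-part staying fixed), exactly as such pushouts are computed in the proof of Corollary \ref{Cor:CotExistAff}. Each $\Der_{X_\alpha/Y_\alpha}$ is representable by $(X_\alpha,\LL_{X_\alpha/Y_\alpha})$ and therefore sends this pushout to a pullback in $\Space$. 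As limits commute with pullbacks, the equivalence of the first step shows that $\Der_{X/Y}(T,\MM_{12})\simeq\Der_{X/Y}(T,\MM_1)\times_{\Der_{X/Y}(T,\MM)}\Der_{X/Y}(T,\MM_2)$. By Corollary \ref{Cor:CotExistAff} the morphism $X\to Y$ admits a cotangent complex, and the argument moreover shows that $\LL_{X/Y}$ pulls back to $\LL_{X_\alpha/Y_\alpha}$ on each $X_\alpha$.

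I expect the only real obstacle to be the bookkeeping in the first step: carefully matching base change of $\CCC$-stacks, the square-zero extension functor $T[-]$, and restriction of quasi-coherent modules, and invoking universality of colimits so as to identify $T[\MM]$ with $\colim_\alpha T_\alpha[\MM_\alpha]$. Once that compatibility is pinned down the remainder is formal, resting on the fact that representable functors send pushouts to pullbacks and that limits commute among themselves. (As usual one keeps the universe conventions in the background, taking the diagram $\alpha\mapsto Y_\alpha$ small enough that $\Der_{X/Y}$ still lands in $\Space_2$, which causes no trouble.)
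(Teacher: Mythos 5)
Your proposal is correct and follows essentially the same route as the paper's own proof: identify $T_\alpha[\MM_\alpha]$ with the base change of $T[\MM]$ via Lemma \ref{Lem:SpecBC}, use universality of colimits to write $\Der_{X/Y}(T,\MM)$ as $\lim_\alpha \Der_{X_\alpha/Y_\alpha}(T_\alpha,\MM_\alpha)$, check that the special pushouts are preserved by restriction, and conclude via Corollary \ref{Cor:CotExistAff}. The only difference is that you spell out slightly more of the bookkeeping (e.g.\ that maps over $Y$ out of $T_\alpha[\MM_\alpha]$ factor through $X_\alpha$), which the paper leaves implicit.
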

\begin{proof}
	Let $(T,\MM) \in \StMod_Y$ be given. Put $X_\alpha \coloneqq X \times_Y Y_\alpha$, and $T_\alpha \coloneqq T \times_Y Y_\alpha$. By Lemma \ref{Lem:SpecBC}, it holds
	\[
	T_\alpha[\MM_\alpha] \simeq T[\MM] \times_T T_\alpha
	\]
	where $\MM_\alpha$ is the pullback of $\MM$ along $T_\alpha \to T$. Moreover, since colimits are universal in ${\St_{\CCC}}$, it holds
	\begin{align*}
	X \simeq \colim X_\alpha, && T[\MM] \simeq \colim T_\alpha[\MM_\alpha], && T \simeq \colim T_\alpha.
	\end{align*}
	It follows that
	\[
	\St_{Y} (T[\MM],X) \simeq \lim_\alpha \St_{Y_\alpha}(T_\alpha[\MM_\alpha],X_\alpha).
	\]
	
	Now suppose $(T,\MM)$ is given as a pushout
	\[
	(T,\MM) \simeq (T,\MM_1) \sqcup_{(T,\MM_0)} (T,\MM_2)
	\]
	in $\StMod_Y$, where the maps on $T$ defining the pushout are the identity. Then it holds
	\[
	(T_\alpha,\MM_\alpha) \simeq (T_\alpha,(\MM_1)_\alpha) \sqcup_{(T_\alpha,(\MM_0)_\alpha)} (T_\alpha,(\MM_2)_\alpha).
	\]
	The claim now follows from Corollary \ref{Cor:CotExistAff}.
\end{proof}

\begin{Cor}
	\label{Cor:CotAffExist}
	Suppose that $X \to Y$ is nonconnectively affine. Then the cotangent complex $\LL_{X/Y}$ exists.
\end{Cor}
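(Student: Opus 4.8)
The plan is to reduce to the affine case and then glue by means of Proposition \ref{Prop:CotExistCover}. Since $X \to Y$ is nonconnectively affine, by definition (Definition \ref{Def:Affine}) we may write $X \simeq \Spec^\nc \BB$ for some $\BB \in \QAlg(Y)$.

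First I would exhibit $Y$ as a colimit of affine $\CCC$-schemes. Let $\{Y_\alpha \to Y\}_\alpha$ be the family indexed by the category of elements $(\Aff_\CCC)_{/Y}$, where the object $\alpha = (\Spec^\nc A \to Y)$ is sent to $Y_\alpha \coloneqq \Spec^\nc A$, viewed as an object of $\St_Y$. The natural map $\colim_\alpha Y_\alpha \to Y$ is invertible: in $\PPP(\Aff_\CCC)$ every prestack is the colimit of the representables mapping to it, and the stackification functor $L \colon \PPP(\Aff_\CCC) \to \St_\CCC$ preserves colimits, sends each representable $\Spec^\nc A$ to itself (the topology being subcanonical), and fixes the stack $Y$; hence the equivalence $Y \simeq \colim_\alpha \Spec^\nc A$ already holds in $\St_\CCC$, a fortiori in $\St_Y$. (Set-theoretic size of the index category is handled by the universe conventions of the paper.)

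Next, for each $\alpha$ the base change $X \times_Y Y_\alpha \simeq \Spec^\nc(\BB_A)$ by Lemma \ref{Lem:SpecBC}, so $X \times_Y Y_\alpha \to Y_\alpha$ is a morphism of nonconnective affines $\Spec^\nc B \to \Spec^\nc A$ with $B = \BB_A$. Such a morphism admits a cotangent complex: under the equivalence $\QCoh(\Spec^\nc B) \simeq \Mod_B$ the object $L_{B/A} \in \Mod_B$ supplied by the local theory (the left adjoint of $(C,M) \mapsto C \oplus M$) satisfies exactly the universal property of $\Der_{X \times_Y Y_\alpha / Y_\alpha}$, as recorded in the remark preceding this corollary. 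Thus each $X \times_Y Y_\alpha \to Y_\alpha$ admits a cotangent complex, and applying Proposition \ref{Prop:CotExistCover} to the family $\{Y_\alpha \to Y\}_\alpha$ shows that $X \to Y$ admits a cotangent complex, as claimed.

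The only mildly delicate point is the first step, namely checking that $Y$ is genuinely the colimit \emph{in $\St_\CCC$} (not merely in prestacks) of the diagram of all affines over it, and that this large index category is an admissible diagram shape; once one observes that $L$ is a left adjoint fixing stacks, this is routine, and everything else is formal given the local existence of the cotangent complex.
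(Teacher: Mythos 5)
Your proof is correct and follows essentially the same route as the paper: the paper's proof is the one-line instruction to apply Proposition \ref{Prop:CotExistCover} to the family $\{\Spec^{\nc}B \to Y\}_{B \in \DAlg_\CCC}$, and you have simply spelled out the details (that $Y$ is the colimit in $\St_\CCC$ of the affines over it, that base change preserves nonconnective affineness by Lemma \ref{Lem:SpecBC}, and that the affine case is handled by the local theory of $L_{B/A}$ together with the remark preceding the corollary). No gaps.
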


\begin{proof}
	Apply Proposition \ref{Prop:CotExistCover} to $\{\Spec^{\nc}B \to Y\}_{B \in \DAlg_\CCC}$.
\end{proof}

\section{Deformation spaces, normal bundles and blow-ups}
\label{Sec:Deformation_spaces_normal_bundles_and_blowups}
Let still $(\CCC,J)$ be a geometric context. Throughout, fix a morphism \[f\colon  X \to Y\] of $\CCC$-stacks.

\subsection*{Weil restrictions}
 Define the functors
\begin{align*}
		f^*\colon  \St_{Y} \to \St_{X}, && f^*_{\geq 0} \colon  \St_{Y_{\geq 0}} \to \St_{X_{\geq 0}} 
\end{align*}
by pulling back. Observe, if $f$ is of the form $if' \colon  iX'  \to iY' $ (see \ref{Eq:functors})
for 
\[f'\colon  X' \to Y'\] in ${\St_{\CCC_{\geq 0}}}$, then $f^*_{\geq 0}$ coincides with pulling back along $f'$. 

\begin{Prop}
	\label{Prop:Weil}
	If $f$ is nonconnectively affine, then $f^*$ has a right adjoint, written $\Res^\nc_f$. If moreover $f$ is in ${\St_{\CCC_{\geq 0}}}$ and is  affine, then $f^*_{\geq 0}$ has a right adjoint, written $\Res_f$, such that $\rho \circ \Res^\nc_f \circ i \simeq \Res_f$.
\end{Prop}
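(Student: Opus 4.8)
The plan is to obtain both right adjoints from the adjoint functor theorem, and then to derive the compatibility $\rho \circ \Res^\nc_f \circ i \simeq \Res_f$ by a mate argument. First I would show that $f^* \colon \St_Y \to \St_X$ preserves all colimits: colimits in the slice $\St_Y = (\St_\CCC)_{/Y}$ are created by the forgetful functor to $\St_\CCC$, and colimits in the $\infty$-topos $\St_\CCC$ are universal, so for any diagram $\alpha \mapsto (W_\alpha \to Y)$ one has
\[
	f^*\bigl( \colim_\alpha (W_\alpha \to Y) \bigr) \simeq X \times_Y \colim_\alpha W_\alpha \simeq \colim_\alpha (X \times_Y W_\alpha) \simeq \colim_\alpha f^*(W_\alpha \to Y).
\]
Since $\St_Y$ and $\St_X$ are presentable, being slices of the presentable category $\St_\CCC$, the adjoint functor theorem \cite[Cor.\ 5.5.2.9]{LurieHTT} supplies a right adjoint $\Res^\nc_f$. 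Evaluating the adjunction on affine test objects shows that $\Res^\nc_f(Z)$ is the $\CCC$-stack over $Y$ sending $(\Spec^\nc A \to Y)$ to $\St_X(X \times_Y \Spec^\nc A, Z)$; when $f$ is nonconnectively affine the fibre products occurring here are again nonconnectively affine by Proposition \ref{Prop:AffLoc}, which is the form in which Weil restrictions are customarily presented, cf.\ \cite{LurieSpectral}, \cite{Weil}.

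For the second assertion, write $f = i f'$ with $f' \colon X' \to Y'$ in $\St_{\CCC_{\geq 0}}$, so that $f^*_{\geq 0}$ is pullback along $f'$; the same reasoning (now in the $\infty$-topos $\St_{\CCC_{\geq 0}}$) produces a right adjoint $\Res_f$. To identify $\rho \circ \Res^\nc_f \circ i$ with $\Res_f$ I would use that $t \dashv i \dashv \rho$ from \eqref{Eq:functors} with $i$ fully faithful, so that $i$ preserves limits and colimits and $\rho i \simeq \id$. The adjunction $i \dashv \rho$ induces adjunctions on slices,
\[
	i_{Y'} \colon \St_{Y_{\geq 0}} \rightleftarrows \St_Y \colon \rho_Y, \qquad i_{X'} \colon \St_{X_{\geq 0}} \rightleftarrows \St_X \colon \rho_X,
\]
with $i_{Y'}, i_{X'}$ again induced by $i$ (hence fully faithful, so $\rho_X i_{X'} \simeq \id$) and $\rho_Y, \rho_X$ induced by $\rho$. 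Since $i$ preserves pullbacks and $f = i f'$, for $Z' \in \St_{Y_{\geq 0}}$ one gets
\[
	f^*\bigl( i_{Y'}(Z') \bigr) \simeq X \times_Y i Z' \simeq i\bigl( X' \times_{Y'} Z' \bigr) \simeq i_{X'}\bigl( f^*_{\geq 0}(Z') \bigr),
\]
that is, $f^* \circ i_{Y'} \simeq i_{X'} \circ f^*_{\geq 0}$. Passing to right adjoints in this equivalence of left adjoints gives $\rho_Y \circ \Res^\nc_f \simeq \Res_f \circ \rho_X$, and precomposing with $i_{X'}$ together with $\rho_X i_{X'} \simeq \id$ yields $\rho_Y \circ \Res^\nc_f \circ i_{X'} \simeq \Res_f$, which is the asserted identity.

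The main obstacle is not existence but the slice-category bookkeeping in the last step: one must check that the adjunction $i \dashv \rho$ descends to slices with the right adjoint literally given by applying $\rho$ — this relies on $i$ being fully faithful, so that the relevant units are equivalences — and that pullback along $f$ commutes with the inclusion of connective stacks, which rests on $i$ preserving limits. If one preferred to bypass the adjoint functor theorem, the alternative is to build $\Res^\nc_f$ directly as the presheaf on $(\Aff_\CCC)_{/Y}$ given by $Z \mapsto \bigl( \Spec^\nc A \mapsto \St_X(X \times_Y \Spec^\nc A, Z) \bigr)$; then the work shifts to showing this is a $J$-sheaf, using that $X \times_Y (-)$ preserves the colimits computing \v{C}ech nerves of covers, and that it has the correct universal property, which one reduces to affine test objects.
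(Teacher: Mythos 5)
Your argument is correct, but the existence half runs along a different track from the paper's. The paper does not invoke the adjoint functor theorem: it writes down $\Res^\nc_f Z$ explicitly as the presheaf sending $\Spec^\nc A$ to the space of pairs $(g,h)$ with $g \colon \Spec^\nc A \to Y$ and $h \colon X_A \to Z$ over $X$, and then verifies the descent condition by reducing to the case where $X$ and $Y$ are nonconnectively affine --- this reduction is the one place where the affineness hypothesis actually enters. Your route via universality of colimits in the topos $\St_\CCC$ plus presentability of the slices is the one the authors deliberately sidestep: in the remark immediately following the proposition they concede that this yields a Weil restriction along \emph{any} morphism, but warn that the result may fail to be $\Space_1$-valued, which is why they restrict to the affine case and work with the explicit functor of points (whose values $\St_X(X_A,Z)$ are controlled because $X_A$ is affine). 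So your proof buys generality at the cost of the universe bookkeeping the paper is trying to avoid; your closing paragraph, which sketches the direct presheaf construction and the descent check, is in fact the paper's actual proof. For the compatibility $\rho \circ \Res^\nc_f \circ i \simeq \Res_f$, the paper offers only the one-line justification that $i$ commutes with pullbacks; your mate argument --- establishing $f^* \circ i_{Y'} \simeq i_{X'} \circ f^*_{\geq 0}$ and passing to right adjoints, then cancelling $\rho_X i_{X'} \simeq \id$ by full faithfulness of $i$ --- is a correct and welcome expansion of exactly that step.
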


\begin{proof}
	For $Z \to X$ in ${\St_{\CCC}}$, let $\Res^\nc_fZ$ be the presheaf that sends $\Spec^\nc A$ to the space of pairs $(g,h)$, where $g\colon  \Spec^\nc A \to Y$ is a map of $\CCC$-stacks, and $h\colon X_A \to Z$ is a map in $\St_X$, where $X_A$ is the pullback of $\Spec^\nc A$ to $X$ along $f$. To see that $\Res^\nc_fZ$ is a $\CCC$-stack, one reduces to the case where $X,Y$ are both nonconnectively affine, which is straightforward. 
	
	For the second point, assume that $f\colon  X \to Y$ is in ${\St_{\CCC_{\geq 0}}}$. By the same reasoning as for $f^*$, we have a right adjoint $\Res_f$ to $f^*_{\geq 0}$. Now the equivalence 
	\[
		\rho \circ \Res^\nc_f \circ i \simeq \Res_f
	\]
	follows from the fact that $i$ commutes with pullbacks.
\end{proof}

\begin{Rem}
	Since colimits are universal in $\St_{\CCC}$, the Weil restriction along any morphism $f$ exists, but the result might not be valued in $\Space_1$ anymore. To avoid having to deal with this, we only consider the affine case, since it is the only case that we need.
\end{Rem}

\begin{Def}
	For $f \colon  X \to Y$ nonconnectively affine and $Z \in \St_X$, we call $\Res_f^\nc(Z) \in \St_Y$ the \emph{Weil restriction} of $Z$ along $f$.
\end{Def}

Let $Z \in \St_{X_{\geq 0}}$ be given. By Proposition \ref{Prop:Weil}, the stack $\Res_fZ$ can be computed as the restriction of $\Res_f^\nc(iZ ) \colon  {\Aff_{\CCC}} \to \Space$ to ${\Aff_{\CCC_{\geq 0}}}$. Our definition of Weil restrictions therefore recovers the preexisting one for the connective case---used in \cite{Weil}---in a straightforward way. In the proof of Theorem~\ref{Thm:Rees-affine} we will see another reason to take $\Res_f^\nc(-)$ as our definition of the Weil restriction, namely that Rees algebras cannot be expected to be connective in general, even for connective input. 

\begin{Rem}
	\label{Rem:GequivWeil}
	Let $G$ be an affine group object in ${\St_{\CCC_{\geq 0}}}$. Then Proposition \ref{Prop:Weil} also holds $G$-equivariantly, meaning that for $f \colon  X \to Y$ in $ {\St_{\CCC}^G}$ nonconnectively affine, the pullback functor $f^* \colon   \St^G_{Y} \to  \St^G_{X}$ has a right adjoint, written $f_*^G$.
	
	Write $\overline{f} \colon  [X/G] \to [Y/G]$. Then the $G$-equivariant Weil restriction along $f$ induces a Weil restriction along $\overline{f}$. That is, let $[Z/G] \to [X/G]$ be given. Then the diagram
	\begin{center}
		\begin{tikzcd}
			{f_*^GZ} \arrow[r] \arrow[d] & {[f_*^GZ/G]} \arrow[d] \\
			Y \arrow[r] & {[Y/G]}
		\end{tikzcd}
	\end{center}
	is Cartesian. It follows from the universal property of the Weil restrictions that $[f_*^GZ/G]$ is the Weil restriction of $[Z/G]$ along $\overline{f}$, which we again write as $\Res^{\nc}_{\overline{f}}[Z/G]$.
\end{Rem}

\subsection*{Deformation spaces}
Let $F \colon  \DAlg \to {\DAlg_{\CCC}}$ be the functor induced by the initiality of $\Mod_\Z$ among algebraic contexts. Then this induces a functor $F^\Z \colon  \DAlg^\Z \to \DAlg_\CCC^\Z$. Recall that $\LSym_{\Mod_\Z}(\Z(1)) $ is the graded ring $\Z[t^{-1}]$ where $t^{-1}$ has degree $-1$. Then it holds
\[
F^\Z(\Z[t^{-1}]) \simeq \LSym_\CCC(\mathbbm{1}_\CCC(1))
\]
which we simply write as $\Z[t^{-1}]$ if it is clear in which context we consider this object. Put then \[\A^1_\CCC \coloneqq \Spec(\LSym(\mathbbm{1}_\CCC(1))),\] and write $\A^1_Y \coloneqq Y \times \A^1_\CCC$ and $\G_{m,Y} \coloneqq Y \times \G_{m,\CCC}$ when convenient. 

Note that $\A^1_\CCC$ has the $\G_{m,\CCC}$-action of degree $-1$, i.e., the one corresponding to the $\Z$-grading on $\LSym_\CCC(\mathbbm{1}_\CCC(1))$. This is the same action as the one induced by the $\G_m$-action on $\A^1$ with degree $-1$ via the functor $F$.

The zero section $\{0\} \to \A^1_\CCC$ induces a map
\[
	\zeta\colon B\G_{m,\CCC} \to [\A^1_{\CCC}/\G_{m,\CCC}].
\]
Put $\zeta_Y \simeq \id_Y \times \zeta$, which is equivalent to the nonconnectively affine map
\[
	Y \times B\G_{m,\CCC} = [Y \times \{0\} / \G_{m,\CCC}] \stackrel{\zeta_Y}\longrightarrow [Y \times \A^1_\CCC/\G_{m,\CCC}]
\]
where we endow $Y$ with trivial $\G_{m,\CCC}$-action. By Remark \ref{Rem:GequivWeil}, we can thus Weil-restrict along $\zeta_Y$.

\begin{Def}
	\label{Def:DefSpace}
	The \textit{deformation space} of $X \to Y$ in ${\St_{\CCC}}$ is the Weil restriction of $X \times B\G_{m,\CCC} \to Y \times B\G_{m,\CCC}$ along $\zeta_Y$. It is written
	\[
		\cD_{X/Y} \to Y \times [\A^1_{\CCC}/\G_{m,\CCC}].
	\]
	We write $\sD_{X/Y}$ for the pullback of $\cD_{X/Y}$ along \[Y \times \A^1_\CCC \to Y \times [\A^1_\CCC/\G_{m,\CCC}]\] and endow $\sD_{X/Y}$ with the $\G_{m,\CCC}$-action such that $[\sD_{X/Y}/\G_{m,\CCC}] \simeq \cD_{X/Y}$.
\end{Def}

Observe that $\sD_{X/Y}$ is also the Weil restriction of $X \times\{0\} \to Y \times \{0\}$ along $Y \times \{0\} \to Y \times \A^1_\CCC$. If $X \to Y$ is of the form $\Spec^{\nc} B \to \Spec^{\nc} A$, then put $\sD_{B/A} \coloneqq\sD_{X/Y}$.

\subsection*{Rees algebras}\label{Par:Rees_algebras} The goal of this subsection is to show the following.
\begin{Thm}
	\label{Thm:Rees-affine}
	If $X \to Y$ lives in ${\St_{\CCC_{\geq 0}}}$ and is affine, then $\sD_{X/Y} \to Y \times \A^1_{\CCC}$ is nonconnectively affine. Consequently, the same is true for $\cD_{X/Y} \to Y \times [\A^1_{\CCC}/\G_{m,\CCC}]$.
\end{Thm}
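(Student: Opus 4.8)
The plan is to reduce to an affine morphism between affine $\CCC_{\geq 0}$-stacks, to identify $D_{X/Y}$ there as a concrete functor on $\DAlg_{A'}$ --- where $A' := A \otimes_{\mathbbm{1}_\CCC} \LSym_\CCC(\mathbbm{1}_\CCC(1))$ is the coordinate ring of $Y\times\A^1_\CCC$ for $Y = \Spec^\nc A$ --- and then to compute this functor on polynomial $A$-algebras. Write $t$ for the weight-$(-1)$ generator of $\LSym_\CCC(\mathbbm{1}_\CCC(1))$, i.e.\ the canonical coordinate on $\A^1_\CCC$. First I would reduce to the affine case: by Proposition \ref{Prop:AffLoc} nonconnective affineness of $D_{X/Y} \to Y\times\A^1_\CCC$ can be tested locally on the base, and both $\Spec^\nc(-)$ (Lemma \ref{Lem:SpecBC}) and the formation of $D_{X/Y}$ (Definition \ref{Def:DefSpace}, together with the fact --- visible from the proof of Proposition \ref{Prop:Weil} --- that Weil restriction commutes with base change on its target) commute with base change along a morphism $Y'\to Y$; so pulling a cover of $Y$ by connective affines back along $Y\times\A^1_\CCC \to Y$ reduces the statement to: for $A\to B$ a morphism of connective derived rings, $D_{B/A}\to\Spec^\nc A'$ is nonconnectively affine. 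The final ``consequently'' I would then deduce from the fact that the $\G_{m,\CCC}$-torsor $Y\times\A^1_\CCC \to Y\times[\A^1_\CCC/\G_{m,\CCC}]$ pulls $\DDD_{X/Y}$ back to $D_{X/Y}$ and that nonconnective affineness descends along this torsor (Corollaries \ref{Cor:QCohCont} and \ref{Cor:RelSpec}); equivalently, the $\G_{m,\CCC}$-equivariant structure on $D_{X/Y}$ corresponds via Proposition \ref{Prop:ZgradeGm} to a $\Z$-graded quasi-coherent algebra on the quotient, which will become $\RR^{\ext}_{X/Y}$.

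In the affine case, Definition \ref{Def:DefSpace} and Proposition \ref{Prop:Weil} exhibit $D_{B/A}$ as the Weil restriction of $\Spec^\nc B\to\Spec^\nc A$ along the zero section $g\colon\Spec^\nc A\to\Spec^\nc A'$, so $D_{B/A}(C)\simeq\DAlg_A(B, C\otimes_{A'}A)$ for $C\in\DAlg_{A'}$. Since $t$ is a non-zero-divisor in the ``polynomial algebra'' $A'$, one has $A\simeq\operatorname{cofib}(A'\xrightarrow{t}A')$ in $\Mod_{A'}$, hence $C\otimes_{A'}A\simeq\operatorname{cofib}(C\xrightarrow{t}C)$. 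As $\Spec^\nc(-)$ carries colimits to limits and $\Res^\nc_g$ preserves limits, $B\mapsto D_{B/A}$ carries colimits in $\DAlg_A$ to limits in $\St_{\Spec^\nc A'}$; and by Corollary \ref{Cor:RelSpec} the objects nonconnectively affine over $\Spec^\nc A'$ form a full subcategory equivalent to $\DAlg_{A'}^\op$ via $\Spec^\nc(-)$, hence closed under limits. By Lemma \ref{Lem:ZeroDef} every connective $B$ is a sifted colimit of objects of $\DAlg_{A^0}$, so it suffices to treat $B=\LSym_A(A\otimes M)$ with $M\in\CCC^0$.

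For such $B$ the computation runs as follows. The free--forgetful adjunction gives $D_{B/A}(C)\simeq\Mod_A\big(A\otimes M,\operatorname{cofib}(C\xrightarrow{t}C)\big)$. Rotating the cofiber sequence $C\xrightarrow{t}C\to\operatorname{cofib}(C\xrightarrow{t}C)$ in the stable category $\Mod_{A'}$ identifies $\operatorname{cofib}(C\xrightarrow{t}C)\simeq\fib(C[1]\xrightarrow{t}C[1])$; since $\Mod_A(A\otimes M,-)$ preserves limits and $\Mod_A(A\otimes M, C[1])\simeq\Mod_A((A\otimes M)[-1],C)\simeq\DAlg_{A'}(\RR_0,C)$ with $\RR_0:=\LSym_{A'}(A'\otimes M[-1])$ (base change of free algebras), one obtains a natural equivalence
\[
	D_{B/A}(C)\simeq\fib\big(\DAlg_{A'}(\RR_0,C)\xrightarrow{\sigma^*}\DAlg_{A'}(\RR_0,C)\big),
\]
the fiber taken over the basepoint provided by the augmentation $\varepsilon\colon\RR_0\to A'$ followed by $A'\to C$, and $\sigma\colon\RR_0\to\RR_0$ the image under $\LSym_{A'}$ of multiplication by $t$ on $A'\otimes M[-1]$; the point here is that post-composition with multiplication by $t$ on the target of $\Mod_{A'}(A'\otimes M[-1],C[1])$ agrees, by $A'$-linearity, with pre-composition with the $A'$-module endomorphism $t$ of $A'\otimes M[-1]$, so that the endomorphism is genuinely induced by a morphism of $A'$-algebras. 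Therefore $D_{B/A}$ is the fiber product $\Spec^\nc A'\times_{\Spec^\nc\varepsilon,\,\Spec^\nc\RR_0,\,\Spec^\nc\sigma}\Spec^\nc\RR_0\simeq\Spec^\nc(A'\otimes_{\RR_0}\RR_0)$, which is nonconnectively affine over $\Spec^\nc A'$, with extended Rees algebra $\RR^{\ext}_{B/A}=A'\otimes_{\RR_0}\RR_0$ (the pushout along $\varepsilon$ and $\sigma$).

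The hard part is precisely this last computation --- producing the $A'$-algebra endomorphism $\sigma$ of $\RR_0$ that realizes multiplication by $t$, so that $D_{B/A}$ becomes a genuine limit of representables and hence corepresentable. Once this is in place, everything else --- the two reduction steps --- is formal bookkeeping with adjunctions, descent (Corollaries \ref{Cor:QCohCont}, \ref{Cor:RelSpec}) and the generation result of Lemma \ref{Lem:ZeroDef}.
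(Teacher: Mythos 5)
Your proof is correct, and its overall architecture is the one the paper uses: reduce to $X=\Spec B \to Y = \Spec A$ affine via Proposition \ref{Prop:AffLoc} and base change of Weil restrictions, reduce further to $B \in \DAlg_{A^0}$ using Lemma \ref{Lem:ZeroDef} together with the fact that $B \mapsto D_{B/A}$ carries (sifted) colimits to limits and that nonconnectively affine objects over $\Spec^\nc A'$ are closed under limits, and then settle the generating case $B = \LSym_A(A\otimes M)$, $M \in \CCC^0$. Where you genuinely diverge is in that last, central step. The paper does \emph{not} construct the corepresenting object there: it shows that the functor $Q \mapsto \DAlg_A(B, Q/(t^{-1}))$ preserves limits (by the same rotation of the fiber sequence $Q \xrightarrow{\times t^{-1}} Q \to Q/(t^{-1})$ that you use) \emph{and} filtered colimits (using compactness of $B$ in $\DAlg_A$), and then invokes the abstract corepresentability criterion \cite[Prop.\ 5.5.2.7]{LurieHTT}. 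You instead exhibit the corepresenting object explicitly as the pushout $A' \otimes_{\RR_0,\,\sigma,\,\varepsilon} \RR_0$ with $\RR_0 = \LSym_{A'}(A'\otimes M[-1])$, the crux being the exchange of post-composition with $\times t$ on the target for pre-composition with the algebra endomorphism $\sigma = \LSym_{A'}(\times t)$. This is legitimate (both maps are multiplication by $t$ on the internal mapping object $\sMap_{A'}(A'\otimes M[-1],C)$), and in fact it is precisely the computation the paper carries out separately in Example \ref{Ex:ReesofSym}, where $A'\otimes_{\RR_0}\RR_0 \simeq \LSym_{A'}(\cofib(A'\otimes M[-1] \xrightarrow{\times t} A'\otimes M[-1])) \simeq \LSym_{A[t^{-1}]}((A\otimes M)[-1])$. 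So in effect you have folded that Example into the proof and bypassed the adjoint-functor-theorem step; this buys an explicit formula for $\RR^\ext_{B/A}$ on generators and spares you the filtered-colimit check, at the cost of having to justify the post-/pre-composition exchange carefully. Your handling of the final ``consequently'' via Proposition \ref{Prop:ZgradeGm} (the $\G_{m,\CCC}$-equivariant structure on $D_{X/Y}$ descending to a $\Z$-graded quasi-coherent algebra on $Y\times[\A^1_\CCC/\G_{m,\CCC}]$) is also how the paper intends it.
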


As before, we write $\Z[x] \coloneqq \LSym_\CCC(\mathbbm{1}_\CCC)$. Let $R \in {\DAlg_{\CCC}}$. Then we call a morphism 
\[
	f\colon  \Z[x] \to R
\]
an \textit{element} of $R$, written $f \in R$. By tensoring the morphism $\times x \colon  \Z[x] \to \Z[x]$ with $R$ along $f$, we obtain the \textit{multiplication map} \[\times f \colon  R \to R.\] We write $R/(f)$ for the pushout of $f$ along $\Z[x] \to \Z \colon  x \mapsto 0$, hence 
\[
	R/(f) \coloneqq R \otimes_{\Z[x]} \Z.
\]

\begin{Lem}
	Let $B \in \DAlg_A$ and $f \in B$. Then there is a fiber sequence
	\[
		B \xrightarrow{\times f} B \to B/(f)
	\]
	in $\Mod_B$. 
\end{Lem}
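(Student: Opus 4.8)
The plan is to exhibit the whole triple $B \xrightarrow{\times f} B \to B/(f)$ as the base change along $f$ of a single fibre sequence living over the universal polynomial algebra $\Z[x] = \LSym_\CCC(\mathbbm{1}_\CCC)$, and to obtain that fibre sequence from a short exact sequence in the heart.

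First I would unwind the definitions. By construction the multiplication map $\times f \colon B \to B$ is the image of $\times x \colon \Z[x] \to \Z[x]$ under the functor $B \otimes_{\Z[x]} (-) \colon \Mod_{\Z[x]} \to \Mod_B$, where $B$ is regarded as a $\Z[x]$-algebra via $f$ and $B \otimes_{\Z[x]} \Z[x] \simeq B$; and $B/(f) = B \otimes_{\Z[x]} \Z$ is the image of the augmentation $\epsilon \colon \Z[x] \to \Z$, $x \mapsto 0$, under the same functor. Since $B \otimes_{\Z[x]} (-)$ is colimit-preserving between stable categories it is exact, hence carries cofibre sequences to cofibre sequences; and in a stable category a cofibre sequence is the same as a fibre sequence. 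So it suffices to produce a cofibre sequence
\[ \Z[x] \xrightarrow{\ \times x\ } \Z[x] \xrightarrow{\ \epsilon\ } \Z \]
in $\Mod_{\Z[x]}$ and then apply $B \otimes_{\Z[x]} (-)$.

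Second, I would establish that cofibre sequence. Here $\Z[x] = \LSym_\CCC(\mathbbm{1}_\CCC)$ is the free $\CCC$-algebra on the unit $\mathbbm{1}_\CCC \in \CCC^0$, and $\LSym_\CCC$ restricted to $\CCC^0$ agrees with the classical symmetric algebra functor $\Sym_{\CCC^\heartsuit}$; hence $\Z[x] \simeq \bigoplus_{n \geq 0} \Sym^n_{\CCC^\heartsuit}(\mathbbm{1}_\CCC) \simeq \bigoplus_{n \geq 0} \mathbbm{1}_\CCC$ is discrete and is just the polynomial algebra on one generator, with $\times x$ the evident shift sending the $n$-th summand isomorphically onto the $(n+1)$-st. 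Thus $\times x$ is a monomorphism in $\CCC^\heartsuit$ with zero fibre and with cokernel the zeroth summand $\mathbbm{1}_\CCC \simeq \Z$ carrying its $\epsilon$-module structure, so $0 \to \Z[x] \xrightarrow{\times x} \Z[x] \xrightarrow{\epsilon} \Z \to 0$ is short exact in $\CCC^\heartsuit$, hence a cofibre sequence in $\CCC$; as all three maps are $\Z[x]$-linear and the forgetful functor $\Mod_{\Z[x]} \to \CCC$ is conservative and exact, it is a cofibre sequence in $\Mod_{\Z[x]}$. A more global alternative is to observe that $\Z[x]$, the element $x$, and $\epsilon$ are all pulled back along the initial morphism of algebraic contexts $\Mod_\Z \to \CCC$ — compatibly with module categories and base change, by Remark \ref{Rem:InducedAdjoint} — where the claim is the classical fact that $x$ is a non-zero-divisor in $\Z[x]$, i.e.\ that the two-term complex $[\Z[x] \xrightarrow{x} \Z[x]]$ is the Koszul resolution of $\Z$.

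The main obstacle is precisely this second step: the identification of $\mathrm{cofib}(\times x \colon \Z[x] \to \Z[x])$ with $\Z$ in $\Mod_{\Z[x]}$, i.e.\ the fact that the derived polynomial algebra $\LSym_\CCC(\mathbbm{1}_\CCC)$ behaves like an honest polynomial ring with respect to the regular element $x$. Once that is in hand, the reduction via exactness of base change and the passage between cofibre and fibre sequences in the stable setting are routine.
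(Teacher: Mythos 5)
Your proposal is correct and follows essentially the same route as the paper: the paper's proof also takes the exact sequence $\Z[x] \xrightarrow{\times x} \Z[x] \to \Z$ and tensors it with $B$ over $\Z[x]$ via the structure map determined by $f$. The only difference is that you supply the justification (discreteness of $\LSym_\CCC(\mathbbm{1}_\CCC)$ and exactness of base change) that the paper leaves implicit.
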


\begin{proof}
	Consider the exact sequence \[ \Z[x] \xrightarrow{\times x} \Z[x] \to \Z \] in $\CCC$. Consider $B$ as $\Z[x]$-module via the structure map $\Z[x] \to B$ determined by $f$. Then upon tensoring $\Z[x] \xrightarrow{\times x} \Z[x] \to \Z$ with $B$ over $\Z[x]$, we obtain the required fiber sequence.
\end{proof}

\begin{Exm}
\label{Ex:ReesofSym}
	Consider a map $A \to B = \LSym_A(M)$ in ${\DAlg_{\CCC_{\geq 0}}}$ for some connective $A$-module $M$. Then by rotating the fiber sequence \[M[t^{-1}] \xrightarrow{\times t^{-1}} M[t^{-1}] \to M\] we obtain a pushout square
	\begin{center}
	\begin{tikzcd}
		{M[t^{-1}] [-1]} \arrow[r] \arrow[d, swap, "\times {t^{-1}[-1]}"] & 0 \arrow[d] \\
		{M[t^{-1}][-1]} \arrow[r] & M[-1].
	\end{tikzcd}
	\end{center}
	We claim that $\LSym_{A[t^{-1}]}(M[-1])$ corepresents $\sD_{B/A}$. Indeed, for any $Q \in \DAlg_{A[t^{-1}]}$, we have a fiber sequence
	\[
		Q \xrightarrow{t^{-1}} Q \to Q/(t^{-1})
	\]
	in $\Mod_{A}$, and thus $Q/(t^{-1})$ is the fiber of $\times t^{-1}[1] \colon Q[1] \to Q[1]$. It follows that
	\begin{align*}
		\DAlg_A(B,Q/(t^{-1})) &\simeq \Mod_A(M,Q/(t^{-1})) \\
		& \simeq \Mod_A(M[-1], Q) \times_{\Mod_A(M[-1],Q)} \Mod_A(M[-1],0) \\
		& \simeq \Mod_{A[t^{-1}]} (M[t^{-1}][-1],Q) \times_{\Mod_{A[t^{-1}]} (M[t^{-1}][-1],Q)} 
		\{0\} \\
		&\simeq \Mod_{A[t^{-1}]}(M[-1],Q) \\
		& \simeq \DAlg_{A[t^{-1}]}(\LSym_{A[t^{-1}]}(M[-1]),Q)
	\end{align*}
	where the left factor in the fiber product on the second line is induced by multiplication with $t^{-1}$ on $Q$, and likewise on the third line by multiplication with $t^{-1}$ on $A[t^{-1}][-1]$. The claim follows by the universal property of $D_{B/A}$ as Weil restriction.	
\end{Exm}

\begin{proof}[Proof of Theorem \ref{Thm:Rees-affine}]
	By Proposition \ref{Prop:AffLoc}, we may assume that $X,Y$ are affine. Say $X =\Spec B$ and $Y= \Spec A$. We will define a colimit-preserving functor
	\[
		R^\ext_{(-)/A} \colon  \DAlg_{A_{\geq 0}} \to \DAlg_{A[t^{-1}]}
	\]
	and show that $\Spec R^\ext_{B/A}$ has the desired universal property, for any $B \in \DAlg_{A_{\geq 0}}$.
	
	Let $B = \LSym_A(A \otimes M)$ with $M \in \CCC^0$ be given. Consider the functor
	\begin{align*}
		H_B \colon  \DAlg_{A[t^{-1}]} &\to \Space \\
		Q & \mapsto \DAlg_A(B,Q/(t^{-1})).
	\end{align*}
	We claim that $H_B$ is corepresentable. By \cite[Prop.\ 5.5.2.7]{LurieHTT}, it suffices to show that $H_B$ preserves limits and $\kappa$-filtered colimits for some regular cardinal $\kappa$. Observe, since $B \in \DAlg_{A^0}$ as defined in Lemma \ref{Lem:ZeroDef}, it is compact in $\DAlg_{A_{\geq 0}}$. Since $A,B$ are connective, and since $\tau_{\geq 0}$ is right-adjoint to the inclusion ${\DAlg_{\CCC_{\geq 0}}} \to {\DAlg_{\CCC}}$ and preserves filtered colimits, $B$ is also compact in $\DAlg_{A}$. It follows that $H_B$ preserves filtered colimits, since $Q \mapsto Q/(t^{-1})$ preserves filtered colimits. Therefore, it suffices to show that $H_B$ preserves limits. 
	
	Observe that $Q/(t^{-1})$ is the fiber of $\times t^{-1}[1] \colon Q[1] \to Q[1]$ in $\Mod_A$. It follows that
	\[
		H_B(Q) \simeq \Mod_A(M,Q/(t^{-1})) \simeq \Mod_A(M,Q[1]) \times_{\Mod_A(M,Q[1])} \{0\}.
	\]
	Now $\Mod_A(M,(-)[1]) \colon  \DAlg_{A[t^{-1}]} \to \Space$ is the composition of the forgetful functor $\DAlg_{A[t^{-1}]} \to \Mod_{A[t^{-1}]}$, followed by the shift functor $(-)[1]$ and the functor $\Mod_A(M,-)$. Since these are all limit preserving, it follows that $H_B$ is a limit of limit-preserving functors, hence itself limit-preserving.
	
	Write $R_{B/A}^\ext \in \DAlg_{A[t^{-1}]}$ for a corepresenting object of $H_B$, for any $B \in \DAlg_{A^0}$. Then left-Kan extend  \[R^\ext_{(-)/A}\colon \DAlg_{A^0} \to \DAlg_{A[t^{-1}]}\] to a colimit-preserving functor
	\[
		R^\ext_{(-)/A}\colon  \DAlg_{A_{\geq 0}} \to \DAlg_{A[t^{-1}]}
	\]
	using Proposition \ref{Prop:DACsum}.
	
	Let now $B \in \DAlg_{A_{\geq 0}}$ be given such that $X \to Y$ is $\Spec B \to \Spec A$. Write $B = \colim_{\alpha} B_\alpha$ for $B_\alpha \in \DAlg_{A^0}$. Then for $Q \in \DAlg_{A[t^{-1}]}$, it holds
	\begin{align*}
		\DAlg_{A[t^{-1}]}(R^\ext_{B/A},Q) &\simeq \lim_{\alpha} \DAlg_{A[t^{-1}]}(R^\ext_{B_\alpha/A}, Q) \\
		&\simeq \lim_{\alpha} \DAlg_A(B_\alpha, Q/(t^{-1})) \\
		& \simeq \DAlg_A(B,Q/(t^{-1})).
	\end{align*}
	Geometrically, this means that
	\[
		\St_{\A^1_\CCC \times Y}(T,\Spec R_{B/A}^\ext) \simeq \St_Y(T \times_{\A^1_\C} \{0\}, X)
	\]
	for any $T \in \St_{\A^1_\CCC \times Y}$, since stacks are colimits of representables. Therefore $\Spec R_{B/A}^\ext$ satisfies the universal property of $D_{X/Y}$ as defined via Weil restriction, which was to be shown.
\end{proof}

\begin{Def}
	\label{Def:Rees}Suppose that $X \to Y$ lives in ${\St_{\CCC_{\geq 0}}}$ and is affine. The \textit{extended Rees algebra} of $X \to Y$ is the nonconnective, quasi-coherent algebra over $Y \times\A^1_\CCC $ which corepresents $\sD_{X/Y}$, written $\RR^\ext_{X/Y}$. We endow $\RR^\ext_{X/Y}$ with the $\Z$-grading which corresponds to the $\G_{m,\CCC}$-action on $\sD_{X/Y}$. We let $\RR_{X/Y}$ be the part of $\RR^{\ext}_{X/Y}$ with non-negative homogeneous degrees, and call it the \emph{Rees algebra}. If $X \to Y$ is of the form $\Spec B \to \Spec A$, we put $R_{B/A}^\ext \coloneqq \RR_{X/Y}^\ext$ and $R_{B/A} \coloneqq \RR_{X/Y}$.
\end{Def}

\begin{Rem}
	Let $A \to B$ in ${\DAlg_{\CCC_{\geq 0}}}$ be given. Taking the $\Z$-grading into account, the Rees algebra satisfies the following universal property
	\[
		\DAlg^\Z_{A[t^{-1}]}(R_{B/A}^\ext, Q) \simeq \DAlg_A(B,(Q/(t^{-1}))_0)
	\]
	for any $Q \in \DAlg^\Z_{A[t^{-1}]}$, where $(-)_0$ refers to taking the homogeneous-degree zero part.
\end{Rem}

\begin{Prop}
	\label{Prop:ReesBC}
	The extended Rees algebra is stable under base-change. That is, for $X \to Y$ affine in ${\St_{\CCC_{\geq 0}}}$  and $f\colon Y' \to Y$ in ${\St_{\CCC_{\geq 0}}}$  it holds that \[(f,\id)^*\RR_{X/Y}^\ext \simeq \RR_{X'/Y'}^\ext\] where $X' = X \times_Y Y'$ and $(f,\id)$ is the obvious map $Y'\times \A^1_\CCC   \to Y \times \A^1_\CCC  $.  Likewise for the Rees algebra: \[(f,\id)^*\RR_{X/Y} \simeq \RR_{X'/Y'}.\]
\end{Prop}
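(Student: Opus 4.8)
The plan is to reduce everything to the affine situation and then invoke the defining universal property of the extended Rees algebra together with the base-change property of the deformation space $D_{X/Y}$. Since $\RR^\ext_{X/Y}$ is the quasi-coherent algebra over $Y \times \A^1_\CCC$ representing $D_{X/Y}$ (Definition \ref{Def:Rees}), it suffices to produce a natural equivalence $D_{X'/Y'} \simeq D_{X/Y} \times_{Y \times \A^1_\CCC} (Y' \times \A^1_\CCC)$ as stacks over $Y' \times \A^1_\CCC$, compatibly with the $\G_{m,\CCC}$-actions. The claim for $\RR_{X/Y}$ (non-negative weights) then follows formally, because $(f,\id)^*$ is computed weight-wise on $\Z$-graded quasi-coherent algebras and hence commutes with the operation of discarding negative homogeneous components.

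First I would recall that $D_{X/Y}$ is, by the observation following Definition \ref{Def:DefSpace}, the Weil restriction of $X \times \{0\} \to Y \times \{0\}$ along the nonconnectively affine map $Y \times \{0\} \to Y \times \A^1_\CCC$. Weil restriction along a nonconnectively affine morphism satisfies base change: for a Cartesian square of the relevant shape, restricting along the pulled-back morphism and then along the base-change morphism agree. Concretely, given $f : Y' \to Y$, the square obtained by crossing with $\A^1_\CCC$ and with $\{0\}$ is Cartesian, $X' \times \{0\} = (X \times \{0\}) \times_{Y \times \{0\}} (Y' \times \{0\})$, and the universal property defining $\Res^\nc$ (Proposition \ref{Prop:Weil}) immediately yields
\[
	D_{X'/Y'} \simeq D_{X/Y} \times_{Y \times \A^1_\CCC} (Y' \times \A^1_\CCC)
\]
over $Y' \times \A^1_\CCC$: a $T$-point of the right-hand side is a map $T \to Y' \times \A^1_\CCC$ together with a map $T \times_{Y \times \A^1_\CCC}(Y \times \{0\}) \to X$ over $Y$, which (since $T$ already maps to $Y'$) is the same datum as a map $T \times_{Y' \times \A^1_\CCC}(Y' \times \{0\}) \to X'$ over $Y'$. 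This equivalence is compatible with the $\G_{m,\CCC}$-action because the zero section $\{0\} \to \A^1_\CCC$ is $\G_{m,\CCC}$-equivariant and the Weil restriction of Remark \ref{Rem:GequivWeil} is built $G$-equivariantly; alternatively one works throughout with $\DDD_{X/Y}$ over $Y \times [\A^1_\CCC/\G_{m,\CCC}]$ and base-changes there, using that $[D_{X/Y}/\G_{m,\CCC}] \simeq \DDD_{X/Y}$.

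Finally, to pass from this stack-level equivalence to the stated equivalence of quasi-coherent algebras, I would invoke Theorem \ref{Thm:Rees-affine}, which guarantees $D_{X/Y} \to Y \times \A^1_\CCC$ (hence also $D_{X'/Y'} \to Y' \times \A^1_\CCC$) is nonconnectively affine, and Corollary \ref{Cor:RelSpec}, which says $\Spec^\nc(-)$ is fully faithful with image the nonconnectively affine morphisms. Since $\Spec^\nc$ commutes with base change (Lemma \ref{Lem:SpecBC}), the displayed equivalence of stacks corresponds to the desired equivalence $(f,\id)^*\RR^\ext_{X/Y} \simeq \RR^\ext_{X'/Y'}$ of quasi-coherent $\OO_{Y' \times \A^1_\CCC}$-algebras, and tracking the grading through Proposition \ref{Prop:ZgradeGm} shows it is $\Z$-graded; restricting to non-negative weights gives the statement for $\RR_{X/Y}$. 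The only mild subtlety — the step I expect to require the most care — is checking the $\G_{m,\CCC}$-equivariance of the base-change equivalence, i.e.\ that the grading on $\RR^\ext$ is genuinely natural in the base; this is handled cleanly by phrasing the argument on the quotient stacks $\DDD_{X/Y}$ rather than on $D_{X/Y}$, where no equivariance needs to be separately verified.
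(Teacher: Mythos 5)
Your proposal is correct and follows the same route as the paper, whose entire proof is the one-line observation that Weil restrictions commute with base-change; you have simply unwound that observation into the $T$-point computation and the passage back to quasi-coherent algebras via full faithfulness of $\Spec^\nc(-)$. The extra care you take with $\G_{m,\CCC}$-equivariance (working on the quotient stacks $\DDD_{X/Y}$) is a reasonable way to make the grading compatibility explicit, though the paper leaves it implicit.
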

\begin{proof}
	This follows from the fact that Weil restrictions commute with base-change.
\end{proof}

\begin{Exm}
	Consider $\CCC = \Mod_\Z$, i.e., the algebraic context of derived algebraic geometry. Let $A \in \Alg_{\geq 0}$ be given. We use multi-index notation, writing $\underline{x}$ for $x_1,\dots,x_n$ etc. By Example \ref{Ex:ReesofSym}, we know that
	\[
	R^\ext_{A[\underline{x}]/A} = \LSym_{A[t^{-1}]}(A[-1]^{\oplus n}).
	\]
	
	Let $\underline{f} = f_1,\dots,f_n \in A$ be given, and put $B = A/(\underline{f})$. Let us recover the known formula for $R_{B/A}^\ext$ from \cite{HekkingGraded}. Since $B $  is the pushout $A \otimes_{A[\underline{x}]} A$ of $\underline{f} \colon A[\underline{x}] \to A$ along the map $x_i \mapsto 0$, we have a pushout diagram
	\begin{center}
		\begin{tikzcd}
			\LSym_{A[t^{-1}]}(A[-1]^{\oplus n}) \arrow[r, "\varphi"] \arrow[d,"\psi"] \arrow[r] & A[t^{-1}] \arrow[d] \\
			A[t^{-1}]\arrow[r] & R_{B/A}^\ext
		\end{tikzcd}
	\end{center}
	where $\psi$ is induced by $0\colon A[-1]^{\oplus n} \to A[t^{-1}]$. For the definition of $\varphi$, consider the diagram
	\begin{center}
		\begin{tikzcd}
			A[t^{-1}][-1]^{\oplus n} \arrow[r] \arrow[d] & 0 \arrow[d] \\
			A[t^{-1}][-1]^{\oplus n} \arrow[r] \arrow[d] & A[-1]^{\oplus n} \arrow[r] \arrow[d] & 0 \arrow[d] \\
			0 \arrow[r] & A[t^{-1}]^{\oplus n} \arrow[r] \arrow[dr, swap, "\underline{f}"] & A[t^{-1}]^{\oplus n} \\
			&& A[t^{-1}]
		\end{tikzcd}
	\end{center}
	where all the square are pushout squares in $\Mod_{A[t^{-1}]}$. The map $\varphi$ is then induced by the map $A[-1]^{\oplus n} \to A[t^{-1}]$ given by composition in this diagram. Using the bottom right square in the above diagram, we can thus rewrite the pushout diagram defining $R_{B/A}^\ext$ as
	\begin{center}
		\begin{tikzcd}
			\LSym_{A[t^{-1}]}(A[-1]^{\oplus n}) \arrow[r] \arrow[d] & \LSym_{A[t^{-1}]}(A[t^{-1}]^{\oplus n}) \arrow[r, "{\underline{v} \mapsto \underline{f}}"] \arrow[d, "{\underline{v} \mapsto t^{-1}\underline{v}}"] & A[t^{-1}] \arrow[d] \\
			A[t^{-1}] \arrow[r] & \LSym_{A[t^{-1}]}(A[t^{-1}]^{\oplus n}) \arrow[r] & R^\ext_{B/A}
		\end{tikzcd}
	\end{center}
	where $\underline{v}$ are the coordinates of \[\LSym_{A[t^{-1}]}(A[t^{-1}]^{\oplus n}) = A[t^{-1},\underline{v}].\] We thus recover
	\[
	R^\ext_{B/A} \simeq \frac{A[t^{-1},\underline{v}]}{(t^{-1}\underline{v} - \underline{f})}
	\]
	as expected.
\end{Exm}

Recall that $M \to N$ in $\CCC$ is defined to be surjective if the fiber is connective. For $k \in {\DAlg_{\CCC}}$, we call $R \in \DAlg^\Z_{k}$ \textit{generated by degree 1 over $k$} if $\LSym_k(R_1) \to R$ is surjective.

\begin{Prop}
	\label{Prop:Rgendeg1}
	Let $A \to B$ be in ${\DAlg_{\CCC_{\geq 0}}}$. Then $R_{B/A}^\ext \in \DAlg^\Z_{A[t^{-1}]}$ is generated by degree $1$ over $A[t^{-1}]$.
\end{Prop}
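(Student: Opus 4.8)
The plan is to reduce, along the sifted-colimit presentation of $B$, to the case of a free $A$-algebra, and then to read the statement off the explicit formula of Example~\ref{Ex:ReesofSym}.

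\emph{Reduction to generators.} Recall from the proof of Theorem~\ref{Thm:Rees-affine} that $B\mapsto R^\ext_{B/A}$ underlies a colimit-preserving functor, and that by Lemma~\ref{Lem:ZeroDef} (i.e.\ $\DAlg_{A_{\geq 0}}\simeq\PPP_\Sigma(\DAlg_{A^0})$) every $B\in\DAlg_{A_{\geq 0}}$ is a sifted colimit $\colim_\alpha B_\alpha$ with $B_\alpha\in\DAlg_{A^0}$, so that $R^\ext_{B/A}\simeq\colim_\alpha R^\ext_{B_\alpha/A}$ in $\DAlg^\Z_{A[t^{-1}]}$. I claim the property ``$\LSym_{A[t^{-1}]}(R_1)\to R$ is surjective'' is stable under sifted colimits of $\Z$-graded algebras: the functor $R\mapsto R_1$ and $\LSym_{A[t^{-1}]}$ preserve colimits, the forgetful functor $\DAlg^\Z_{A[t^{-1}]}\to\CCC^\Z$ preserves sifted colimits, in the stable category $\CCC^\Z$ a fibre sequence is a cofibre sequence and hence is preserved by colimits, and $\CCC^\Z_{\geq 0}$ is closed under colimits; so a sifted colimit of maps with connective fibre again has connective fibre. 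This reduces the statement to $B\in\DAlg_{A^0}$.

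\emph{The generator case.} Let $B=\LSym_A(A\otimes M)$ with $M\in\CCC^0$. By Example~\ref{Ex:ReesofSym} there is an equivalence $R^\ext_{B/A}\simeq\LSym_{A[t^{-1}]}(M[-1])$, and tracking the $\Z$-grading (equivalently, the $\G_{m,\CCC}$-action on $D_{B/A}$) through the rotation of the fibre sequence $M[t^{-1}]\xrightarrow{\times t^{-1}}M[t^{-1}]\to M$ shows that the copy of $M[-1]$ appearing there is placed in weight $1$, hence lies inside $(R^\ext_{B/A})_1$. Therefore the equivalence $\LSym_{A[t^{-1}]}(M[-1])\xrightarrow{\sim}R^\ext_{B/A}$ factors through $\LSym_{A[t^{-1}]}((R^\ext_{B/A})_1)$, so the canonical map $c\colon\LSym_{A[t^{-1}]}((R^\ext_{B/A})_1)\to R^\ext_{B/A}$ is a split epimorphism. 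It remains to see $c$ has connective fibre; I would do this from the explicit weight-graded description of $\LSym_{A[t^{-1}]}(M[-1])$ — in non-positive weights this algebra reduces to $A[t^{-1}]$, while in positive weights it is assembled from its weight-$1$ part through the symmetric-power multiplication maps.

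\emph{Main obstacle.} The reduction step is formal. The real content is the generator case, and within it the point needing care is the bookkeeping of the $\Z$-grading in Example~\ref{Ex:ReesofSym} — confirming that the free generator $M[-1]$ genuinely lands in weight $1$, so that it is captured by $(R^\ext_{B/A})_1$ — together with the verification that the resulting split epimorphism $c$ is surjective in the required (connective-fibre) sense. An alternative to the split-epimorphism argument is to compute the graded pieces $(R^\ext_{B/A})_w$ directly from $R^\ext_{B/A}=\LSym_{A[t^{-1}]}(M[-1])$ and to establish surjectivity of $\LSym_{A[t^{-1}]}((R^\ext_{B/A})_1)\to R^\ext_{B/A}$ by induction on $w$.
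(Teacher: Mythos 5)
Your proposal follows the paper's proof essentially step for step: the same reduction along the sifted-colimit presentation of $B$ by objects of $\DAlg_{A^0}$, the same argument that surjectivity (connectivity of the fibre, equivalently $1$-connectivity of the cofibre) is preserved under sifted colimits because the forgetful functor and $\LSym$ preserve them and cofibres commute with colimits, and the same appeal to Example \ref{Ex:ReesofSym} for the generator case. The paper is in fact terser than you are on that last step---it simply asserts that the base case ``follows from Example \ref{Ex:ReesofSym}''---so the weight bookkeeping and the connective-fibre verification that you explicitly flag as remaining work (correctly noting that the split-epimorphism observation alone does not suffice for nonconnective objects) are left implicit there as well.
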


\begin{proof}
	By the proof of Theorem \ref{Thm:Rees-affine}, it suffices to show that this condition is stable under sifted colimits, and that $R_{B/A}^\ext$ satisfies this condition for any $B \in \DAlg_{A^0}$. 
	
	To show that the condition is stable under sifted colimits, observe that the forgetful functor $\DAlg^\Z_{A[t^{-1}]} \to \Mod^\Z_{A[t^{-1}]}$ preserves sifted colimits, and likewise $\LSym_\CCC(-)$. We thus reduce to showing that, for a given map of sifted diagrams
	\[
		f_\alpha \colon  M_\alpha \to N_\alpha
	\]
	in $\Mod^\Z_{A[t^{-1}]}$, with colimit $f \colon  M \to N$ and such that each $f_\alpha$ is surjective, the map $f$ is surjective as well. Let $F$ be the fiber of $f$, and $F_\alpha$ the fiber of $f_\alpha$. Then $F[1]$ is the cofiber of $f$, and $F[1] \simeq \colim F_\alpha[1]$, since colimits commute with cofibers. Since each $F_\alpha[1]$ is 1-connective, so is $F[1]$.
	
	From Example \ref{Ex:ReesofSym}, it follows that $R_{B/A}^\ext \in \DAlg^\Z_{A[t^{-1}]}$ is generated by degree $1$ over $A[t^{-1}]$, for any $B \in \DAlg_{A^0}$.
\end{proof}

\subsection*{Deformation to the normal bundle}
 Let still $f\colon X \to Y$ in ${\St_{\CCC}}$, and suppose $f$ admits a cotangent complex. Let $\NN_{X/Y}$ be the shifted cotangent complex $\LL_{X/Y}[-1]$, and put 
\[
	\sN_{X/Y} \coloneqq \Spec^\nc (\LSym_X (\NN_{X/Y}(-1))).
\]
\begin{Prop}
	\label{Prop:TNXY}
	Endow $X,Y$ with trivial $\G_{m,\CCC}$-action. Then for any $\G_{m,\CCC}$-equivariant $h \colon T \to Y$, we have a natural equivalence
	\[
		\St^{\G_{m,\CCC}}_Y(T,\sN_{X/Y}) \simeq \St^{\G_{m,\CCC}}_Y(T \times_{\A^1_\CCC} \{0\},X)
	\]
	where $T$ lives over $\A^1_\CCC$ as $T \to Y \to \{0\} \to \A^1_\CCC$, which is $\G_{m,\CCC}$-equivariant.
\end{Prop}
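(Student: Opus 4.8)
The plan is to verify the asserted universal property by hand, after a standard reduction to the affine case. First I would note that both functors of $T$ occurring in the statement --- namely $T\mapsto\St^{\G_{m,\CCC}}_Y(T,\N_{X/Y})$ and $T\mapsto\St^{\G_{m,\CCC}}_Y(T\times_{\A^1_\CCC}\{0\},X)$ --- send colimits in $T$ to limits: the former because it is corepresented, the latter by universality of colimits in ${\St_{\CCC}}$ applied to the base change along the zero section. Both are also stable under base change along morphisms $Y'\to Y$ in ${\St_{\CCC}}$, by Lemma \ref{Lem:SpecBC} and the base-change formulas for $\LL_{X/Y}$ and for $\LSym$ on the one hand, and by universality of colimits together with Remark \ref{Rem:GequivWeil} on the other. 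Combining these, it suffices to treat the case where $Y=\Spec^\nc A$, $X=\Spec^\nc B$ for a map $A\to B$ in ${\DAlg_{\CCC}}$, and $T=\Spec^\nc D$ for $D\in\DAlg^\Z_A$ (Proposition \ref{Prop:ZgradeGm}), with the structure map $T\to\A^1_\CCC$ corresponding to the homogeneous map $\Z[t^{-1}]\to D$ sending $t^{-1}$ to $0$. In this situation $T\times_{\A^1_\CCC}\{0\}\simeq\Spec^\nc(D\otimes_{\Z[t^{-1}]}\Z)=\Spec^\nc(D/(t^{-1}))$, carrying its natural $\Z$-grading.

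The computational heart of the argument is then the identification of the weight-zero part of $D/(t^{-1})$. Tensoring the homogeneous fiber sequence $\Z[t^{-1}](1)\xrightarrow{\times t^{-1}}\Z[t^{-1}]\to\Z$ with $\Z$ over $\Z[t^{-1}]$ yields a fiber sequence $\mathbbm{1}_\CCC(1)\xrightarrow{0}\mathbbm{1}_\CCC\to\Z\otimes_{\Z[t^{-1}]}\Z$ in $\CCC^\Z$, so $\Z\otimes_{\Z[t^{-1}]}\Z\simeq\mathbbm{1}_\CCC\oplus\mathbbm{1}_\CCC(1)[1]$ as a module; since $\mathbbm{1}_\CCC(1)[1]$ is concentrated in weight $-1$, its self-product would have to lie in weight $-2$, which is absent, so this is the \emph{trivial} square-zero extension of $\mathbbm{1}_\CCC$. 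Base-changing along $\mathbbm{1}_\CCC\to D$ (whose $\Z[t^{-1}]$-structure factors through $\mathbbm{1}_\CCC$) gives $D/(t^{-1})\simeq D\oplus D(1)[1]$ as a trivial square-zero extension of $D$, whence $(D/(t^{-1}))_0\simeq D_0\oplus D_1[1]$ as an $A$-algebra --- the trivial square-zero extension of $D_0$ by the $D_0$-module $D_1[1]$.

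Now I would unwind both sides. Since $B$ is trivially graded, the right-hand side is $\St^{\G_{m,\CCC}}_Y(T\times_{\A^1_\CCC}\{0\},X)\simeq\DAlg^\Z_A(B,D/(t^{-1}))\simeq\DAlg_A(B,(D/(t^{-1}))_0)\simeq\DAlg_A(B,D_0\oplus D_1[1])$ by the previous paragraph. For the left-hand side, a $\G_{m,\CCC}$-equivariant map $T\to\N_{X/Y}=\Spec^\nc\LSym_X(\NN^\vee_{X/Y}(-1))$ over $Y$ consists of a map $g\colon T\to X$ over $Y$ --- equivalently, since $X$ is trivially graded, a map $\phi\colon B\to D_0$ in $\DAlg_A$ --- together with a homogeneous $\OO_T$-algebra map $g^*\LSym_X(\NN^\vee_{X/Y}(-1))\to\OO_T$; by the free--forgetful adjunction for $\LSym$ this is a homogeneous $\OO_T$-module map $(g^*\NN^\vee_{X/Y})(-1)\to\OO_T$, i.e.\ a map $(L_{B/A}\otimes_B D_0)[-1]\to D_1$ of $D_0$-modules, equivalently a map $L_{B/A}\otimes_B D_0\to D_1[1]$. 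Integrating over $\phi$ and invoking the universal property of the cotangent complex, in the form $\Mod_C(L_{B/A}\otimes_B C,M)\simeq\DAlg_{A/C}(B,C\oplus M)$ with $C=D_0$ and $M=D_1[1]$, identifies the left-hand side with $\DAlg_A(B,D_0\oplus D_1[1])$ as well. These identifications are manifestly natural in $T$, so they assemble into the claimed equivalence.

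The step I expect to be the main obstacle is the middle one: pinning down not merely the underlying module but the \emph{algebra} structure of $D/(t^{-1})$, i.e.\ recognising $\Z\otimes_{\Z[t^{-1}]}\Z$ as a trivial square-zero extension, together with keeping scrupulous track of the homogeneous weight in which $\NN^\vee_{X/Y}(-1)$ sits, so that the weight-$1$ contributions appearing on the two sides genuinely coincide. The reduction in the first step is routine, but it does rely on all the relevant constructions --- $\Spec^\nc(-)$, the cotangent complex, $\LSym$, and the Weil restriction underlying $D_{X/Y}$ --- being stable under base change.
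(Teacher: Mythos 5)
Your computation is, at its core, the paper's own proof written out in coordinates. The paper's argument is precisely the chain
\[
\St^{\G_{m,\CCC}}_Y(T,\N_{X/Y}) \simeq \QCoh^\Z(T)(h^*\LL_{X/Y},\OO_T[1](1)) \simeq \St^{\G_{m,\CCC}}_Y(T[\OO_T[1](1)],X),
\]
i.e.\ the free--forgetful adjunction for $\LSym$, the universal property of the cotangent complex, and the identification of $T\times_{\A^1_\CCC}\{0\}$ with the trivial square-zero extension $T[\OO_T[1](1)]$. Your weight bookkeeping --- $(D/(t^{-1}))_0\simeq D_0\oplus D_1[1]$, and graded maps $g^*\NN^\vee_{X/Y}(-1)\to\OO_T$ being maps $L_{B/A}\otimes_B D_0\to D_1[1]$ of $D_0$-modules --- checks out against this, so the computational heart is sound.

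The genuine problem is the opening reduction. The standing hypothesis for Proposition \ref{Prop:TNXY} is only that $f\colon X\to Y$ admits a cotangent complex; $f$ is \emph{not} assumed affine. Reducing $T$ to an affine is legitimate (both functors do send colimits in $T$ to limits), and base change along $Y'\to Y$ lets you assume $Y=\Spec^\nc A$; but neither operation makes $X$ affine, since base change replaces $X$ by $X\times_Y Y'$, which is affine over $Y'$ only if $f$ was affine to begin with. So ``it suffices to treat the case $X=\Spec^\nc B$'' is unjustified, and your subsequent use of $L_{B/A}$ and of $\DAlg_A(B,-)$ only proves the statement for affine morphisms of affines. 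The repair is to not reduce $X$ at all: keep $T$ arbitrary (or affine, if you prefer), and run the identical argument with $g^*\LL_{X/Y}$ in place of $L_{B/A}\otimes_B D_0$, invoking the global universal property of Definition \ref{Def:GlobCotangent} --- morphisms $(T,\MM)\to(X,\LL_{X/Y})$ in $\StMod_Y$ are morphisms $T[\MM]\to X$ over $Y$ --- in place of the affine one. This is exactly what the paper does, and it renders the reduction, and hence all the base-change and descent preliminaries, unnecessary.
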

\begin{proof}
	It holds
	\begin{align*}
		\St^{\G_{m,\CCC}}_Y(T,\sN_{X/Y}) &\simeq \QCoh^\Z(T)(h^*\LL_{X/Y},\OO_T[1](1)) \\ &\simeq \St^{\G_{m,\CCC}}_Y(T[\OO_T[1](1)],X).
	\end{align*}
	Note $\OO_T[1](1) \simeq (\OO_T/(0))(1)$, and so $T[\OO_T[1](1)]$ is the $\G_{m,\CCC}$-equivariant fiber product $T \times_{\A^1_\CCC} \{0\}$.
\end{proof}

\begin{Thm}
	\label{Thm:Deformation}
	We have a $\G_{m,\CCC}$-equivariant Cartesian diagram
	\begin{center}
		\begin{tikzcd}
			X \times \{0\}\ar[r]\ar[d] & X\times \A^1_\CCC\ar[d] & X\times \G_{m,\CCC}\ar[l]\ar[d]\\
			\sN_{X/Y}\ar[r]\ar[d] &\sD_{X/Y}\ar[d] & Y\times \G_{m,\CCC}\ar[l]\ar[d]\\
			Y \times \{0\}\ar[r] & Y\times \A^1_\CCC & Y\times \G_{m,\CCC}.\ar[l]		
		\end{tikzcd}	
	\end{center}
\end{Thm}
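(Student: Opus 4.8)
The plan is to read off, straight from the definitions, the universal properties of $D_{X/Y}$ over $Y \times \A^1_\CCC$ and of $\N_{X/Y}$ over $Y$, and then deduce all four Cartesian squares by formal manipulations (universality of colimits and the pasting lemma for pullbacks). Everything is carried out $\G_{m,\CCC}$-equivariantly, i.e.\ in $\St^{\G_{m,\CCC}}_\CCC$, where $\A^1_\CCC$ carries its weight $-1$ action and $X,Y$ the trivial action. The first ingredient is that, being the Weil restriction of $X \times B\G_{m,\CCC} \to Y \times B\G_{m,\CCC}$ along $\zeta_Y$ (Definition \ref{Def:DefSpace}, Remark \ref{Rem:GequivWeil}), the deformation space satisfies, by the defining property of Weil restrictions,
\[ \St^{\G_{m,\CCC}}_{Y \times \A^1_\CCC}(T, D_{X/Y}) \simeq \St^{\G_{m,\CCC}}_Y\big(T \times_{\A^1_\CCC} \{0\},\, X\big) \]
naturally in every $\G_{m,\CCC}$-equivariant $T \to Y \times \A^1_\CCC$. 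The second ingredient is Proposition \ref{Prop:TNXY}, which says precisely that $\N_{X/Y} \in \St^{\G_{m,\CCC}}_Y$ represents the functor $T \mapsto \St^{\G_{m,\CCC}}_Y(T \times_{\A^1_\CCC}\{0\}, X)$ on $\G_{m,\CCC}$-equivariant $T \to Y$, where $T$ is viewed over $\A^1_\CCC$ through the zero section $T \to Y \to \{0\} \hookrightarrow \A^1_\CCC$.

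Next I would treat the bottom row. Since $\{0\} = \Spec^\nc(\mathbbm{1}_\CCC)$ is terminal we have $Y \times \{0\} \simeq Y$, so the bottom-left square amounts to $\N_{X/Y} \simeq D_{X/Y} \times_{Y \times \A^1_\CCC}(Y \times \{0\})$ as objects over $Y$. Evaluating the right-hand side on a $\G_{m,\CCC}$-equivariant $T \to Y$, placed over $Y \times \A^1_\CCC$ by the zero section, the universal property of the pullback followed by that of $D_{X/Y}$ gives
\[ \St^{\G_{m,\CCC}}_Y\big(T,\, D_{X/Y} \times_{Y \times \A^1_\CCC}(Y \times \{0\})\big) \simeq \St^{\G_{m,\CCC}}_{Y \times \A^1_\CCC}(T, D_{X/Y}) \simeq \St^{\G_{m,\CCC}}_Y\big(T \times_{\A^1_\CCC}\{0\},\, X\big), \]
which is exactly the functor represented by $\N_{X/Y}$; hence the two objects agree, compatibly with their maps to $D_{X/Y}$ (both being classified by the identity under the displayed equivalences). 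For the bottom-right square, take $T \to Y \times \G_{m,\CCC}$ arbitrary and place it over $Y \times \A^1_\CCC$ through $Y \times \G_{m,\CCC} \hookrightarrow Y \times \A^1_\CCC$; then $T \times_{\A^1_\CCC}\{0\} \simeq T \times_{\G_{m,\CCC}}(\G_{m,\CCC} \times_{\A^1_\CCC}\{0\})$, and $\G_{m,\CCC}\times_{\A^1_\CCC}\{0\} \simeq \Spec^\nc(\mathbbm{1}_\CCC[\Z]/(t^{-1})) \simeq \emptyset$, since killing the unit $t^{-1}$ collapses $\mathbbm{1}_\CCC[\Z]$ to the zero algebra. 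As colimits are universal in $\St^{\G_{m,\CCC}}_\CCC$ the initial object is strict, so $T \times_{\A^1_\CCC}\{0\} \simeq \emptyset$ and $\St^{\G_{m,\CCC}}_Y(\emptyset, X) \simeq *$; by the universal property of $D_{X/Y}$ this forces $D_{X/Y} \times_{Y \times \A^1_\CCC}(Y \times \G_{m,\CCC}) \simeq Y \times \G_{m,\CCC}$, i.e.\ the bottom-right square is Cartesian.

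Finally, the top squares follow by pasting. Stacking the top-left square onto the (now known) bottom-left one yields the outer rectangle $X \times \{0\} \to X \times \A^1_\CCC$ over $Y \times \{0\} \to Y \times \A^1_\CCC$, which is Cartesian because it is the base change of $f \times \id_{\A^1_\CCC}$ along the projection $Y \times \A^1_\CCC \to Y$, using $X \times \A^1_\CCC \simeq X \times_Y (Y \times \A^1_\CCC)$ and $Y \times \{0\} \simeq Y$; likewise, stacking the top-right square onto the bottom-right one yields the Cartesian outer rectangle $X \times \A^1_\CCC \leftarrow X \times \G_{m,\CCC}$ over $Y \times \A^1_\CCC \leftarrow Y \times \G_{m,\CCC}$. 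Since in each case the lower square is Cartesian, the pasting lemma for pullbacks shows the upper square is Cartesian as well; along the way one checks that the middle vertical map $X \times \A^1_\CCC \to D_{X/Y}$ is the section classifying $\id_X$ (so that its composite to $Y \times \A^1_\CCC$ is $f \times \id$) and that $X \times \{0\} \to \N_{X/Y}$ is correspondingly the zero section of the normal bundle. The only step that is not purely formal is the first one: one must carry the $\G_{m,\CCC}$-action through the Weil restriction along $\zeta_Y$ so that the universal property of $D_{X/Y}$ comes out in a form matching Proposition \ref{Prop:TNXY} verbatim. Once both $D_{X/Y}$ and $\N_{X/Y}$ are described by the same functor $T \mapsto \St^{\G_{m,\CCC}}_Y(T \times_{\A^1_\CCC}\{0\}, X)$ over their respective bases, the remaining work is just pasting and universality of colimits.
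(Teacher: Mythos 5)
Your proposal is correct and follows essentially the same route as the paper: Proposition \ref{Prop:TNXY} handles the normal-bundle square, the identification $\G_{m,\CCC} \times_{\A^1_\CCC} \{0\} \simeq \emptyset$ (which the paper obtains by noting that the symmetric monoidal functor $F \colon \Mod_\Z \to \CCC$ carries the corresponding pushout to $\Spec(0)$, rather than by your direct ``killing a unit'' computation) handles the right-hand column, and the remaining squares are formal pastings. You are merely more explicit than the paper about the pasting lemma for the top row, which the paper leaves implicit after reducing to the bottom-right square.
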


\begin{proof}
	By Proposition \ref{Prop:TNXY}, it remains to show that the bottom right square is Cartesian. Let $T \to Y \times \G_{m,\CCC}$ be given. Then
\begin{align*}		
		\St_{\G_{m,Y}}(T,\sD_{X/Y} \times_{\A^1_Y} \G_{m,Y}) &\simeq \St_{\A^1_Y}(T,\sD_{X/Y}) \simeq \St_Y(T \times_{\A^1_\CCC} \{0\},X)	
\end{align*}
	by definition of $\sD_{X/Y}$. Now observe that
	\[
		\G_{m,\CCC} \times_{\A^1_\CCC} \{0\} \simeq \Spec (F(0)) \simeq \Spec(0)
	\]
	for $F\colon \Mod_\Z \to \CCC$ the unique morphism of derived algebraic contexts. It follows that \[\St_Y(T \times_{\A^1_\CCC} \{0\},X) \simeq \{*\}\] which shows our claim.
\end{proof}

\begin{Cor}
	\label{Cor:G_A(R)=B}
	For any morphism $A \to B$ of $\CCC_{\geq 0}$-algebras, the map \[(R_{B/A}^\ext)_0 \to (R_{B/A}^\ext/(t^{-1}))_0\] is naturally equivalent to $A \to B$.
\end{Cor}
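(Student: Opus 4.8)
The plan is to deduce the statement from Theorem \ref{Thm:Deformation}, working affine-locally. By Proposition \ref{Prop:AffLoc} and Theorem \ref{Thm:Rees-affine} we may assume $X = \Spec B$ and $Y = \Spec A$ with $A \to B$ a morphism in ${\DAlg_{\CCC_{\geq 0}}}$, so that $D_{X/Y}$ is represented by $R^\ext_{B/A} \in \DAlg^\Z_{A[t^{-1}]}$ and $R^\ext_{B/A}/(t^{-1}) = R^\ext_{B/A} \otimes_{A[t^{-1}]} A$, the tensor being taken along the zero section $A[t^{-1}] \to A$ (which sends $t^{-1} \mapsto 0$). Every stack in sight is nonconnectively affine, so by Corollary \ref{Cor:RelSpec} and Proposition \ref{Prop:ZgradeGm} a $\G_{m,\CCC}$-equivariant equivalence of nonconnectively affine $\CCC$-stacks over $\Spec A$ amounts to an equivalence of $\Z$-graded $A$-algebras.

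First I would compute $R^\ext_{B/A}/(t^{-1})$. The bottom-left square of Theorem \ref{Thm:Deformation} is Cartesian and $\G_{m,\CCC}$-equivariant, and $Y \times \{0\} \to Y \times \A^1_\CCC$ is the zero section, so that square identifies $\N_{X/Y} = \Spec^\nc \LSym_B(\LL_{X/Y}[-1](-1))$ with $\Spec^\nc(R^\ext_{B/A}/(t^{-1}))$ over $\Spec A$; hence $R^\ext_{B/A}/(t^{-1}) \simeq \LSym_B(\LL_{X/Y}[-1](-1))$ as $\Z$-graded $A$-algebras. Since the twisted module $\LL_{X/Y}[-1](-1)$ is concentrated in weight $1$ and $B$ carries the trivial grading, the $n$-th graded summand $((\LL_{X/Y}[-1])^{\otimes_B n})_{\Sigma_n}$ of $\LSym_B(\LL_{X/Y}[-1](-1))$ lives in weight $n$; thus $R^\ext_{B/A}/(t^{-1})$ vanishes in negative weights and its weight-zero part is the $n = 0$ summand $B$, with the induced $A$-algebra structure equal to the structure map $A \to B$ (the map $A \to \LSym_B(\LL_{X/Y}[-1](-1))$ out of the trivially graded $A$ factors through the weight-zero subalgebra as $A \to B \hookrightarrow \LSym_B(\LL_{X/Y}[-1](-1))$).

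Next I would identify $(R^\ext_{B/A})_0$ with $A$. Multiplication by the weight-$(-1)$ element $t^{-1}$ gives a fiber sequence $R^\ext_{B/A} \xrightarrow{\times t^{-1}} R^\ext_{B/A} \to R^\ext_{B/A}/(t^{-1})$ of graded $R^\ext_{B/A}$-modules; taking the weight-$w$ part is exact, and since the cofiber vanishes for $w < 0$, the map $\times t^{-1} \colon (R^\ext_{B/A})_m \to (R^\ext_{B/A})_{m-1}$ is an equivalence for every $m \leq 0$. Hence the tower $(R^\ext_{B/A})_0 \xrightarrow{\times t^{-1}} (R^\ext_{B/A})_{-1} \xrightarrow{\times t^{-1}} \cdots$ consists of equivalences, and its colimit — which is the weight-zero part of the localization $R^\ext_{B/A}[(t^{-1})^{-1}] = R^\ext_{B/A} \otimes_{A[t^{-1}]} A[t^{\pm 1}]$ — is $(R^\ext_{B/A})_0$. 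But the bottom-right square of Theorem \ref{Thm:Deformation} identifies this localization with $\OO_{Y \times \G_{m,\CCC}} = A[t^{\pm 1}]$, compatibly with the localization map $A[t^{-1}] \to A[t^{\pm 1}]$, which is the identity on weight-zero parts. Running the same localization argument with $A[t^{-1}]$ in place of $R^\ext_{B/A}$ (using $A[t^{-1}]/(t^{-1}) \simeq A$, concentrated in weight $0$), we conclude that the structure map $A[t^{-1}] \to R^\ext_{B/A}$ induces an equivalence $A = (A[t^{-1}])_0 \xrightarrow{\ \sim\ } (R^\ext_{B/A})_0$.

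Finally I would pass to weight-zero parts in the commuting pushout square of $\Z$-graded algebras
\begin{center}
\begin{tikzcd}
A[t^{-1}] \ar[r] \ar[d] & R^\ext_{B/A} \ar[d] \\
A \ar[r] & R^\ext_{B/A}/(t^{-1})
\end{tikzcd}
\end{center}
By the two previous paragraphs the top arrow becomes the equivalence $A \xrightarrow{\sim} (R^\ext_{B/A})_0$ and the left arrow becomes $\id_A$, while the bottom arrow becomes the structure map $A \to B$ under the identification $(R^\ext_{B/A}/(t^{-1}))_0 \simeq B$. Therefore $(R^\ext_{B/A})_0 \to (R^\ext_{B/A}/(t^{-1}))_0$ is equivalent to $A \to B$, and naturally so, since the Rees algebra functor, the functors $(-)_0$ and $(-)/(t^{-1})$, and the diagram of Theorem \ref{Thm:Deformation} are all functorial in $A \to B$. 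I expect the identification $(R^\ext_{B/A})_0 \simeq A$ to be the main obstacle: it is the only step that is not formal bookkeeping with the $\Z$-grading, as it genuinely uses both the vanishing of $R^\ext_{B/A}/(t^{-1})$ in negative weights and the invertibility of $t^{-1}$ over $\G_{m,\CCC}$.
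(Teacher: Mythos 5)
Your proof is correct, and it rests on the same key input as the paper's --- the Cartesian squares of Theorem \ref{Thm:Deformation} --- but it is organized differently. The paper's argument is a one-liner on the $B$-side: it takes the map $h \colon B \to (R^\ext_{B/A}/(t^{-1}))_0$ furnished by the universal property of the extended Rees algebra (the image of $\id$ under $\DAlg^\Z_{A[t^{-1}]}(R^\ext_{B/A},R^\ext_{B/A}) \simeq \DAlg_A(B,(R^\ext_{B/A}/(t^{-1}))_0)$) and uses the deformation to the normal bundle to identify $h$ with the composite $B \to \LSym_B(\NN^\vee_{B/A}) \to (\LSym_B(\NN^\vee_{B/A}))_0$, i.e.\ with the identity of $B$. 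You instead compute $(R^\ext_{B/A}/(t^{-1}))_0 \simeq B$ by direct weight bookkeeping on $\LSym_B(\LL_{B/A}[-1](-1))$ and read the map off the base-change pushout square; both routes work, though note that the degree-$n$ summand of $\LSym_B$ is not literally $((-)^{\otimes_B n})_{\Sigma_n}$ (that is the formula for the $\Einfty$-symmetric algebra, from which $\LSym$ differs) --- what you actually need, and what the paper also uses silently, is only that the degree-$n$ piece of $\LSym_B$ applied to a module concentrated in weight $1$ sits in weight $n$. The genuine added value of your write-up is the middle paragraph: the paper's proof says nothing about why $(R^\ext_{B/A})_0 \simeq A$, and your localization argument --- $\times t^{-1}$ is an equivalence in non-positive weights because $R^\ext_{B/A}/(t^{-1})$ is concentrated in non-negative weights, whence $(R^\ext_{B/A})_0 \simeq ((R^\ext_{B/A})_{t^{-1}})_0 \simeq (A[t,t^{-1}])_0 = A$ by the bottom-right square of Theorem \ref{Thm:Deformation} --- supplies exactly the justification that is missing there, and it is consistent with how the same two facts are exploited in the proof of Theorem \ref{Thm:ReesClosed}. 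So your closing instinct that the identification $(R^\ext_{B/A})_0 \simeq A$ is the real content of the corollary, rather than formal bookkeeping, is well placed.
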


\begin{proof}
	Consider the map of $A$-algebras $h\colon B \to (R_{B/A}^\ext/(t^{-1}))_0$ corresponding to $\id\colon  R_{B/A}^\ext \to R_{B/A}^\ext$ under the universal property of the extended Rees algebra. We claim that $h$ is an equivalence. From the deformation to the normal bundle, it follows that $h$ is equivalent to the composition
	\[
		B \to \LSym_B(\NN_{B/A}) \to (\LSym_B(\NN_{B/A}))_0
	\]
	which is equivalent to the identity.
\end{proof}

\subsection*{Virtual Cartier divisors}
Let still $X \to Y$ be in $\St_{\CCC}$.
\begin{Def}
	A \textit{virtual Cartier divisor} on $T \in \Aff_{\CCC}$ is a morphism $D \to T$ such that there is a Cartesian diagram
	\begin{center}
		\begin{tikzcd}
			D \arrow[r] \arrow[d] & T \arrow[d] \\
			B\G_{m,\CCC} \arrow[r] & {[\A^1_\CCC /\G_{m,\CCC}]}.
		\end{tikzcd}
	\end{center}
\end{Def}
Let $D \to T$ be a virtual Cartier divisor. Pulling back the corresponding classifying morphism $T \to [\A^1_\CCC/\G_{m,\CCC}]$ along $\A^1_\CCC \to [\A^1_\CCC/\G_{m,\CCC}]$ and using the description of effective epimorphisms as local surjections, we see that $D \to T$ is locally on $T$ of the form $\Spec^\nc A / (f) \to \Spec^\nc A $.
\begin{Def}
	A \textit{virtual Cartier divisor over $X \to Y$} is a commutative diagram of the form
	\begin{center}
		\begin{tikzcd}
			D \arrow[r] \arrow[d] & T \arrow[d] \\
			X \arrow[r] & Y
		\end{tikzcd}
	\end{center}
	such that $D \to T$ is a virtual Cartier divisor.
\end{Def}
If $X \to Y$ admits a cotangent complex, then put $\cN_{X/Y} \coloneqq [\sN_{X/Y}/\G_{m,\CCC}] $.
\begin{Prop}
	\label{Prop:VCD}
	Suppose that $X \to Y$ admits a cotangent complex. Then $\cN_{X/Y} \to \cD_{X/Y}$ is the universal virtual Cartier divisor over $X \to Y$, in the sense that $\cD_{X/Y}(T)$ is the space of virtual Cartier divisors $D \to T$ over $X \to Y$, and any such $D \to T$ is obtained as a pullback, along $\cN_{X/Y} \to \cD_{X/Y}$, of some map $T \to \cD_{X/Y}$.
\end{Prop}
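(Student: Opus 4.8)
The plan is to prove that $\DDD_{X/Y}$ represents the functor of virtual Cartier divisors over $X\to Y$, with universal object $\NNN_{X/Y}\to\DDD_{X/Y}$, by unwinding the Weil-restriction definition of $\DDD_{X/Y}$. First I would record the tautology that the functor $T\mapsto\{\text{virtual Cartier divisors on }T\}$ is represented by $[\A^1_\CCC/\G_{m,\CCC}]$, with universal divisor $B\G_{m,\CCC}\to[\A^1_\CCC/\G_{m,\CCC}]$: by definition a virtual Cartier divisor $D\to T$ is exactly a morphism $T\to[\A^1_\CCC/\G_{m,\CCC}]$ together with the induced Cartesian square. This description is local, so virtual Cartier divisors form a stack and make sense on any $\CCC$-stack, which also justifies stating the conclusion for arbitrary $T$.

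Next I would unwind a virtual Cartier divisor over $X\to Y$ on a test object $T$ into: a morphism $v\colon T\to[\A^1_\CCC/\G_{m,\CCC}]$, hence a divisor $\delta\colon D\coloneqq T\times_{[\A^1_\CCC/\G_{m,\CCC}]}B\G_{m,\CCC}\to T$; a morphism $u\colon T\to Y$; and a morphism $D\to X$ together with a homotopy identifying $D\to X\to Y$ with $u\circ\delta$. The pair $(u,v)$ is a morphism $T\to Y\times[\A^1_\CCC/\G_{m,\CCC}]$. Using $Y\times B\G_{m,\CCC}\simeq(Y\times[\A^1_\CCC/\G_{m,\CCC}])\times_{[\A^1_\CCC/\G_{m,\CCC}]}B\G_{m,\CCC}$ I rewrite $D\simeq T\times_{Y\times[\A^1_\CCC/\G_{m,\CCC}]}(Y\times B\G_{m,\CCC})$ over $Y\times B\G_{m,\CCC}$, and using $X\times B\G_{m,\CCC}\simeq X\times_Y(Y\times B\G_{m,\CCC})$ I rewrite the last datum (the map $D\to X$ plus its compatibility) as a morphism $D\to X\times B\G_{m,\CCC}$ over $Y\times B\G_{m,\CCC}$. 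By the universal property of the Weil restriction along $\zeta_Y\colon Y\times B\G_{m,\CCC}\to Y\times[\A^1_\CCC/\G_{m,\CCC}]$ (Remark \ref{Rem:GequivWeil}, Definition \ref{Def:DefSpace}), this is precisely a $T$-point of $\DDD_{X/Y}$; naturality in $T$ gives the first assertion.

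Finally I would identify the universal divisor. Since $\DDD_{X/Y}$ now represents the functor, the universal virtual Cartier divisor over $X\to Y$ — the one classified by $\id_{\DDD_{X/Y}}$ — has underlying divisor $\DDD_{X/Y}\times_{Y\times[\A^1_\CCC/\G_{m,\CCC}]}(Y\times B\G_{m,\CCC})\simeq\DDD_{X/Y}\times_{[\A^1_\CCC/\G_{m,\CCC}]}B\G_{m,\CCC}$, with its tautological maps to $X$ and $Y$. To see this equals $\NNN_{X/Y}$ I would invoke Theorem \ref{Thm:Deformation}: the bottom-left square there, with corners $\N_{X/Y}$, $D_{X/Y}$, $Y\times\{0\}$, $Y\times\A^1_\CCC$, is $\G_{m,\CCC}$-equivariantly Cartesian, and forming $[-/\G_{m,\CCC}]$ sends an equivariant Cartesian square to a Cartesian square, since a $G$-torsor with a compatible pair of equivariant maps to $A$ and $B$ is the same as a torsor mapping to $A\times_C B$, whence $[A\times_C B/G]\simeq[A/G]\times_{[C/G]}[B/G]$ (cf.\ \cite{NikolausPrincipal}). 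Together with $[D_{X/Y}/\G_{m,\CCC}]\simeq\DDD_{X/Y}$, $[(Y\times\A^1_\CCC)/\G_{m,\CCC}]\simeq Y\times[\A^1_\CCC/\G_{m,\CCC}]$ and $[(Y\times\{0\})/\G_{m,\CCC}]\simeq Y\times B\G_{m,\CCC}$, this yields $\NNN_{X/Y}=[\N_{X/Y}/\G_{m,\CCC}]\simeq\DDD_{X/Y}\times_{Y\times[\A^1_\CCC/\G_{m,\CCC}]}(Y\times B\G_{m,\CCC})$ over $\DDD_{X/Y}$. Transporting the virtual Cartier divisor structure along this equivalence, every virtual Cartier divisor over $X\to Y$ on $T$, being classified by a map $T\to\DDD_{X/Y}$, is the pullback of $\NNN_{X/Y}\to\DDD_{X/Y}$, as claimed.

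The main obstacle is bookkeeping rather than depth: carefully matching all the structure maps when passing between the base $[\A^1_\CCC/\G_{m,\CCC}]$ and $Y\times[\A^1_\CCC/\G_{m,\CCC}]$, and between ``over $Y$'' and ``over $Y\times B\G_{m,\CCC}$'', in the second step; together with the easy but genuinely used fact in the third step that $[-/\G_{m,\CCC}]$ commutes with the relevant Cartesian square, which rests on universality of colimits in the topos $\St_\CCC$ and the torsor description of quotient stacks.
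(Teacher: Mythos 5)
Your argument is correct and is exactly the intended one: the paper's proof is the single line ``By the deformation to the normal bundle, this follows straight from the definitions,'' and your write-up supplies precisely those details --- unwinding the Weil-restriction definition of $\DDD_{X/Y}$ to match $T$-points with virtual Cartier divisors over $X \to Y$, and then using the Cartesian square $\N_{X/Y} \simeq D_{X/Y} \times_{Y\times\A^1_\CCC}(Y\times\{0\})$ from Theorem \ref{Thm:Deformation}, passed through $[-/\G_{m,\CCC}]$, to identify the universal divisor with $\NNN_{X/Y}$.
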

\begin{proof}
	By the deformation to the normal bundle, this follows straight from the definitions. 
\end{proof}
\begin{Def}
	A virtual Cartier divisor $D \to T$ over $X \to Y$ is called \textit{strict} if for any commutative diagram
	\begin{equation}
		\label{Eq:svCd}
		\begin{tikzcd}
			D_S \arrow[d] \arrow[r] & S \arrow[d] \arrow[ddl] \\
			D \arrow[d] \arrow[r] & T \arrow[d] \\
			X \arrow[r] & Y
		\end{tikzcd}
	\end{equation}
	where the top square is Cartesian, it holds that $S = \emptyset$.
\end{Def}
One can show that $\cD_{X/Y}$ is not only functorial in $X$, but also in $Y$. We then get the following.
\begin{Prop}
	\label{Prop:strictVCD}
	The pseudocomplement of $\cD_{X/X} \to \cD_{X/Y}$ classifies strict virtual Cartier divisors over $X \to Y$.
\end{Prop}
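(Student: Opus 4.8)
The plan is to unwind the definitions of strictness and of the pseudocomplement, and to match them up. Recall from Proposition \ref{Prop:PseudoC} that for a morphism $\DDD_{X/X} \to \DDD_{X/Y}$ the pseudocomplement $\DDD_{X/Y} \setminus \DDD_{X/X}$ is the prestack whose $T$-points are those maps $T \to \DDD_{X/Y}$ for which $T \times_{\DDD_{X/Y}} \DDD_{X/X} \simeq \emptyset$. By Proposition \ref{Prop:VCD}, a map $T \to \DDD_{X/Y}$ is the same datum as a virtual Cartier divisor $D \to T$ over $X \to Y$, and $\NNN_{X/Y} \to \DDD_{X/Y}$ is the universal such. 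So the first step is to identify the fiber product $T \times_{\DDD_{X/Y}} \DDD_{X/X}$ in these terms: it should be the prestack over $T$ whose $S$-points (for $S \to T$) are the lifts of $D_S \to S$ to a virtual Cartier divisor over $X \to X$, i.e. a factorization of $S \to X$ through something compatible with $D_S$. Concretely, a virtual Cartier divisor over $X \to X$ on $S$ is a Cartier divisor $D_S \to S$ together with a map $S \to X$ such that the composite $D_S \to S \to X$ agrees with $D_S \to D \to X$; since $\DDD_{X/X}$ has the universal virtual Cartier divisor $\NNN_{X/X}$ and the functoriality of $\DDD_{X/Y}$ in $Y$ mentioned just before the statement gives the map $\DDD_{X/X} \to \DDD_{X/Y}$, the fiber $T \times_{\DDD_{X/Y}} \DDD_{X/X}$ is exactly the classifying object for diagrams of the shape appearing in \eqref{Eq:svCd} with the top square Cartesian.

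Granting that identification, the definition of strictness says precisely that the virtual Cartier divisor $D \to T$ over $X \to Y$ is strict if and only if this classifying object $T \times_{\DDD_{X/Y}} \DDD_{X/X}$ receives no nonempty $S$; that is, if and only if $T \times_{\DDD_{X/Y}} \DDD_{X/X} \simeq \emptyset$. But that is exactly the condition defining when $T \to \DDD_{X/Y}$ factors through the pseudocomplement $\DDD_{X/Y} \setminus \DDD_{X/X}$. Hence the $T$-points of the pseudocomplement are exactly the strict virtual Cartier divisors over $X \to Y$, which is the claim. I would phrase the final comparison by invoking Proposition \ref{Prop:PseudoC} directly: the pseudocomplement is the largest subobject $W$ of $\DDD_{X/Y}$ with $W \times_{\DDD_{X/Y}} \DDD_{X/X} = \emptyset$, and a map $T \to \DDD_{X/Y}$ factors through $W$ iff its own pullback of $\DDD_{X/X}$ is empty.

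The main obstacle, and the step I would spend the most care on, is the identification of $T \times_{\DDD_{X/Y}} \DDD_{X/X}$ with the classifying object for the lifting diagrams in \eqref{Eq:svCd}: one must check that a lift of the $T$-point of $\DDD_{X/Y}$ to an $S$-point of $\DDD_{X/X}$ along $S \to T$ is the same as a diagram \eqref{Eq:svCd} with Cartesian top square, using the functoriality of $\DDD_{X/-}$ in the target and the universal property of Weil restriction underlying Proposition \ref{Prop:VCD}. This is essentially bookkeeping with the universal properties, but it needs the (separately asserted) functoriality of $\DDD_{X/Y}$ in $Y$, and one should be slightly careful that "$S$ appears over $T$ in \eqref{Eq:svCd}" matches "$S \to T \times_{\DDD_{X/Y}} \DDD_{X/X}$" rather than merely mapping to $T$; the Cartesianness of the top square in \eqref{Eq:svCd} is exactly what pins this down. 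Once that dictionary is in place, the rest is a formal application of Proposition \ref{Prop:PseudoC}.
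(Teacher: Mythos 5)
Your proposal is correct and follows essentially the same route as the paper: the paper's (very terse) proof likewise observes that a commutative square $S \to T$, $S \to \DDD_{X/X}$ over $\DDD_{X/X} \to \DDD_{X/Y}$ corresponds exactly to a diagram (\ref{Eq:svCd}) with Cartesian top square, so that strictness is the emptiness of $T \times_{\DDD_{X/Y}} \DDD_{X/X}$, which by Proposition \ref{Prop:PseudoC} is the condition for factoring through the pseudocomplement. Your write-up just makes explicit the dictionary the paper leaves as "the obvious way."
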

\begin{proof}
	A commutative diagram 
	\begin{center}
		\begin{tikzcd}
			S \arrow[d] \arrow[r] & T \arrow[d] \\
			\cD_{X/X} \arrow[r] & \cD_{X/Y}
		\end{tikzcd}
	\end{center}
	corresponds to a diagram (\ref{Eq:svCd}) in the obvious way, from which the claim follows.
\end{proof}

\subsection*{Rees algebras of closed immersions}
For $R \in \DAlg_A$ and $f \in R$, the \textit{localization} of $R$ at $f$ is defined as the colimit
\[ R_f \coloneqq \colim \left( R \xrightarrow{ \times f} R \xrightarrow{\times f} R \xrightarrow{\times f} \cdots \right).  \]
For $M \in \Mod_R$ we put $M_f \coloneqq M \otimes_R R_f$.
\begin{Thm}
	\label{Thm:ReesClosed}
	Let $X \to Y$ be a closed immersion of $\CCC_{\geq 0}$-stacks. Then the extended Rees algebra $R_{X/Y}^\ext$ is connective.
\end{Thm}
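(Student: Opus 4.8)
\emph{Strategy and reduction.} The plan is to reduce to the affine case and then exhibit the extended Rees algebra as the target of a surjection from a connective $\Z$-graded algebra, using that extended Rees algebras are generated in weight one (Proposition~\ref{Prop:Rgendeg1}). Connectivity of a quasi-coherent algebra may be tested on affine charts, and the extended Rees algebra commutes with base change (Proposition~\ref{Prop:ReesBC}); since a closed immersion of $\CCC_{\geq 0}$-stacks is, locally on the target, of the form $\Spec B \to \Spec A$ with $A \to B$ a surjection of connective $\CCC$-algebras, and in that case $\RR^\ext_{X/Y}$ restricts to $R^\ext_{B/A}$ by the proof of Theorem~\ref{Thm:Rees-affine} together with Proposition~\ref{Prop:AffLoc}, it suffices to prove: for every surjection $A \to B$ in $\DAlg_{\CCC_{\geq 0}}$, the algebra $R^\ext_{B/A} \in \DAlg^\Z_{A[t^{-1}]}$ is connective.

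\emph{The weight-one part.} First I would identify the weight-one piece of $R^\ext_{B/A}$. Since multiplication by $t^{-1}$ lowers the weight by one, the weight-zero component of the fibre sequence $R^\ext_{B/A} \xrightarrow{\times t^{-1}} R^\ext_{B/A} \to R^\ext_{B/A}/(t^{-1})$ is a fibre sequence
\[ (R^\ext_{B/A})_1 \xrightarrow{\times t^{-1}} (R^\ext_{B/A})_0 \longrightarrow \big( R^\ext_{B/A}/(t^{-1}) \big)_0 \]
in $\CCC$. By Corollary~\ref{Cor:G_A(R)=B} --- which is a consequence of the deformation to the normal bundle (Theorem~\ref{Thm:Deformation}), reflecting that $R^\ext_{B/A}/(t^{-1})$ is the symmetric algebra on the shifted cotangent complex $L_{B/A}[-1]$, itself connective by Proposition~\ref{Prop:CotangentofSur} --- the right-hand map is the structure map $A \to B$. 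Hence $(R^\ext_{B/A})_1 \simeq \fib(A \to B)$, which is connective precisely because $A \to B$ is surjective.

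\emph{Conclusion and main obstacle.} By Proposition~\ref{Prop:Rgendeg1} the canonical map $\LSym_{A[t^{-1}]}\big( (R^\ext_{B/A})_1 \big) \to R^\ext_{B/A}$ is surjective; its source is connective, since $A[t^{-1}]$ and $(R^\ext_{B/A})_1$ are connective and $\LSym$ preserves connectivity, so the long exact sequence of homotopy groups forces $R^\ext_{B/A}$ to be connective. I expect the identification $(R^\ext_{B/A})_1 \simeq \fib(A \to B)$ to be the crux: it is the only step where the \emph{closed} immersion hypothesis --- rather than mere affineness --- enters, through ``surjective $=$ connective fibre'', and it is where the deformation to the normal bundle is used. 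No purely weight-by-weight argument would suffice here, since the weight-wise fibre sequences only show inductively that the $(R^\ext_{B/A})_n$ are $(-1)$-connective; the generation-in-weight-one input of Proposition~\ref{Prop:Rgendeg1} is what upgrades this. A secondary technical point is the reduction step, i.e.\ making precise that connectivity of $\RR^\ext_{X/Y}$ over $Y \times \A^1_\CCC$ is detected on the charts above.
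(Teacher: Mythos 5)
Your proof is correct, and its overall skeleton (reduce to the affine case, control the Rees algebra via the deformation to the normal bundle and Corollary \ref{Cor:G_A(R)=B}, then conclude by generation in weight one via Proposition \ref{Prop:Rgendeg1}) matches the paper's. However, your treatment of the central step is genuinely different and more direct. The paper first shows that $R/(t^{-1}) \simeq \LSym_B(L_{B/A}[-1])$ is connective (using the $1$-connectivity of $L_{B/A}$ from Proposition \ref{Prop:CotangentofSur}), deduces from the long exact sequence that $\times t^{-1} \colon \pi_n R \to \pi_n R$ is an isomorphism for $n < -1$, and then invokes the identification $R_{t^{-1}} \simeq A[t,t^{-1}]$ (the $\G_{m,\CCC}$-locus of the deformation space) to kill $\pi_n R$ for $n < -1$; only after that does it run the weight-graded exact sequence at $n = -1, 0$ together with Corollary \ref{Cor:G_A(R)=B} to get $(\pi_{-1}R)_1 = 0$ and hence connectivity of $R_1$. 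You instead observe that the weight-zero piece of the graded Koszul fibre sequence $R^\ext_{B/A}(1) \xrightarrow{\times t^{-1}} R^\ext_{B/A} \to R^\ext_{B/A}/(t^{-1})$ exhibits $(R^\ext_{B/A})_1$ as the fibre of $(R^\ext_{B/A})_0 \to (R^\ext_{B/A}/(t^{-1}))_0 \simeq (A \to B)$, so that $(R^\ext_{B/A})_1 \simeq \fib(A \to B)$ is connective in one stroke --- a clean derived analogue of the classical fact that the weight-one piece of the extended Rees algebra of an ideal $I$ is $I$ itself. This bypasses both the connectivity of $R/(t^{-1})$ and the localization argument, and isolates the closed-immersion hypothesis exactly where it is needed; the final step (a surjection from the connective algebra $\LSym_{A[t^{-1}]}((R^\ext_{B/A})_1)$ forces $R^\ext_{B/A}$ to be connective) is common to both proofs. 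One small expository quibble: Corollary \ref{Cor:G_A(R)=B} holds for an arbitrary morphism of connective algebras and does not itself require $L_{B/A}[-1]$ to be connective, so your parenthetical appeal to Proposition \ref{Prop:CotangentofSur} at that point is unnecessary (though harmless).
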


\begin{proof}
	Since the extended Rees algebra is stable under base-change, and by definition of the t-structure on $\QCoh(Y)$ from Lemma \ref{Lem:DefOfTstr}, we may assume without loss of generality that $X \to Y$ corresponds to $A \to B$, with $A$ and $B$ connective. 
	
	Let $R$ be the extended Rees algebra $R_{B/A}^\ext$. Since $A \to B$ has connective fiber, the cotangent complex $L_{B/A}$ is 1-connective by Proposition \ref{Prop:CotangentofSur}. By the deformation to the normal bundle, we have
	\[
		R/(t^{-1}) \simeq \LSym_B (L_{B/A}[-1])
	\]
	and thus $R/(t^{-1})$ is connective. From the long exact sequence associated to the fiber sequence
	\[
		R \xrightarrow{\times t^{-1}} R \to R/(t^{-1})
	\]
	 it now follows that the maps $\times t^{-1} \colon  \pi_nR \to \pi_nR$ are isomorphisms, for all $n<-1$. Also from the deformation to the normal bundle, it follows that $R_{t^{-1}} \simeq A[t,t^{-1}]$, which is connective. Hence
	\[                                                                                                                                                  		\pi_n(R_{t^{-1}}) \simeq \colim \left(\pi_nR \xrightarrow{\times t^{-1}} \pi_n R \xrightarrow{\times t^{-1}} \pi_n R  \xrightarrow{\times t^{-1}} \cdots \right) \simeq \pi_nR
	\]
	vanishes for $n<-1$.
	
	For $n=-1$, recall that the connecting map in the long exact sequence associated to the above fiber sequence comes about from the map $R/(t^{-1}) \to R[1]$, which is homogeneous of degree $-1$. We thus have an exact sequence
	\[
		(\pi_0R)_1 \xrightarrow{\times t^{-1}} (\pi_0R)_0 \to (\pi_0(R/(t^{-1})))_0 \to (\pi_{-1}R)_1 \xrightarrow{\times t^{-1}} (\pi_{-1} R)_0 \to 0
	\]
	on the homogeneous pieces.	By Corollary \ref{Cor:G_A(R)=B}, it follows that the second map is equivalent to $\pi_0A \to \pi_0B$, which is surjective. Therefore
	\[
		(\pi_{-1}R)_1 \cong (\pi_{-1} R)_0 \cong \pi_{-1} A \cong 0
	\]
	and thus $R_1$ is connective. Now since $R$ is generated by degree $1$ by Proposition \ref{Prop:Rgendeg1}, it follows that $R$ is connective as well.
\end{proof}

\subsection*{Blow-ups}
Let $W \to X$ be an affine morphism of $\CCC_{\geq 0}$-stacks, with Rees algebra $\RR_{W/X}$.
\begin{Def}
	\label{Def:Blup}
	The \textit{blow-up} of $X$ in $W$ is the relative projective spectrum
	\[
		\Bl_WX \coloneqq \Proj \RR_{W/X}
	\]
	over $X$.
\end{Def}
\begin{Prop}
	We have the following fundamental properties.
	\begin{enumerate}
		\item The blow-up is stable under base-change.
		\item If $W \to X$ is a closed immersion, then $\Bl_WX$ lives in ${\St_{\CCC_{\geq 0}}}$.
	\end{enumerate}
\end{Prop}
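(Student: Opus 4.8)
The plan is to obtain both assertions as formal consequences of the base-change property of Rees algebras (Proposition~\ref{Prop:ReesBC}), the behaviour of $\Proj(-)$ recorded in Lemma~\ref{Lem:Proj-BC-Con}, and the connectivity result for closed immersions (Theorem~\ref{Thm:ReesClosed}); no new construction should be needed.

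For (i), I would fix a morphism $g\colon X' \to X$ of $\CCC_{\geq 0}$-stacks and set $W' \coloneqq W \times_X X'$. Since $i$ preserves limits and $\Spec^\nc(-)$ commutes with base change (Lemma~\ref{Lem:SpecBC}), the projection $W' \to X'$ is again an affine morphism of $\CCC_{\geq 0}$-stacks, so $\Bl_{W'}X' = \Proj \RR_{W'/X'}$ is defined. By the base-change statement for the non-negatively graded Rees algebra in Proposition~\ref{Prop:ReesBC}, there is a natural equivalence $g^{\ast}\RR_{W/X} \simeq \RR_{W'/X'}$ of $\N$-graded quasi-coherent $\OO_{X'}$-algebras, where I would note in passing that $\RR_{W/X}$ is by construction an $\N$-graded $\OO_X$-algebra (its weight-$0$ part being $\OO_X$, cf.\ Corollary~\ref{Cor:G_A(R)=B}). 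Combining this with Lemma~\ref{Lem:Proj-BC-Con}(1), which says $\Proj(-)$ commutes with base change, then yields
\[
	X' \times_X \Bl_W X \;=\; X' \times_X \Proj \RR_{W/X} \;\simeq\; \Proj\bigl(g^{\ast}\RR_{W/X}\bigr) \;\simeq\; \Proj \RR_{W'/X'} \;=\; \Bl_{W'}X',
\]
which is exactly (i).

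For (ii), suppose $W \to X$ is a closed immersion of $\CCC_{\geq 0}$-stacks. By Theorem~\ref{Thm:ReesClosed} the extended Rees algebra $\RR^\ext_{W/X}$ is connective. Since connectivity of a graded quasi-coherent algebra is tested weight-by-weight and affine-locally, each homogeneous component $(\RR^\ext_{W/X})_n$ is a connective quasi-coherent $\OO_X$-module; in particular the weight-$n$ part of $\RR_{W/X}$, which for $n \geq 0$ is just $(\RR^\ext_{W/X})_n$, is connective. Hence $\RR_{W/X} \in \QAlg^\N(X)$ is connective, and Lemma~\ref{Lem:Proj-BC-Con}(2) then gives $\Bl_W X = \Proj \RR_{W/X} \in {\St_{\CCC_{\geq 0}}}$, proving (ii).

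I do not expect a genuine obstacle here: the proposition is essentially a repackaging of Theorem~\ref{Thm:ReesClosed}, Proposition~\ref{Prop:ReesBC} and Lemma~\ref{Lem:Proj-BC-Con}. The only mildly delicate bookkeeping is to make sure the grading and the $\OO_X$-algebra structure used by $\Proj$ are the ones transported by Proposition~\ref{Prop:ReesBC} — i.e.\ that base change of the $\Z$-graded object $\RR^\ext_{W/X}$ over $X \times \A^1_\CCC$ restricts to base change of the $\N$-graded $\OO_X$-algebra $\RR_{W/X}$ over $X$ — and, for (ii), that passing to non-negative weights preserves connectivity; both are immediate once spelled out.
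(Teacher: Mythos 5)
Your proposal is correct and follows exactly the paper's own argument: part (i) is Proposition \ref{Prop:ReesBC} combined with Lemma \ref{Lem:Proj-BC-Con}(1), and part (ii) is Theorem \ref{Thm:ReesClosed} combined with Lemma \ref{Lem:Proj-BC-Con}(2). The extra bookkeeping you spell out (the grading being transported correctly and connectivity being preserved when restricting to non-negative weights) is left implicit in the paper but is a welcome clarification.
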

\begin{proof}
	By Lemma \ref{Lem:Proj-BC-Con}, the first claim follows from Proposition \ref{Prop:ReesBC}, and the second  from Theorem \ref{Thm:ReesClosed}.
\end{proof}

For the remainder of this section, we will describe the functor of points of the blow-up of a closed immersion. To this end, we need an additional assumption, which in practice will always be met.
\begin{Ass}
	\label{Ass:Bcover}
	For any $\N$-graded, connective $\CCC_{\geq 0}$-algebra $R$ which is generated by degree $1$ over $R_0$, the map
	\[ \bigsqcup_{f \in R_1} \Spec R_f \to \Spec R \setminus \Spec R_0 \]
	is an effective epimorphism.
\end{Ass}
Our final main result is the following.
\begin{Thm}
	\label{Thm:BlFOPaff}
	Let $Z \to X$ be a closed immersion of $\CCC_{\geq 0}$-stacks. Assume that Assumption \ref{Ass:Bcover} holds.
	\begin{enumerate}
		\item\label{PI:1} There is a natural map $\Bl_ZX \to \cD_{Z/X}$ which exhibits $\Bl_ZX$ as the pseudocomplement of $\cD_{Z/Z} \to \cD_{Z/X}$.
		\item\label{PI:2} $\Bl_ZX$ classifies strict virtual Cartier divisors over $Z \to X$.
		\item\label{PI:3} Suppose that $\pi_*\OO_{\cD_{Z/Z}}$ is perfect, for $\pi\colon  \cD_{Z/Z} \to \cD_{Z/X}$ the natural map. Then $\Bl_ZX \to X \times [\A^1_{\CCC}/\G_{m,\CCC}]$ is nonconnectively affine.
	\end{enumerate}
\end{Thm}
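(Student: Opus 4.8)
The plan is to prove (1) first, then deduce (2) from it together with Proposition~\ref{Prop:strictVCD}, and (3) from it together with Theorem~\ref{Thm:Rees-affine} and the corollary to Proposition~\ref{Prop:VanLocAff}.

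For (1), first reduce to the case $X = \Spec A$, $Z = \Spec B$ with $A \to B$ a closed immersion, since the blow-up, the deformation spaces and pseudocomplements all commute with base change along morphisms in $\St_{\CCC_{\geq 0}}$ (Proposition~\ref{Prop:ReesBC}, Lemma~\ref{Lem:Proj-BC-Con}, and the fact that base change preserves pseudocomplements, immediate from the definition and the pasting law for pullbacks). Write $\RR \coloneqq \RR_{Z/X}$ and $\RR^\ext \coloneqq \RR^\ext_{Z/X}$; by Theorem~\ref{Thm:ReesClosed} the algebra $\RR^\ext$, hence its non-negative part $\RR$, is connective, and $\RR$ is $\N$-graded, connective and generated by weight $1$ over $\RR_0 \simeq \OO_X$ (the last point deduced from Proposition~\ref{Prop:Rgendeg1} by restricting the degree-$1$ generation of $\RR^\ext$ over $\OO_X[t^{-1}]$ to non-negative weights; $\RR_0 \simeq \OO_X$ by Corollary~\ref{Cor:G_A(R)=B}). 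The inclusion of the non-negative part, i.e.\ the counit of $k_! \dashv k^!$ for $k\colon \N \to \Z$, is a homogeneous algebra map $\RR \hookrightarrow \RR^\ext$, and applying $\Spec^\nc$ (relative to $X$) yields a $\G_{m,\CCC}$-equivariant map $\varphi\colon D_{Z/X} = \Spec^\nc \RR^\ext \to \Spec^\nc \RR \eqqcolon C_{Z/X}$ over $X$. I claim $\varphi$ restricts to a $\G_{m,\CCC}$-equivariant equivalence $D_{Z/X} \setminus D_{Z/Z} \xrightarrow{\ \sim\ } C_{Z/X} \setminus V(\RR_+)$, where $D_{Z/Z} \hookrightarrow D_{Z/X}$ is the closed immersion underlying $\pi$ and $V(\RR_+) = \Spec^\nc \RR_0$ is the zero section, so that $[(C_{Z/X}\setminus V(\RR_+))/\G_{m,\CCC}] = \Proj\RR = \Bl_ZX$. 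Granting this, the composite
\[ \Bl_ZX = \bigl[(C_{Z/X}\setminus V(\RR_+))/\G_{m,\CCC}\bigr] \xrightarrow{\ \sim\ } \bigl[(D_{Z/X}\setminus D_{Z/Z})/\G_{m,\CCC}\bigr] \hookrightarrow [D_{Z/X}/\G_{m,\CCC}] = \DDD_{Z/X} \]
is the desired natural map (naturality in $Z \to X$ being inherited from that of the Rees algebra); and since $[-/\G_{m,\CCC}]$ carries pseudocomplements of $\G_{m,\CCC}$-equivariant monomorphisms to pseudocomplements — because colimits are universal in a topos, giving $[W/\G_{m,\CCC}]\times_{[V/\G_{m,\CCC}]}[V'/\G_{m,\CCC}] \simeq [(W\times_V V')/\G_{m,\CCC}]$ for equivariant maps, whence $[V/\G_{m,\CCC}]\setminus[X'/\G_{m,\CCC}] \simeq [(V\setminus X')/\G_{m,\CCC}]$ — this exhibits $\Bl_ZX$ as the pseudocomplement of $\DDD_{Z/Z}\hookrightarrow\DDD_{Z/X}$.

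Two points remain for (1). That $\varphi$ is an equivalence away from $\varphi^{-1}(V(\RR_+))$: by Assumption~\ref{Ass:Bcover} the family $\{\Spec R_f \to C_{Z/X}\setminus V(\RR_+)\}_{f\in \RR_1}$ is an effective epimorphism, so it suffices to check $\varphi$ is an equivalence after base change along each $\Spec R_f$; this base change is $\Spec^\nc(\RR^\ext\otimes_\RR R_f) \to \Spec R_f$ with $\RR^\ext\otimes_\RR R_f \simeq \RR^\ext_f$ the localization at $f \in \RR_1 = \RR^\ext_1$, and $\RR^\ext_f \simeq R_f$ since the colimit defining a localization at a weight-$1$ element depends only on its positive-weight tail, where $\RR$ and $\RR^\ext$ coincide. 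That $\varphi^{-1}(V(\RR_+)) \simeq D_{Z/Z}$: the classifying map $\RR \to \OO_{Z\times\A^1_\CCC}$ of $D_{Z/Z}\to C_{Z/X}$ is supported in weight $0$, hence kills $\RR_+$, so $D_{Z/Z}\to D_{Z/X}$ factors through $\varphi^{-1}(V(\RR_+)) = D_{Z/X}\times_{C_{Z/X}} X$; one then checks this factorization is an equivalence using the deformation to the normal bundle (Theorem~\ref{Thm:Deformation}): over $\{0\}\hookrightarrow\A^1_\CCC$ both sides restrict to $Z$ (for $\varphi^{-1}(V(\RR_+))$ because the zero section of $C_{Z/X}$ pulls back along $\N_{Z/X}\to C_{Z/X}$ to the zero section of $\N_{Z/X}$, which is $Z$ as $\LL_{Z/Z} = 0$), over $\G_{m,\CCC}$ both restrict to $Z\times\G_{m,\CCC}$, and the representing map of graded $\OO_X$-algebras is then an equivalence in each weight. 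This last identification is the one place where the closed-immersion hypothesis is essential, via the connectivity of the Rees algebra.

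Part (2) is then immediate: by (1), $\Bl_ZX$ is the pseudocomplement of $\DDD_{Z/Z}\to\DDD_{Z/X}$, which by Proposition~\ref{Prop:strictVCD} classifies strict virtual Cartier divisors over $Z\to X$ (note $Z\to X$ is affine, hence admits a cotangent complex by Corollary~\ref{Cor:CotAffExist}, so Proposition~\ref{Prop:VCD} applies). For (3): the map $\pi\colon\DDD_{Z/Z}\to\DDD_{Z/X}$ is nonconnectively affine — working locally on $X$ and then on $X\times[\A^1_\CCC/\G_{m,\CCC}]$ via the atlas $\A^1_\CCC \to [\A^1_\CCC/\G_{m,\CCC}]$ reduces it to a morphism of relative spectra over a common affine base, using that $D_{Z/Z}$ and $D_{Z/X}$ are both nonconnectively affine over $X\times\A^1_\CCC$ by Theorem~\ref{Thm:Rees-affine} (as $\id_Z$ and $Z\to X$ are affine), and a morphism over a common affine base between two relative spectra is itself a relative spectrum. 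With the hypothesis that $\pi_*\OO_{\DDD_{Z/Z}}$ is perfect, the corollary to Proposition~\ref{Prop:VanLocAff} shows $\DDD_{Z/X}\setminus\DDD_{Z/Z}\to\DDD_{Z/X}$ is nonconnectively affine, i.e.\ by (1) the structure map $\Bl_ZX\to\DDD_{Z/X}$ is nonconnectively affine; composing with the nonconnectively affine $\DDD_{Z/X}\to X\times[\A^1_\CCC/\G_{m,\CCC}]$ of Theorem~\ref{Thm:Rees-affine} gives the claim. The main obstacle, as indicated, is the identification $\varphi^{-1}(V(\RR_+))\simeq D_{Z/Z}$, which demands descending from the deformation to the normal bundle together with a weight-by-weight comparison of $\RR^\ext\otimes_\RR\OO_X$ with $\OO_{Z\times\A^1_\CCC}$.
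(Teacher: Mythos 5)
Your overall architecture matches the paper's: the map $\Bl_ZX \to \DDD_{Z/X}$ is produced from the morphism $\varphi \colon D_{Z/X} = \Spec^\nc \RR^\ext_{Z/X} \to \Spec^\nc \RR_{Z/X}$ induced by the inclusion of the non-negative part, the open part is handled exactly as in the paper (Assumption \ref{Ass:Bcover} plus the observation that localization at a weight-$1$ element only sees non-negative weights, which is precisely Lemma \ref{Lem:Loc}), and your deductions of \itemref{PI:2} from Proposition \ref{Prop:strictVCD} and of \itemref{PI:3} from the corollary to Proposition \ref{Prop:VanLocAff} and Theorem \ref{Thm:Rees-affine} are the paper's (you even make explicit the nonconnective affineness of $\pi$, which the paper leaves implicit). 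The one place where you genuinely diverge is the identification $\varphi^{-1}(V(\RR_+)) \simeq D_{Z/Z}$, i.e.\ the Cartesianness of the square (\ref{Eq:complement}), and this is where your argument has a real gap.

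The paper proves this (Lemma \ref{Lem:ComplCart}) as the explicit statement $\RR^\ext_{W/X} \otimes_{\RR_{W/X}} \OO_X \simeq \OO_W[t^{-1}]$, by reducing via Lemma \ref{Lem:ZeroDef} and sifted colimits to $\OO_W = \LSym_{\OO_X}(M)$ with $M \in \CCC^0$, where Example \ref{Ex:ReesofSym} makes both sides of the pushout computable. You instead assert that the zero section of $\Spec^\nc \RR_{Z/X}$ pulls back along $\N_{Z/X} \to \Spec^\nc \RR_{Z/X}$ to the zero section of $\N_{Z/X}$. Unwound, that assertion is the equivalence $\LSym_{\OO_Z}(\NN^\vee_{Z/X}) \otimes_{\RR_{Z/X}} \OO_X \simeq \OO_Z$, a derived base change along the projection $\RR_{Z/X} \to (\RR_{Z/X})_0$ that is exactly the kind of computation Lemma \ref{Lem:ComplCart} exists to carry out; it is not formal (pulling back the vanishing locus of $\RR_+$ gives the vanishing locus of $\RR_+ \cdot \LSym_{\OO_Z}\NN^\vee$, and identifying that with $(\LSym_{\OO_Z}\NN^\vee)_+$ needs an argument), so your step begs the question. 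Your final "equivalence in each weight" step is salvageable --- for a graded module $F$ over $\OO_X[t^{-1}]$, $F/(t^{-1}) \simeq 0$ forces $\times t^{-1} \colon F_{n+1} \to F_n$ to be an equivalence for all $n$, and then $F_{t^{-1}} \simeq 0$ forces each $F_n \simeq 0$ --- but you would still need to actually verify the two restrictions of $\RR^\ext_{Z/X} \otimes_{\RR_{Z/X}} \OO_X$, and the restriction over $\{0\}$ again comes down to the same unproved base-change computation. The clean fix is to prove $\RR^\ext_{W/X} \otimes_{\RR_{W/X}} \OO_X \simeq \OO_W[t^{-1}]$ directly for $\OO_W$ a free algebra on an object of $\CCC^0$ and extend by sifted colimits, as the paper does; with that in hand the rest of your write-up goes through.
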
                       
We need some preliminary results.
\begin{Lem}
	\label{Lem:Loc}
	Let $R \in \CAlg_{\CCC^\heartsuit}^\N$ be given, and let $\varphi \colon  M \to N$ be a morphism of discrete, $\Z$-graded $R$-modules such that $M_{\geq 0} \to N_{\geq 0}$ is an equivalence. Then for any homogeneous element $f \in R$ of degree $1$, it holds that $\varphi_f \colon M _f \to N_f$ is an equivalence.
\end{Lem}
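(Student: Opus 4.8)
The plan is to reduce the assertion to a weight-by-weight computation, after noting that everything stays discrete. For a graded algebra in the heart, the category of graded modules inherits its $t$-structure from $\CCC$, colimits of graded modules are formed weight-by-weight, and in each weight a filtered colimit is a filtered colimit in $\CCC$, hence $t$-exact since the $t$-structure on $\CCC$ is compatible (so $\CCC_{\leq 0}$ is closed under filtered colimits, while $\CCC_{\geq 0}$ is closed under all colimits). Therefore the localizations $M_f = \colim(M \xrightarrow{\times f} M \xrightarrow{\times f} \cdots)$ and $N_f$ remain discrete, and $\varphi_f$ is an equivalence if and only if it is an isomorphism on each homogeneous weight.

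Next I would make the grading on $M_f$ explicit. Since $f$ is homogeneous of weight $1$, the map $\times f\colon M \to M$ underlies a morphism of $\Z$-graded $R$-modules $M \to M(1)$, where $M(d)_m = M_{m+d}$ in the paper's twist notation and $M(1)$ has the same underlying module as $M$. Hence $M_f$, with its natural grading, is the underlying module of $\colim(M(0) \to M(1) \to M(2) \to \cdots)$ computed among graded $R$-modules, and as this colimit is formed weight-by-weight we obtain $(M_f)_n \simeq \colim_k M_{n+k}$ with transition maps induced by $\times f$; likewise $(N_f)_n \simeq \colim_k N_{n+k}$, and $(\varphi_f)_n$ is the colimit of the maps $M_{n+k} \to N_{n+k}$ induced by $\varphi$.

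Finally, fix $n \in \Z$. For $k \geq \max(0,-n)$ the weight $n+k$ is non-negative, so the subdiagram indexed by such $k$ is cofinal in the sequence and lies entirely in $M_{\geq 0}$ (resp.\ $N_{\geq 0}$); on it $\varphi$ is by hypothesis a levelwise equivalence compatible with the transition maps, hence the induced map of colimits $(M_f)_n \to (N_f)_n$ is an equivalence. As $n$ was arbitrary, $\varphi_f$ is an equivalence. This is the familiar phenomenon that a homogeneous localization at a weight-$1$ element only sees the high-weight part of a module; the single point needing a little care — and hardly a genuine obstacle — is the bookkeeping that $\times f$ raises weight by one, so that each $(M_f)_n$ involves only the pieces $M_m$ with $m$ arbitrarily large.
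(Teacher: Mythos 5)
Your proof is correct and follows essentially the same route as the paper's: both identify $\varphi_f$ as the colimit of the ladder $M \to M(1) \to M(2) \to \cdots$ over $N \to N(1) \to \cdots$, observe that the colimit is computed weight-wise, and conclude by cofinality since for large $k$ the relevant pieces lie in non-negative weights where $\varphi$ is an equivalence. The only cosmetic difference is that the paper organizes the argument by the truncations $(-)_{\geq -k}$ rather than by individual homogeneous weights $n$.
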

\begin{proof}
	Write $\varphi_f$ as the colimit of
	\begin{center}
		\begin{tikzcd}
			M \arrow[r, "\times f"] \arrow[d] & M(1) \arrow[r, "\times f"] \arrow[d] & M(2) \arrow[r, "\times f"] \arrow[d] & \cdots \arrow[r] & M_f \arrow[d, "\varphi_f"] \\
			 N \arrow[r, "\times f"]  & N(1) \arrow[r, "\times f"]  & N(2) \arrow[r, "\times f"] & \cdots \arrow[r] & N_f 
		\end{tikzcd}
	\end{center}
	where $P(k)$ is the twisted module with $P(k)_d = P_{d + k}$, for any $P$. By assumption, for any $k' \geq k$ it holds that $M(k')_{\geq -k} \to N(k')_{\geq -k}$ is an equivalence. Moreover, the natural map
	\[ \colim \left( M(k)_{\geq -k} \xrightarrow{\times f} M(k+1)_{\geq -k} \xrightarrow{\times f} \cdots \right) \to  (M_f)_{\geq -k}\]
	is an equivalence. Hence $(M_f)_{\geq -k} \to (N_f)_{\geq -k}$ is an equivalence for all $k$, from which the claim follows.
\end{proof}
Since $(\RR_{Z/X})_0 = \OO_X$, we have a natural map $X \to \Spec R_{Z/X}$. Applying Assumption \ref{Ass:Bcover} to the Rees algebra---allowed by Theorem \ref{Thm:ReesClosed}---it is straightforward to show that \[\sD_{Z/Z} \times_{\Spec \RR_{Z/X}} (\Spec \RR_{Z/X} \setminus X) = \emptyset\] We thus obtain a $\G_{m,\CCC}$-equivariant diagram
\begin{equation}
	\label{Eq:complement}
	\begin{tikzcd}
		\sD_{Z/X} \setminus\sD_{Z/Z} \arrow[r] \arrow[d] & \Spec \RR_{Z/X} \setminus X \arrow[d] \\
		\sD_{Z/X} \arrow[r] & \Spec \RR_{Z/X}.
	\end{tikzcd}
\end{equation}
\begin{Lem}
	\label{Lem:ComplCart}
	The square (\ref{Eq:complement}) is Cartesian.
\end{Lem}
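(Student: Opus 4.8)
The plan is to reduce to the affine case and then to exhibit the square~(\ref{Eq:complement}) as the image under $\Spec^\nc(-)$ of a Cartesian square of quasi-coherent algebras, the real content being the identification of $D_{Z/Z}$ with the fibre product $D_{Z/X}\times_{\Spec^\nc\RR_{Z/X}}X$.

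\emph{Reduction.} Since $\Spec^\nc(-)$, pseudocomplements, Weil restrictions (hence deformation spaces) and Rees algebras (Proposition~\ref{Prop:ReesBC}, Lemma~\ref{Lem:SpecBC}) all commute with base change along morphisms in ${\St_{\CCC_{\geq 0}}}$, and Cartesianness of a square of $\CCC$-stacks can be checked after base change along a covering family of $X$, I would pass to the situation $X=\Spec A$, $Z=\Spec B$ with $A\to B$ a closed immersion of connective derived rings. Write $R\coloneqq\RR^\ext_{Z/X}$ and $\RR\coloneqq\RR_{Z/X}=R_{\geq 0}$; by Theorem~\ref{Thm:ReesClosed} these are connective, and by Proposition~\ref{Prop:Rgendeg1} (and its proof) $R$ is generated in weight one over $A[t^{-1}]$ while $\RR$ is generated in weight one over $\RR_0=A$. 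Then $D_{Z/X}=\Spec^\nc R$, the map $\pi\colon D_{Z/X}\to P\coloneqq\Spec^\nc\RR$ is $\Spec^\nc$ of the inclusion of the non-negative part $\RR\hookrightarrow R$, the section $X\to P$ is $\Spec^\nc$ of the projection $\RR\to\RR_0=A$, and, applying the deformation to the normal bundle (Theorem~\ref{Thm:Deformation}) to $\id_Z$ whose conormal complex vanishes, $D_{Z/Z}=\Spec^\nc(\OO_Z[t^{-1}])$ with $\RR^\ext_{Z/Z}\simeq\OO_Z[t^{-1}]$.

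\emph{Reducing the Lemma to one identification.} The composite $\RR\hookrightarrow R\to\RR^\ext_{Z/Z}=\OO_Z[t^{-1}]$ is a map of graded algebras into an algebra concentrated in weights $\leq 0$, hence kills $\RR_+$ and factors through $\RR\to A\xrightarrow{A\to B}B\hookrightarrow\OO_Z[t^{-1}]$; thus $D_{Z/Z}\to D_{Z/X}\xrightarrow{\pi}P$ factors through $X\hookrightarrow P$, giving a natural map $\theta\colon D_{Z/Z}\to D_{Z/X}\times_P X$ over $D_{Z/X}$ (this is also what allows one to form the diagram~(\ref{Eq:complement})). I claim that $\theta$ is an equivalence, i.e.\ $R\otimes_\RR A\simeq\RR^\ext_{Z/Z}$. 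Granting this, the Lemma follows: both $D_{Z/X}\setminus D_{Z/Z}$ and $D_{Z/X}\times_P(P\setminus X)$ are subobjects of $D_{Z/X}$ (Proposition~\ref{Prop:PseudoC}, and monomorphisms are stable under base change), so it suffices to compare their sieves; for any $g\colon T\to D_{Z/X}$ one has, using $\theta$, that $T\times_{D_{Z/X}}D_{Z/Z}\simeq T\times_{D_{Z/X}}(D_{Z/X}\times_P X)\simeq T\times_P X$ over $\pi g$, so $T\times_{D_{Z/X}}D_{Z/Z}=\emptyset$ if and only if $T\times_P X=\emptyset$, i.e.\ if and only if $g$ factors through $D_{Z/X}\times_P(P\setminus X)$. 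Hence the two subobjects coincide and the square~(\ref{Eq:complement}) is Cartesian. (Assumption~\ref{Ass:Bcover} applied to $\RR$, as in the discussion preceding the Lemma, can alternatively be used to establish the inclusion $D_{Z/X}\times_P(P\setminus X)\subseteq D_{Z/X}\setminus D_{Z/Z}$ directly, by localizing $R$ at the weight-one elements covering $P\setminus X$; but the opposite inclusion still needs $\theta$.)

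\emph{The identification $R\otimes_\RR A\simeq\RR^\ext_{Z/Z}$.} Since $\RR$ is generated in weight one over $A$, tensoring down along $\RR\to A$ kills exactly the weight-one generators of $R$, so $R\otimes_\RR A$ is $R$ modulo the ideal generated by its weight-one part. To compare this with $\OO_Z[t^{-1}]$ I would test $\theta$ after restriction along $X\times\{0\}\hookrightarrow X\times\A^1_\CCC$ and along $X\times\G_{m,\CCC}\hookrightarrow X\times\A^1_\CCC$: these are $(-)\otimes_{A[t^{-1}]}A$ and $(-)[\,(t^{-1})^{-1}]$, and the functor $\Mod_{A[t^{-1}]}\to\Mod_A\times\Mod_{A[t,t^{-1}]}$ is conservative, because for $M\in\Mod_{A[t^{-1}]}$ one has $M\otimes_{A[t^{-1}]}A\simeq\operatorname{cofib}(M\xrightarrow{\cdot t^{-1}}M)$, so its vanishing makes $\cdot\,t^{-1}$ invertible on $M$, whence $M[\,(t^{-1})^{-1}]\simeq M$; applying this to $\operatorname{cofib}(\theta)$ reduces us to two checks. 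Over $\{0\}$: by Theorem~\ref{Thm:Deformation} $R\otimes_{A[t^{-1}]}A\simeq\LSym_{\OO_Z}(\NN^\vee_{Z/X}(-1))$, whose weight-one generators $\NN^\vee_{Z/X}$ are precisely the image of the weight-one part of $R$, so killing them yields $\OO_Z$ and $\theta\otimes_{A[t^{-1}]}A$ is the identity of $\OO_Z$ (the zero section of $\N_{Z/X}$). Over $\G_{m,\CCC}$: by Theorem~\ref{Thm:Deformation} and Corollary~\ref{Cor:G_A(R)=B}, $R[\,(t^{-1})^{-1}]\simeq\OO_X[t,t^{-1}]$ and the image of $\RR_+$ generates the extension of $\ker(\OO_X\to\OO_Z)$, so $R\otimes_\RR A$ localized at $t^{-1}$ is $\OO_Z[t,t^{-1}]$ and $\theta[\,(t^{-1})^{-1}]$ is the identity.

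\emph{Main obstacle.} The delicate step is this last identification $R\otimes_\RR A\simeq\RR^\ext_{Z/Z}$ in the derived, context-generalised setting: one must show that quotienting the extended Rees algebra by its weight-one generators recovers the extended Rees algebra of the identity, the algebraic shadow of the fact that the deformation to the normal bundle degenerates the closed immersion $Z\to X$ to the zero section $Z\to\N_{Z/X}$, along which the weight-one directions are exactly the normal ones. Care is needed with the derived tensor products and with the interaction between the $\N$-grading on $\RR$ and the $\Z$-grading on $R$ in making precise both the ``killing the weight-one generators'' step and the two fibre-wise comparisons; everything else in the argument is formal manipulation of subobjects and base change.
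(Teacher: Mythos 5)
You have correctly isolated what the lemma is really about: after reducing to $X=\Spec A$, $Z=\Spec B$, everything comes down to the single identification $R^{\ext}_{B/A}\otimes_{R_{B/A}}A\simeq B[t^{-1}]$, i.e.\ $D_{Z/Z}\simeq D_{Z/X}\times_{\Spec \RR_{Z/X}}X$, from which Cartesianness of the square follows by the formal comparison of the two subobjects of $D_{Z/X}$ that you spell out. This is exactly the ``stronger statement'' the paper proves (and the paper proves it for an arbitrary affine $W\to X$, not only closed immersions), so up to that point your argument agrees with, and is somewhat more explicit about the formal bookkeeping than, the paper's.

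The gap is in your proof of that identification. The paper establishes it by reducing, via Lemma \ref{Lem:ZeroDef} and sifted colimits, to the free case $\OO_W=\LSym_{\OO_X}(M)$, where $R^{\ext}_{W/X}$ and $R_{W/X}$ are explicit symmetric algebras (Example \ref{Ex:ReesofSym}); the pushout $R^{\ext}_{W/X}\otimes_{R_{W/X}}\OO_X$ is then read off from the rotated fibre sequence $M[-1]\to M[-1]\to M[t^{-1}]$, using that $\LSym$ carries pushouts of modules to pushouts of algebras. Your alternative --- construct $\theta$ and test it after base change to $\{0\}$ and to $\G_{m,\CCC}$ --- has a sound frame (the joint conservativity of the two restrictions is fine), but the two residual checks, $\LSym_{\OO_Z}(\NN^\vee_{Z/X}(-1))\otimes_{\RR_{Z/X}}\OO_X\simeq\OO_Z$ and $\OO_X[t,t^{-1}]\otimes_{\RR_{Z/X}}\OO_X\simeq\OO_Z[t,t^{-1}]$, are derived tensor products over the non-negatively graded Rees algebra of essentially the same difficulty as the statement you started from. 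The justifications you offer (``killing the weight-one generators yields $\OO_Z$'', ``the image of $\RR_+$ generates the extension of $\ker(\OO_X\to\OO_Z)$'') are the classical, underived computations: surjectivity of $\RR_1\to\NN^\vee_{Z/X}$ matches up the ideals on $\pi_0$ but gives no control over the higher Tor terms of $-\otimes_{\RR_{Z/X}}\OO_X$, which is exactly what must be shown to vanish. As you yourself flag, this is where the real work lies, and the natural way to supply it is the sifted-colimit reduction to free algebras --- at which point the detour through the two fibres of $\A^1_\CCC$ becomes unnecessary. A further small point: Proposition \ref{Prop:Rgendeg1} asserts that $R^{\ext}_{B/A}$ is generated in weight one over $A[t^{-1}]$; the statement you actually use, that $\RR_{Z/X}$ is generated in weight one over $\RR_0=\OO_X$, is a related but distinct assertion that would need its own short argument.
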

\begin{proof}
	Since the question is local, we assume that $X$ is affine. We will show the stronger statement that $R_{W/X}^\ext \otimes_{R_{W/X}} \OO_X \simeq \OO_W[t^{-1}]$ for any affine scheme $W$ over $X$. By Lemma \ref{Lem:ZeroDef}, it suffices to do the case $\OO_W = \LSym_{\OO_X}(M)$ for some connective $\OO_X$-module $M$.
	
	Let $i \colon  \N \to \Z$ be the natural map, and recall that $R_{W/X} = i^! R^{\ext}_{W/X}$, where $i^!$ is the right adjoint to the morphism of algebraic contexts $i_! \colon  \CCC^{\N} \to \CCC^{\Z}$ (Proposition \ref{Prop:EquivAdj}). By Example \ref{Ex:ReesofSym}, it follows that $R_{W/X} \simeq \LSym_{\OO_X}(M[-1])$. Now rotating the exact sequence
	\[ M[t^{-1}] \xrightarrow{\times t^{-1}} M[t^{-1}] \to M \]
	gives us the exact sequence $M[-1] \to M[-1] \to M[t^{-1}]$, which shows that the square
	\begin{center}
		\begin{tikzcd}
			\LSym_{\OO_X}(M[-1]) \arrow[r] \arrow[d] & \LSym_{\OO_X[t^{-1}]}(M[-1]) \arrow[d] \\
			\LSym_{\OO_X}(0) \arrow[r] & \LSym_{\OO_X[t^{-1}]}(M[t^{-1}])
		\end{tikzcd}
	\end{center}
	is coCartesian. 
\end{proof}

\begin{proof}[Proof of Theorem \ref{Thm:BlFOPaff}]
	Lemma \ref{Lem:ComplCart} gives us a natural map
	\[ \Bl_ZX = [(\Spec \RR_{Z/X} \setminus X)/\G_{m,\CCC}] \to [ (\sD_{Z/X} \setminus\sD_{Z/Z} )/\G_{m,\CCC} ] \to \cD_{Z/X}.  \]
	For \itemref{PI:1}, it thus suffices to show that $\sD_{Z/X} \setminus\sD_{Z/Z} \to \Spec \RR_{Z/X} \setminus X$ is an equivalence, which indeed it is by Lemma \ref{Lem:Loc} and Assumption \ref{Ass:Bcover}. Then \itemref{PI:2} follows from \itemref{PI:1} by Proposition \ref{Prop:strictVCD}.
	
	By \itemref{PI:1} and Proposition \ref{Prop:VanLocAff}, it holds that $\Bl_ZX$ is nonconnectively affine over $\cD_{Z/X}$. Hence \itemref{PI:3} follows, since $\cD_{Z/X}$ is nonconnectively affine over $X \times [\A^1_\CCC/\G_{m,\CCC}]$ by Theorem \ref{Thm:Rees-affine}.
\end{proof}
\begin{Rem}
	The only point in the proof of Theorem \ref{Thm:BlFOPaff} where we use that $Z \to X$ is a closed immersion of $\CCC_{\geq 0}$-stacks is in the appeal to Assumption \ref{Ass:Bcover}. If the context $\CCC$ satisfies the stronger assumption that 
	\[ \bigsqcup_{f \in R_1} \Spec R_f \to \Spec R \setminus \Spec R_0 \]
	is an effective epimorphism for any (possibly nonconnective) $\N$-graded $R$, then Theorem \ref{Thm:BlFOPaff} holds for any affine morphism of $\CCC_{\geq 0}$-stacks. 
\end{Rem}

\section{Examples}
\label{Sec:Applications_and_examples}
We give our two main examples of a geometric context. 

\subsection*{Derived algebraic geometry}
The primary example of a geometric context is given by derived algebraic geometry. We spell out some well-known details as an illustration.

Let $\Mod^{\fgf}$ be the 1-category of finitely generated, free $\Z$-modules, put $\Mod_{\geq 0} \coloneqq \PPP_{\Sigma}(\Mod^{\fgf})$, and let $\Mod$ be the stabilization of $\Mod_{\geq 0}$. The inclusion $\Mod_{\geq 0} \to \Mod$ has a right adjoint $\tau_{\geq 0}$, and this determines a t-structure by \cite[Prop.\ 1.2.1.16]{LurieHA}, which is both left and right complete by \cite[Prop.\ 1.2.1.19]{LurieHA}.

The tensor product on $\Mod^{\fgf}$ induces a symmetric monoidal structure on $\Mod$ through Day-convolution and stabilization. The natural functor from $\Mod^{\fgf}$ to the $\infty$-category $\Sp$ of spectra extends to $\Mod_{\geq 0} \to \Sp$ by taking the left derived functor, and to $\Mod \to \Sp$ by completeness. Using \cite[\S 25]{LurieSpectral}, one shows that $\Mod$ (with the t-structure and symmetric monoidal structure just mentioned) is an algebraic context. 

We have that $\DAlg_{\geq 0}$ is the familiar $\infty$-category of simplicial commutative rings, and that $\Mod$ is equivalent to the $\infty$-category of $\Z^\circ$-modules in $\Sp$, where $(-)^\circ \colon  \DAlg_{\geq 0} \to \CAlg_{\geq 0}$ it the unique colimit-preserving functor which sends $\Z[x]$ to $\Z[x] \in \CAlg_{\geq 0}$. 

We use Example \ref{Ex:InducedContext} to enhance the \'{e}tale topology on $\Aff_{\Mod_{\geq0}}$ to a topology on $\Aff_{\Mod}$, also called the \emph{\'{e}tale topology}. Clearly, this gives a geometric context for which the $\infty$-category of $\Mod_{\geq 0}$-stacks recovers ordinary derived algebraic geometry. 

Proposition \ref{Prop:Weil} shows that, in the connective case, our definition of deformation spaces, hence of derived blow-ups, coincides with the one from \cite{Weil}. 

\subsection*{Derived analytic geometry}
Since the following example involves passing between model 1-categories and $\infty$-categories, we drop our convention that everything is implicitly $\infty$-categorical. We will use the script $\ttC$ for 1-categories, and $\Hom(-,-)$ for the mapping sets in 1-categories.

Let $R$ be a Banach ring. Recall that this means that $R$ is a ring endowed with a complete norm $\lvert - \rvert$ such that the underlying group is a normed group (with respect to $\lvert - \rvert$), and such that there is a constant $C$ for which 
\[ \lvert xy \rvert \leq C \lvert x \rvert \cdot \lvert y \rvert \]
holds for all $x,y \in R$. We call $R$ \emph{non-Archimedean} if 
\[ \lvert x + y \rvert \leq \max \{ \lvert x \rvert, \lvert y \rvert \} \]
holds for all $x,y\in R$. Else, $R$ is called \emph{Archimedean}, in which case $\lvert x+y \rvert \leq \lvert x \rvert + \lvert y \rvert$ holds by definition.

A \emph{Banach $R$-module} is a complete normed abelian group $V$ endowed with an $R$-module structure, such that there is a constant $C'$ for which 
\[ \lVert \lambda v \rVert \leq C' \lvert \lambda \rvert \cdot \lVert v \rVert  \]
holds for all $\lambda \in R, v \in V$. This gives us the $1$-category of Banach $R$-modules $\ttBan_R$, with morphisms bounded $R$-linear maps.
\begin{Def}\label{def:projective}
A Banach $R$-module $P$ is called \textit{projective} if for any strict epimorphism $f \colon V \to W$ of Banach $R$-modules the induced map $\Hom(P,V)\to \Hom(P,W)$ of sets is surjective. In this setting, we can define $f$ to be a \emph{strict epimorphism} if $V / \ker(f) \to W$ is an isomorphism \cite{KashiwaraCats}.
\end{Def}
 
 The \emph{separated completion} of a semi-normed space $N$ is the set of Cauchy-sequences in $N$ modulo the sequences which converge to $0$.
 
 The category of Banach modules over $R$ carries a symmetric monoidal structure with respect to the \textit{projective tensor product}---written $(-)\wotimes_{R}(-)$, and defined as the separated completion of the algebraic tensor product $V \otimes_R W$ with respect to the \emph{projective semi-norm} 
\[\lVert x \rVert \coloneqq \inf \left\{ \sum_{i=1}^{
n} \lVert v_i\rVert \lVert w_i\rVert  \Bigm\vert x= \sum_{i=1}^{
n} v_i  \otimes_{R} w_i \right\}
\]
for all $x \in V \otimes_R W$.

For $R, V, W$ non-Archimedean, we can use either the above, or the \textit{non-Archimedean projective tensor product} $(-)\wotimes^{\na}_{R}(-)$, which is the separated completion of $V \otimes_R W$ with respect to the \emph{non-Archimedean projective semi-norm}
\[\lVert x \rVert \coloneqq \inf \left\{ \max_{i=1, \dots,  n} \{ \lVert v_i\rVert \lVert w_i\rVert \} \Bigm\vert x= \sum_{i=1}^{
	n} v_i  \otimes_{R} w_i \right\}
\]
for all $x \in V \otimes_R W$.  

\begin{Rem}
	Any projective object of $\ttBan_R$ is flat with respect to $\wotimes_{R}$. The category of projective Banach $R$-modules is a symmetric monoidal subcategory of the category of Banach $R$-modules. 
\end{Rem} 

The category $\ttBan_R$  does not have countable products or coproducts. If we consider the category $\ttBanleq_R$ with the same objects as $\ttBan_R$, but with morphisms those maps in $\ttBan_R$ which are of norm less than or equal to $1$, then arbitrary products and coproducts do exist. This can be used to show that $\ttBan_R$ has enough projectives, meaning that for any $V \in \ttBan_R$ there is a projective $P$ and a strict epimorphism $P \to V$. To see this, denote the coproduct in $\ttBanleq_R$ by $\coprod^{\leq 1}$. Then 
\[\sideset{}{^{\leq 1}}\coprod_{ v \in V \setminus \{0\}} R_{\lVert v \rVert} \longrightarrow V
\]
is a strict epimorphism, and $\coprod_{v \in V-\{0\}}^{\leq 1} R_{\lVert v \rVert }$ is projective. We are using the notation $R_s$ for the module $R$ equipped with the norm $|r|_s = s|r|$.

The 1-categorical ind-completion $\Ind(\ttBan_{R})$ is a good setting for merging functional analysis and homological algebra, since it is cocomplete, quasi-abelian, and closed symmetric monoidal, where the symmetric monoidal structure is induced by the projective tensor product. The theory of derived categories of quasi-abelian categories from \cite{QACS} is used to produce an $\infty$-categorical version of $\Ind(\ttBan_R)$, which will be the algebraic context of derived analytic geometry over $R$. We review the main steps.

One can use formal colimits of projectives in $\ttBan_R$ as a strictly generating set of projective objects in $\Ind(\ttBan_{R})$ as follows. The functor $\ttBan_R \to \Ind(\ttBan_R)$ preserves projectives and has as essential image the full subcategory of compact objects of $\Ind(\ttBan_R)$. Let then $\ttP$ be the essential image of the functor $\ttBan_R \to \Ind(\ttBan_R)$ applied to the compact projectives of $\ttBan_R$. Then for any monomorphism $S \to E$ in $\Ind(\ttBan_R)$ which is not an isomorphism, there is some $P \to E$ with $P \in \ttP$ which does not factor through $E$.

The category of simplicial objects in $\Ind(\ttBan_{R})$ has a nice (projective) monoidal model category structure, first discussed in \cite{BassatNonArchimedean}. In particular, the weak equivalences are morphisms $X_{\bullet}\to Y_{\bullet}$ such that for all $P$ projective in $\ttBan_R$, $\Hom(P, X_{\bullet} )\to \Hom(P, Y_{\bullet})$ is a weak  equivalence of simplicial sets. Now $\BBBan_{R_{\geq 0}}$ is defined as the $\infty$-category we get from the localization at these morphisms.

Notice that \cite[Prop.\ 2.1.14]{QACS} tells us that the left heart of $\Ind(\ttBan_{R})$ (an abelian category) is equivalent to the category of additive functors from the opposite category of $\ttP$ to the category of abelian groups. Here, the \emph{left heart} is the category of monomorphisms $V \to W$ in $\Ind(\ttBan_R)$, localized at the biCartesian squares
\begin{center}
	\begin{tikzcd}
		V \arrow[r, hook] \arrow[d] & W \arrow[d] \\
		V' \arrow[r, hook] & W'.
	\end{tikzcd}
\end{center}
This category is in turn equivalent to the category of finite product preserving functors from the opposite category of $\ttP$ to the category of sets. It can be shown that $\PPP_\Sigma(\ttP)$ is then equivalent to $\BBBan_{R_{\geq 0}}$.

\begin{Def}
 The category $\BBBan_R$ is defined as the stabilization of $\BBBan_{R_{\geq 0}}$. There is a t-structure on $\BBBan_R$ for which $\BBBan_{R_{\geq 0}}$ are the connective objects, and the symmetric monoidal structure $\wotimes_{R}$ extends to $\BBBan_R$. Then $\BBBan_R$ is an algebraic context, called the context of \emph{derived analytic geometry over $R$}.
 
 If $R$ is non-Archimedean, then the same construction but with $\wotimes_{R}^{\na}$ instead of $\wotimes_{R}$ gives us the algebraic context $\BBBan_R^{\na}$ of \emph{derived non-Archimedean analytic geometry over $R$.}	
\end{Def}

 Observe that $\BBBan_R^\heartsuit$ is the left heart of $\Ind(\ttBan_R)$ by Proposition \ref{Prop:DACsum}. Now $\ttBan_R$ is not abelian in general, essentially because the image of $V \to W$ is the closure of the set-theoretic image. One can thus not hope for an algebraic context which has $\ttBan_R$ at its heart. Since the left heart of $\Ind(\ttBan_R)$ is the abelian envelope of $\ttBan_R$, this construction is in some sense the most economical solution to this problem \cite[\S 1.2.4]{QACS}. 

\begin{Exm}
	The ring of integers endowed with trivial norm is a non-Archimedean Banach ring, written $\Z_{\triv}$, and every abelian group is a Banach $\Z_{\triv}$-module (endowed with trivial norm). For two Banach $\Z_{\triv}$-modules $V,W$ with trivial norm, the non-Archimedean projective semi-norm is again trivial, hence the completed tensor product $V \wotimes^{\na}_{\Z_{\triv}} W$ is just the algebraic tensor product. 
	
	One might be tempted to think that the canonical morphism $\Mod_{\Z} \to \BBBan^{\na}_{\Z_{\triv}}$ of algebraic contexts is an equivalence. This is however not the case. Indeed, for any prime $p$, the $p$-adic numbers $\Q_p$ (with $p$-adic norm, written $\lVert \cdot \rVert_p$ for consistency) is a Banach $\Z_{\triv}$-module, since for any nonzero $x \in \Q_p$ and $n \in \Z_{\triv}$ it holds
	\[ \frac{\lVert n x \rVert_p}{\lVert x \rVert_p} = \lVert n \rVert_p = p^{-v_p(n)} \leq 1 \] 
	so we can take $C=1$ in the definition of a Banach ${\Z_{\triv}}$-module. Now a quasi-abelian category is always a full subcategory of its left heart \cite[Prop.\ 1.2.27]{QACS}, hence $\Q_p \in \BBBan_{\Z_{\triv}}^{\na,\heartsuit}$, meanwhile any object in $\BBBan_{\Z_{\triv}}^{\na,\heartsuit}$ which is in the image of $\Mod_{\Z} \to \BBBan^{\na}_{\Z_{\triv}}$ has trivial norm.
\end{Exm}

\begin{Rem}
	By Example \ref{Ex:InducedContext}, any subcanonical topology $J_{\geq 0}$ on $\BBBan_{R_{\geq 0}}$ for which $\Mod_{(-)} \colon \Aff^{\op}_{\BBBan_{R_{\geq 0}}} \to \Cat$ satisfies descent, induces a geometric context. This part of the theory of analytic geometry is currently still under development, and will appear in \cite{FDA}.
	
	The primary example of such a topology---which works for any Banach ring $R$, and which has been investigated in \cite{BassatAnalytification}---will be the \emph{homotopy monomorphism topology}. Roughly speaking, a covering family for a connective affine $T \in  \DAlg_{{\BBBan_R}}^\op$ is here a finite family of monomorphisms (in the $\infty$-categorical sense) $\{T_\alpha \to T\}_\alpha$, where each $T_\alpha$ is affine and connective, such that $\bigsqcup_\alpha T_\alpha \to T$ is an effective epimorphism. Other topologies discussed in \cite{FDA} include the flat topology, the \'{e}tale topology, and the Zariski topology. \end{Rem}

\appendix
\renewcommand{\thesection}{\hspace{-1.1mm}}
 \section{Set-theoretic background}
We will briefly indicate how we deal with size-issues. Fix Grothendieck universes $\mathfrak{U}_0 \subset \mathfrak{U}_1 \subset \mathfrak{U}_2 \subset \cdots$ such that $\mathfrak{U}_i \in \mathfrak{U}_{i+1}$ for all $i$. For each $i$, let $\Space_i$ be the category of $\mathfrak{U}_i$-small spaces, and $\Cat_i$ the category of $\mathfrak{U}_i$-small categories. This gives us inclusions $\Cat_i \subset \Cat_{i+1}$ and  $\Space_i \subset \Space_{i+1}$, and moreover $\Space_i,\Cat_i \in \Cat_{i+1}$.

An \textit{$i$-small (co)limit} is a (co)limit indexed by some $K \in \Cat_i$.

\begin{Prop}
	\label{Prop:UniversesCont}
	For $i \in \N$, the inclusion $j\colon  \Space_i \to \Space_{i+1}$ preserves $i$-small limits and colimits.
\end{Prop}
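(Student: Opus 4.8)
The plan is to reduce the assertion to an elementary bookkeeping fact about the sizes of homotopy (co)limits of Kan complexes. Since $j\colon\Space_i\to\Space_{i+1}$ is fully faithful, it is enough to show that the full subcategory $\Space_i\subseteq\Space_{i+1}$ is closed under $i$-small limits and $i$-small colimits: any such (co)limit formed in $\Space_{i+1}$ then lands in $\Space_i$ and, $\Space_i$ being full, still enjoys the required universal property there. Next I would invoke the standard fact that every $i$-small colimit can be built out of $i$-small coproducts, pushouts and sequential colimits (equivalently, it is the geometric realization of the bar construction $[n]\mapsto\coprod_{c_0\to\cdots\to c_n}F(c_0)$), and dually every $i$-small limit out of $i$-small products, pullbacks and towers; see \cite[\S 4.4]{LurieHTT}. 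So it suffices to show that $\Space_i\subseteq\Space_{i+1}$ is closed under $i$-small (co)products, pushouts, pullbacks, and $\bDelta$- and $\bDelta^{\op}$-indexed (co)limits, and that these are computed in the same way in both categories.

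Then I would check each case. For coproducts and products this is immediate: a $\mathfrak U_i$-small disjoint union of $\mathfrak U_i$-small spaces, and a $\mathfrak U_i$-small cartesian product of $\mathfrak U_i$-small spaces, are again $\mathfrak U_i$-small, and these models compute the coproduct and product of spaces regardless of the ambient universe. For pushouts, pullbacks, totalizations and geometric realizations, I would represent the diagram by an objectwise (co)fibrant diagram of Kan complexes and compute the homotopy (co)limit by the usual explicit formulas --- the double mapping cylinder and the homotopy fibre product, and the Bousfield--Kan formulas for $\operatorname{Tot}$ and for geometric realization. Fed $\mathfrak U_i$-small input, these formulas involve only $\mathfrak U_i$-small operations (finite, countable and $\bDelta$-indexed (co)limits of sets, together with internal mapping complexes between $\mathfrak U_i$-small Kan complexes), so the output remains $\mathfrak U_i$-small; and, being genuine homotopy (co)limits, they present the $\infty$-categorical (co)limit simultaneously in $\Space_i$ and in $\Space_{i+1}$. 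Combining these with the reduction of the first paragraph would complete the argument.

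The step I expect to be the real (if modest) obstacle is the last one: checking carefully that the dictionary between the model category of Kan complexes and the $\infty$-categories $\Space_i\subseteq\Space_{i+1}$ is compatible with the explicit (co)limit formulas, and that those formulas genuinely never leave the class of $\mathfrak U_i$-small simplicial sets when applied to $\mathfrak U_i$-small data. A route that largely sidesteps this would be to identify $\Space_i$ with the full subcategory of $\kappa$-compact objects of the presentable $\infty$-category $\Space_{i+1}$, for $\kappa$ the inaccessible cardinal with $\mathfrak U_i=V_\kappa$ (cf.\ \cite[\S 5.4.1]{LurieHTT}); then closure under $i$-small colimits is \cite[\S 5.3.4]{LurieHTT}, while closure under $i$-small limits is exactly the elementary size computation above for products and pullbacks. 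In either approach the content is purely set-theoretic, with no deep input --- which is presumably why the statement is relegated to the appendix.
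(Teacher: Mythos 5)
Your argument is correct and would prove the proposition, but it is organized rather differently from the proof in the appendix. You reduce to showing that the full subcategory $\Space_i \subseteq \Space_{i+1}$ is closed under $i$-small (co)limits, decompose general (co)limits into (co)products and pushouts/pullbacks (plus realizations/totalizations, which are in fact already subsumed by \cite[Prop.\ 4.4.2.7]{LurieHTT} and its dual), and then verify each generating case by an explicit homotopy (co)limit formula. The paper instead avoids any decomposition: it works directly with the inclusion $F \colon \sSet_i \to \sSet_{i+1}$ of simplicial model categories, spends most of its effort verifying that $F$ preserves \emph{and reflects} the entire model structure (fibrations via lifting, cofibrations via retracts of relative cell complexes, weak equivalences via the combinatorial description of homotopy groups of Kan complexes), and then invokes the general formula for homotopy (co)limits in simplicial model categories \cite[\S 18.1]{HirschhornModel} in one shot, concluding by noting that $F$ is homotopical and hence its own derived functor. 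The step you flag as the ``real obstacle''---compatibility of the model-categorical formulas with the $\infty$-categorical (co)limits across the two universes---is precisely what this model-structure verification buys, so your plan would need to supply that (or something equivalent, e.g.\ that fibrant replacement can be performed within $\mathfrak{U}_i$ and detects the same notion in both universes) before the explicit formulas can be trusted. Your alternative route via the identification of $\Space_i$ with the $\kappa$-compact objects of $\Space_{i+1}$, for $\kappa$ the inaccessible cardinal of $\mathfrak{U}_i$, is genuinely different from anything in the paper and is arguably the slickest way to handle colimits (via \cite[Cor.\ 5.3.4.15]{LurieHTT}), though as you note it still leaves the limit case to the same elementary size computation.
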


\begin{proof}
	Write $\Set_i$ for the category of $i$-small sets. Then $\Set_i \subset \Set_{i+1}$, and this inclusion preserves $i$-small limits and colimits in the 1-categorical sense.
	
	Write now $\sSet_i$ for the 1-category $\Set_i^{\bDelta^\op}$, and let $F\colon \sSet_i \to \sSet_{i+1}$ be the inclusion induced by $\Set_i \subset \Set_{i+1}$. We will show that $F$ preserves and reflects the model structure of $\sSet_i$, in the sense that $g\colon K \to L$ in $\sSet_i$ is a (trivial) fibration, resp.\ a (trivial) cofibration, resp.\ a weak equivalence in $\sSet_i$ if and only if $F(g)$ is. 
	
	Recall that Grothendieck universes are transitive by definition, which means that for $X \in \mathfrak{U}_i$ and $Y \in X$, it holds $Y \in \mathfrak{U}_i$. By the characterization of Kan fibrations as those maps that have the right lifting property with respect to all horn inclusions $\Lambda^k[n] \to \Delta[n]$, it then follows that $g$ is fibration if and only if $F(g)$ is. A similar argument shows the claim on trivial fibrations. 
	
	Since cofibrations are those maps with the left lifting property against all trivial fibrations, it follows that $F(g)$ being a cofibration implies that $g$ is a cofibration as well. Conversely, if $g$ is a cofibration, then it can be obtained as a retract of a relative $I$-cell complex, where $I$ is the set of horn inclusions. Since both $I$-cell complexes and retracts are preserved by $F$, also $F(g)$ is a cofibration. Likewise for the claim on trivial cofibrations.
	
	Now we show that $F$ also preserves weak equivalences. Let $g\colon K \to L$ be given. Since the standard bifibrant replacements of $K,L$ are preserved by $F$ by what we just saw, we may assume without loss of generality that $K,L$ are both Kan complexes. Hence, the  homotopy groups of $K,L$ can be computed purely combinatorially, i.e., $\pi_n(K,x)$ is the set of equivalences classes of pointed maps $(S^n,*) \to (K,x)$ modulo simplicial homotopy (relative to the base point), where $S^n$ is the simplicial $n$-sphere. It follows that $F$ preserves the homotopy groups, and hence $g$ is a weak equivalence if and only if $F(g)$ is.
	
	Using that $F$ preserves the model structure, together with the formula for homotopy limits and homotopy colimits in simplicial model categories as given in \cite[\S 18.1]{HirschhornModel}, one shows that $F$ preserves homotopy limits and homotopy colimits. Moreover, since $F$ is homotopical, it is both its own left and right derived functor. In other words, localizing $F$ at the weak equivalences produces the functor $j$. Since $F$ preserves homotopy limits and homotopy colimits, $j$ thus preserves limits and colimits in the $\infty$-categorical sense.
\end{proof}

	\bibliographystyle{dary}
	\bibliography{refs}
\end{document}